\documentclass[11pt]{article}
\usepackage{amsmath,amsthm,amssymb}
\usepackage{times}
\usepackage{enumerate}
\usepackage[utf8]{inputenc}
\usepackage[english]{babel}
\usepackage{tikz}
\usepackage{dynkin-diagrams}

\usepackage{etex}
\usepackage[T1]{fontenc}
\usepackage[utf8]{inputenx}
\usepackage{etoolbox}
\usepackage[kerning=true,tracking=true]{microtype}
\usepackage{amsmath}
\usepackage{amsfonts}
\usepackage{array}
\usepackage{xstring}
\usepackage{longtable}
\usepackage[listings]{tcolorbox}
\tcbuselibrary{breakable}
\tcbuselibrary{skins}
\usepackage[pdftex]{hyperref}

\usepackage{booktabs}
\usepackage{colortbl}
\usepackage{varwidth}
\usepackage{dynkin-diagrams}
\usepackage{fancyvrb}
\usepackage{xspace}

\usepackage{filecontents}
\usetikzlibrary{decorations.markings}
\usetikzlibrary{decorations.pathmorphing}

\pagestyle{myheadings}
\def\titlerunning#1{\gdef\titrun{#1}}
\makeatletter
\def\author#1{\gdef\autrun{\def\and{\unskip, }#1}\gdef\@author{#1}}
\def\address#1{{\def\and{\\\hspace*{18pt}}\renewcommand{\thefootnote}{}%
\footnote {#1}}%
\markboth{\autrun}{\titrun}}
\makeatother
\def\email#1{e-mail: #1}
\def\subjclass#1{{\renewcommand{\thefootnote}{}%
\footnote{\emph{Mathematics Subject Classification (2010):} #1}}}
\def\keywords#1{\par\medskip
\noindent\textbf{Keywords.} #1}

\linespread{1.1}


\newtheorem{thm}{Theorem}[section]
\newtheorem{cor}[thm]{Corollary}
\newtheorem{lem}[thm]{Lemma}
\newtheorem{prop}[thm]{Proposition}




\theoremstyle{definition}
\newtheorem{defin}[thm]{Definition}
\newtheorem{rem}[thm]{Remark}
\newtheorem{exa}[thm]{Example}
\newtheorem{notcon}[thm]{Notation}



\numberwithin{equation}{section}

\frenchspacing

\textwidth=15cm
\textheight=23cm
\parindent=16pt
\oddsidemargin=0.5cm
\evensidemargin=0.5cm
\topmargin=-0.5cm



\newcommand{\C}{\mathbb{C}}

\newcommand{\V}{\mathbb{V}}
\newcommand{\W}{\mathbb{W}}
\newcommand{\Z}{\mathbb{Z}}
\newcommand{\mbP}{\mathbb{P}}

\newcommand{\mfg}{\mathfrak{g}}
\newcommand{\mfh}{\mathfrak{h}}

\newcommand{\mfn}{\mathfrak{n}}
\newcommand{\mfp}{\mathfrak{p}}
\newcommand{\mfq}{\mathfrak{q}}

\newcommand{\mcA}{\mathcal{A}}
\newcommand{\mcC}{\mathcal{C}}

\newcommand{\mcF}{\mathcal{F}}
\newcommand{\mcG}{\mathcal{G}}
\newcommand{\mcH}{\mathcal{H}}

\newcommand{\mcP}{\mathcal{P}}
\newcommand{\mcL}{\mathcal{L}}

\newcommand{\mcV}{\mathcal{V}}
\newcommand{\mcO}{\mathcal{O}}
\newcommand{\mcQ}{\mathcal{Q}}

\newcommand{\mcU}{\mathcal{U}}
\newcommand{\mcD}{\mathcal{D}}
\newcommand{\mcK}{\mathcal{K}}

\newcommand{\BP}{\mathbb{P}}

\AtEndDocument{\vfill\eject\batchmode}
\usepackage{amssymb}
\usepackage[T1]{fontenc}
\DeclareSymbolFont{script}{U}{eus}{m}{n}
\DeclareMathSymbol{\Wedge}{0}{script}{"5E}

\def\sideremark#1{\ifvmode\leavevmode\fi\vadjust{\vbox to0pt{\vss
 \hbox to 0pt{\hskip\hsize\hskip1em
 \vbox{\hsize3cm\tiny\raggedright\pretolerance10000
  \noindent #1\hfill}\hss}\vbox to8pt{\vfil}\vss}}}%

                                                   %


\begin{document}

\titlerunning{Cone structures and parabolic geometries}

\title{Cone structures and parabolic geometries}

\author{Jun-Muk Hwang \& Katharina Neusser}

\date{}

\maketitle

\address{J.-M. Hwang: Institute for Basic Science, Center for Complex Geometry; \email{jmhwang@ibs.re.kr} }
\address{K. Neusser: Department of Mathematics and Statistics, Masaryk University; \email{neusser@math.muni.cz}}

\subjclass{Primary 53B99; Secondary 53C56, 14M15}

\begin{abstract}
A cone structure on a complex manifold $M$  is a closed submanifold $\mcC \subset \mbP TM$ of the projectivized tangent bundle which is submersive over $M$.  A conic connection on $\mcC$ specifies a distinguished family of curves on $M$ in the directions specified by $\mcC$. There are two common sources of cone structures and conic connections, one in differential geometry and another in algebraic geometry. In differential geometry, we have cone structures induced by the geometric structures underlying holomorphic parabolic geometries, a classical example of which is the null cone bundle of a holomorphic conformal structure. In algebraic geometry, we have the cone structures consisting of varieties of minimal rational tangents (VMRT) given by minimal rational curves on uniruled projective manifolds.
The local invariants of the cone structures in parabolic geometries are given by the curvature of the parabolic geometries, the nature of which depend on the type of the parabolic geometry, i.e., the type of the fibers of $\mcC \to M$. For the VMRT-structures, more intrinsic invariants of the conic connections
which do not depend on the type of the fiber play important roles.
We study the relation between these two different aspects for the cone structures induced by
parabolic geometries associated with a long simple root of a complex simple Lie algebra.
As an application, we obtain a local differential-geometric version of  the global algebraic-geometric recognition theorem due to Mok and Hong--Hwang. In our local version, the role of rational curves is completely replaced by appropriate torsion conditions on the conic connection.
\end{abstract}

\keywords{cone structures, rational homogeneous space, varieties of minimal rational tangents, Cartan connections, parabolic geometry, filtered manifolds}

\section*{Introduction}
Throughout this article we work in the holomorphic category: manifolds are complex and maps between them are holomorphic. For a complex manifold $M$ we denote by $TM$ and $T^*M$ the holomorphic tangent respectively co-tangent bundle. For a holomorphic vector bundle $E$ over $M$ we write $\mathcal O(E)$ for its sheaf of local (holomorphic) sections. For readability and aesthetic reasons we also sometimes simply identify a vector bundle with its sheaf of local sections and omit $\mcO(_-)$.

A cone structure on a manifold $M$ is a closed submanifold $\mcC \subset \mbP TM$ in the projectivized tangent bundle which is submersive over $M$ (see Definition \ref{d.cone_str}). It specifies a set of distinguished tangent directions on $M$. When it arises in natural geometric problems, it is usually equipped with a conic connection (see Definition \ref{d.conic}), a line subbundle $\mcF \subset T\mcC$, which specifies a set of distinguished curves on $M$ in direction of $\mcC$.

As classical examples of cone structures and conic connections, we have those given by  geodesics in Riemannian geometry or null-geodesics in Lorentzian geometry. The geometric picture becomes more elegant and easier to handle when we consider them in the holomorphic setting. A prototypical example is LeBrun's work \cite{LeBrun}  on null-geodesics in holomorphic conformal geometry. Conformal geometry is a special example of parabolic geometries (see \cite{csbook}), a large class of geometric structures where the methods of natural connections and their curvatures are well-established.    In this regard, it is natural to extend LeBrun's study to  the analogues of null-geodesics associated to other holomorphic parabolic geometries, which is  worked out to some extent in the current paper.   It turns out that  the  cone structure and the conic connection naturally associated with a parabolic geometry can be viewed as another type of parabolic geometry. In particular, this cone structure can be studied
as a special case of the theory of correspondence spaces developed in \cite{CapCores}, as explained in Section \ref{ss.cones_induce_para}. This illustrates the advantage of considering the different types of geometric structures underlying parabolic geometries together in a uniform way as presented in \cite{csbook}.

There is a completely different source of cone structures and conic connections arising from algebraic geometry. On a uniruled projective manifold $X$, one can select a distinguished class of curves called minimal rational curves. The set $\mcC_x \subset \mbP T_x X$ of tangent directions  at $x$ of minimal rational curves through a general point $x \in X$  is called the variety of minimal rational tangents (VMRT) at $x$, which is often a closed submanifold. Then the union of $\mcC_x$ as $x$ varies on a suitable Zariski open subset $M \subset X$ determines a cone structure on $M$,  called a VMRT-structure. It is equipped  with a natural conic connection given by the minimal rational curves.
The theory of VMRT-structures has numerous applications in algebraic geometry as surveyed in
\cite{HwangMSRI},  \cite{HwangICM} and  \cite{Hwang-Mok99}.

Classical examples of uniruled projective manifolds are
 rational homogeneous spaces $G/P.$
When  $P < G$ is a maximal parabolic subgroup determined by a long simple root of a complex simiple Lie group $G$, the VMRT $\mcC_o^{G/P} \subset \mbP T_o G/P$  at the base point $o \in G/P$ turns out to be the unique closed orbit of the $P$-action on $\mbP T_o G/P$ and
the VMRT-structure  $\mcC^{G/P} \subset \mbP T G/P$ is defined on the whole projective manifold $G/P$  via the $G$-action. This is precisely the natural cone structure associated with the flat parabolic geometry of $G/P$. (On the other hand, the VMRT-structure is different from the natural cone structure of the flat parabolic geometry, if $P$ is determined by a short simple root.)
  In this setting, we have the following result, which can be used to characterize $G/P$ with $P$ associated to a long simple root among uniruled projective manifolds of second Betti number 1 in terms of VMRT.

\begin{thm}\label{t.longroot}
Let $P <  G$ be the maximal parabolic subgroup of a complex simple Lie group $G$ determined by a long simple root.
Let $\mcC \subset \mbP TM$ be a VMRT-structure on a Zariski open subset $M \subset X$ of a uniruled projective manifold $X$ with $b_2(X) =1$. Assume that a fiber $\mcC_x \subset \mbP T_x M$ at each $x \in M$ is isomorphic  to
$\mcC_o^{G/P}$ by a projective isomorphism from $\mbP T_x M$ to $\mbP T_o G/P$.
Then the cone structure $\mcC$ is locally isomorphic to $\mcC^{G/P}$. \end{thm}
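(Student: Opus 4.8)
The plan is to realize $\mcC$ as (the correspondence space of) a parabolic geometry, to translate local flatness into the vanishing of a single curvature quantity, and then to use the deformation theory of minimal rational curves to force that quantity to vanish. Concretely, since each fiber $\mcC_x$ is projectively isomorphic to the model $\mcC_o^{G/P}$, which is the closed $P$-orbit in $\mbP T_o(G/P)$, the cone structure is of parabolic type; by the construction of Section \ref{ss.cones_induce_para} (cf.\ \cite{CapCores}) the total space $\mcC$ then carries a canonical regular normal parabolic geometry of type $(G,Q)$, where $Q\subset P$ is the stabilizer in $G$ of a point of $\mcC_o^{G/P}$, whose flat model is the projection $\mcC^{G/P}=G/Q\to G/P$. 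By the standard theory \cite{csbook}, a local isomorphism $\mcC\to\mcC^{G/P}$ of cone structures is the same as a local isomorphism of the induced parabolic geometries, and the latter exists if and only if the harmonic curvature $\kappa_H$ --- a section of the bundle associated to $H^2(\mfg_-,\mfg)$ --- vanishes identically. Thus everything reduces to proving $\kappa_H\equiv0$.

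To exploit that $\mcC$ is a VMRT-structure I would bring in its tautological conic connection $\mcF\subset T\mcC$ (Definition \ref{d.conic}) defined by the minimal rational curves, and use the standard splitting type of the tangent bundle along a general such curve $C$,
$$ T_X|_C\;\cong\;\mcO(2)\oplus\mcO(1)^{\oplus p}\oplus\mcO^{\oplus q},\qquad p=\dim\mcC_x . $$
Pulling $\kappa_H$ back along $C$ turns it into a section of a direct sum of line bundles $\bigoplus_i\mcO(a_i)$, whose degrees $a_i$ are computed from the grading of $\mfg$ and the above splitting via Kostant's description of the irreducible $Q$-components of $H^2(\mfg_-,\mfg)$. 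The absence of negative summands together with the precise positivities in the splitting forces the lowest, torsion-type component of $\kappa_H$ --- which is exactly the intrinsic invariant of the conic connection --- to have negative degree along $C$, hence to vanish on $C$; as minimal rational curves in the directions prescribed by $\mcC$ sweep out $M$, this component vanishes identically.

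It then remains to rule out the remaining components of $\kappa_H$, and this is where the long-root hypothesis is essential. For a long simple root every irreducible component of $H^2(\mfg_-,\mfg)$ turns out to sit in a homogeneity that is likewise forbidden by the splitting type above --- so the same degree argument along minimal curves annihilates it --- or else is recovered as a differential consequence of the torsion component through the Bianchi identity $\partial\kappa=0$ and the regularity of the canonical Cartan connection. Either way $\kappa_H\equiv0$, and by the first paragraph $\mcC$ is locally isomorphic to $\mcC^{G/P}$.

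The main obstacle is the content of the third paragraph: the representation-theoretic bookkeeping of computing, for each long-root maximal parabolic, the Kostant weights of all components of $H^2(\mfg_-,\mfg)$ and checking uniformly that they are incompatible with the minimal splitting type. Entwined with this is a second technical point I would need to settle first, namely that the algebraic-geometric conic connection of the VMRT-structure coincides with the distinguished conic connection carried by the induced parabolic geometry, so that the splitting-type constraint genuinely controls $\kappa_H$ rather than some unrelated tensor. For a short root precisely this matching breaks down --- the VMRT-structure no longer agrees with the flat cone structure, an extra low-weight curvature component appears that the conic connection cannot detect, and the statement fails --- in agreement with the parenthetical remark preceding the theorem.
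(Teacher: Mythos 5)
Your overall strategy --- induce a parabolic geometry, reduce local flatness to the vanishing of the harmonic curvature, and kill the harmonic curvature by a degree computation along minimal rational curves --- is the strategy of Mok's proof cited for Theorem \ref{t.longroot} and of the alternative proof given in this paper via Proposition \ref{p.vmrt} and Theorem \ref{t.sMok}. But your first step has a genuine gap: a $\mcC_o^{G/P}$--isotrivial cone structure does \emph{not} automatically carry a canonical regular normal parabolic geometry. The equivalence of categories (Theorem \ref{Cartan_cones_str}) requires, beyond the fibrewise projective isomorphism, that (i) the linear span of $\mcC$ be a bracket-generating distribution whose weak derived flag has constant symbol $\mfp_-$, and (ii) the compatibility condition \eqref{comp.cond.} hold between the Levi bracket and the affine tangent spaces of the cone. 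Neither follows from the hypothesis on the fibers alone: (ii) is deduced from the vanishing of the characteristic torsion $\tau_\mcF$ via Proposition \ref{p.Hw12}, and $\tau_\mcF=0$ is itself proved by the $\mcO(-3)$ degree argument along minimal rational curves (Proposition \ref{p.vmrt}), while (i) uses $b_2(X)=1$. So the rational curves must enter \emph{before} the Cartan connection exists; your plan reverses this order. Relatedly, the induced geometry on $\mcC$ is normal but in general \emph{not} regular --- its regularity is equivalent to $\tau_\mcF=0$ (Theorem \ref{m1}) --- so positing a ``canonical regular normal parabolic geometry of type $(G,Q)$ on $\mcC$'' at the outset presupposes part of what must be proved.

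Second, the step you defer as ``representation-theoretic bookkeeping'' is the technical core rather than a verification: one must identify the harmonic curvature components with tensors whose splitting type along a minimal rational curve is actually computable. In this paper that identification is Theorems \ref{m1} and \ref{t.chi}: the homogeneity-one part of the harmonic curvature of the correspondence space is $\tau_\mcF$ and, once that vanishes, the remainder is the cubic torsion $\chi_\mcF$; along a curve these extend to sections of $\mcO(-3)^{\oplus r(n-r)}$ and $\mcO(-4)^{\oplus r^2}$ and hence vanish. Your appeal to the Bianchi identity to dispose of ``remaining components'' is not needed and not how the cases close: by Kostant's computation (Proposition \ref{p.vanishing}) a long root admits at most the two homogeneities realized by these two torsions, and all other types are automatically flat (Corollary \ref{flat_cases}). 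The matching of the VMRT conic connection with the distinguished one of the parabolic geometry, which you rightly isolate, is Corollary \ref{c.F.unique}. With these ingredients supplied your outline becomes the paper's argument; as written it is missing the key lemmas and has one step out of logical order.
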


This was proved by Mok in \cite{Mok} when $G/P$ is of symmetric type or contact type (see Definition \ref{d.symcon}). His method was extended to cover the remaining cases in \cite{HongHwang}. As this is a nice rigidity result on VMRT-structures with important applications in algebraic geometry, one would like to find a similar result, replacing $\mcC^{G/P}_o$ by some other projective submanifold. So it is natural to ask what special properties of $\mcC_o^{G/P}$ have been used in the proof of Theorem \ref{t.longroot}. With some simplification, we can say that the following two properties of $\mcC_o^{G/P}$ were crucial.

\begin{itemize}
\item[(I)] $\mcC_o^{G/P} \subset \mbP T_o G/P$ is essentially determined by its projective fundamental forms.
    \item[(II)] $\mcC_o^{G/P}$ is the fiber of the natural cone structure of the parabolic geometry of type $G/P$. \end{itemize}

Here, we state (I) somewhat vaguely to avoid technicalities. It suffices to know that (I) is a condition coming from the  projective differential geometry of $\mcC_o^{G/P}$ .
The key idea of Mok's proof of  Theorem \ref{t.longroot} is to use (I) to prove that $M$ is covered by minimal rational curves of $X$. On the other hand, (II) implies that the manifold $M$ in Theorem \ref{t.longroot} is equipped with a natural regular normal parabolic geometry with the associated harmonic curvature. By restricting the harmonic curvature to the minimal rational curves on $M$, it is easy to show that the harmonic curvature must vanish, proving the theorem.

Are there projective submanifolds having similar properties? If we consider general Cartan geometries other than parabolic geometries, then (II) can be formulated for any projective submanifold. So there are likely to be many submanifolds satisfying some version of (II). The trouble is (I). Projective submanifolds satisfying an analog of (I)  are very rare. In fact, for a uniruled projective manifold $X$, the Zariski open subset $M \subset X$ where the VMRT-structure is is defined is usually not covered by minimal rational curves of $X$.

One of the motivations of this paper is to give a different proof of Theorem \ref{t.longroot} where  the property (I) is not used at all. This is given in Theorem \ref{t.sMok}, which
is obtained as a byproduct of the following line of investigation.

For the VMRT-structure $\mcC \subset \BP TM$ on the open subset $M$ of a uniruled projective manifold, the fibers $\mcC_x$ and $\mcC_y$ at two distinct points $x \neq y \in M$ are usually not isomorphic. This is very different from geometric structures commonly studied in differential geometry. Such flexibility of choices of fibers in VMRT-structures is actually one of the strong points of the theory of VMRT-structures.
To study cone structures with potentially varying fibers such as VMRT-structures, it is important to use invariants of the structures which can be defined in a uniform way for arbitrary cone structures and conic connections, not depending on specific types of the fibers $\mcC_x$. In Section \ref{s.cone}, we explain two such invariants, the characteristic torsion of a conic connection and the cubic torsion defined for a conic connection with vanishing characteristic torsion. These invariants certainly make sense for the cone structures with conic connections coming from parabolic geometries.
The latter cone structures, however, have also invariants coming from their description as parabolic geometries of type $G/P$, namely, their harmonic curvatures. These are tensors on $M$ which describe certain 
torsion and curvature components of a distinguished class of affine connections of $M$ of which the parabolic geometry is built.

 A natural question is the relation between these two different classes of invariants for cone structures associated with parabolic geometries.
 Our main results,  Theorems \ref{m1} and \ref{t.chi}, describe the relations between these two classes of invariants for the cone structures with conic connection underlying a parabolic geometry of type $G/P$  in Theorem \ref{t.longroot}.  More precisely, Theorem \ref{m1} says that
 the vanishing of characteristic torsion of the conic connection corresponds to the regularity of the induced parabolic geometry on the correspondence space, and Theorem \ref{t.chi} says, excluding two special cases of low dimension of $G/P$, that when the characteristic torsion vanishes, the cubic torsion can be identified with the harmonic curvature of the parabolic geometry on the correspondence space. Theorem \ref{m1} is proved by examining the harmonic curvatures of the correspondence space and the proof of Theorem \ref{t.chi} employs the machinery of Weyl structures in parabolic geometry.

 The paper is organized as follows. In Section \ref{s.cone}, we define conic connections on cone structures and explain the two invariants, the characteristic torsion and the cubic torsion. In Section \ref{s.RHS}, we review the basic theory of parabolic subalgebras and rational homogeneous spaces, with special regard to the natural cone structure on a rational homogeneous space given by a long simple root, which lead to a  nested pair of parabolic subalgebras.
 In Section \ref{s.vmrt}, we give a brief overview of VMRT-structures and explain the vanishing of the characteristic torsions and cubic torsions of the natural conic connections on VMRT-structures.  In Section \ref{s.parabolic}, we study the cone structures associated with parabolic geometries and prove the main results, Theorems \ref{m1}, \ref{t.chi} and \ref{t.sMok}. A brief review of Weyl structures which is needed for the proof of Theorem \ref{t.chi} is given in Subsection \ref{ss.Weyl}.  In the appendix, Section \ref{Appendix}, we list key geometric data for some classes of $G/P$ for the convenience of the reader.

\smallskip{\textbf{Acknowledgement}}\enskip
The authors would like to thank Andreas \v Cap for helpful discussions, in particular about the invariant $\chi_\mcF$. The second author was supported by the grant GX19-28628X of the Czech Science foundation.

\tableofcontents

\section{Cone structures and conic connections}\label{s.cone}
We review the notion of conic connections on cone structures and introduce two fundamental invariants of a conic connection, its characteristic torsion and its cubic torsion.  We start by reviewing some terminology on distributions and filtered manifolds.

\subsection{Filtered manifolds and distributions}\label{ss.filtered}
\begin{defin}\label{d.filtered}
Suppose $M$ is a complex manifold.
\begin{itemize}
\item[(1)] A {\em tangential filtration} of $M$ is a nested sequence of vector subbundles of the tangent bundle $TM$:
\begin{equation}\label{e.filtration}
 TM=T^{-k}M\supset...\supset T^{-1}M,
\end{equation}
 with the convention that $T^{-i}M=TM$ for $i\geq k$ and $T^{-i}M=\{0\}$ for $i\leq 0$. 
 \item[(2)] The \emph{associated graded bundle} of the filtration (\ref{e.filtration}) is the vector bundle on $M$ given by
 $$
\textrm{gr}(TM):=\textrm{gr}_{-k}(TM)\oplus...\oplus \textrm{gr}_{-1}(TM) $$
  where  $\textrm{gr}_{-i}(TM):=T^{-i}M/T^{-i+1}M$ for each $i$.
We write $q_{-i}: T^{-i}M\rightarrow \textrm{gr}_{-i}(TM)$ for the natural projections.
\item[(3)] A complex manifold $M$ equipped with a filtration (\ref{e.filtration}) is called a \emph{filtered manifold}, if the filtration is compatible with the Lie bracket of vector fields, that is,
$$
[\mcO(T^{-i}M), \mcO(T^{-j}M)]\subset \mcO(T^{-(i+j)}M) \quad \textrm{ for all } i,j >0.
$$
\item[(4)] A \emph{(local) isomorphism between filtered manifolds}
$(M, \{T^{-i}M\})$ and $(M', \{T^{-i}M'\})$ is a (local) biholomorphism $\phi: M\rightarrow M'$ whose tangent map sends $T^{-i}M$ to $T^{-i}M'$ for each $i$.
\end{itemize}
\end{defin}

\begin{defin}\label{d.Levi}
Given a filtered manifold $(M, \{T^{-i}M\})$, the Lie bracket induces the vector bundle map $\mathcal L: \textrm{gr}(TM)\otimes\textrm{gr}(TM)\rightarrow\textrm{gr}(TM)$
whose restriction on the component
\begin{equation}\label{e.Levi}
\mathcal L: \textrm{gr}_{-i}(TM)\otimes\textrm{gr}_{-j}(TM)\rightarrow\textrm{gr}_{-(i+j)}(TM),
\end{equation}
is defined by $\mathcal L(q_{-i}(\xi), q_{-j}(\eta))=q_{-(i+j)}([\xi,\eta])$ for sections $\xi$ and $\eta$ of $T^{-i}M$ and $T^{-j}M$ respectively.
The bundle map \eqref{e.Levi} is  called the \emph{Levi bracket}  of the filtered manifold $(M, \{T^{-i}M\}).$ For each $x \in M$, the graded nilpotent Lie algebra $(\textrm{gr}(T_xM), \mathcal L_x)$ is called the \emph{symbol algebra} of $(M, \{T^{-i}M\})$ at $x\in M$.
\end{defin}

The significance of the notion of filtered manifolds emerges from the study of distributions: 

\begin{defin}\label{d.distribution}
 Suppose $M$ is a complex manifold and $\mcH\subset TM$ a \emph{distribution} on $M$, by which we mean a (holomorphic) vector subbundle $\mcH$ of $TM$.
The \emph{weak derived flag} of $\mcH$ is the sequence of subsheaves of $\mcO(TM)$ given by
\begin{equation}\label{weak_derived_flag}
\mcH^{-1}\subset \mcH^{-2}...\subset \mcO(TM),
\end{equation}
where $\mcH^{-1}:=\mcO(\mcH)$ and $\mcH^{-i}$ is defined inductively as the saturated subsheaf of $\mcO(TM)$ generated by $\mcH^{-i+1}$ and $[\mcH^{-1}, \mcH^{-i+1}]$.
Replacing $M$ by an open dense subset of $M$ if necessary, \eqref{weak_derived_flag} induces the structure of a filtered manifold on $M$ given by
\begin{equation}\label{weak_derived_flag_regular}
T^{-1}M\subset ...\subset T^{-k}M\subset T^{-(k+1)}M:=TM,
\end{equation}
where $k>0$ is the smallest integer such that $\mcH^{-i}=\mcH^{-k}$ for all $i\geq k$ and $T^{-i}M\subset TM$ is the vector subbundle such that $\mcO(T^{-i}M)=\mcH^{-i}$ for $i\leq k$.
We say that $\mcH$ is  \emph{bracket-generating}, if $\mcH^{-k}=\mcO(TM)$, or equivalently,  $\textrm{gr}_{-1}(T_xM)=T^{-1}_xM$ generates $\textrm{gr}(T_xM)$ as a Lie algebra for a general point $x\in M$.
If $T^{-k}M\subsetneq TM$, then $T^{-k}M$ is involutive and $M$ is foliated by manifolds equipped with bracket generating distributions.
\end{defin}

We will be mainly interested in filtered manifolds with constant symbol:

\begin{defin} \label{d.Fr}
Let $\mfn$ be a nilpotent graded Lie algebra and let $\textrm{Aut}_{\textrm{gr}}(\mfn)$ be the group of graded Lie algebra automorphisms of $\mfn$.
\begin{itemize} \item[(1)] A filtered manifold $(M, \{T^{-i}M\})$ has \emph{constant symbol of type} $\mfn$,
if its symbol algebra at each point $x\in M$ is isomorphic to $\mfn$.
\item[(2)] In (1),  denote by
$\textrm{Fr}(\textrm{gr}(T_xM))$ the set of graded  Lie algebra isomorphisms $\mfn\rightarrow\textrm{gr}(T_xM)$ and define $$\textrm{Fr}(\textrm{gr}(TM)):=\sqcup _{x\in M}\textrm{Fr}(\textrm{gr}(T_xM)),$$ which is a principal $\textrm{Aut}_{\textrm{gr}}(\mfn)$-bundle on $M$ called the \emph{frame bundle} of $(M, \{T^{-i}M\})$. The principal right action of $\textrm{Aut}_{\textrm{gr}}(\mfn)$ on the fibers $\textrm{Fr}(\textrm{gr}(T_xM))$ is given by pre-composition of maps.
\item[(3)] If $T^{-1}M=TM$, then $\textrm{Fr}(\textrm{gr}(TM))$ is the usual frame bundle $\textrm{Fr}(TM)$ of $M$. \end{itemize}
\end{defin}

\subsection{Cone structures}\label{ss.cone}
We use the following terminology in projective differential geometry.

\begin{notcon}
Let $\mbP \V$ be the projectivization of a vector space $\V$. Denote by $[v] \in \mbP \V$ the point corresponding to a nonzero vector $v \in\V$. Let $Z \subset \mbP \V$ be a closed submanifold.
\begin{itemize}
\item[(1)] The \emph{affine cone} of $Z$ is denoted by $$\widehat Z :=\{0 \neq v\in\V: [v]\in Z\} \cup \{ 0 \}.$$ In particular, the 1-dimensional subspace of $\V$ corresponding to a point $w \in \mbP \V$ is $\widehat w \subset \V$.
\item[(2)] The linear automorphism group of $\widehat Z$, denoted by
$\textrm{Aut}(\widehat Z),$  is the group of linear automorphisms of $\V$ preserving $\widehat Z$.
\item[(3)]
The \emph{affine tangent space} of $Z$ at $z\in Z$ is
$$\widehat{T}_{z} Z :=T_v\widehat Z \quad\quad \textrm{ for some } 0\neq v\in\hat z\subset\V.$$
\end{itemize}
\end{notcon}

\begin{defin}\label{d.cone_str}
Let $M$ be a complex manifold of dimension $n+1$ and let $\mathbb P TM$ its projectivized tangent bundle.
\phantom{} \quad
\begin{itemize}
\item[(1)] A  \emph{cone structure} on $M$ is a  closed complex submanifold $\mcC\subset \mathbb P(TM)$ such that the natural projection $p:\mcC\rightarrow M$ is a submersion in the sense that
$Tp: T_u \mcC \to T_{p(u)}M$ is surjective for every $u \in \mcC$. In particular, every fiber $\mcC_x = p^{-1}(x)$ is a  closed submanifold of the $n$-dimensional projective space $\mathbb P T_xM$.
\item[(2)] Suppose $\mcC\subset\mathbb P(TM)$ and $\mcC'\subset\mathbb P(TM')$ are two cone structures on manifolds $M$ and $M'$ respectively. Then $\mcC$ is \emph{locally isomorphic} to $\mcC'$, if there exist nonempty connected open subsets $U\subset M$ and $U'\subset M'$ equipped with a biholomorphic map $\phi: U\rightarrow U'$ such that $\mathbb P(T\phi):  \mathbb P(TU)\rightarrow  \mathbb P(TU')$
induces a biholomorphic map from  $\mcC \cap \mathbb P(TU) $ to  $\mcC' \cap \mathbb P(TU')$. \end{itemize}
\end{defin}

In this article we are particularly interested in cone structures whose fibers are all isomorphic as projective submanifolds:

\begin{defin}\label{d.G-structure}
Let $\mathbb V$ be a vector space of dimension $n+1$ and $Z \subset\mathbb P\mathbb V$ a closed submanifold. A cone structure $\mcC\subset \mathbb P(TM)$ on a complex manifold $M$ is called $Z$--\emph{isotrivial}, if for any $x\in M$ the fiber $\mcC_x\subset\mbP T_xM$ is isomorphic to $Z \subset\mbP\V$ as a projective submanifold. 
Note that any $Z$--isotrivial cone structure $\mcC$ defines a reduction of the structure group of the frame bundle of
$M$ to $\textrm{Aut}(\widehat Z)$ corresponding to the group inclusion $\textrm{Aut}(\widehat Z)\subset \textrm{GL}(\V)$.
\end{defin}

\begin{exa}\label{ex.flat}
Let $\V$ be a vector space and $Z \subset \mbP \V$ be a  closed submanifold. Then $\mcC:=\V\times Z \subset \V\times\mbP\V=\mbP(T\V)$ is a cone structure on $\V$.
It is called the \emph{flat cone structure} on $\V$ with fiber $Z \subset \mbP\V$. 
\end{exa}

\begin{exa}\label{ex.conformal}
Suppose $M$ is a complex manifold of dimension $n+1$.
A \emph{(holomorphic) conformal structure} on $M$ is a $\textrm{CO}(n+1)$-structure on $M$, where $\textrm{CO}(n+1)$ denotes the group of linear conformal transformations of complex Euclidean space $\C^{n+1}$.
To any conformal structure we can associate its bundle of null cones $\mcC\subset \mbP TM$, which is a $Z$-isotrivial cone structure, where $Z \subset \C\mbP^n$ is the standard nonsingular quadric. Conversely,
the reduction of ${\rm Fr}(TM)$ associated to any $Z$-isotrivial cone structure $\mcC\subset \mbP (TM)$ defines a conformal structure, since $\textrm{Aut}(\widehat Z)\cong\textrm{CO}(n+1)$.
These assignments are evidently inverse to each other.
\end{exa}

\begin{defin}\label{d.subordinate}
A cone structure $\mcC\subset\mathbb P TM$ on a filtered manifold $(M, \{T^{-i}M\})$  is called
\emph{subordinate} to the tangential filtration $\{T^{-i}M\}$, if $\mcC\subset \mathbb P(T^{-1}M)$. Note, if $\mcC$ is $Z$--isotrivial for some $Z\subset \mathbb P \W\subset \mathbb P \V$ with $\dim \W=\textrm{rank}(T^{-1}M)$, then $\mcC$ defines a reduction of the structure group of the frame bundle  $\textrm{Fr}(T^{-1}M)$ of the vector bundle
$T^{-1}M$ corresponding to the inclusion $\textrm{Aut}(\widehat Z)\subset \textrm{GL}(\W)$. 
\end{defin}

\begin{defin}\label{d.cone_filtration}
Suppose $\mcC\subset \mathbb P TM$ is a cone structure on a complex manifold $M.$
\begin{itemize}
\item[(1)] For each $u \in \mcC$, define $$\mcD^{-1}_{u}:=(Tp)^{-1}(\hat u) \mbox{ and }
    \mcD^{-2}_{u} := (Tp)^{-1}(\widehat{T}_{u} \mcC_{p(u)}).$$
    They determine vector subbundles \begin{equation}\label{cone_filtration}
\mcD^{-1}\subset \mcD^{-2}\subset T\mcC,
\end{equation}
where, by construction, $\mcD^{-1}$ contains the vertical subbundle $\mcV$ of $p: \mcC\rightarrow M$.
Note that, if $r$ is the dimension of the fibers of $p:\mcC\rightarrow M$, then $\mcV$ has rank $r$, $\mcD^{-1}$ has rank $r+1$, and $\mcD^{-2}$ has rank $2r+1$.
\item[(2)] Inductively define saturated subsheaves   $$\mcD^{-i}=[\mcV, \mcD^{-i+1}]+\mcD^{-i+1}$$ of $TM$ for any $i \geq 3$. They are not necessarily locally free.
\end{itemize} 
\end{defin}

The following can be proved by a direct generalization of the proof of Proposition 2 of \cite{Hwang-Mok04}.

\begin{lem}\label{l.HM04}
In Definition \ref{d.cone_filtration},
 if $\mcC_x\subset \mathbb PT_xM$ is linearly non-degenerate
(that is, it is not contained in a hyperplane) for a general point $x \in M$, then $\mcD^{-i}=T\mcC$ for sufficiently large $i$ on a Zariski open subset of $\mcC$. \end{lem}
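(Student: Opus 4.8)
The plan is to translate the purely algebraic growth of the sheaves $\mcD^{-i}$ into the projective differential geometry of the fibres $\mcC_x$, where the linear non-degeneracy hypothesis can be brought to bear. Write $\V_x:=T_xM$, fix a general $x\in M$ and a point $u\in\mcC_x$, and for $j\geq 0$ let $\widehat T^{(j)}_u\subset\V_x$ denote the $j$-th osculating space of $\mcC_x\subset\mbP\V_x$ at $u$ (so $\widehat T^{(0)}_u=\hat u$ and $\widehat T^{(1)}_u=\widehat T_u\mcC_x$). The key identity I would prove is
\[
Tp(\mcD^{-i}_u)=\widehat T^{(i-1)}_u\qquad\text{for }u\text{ in a Zariski open subset of }\mcC.
\]
For $i=1,2$ this is precisely Definition \ref{d.cone_filtration}(1). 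The inductive engine is the elementary observation that, for a vertical field $V\in\mcO(\mcV)$ and any local field $W$ on $\mcC$, the bracket $[V,W]$ records after projection only the fibre-derivative of $Tp(W)$: setting $\overline W:=Tp\circ W$, viewed as a section of $p^*TM$, a coordinate computation gives $Tp([V,W])=\nabla_V\overline W$, where $\nabla$ is the canonical flat partial connection along the fibres induced by the trivialization $p^*TM|_{\mcC_x}=\mcC_x\times\V_x$. In particular $Tp(\mcD^{-i}_u)$ depends only on the fibrewise data of $\mcC_x$.

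Granting this, I would run an induction on $i$ over the Zariski open subset of $\mcC$ on which all osculating dimensions are locally constant. By the inductive hypothesis, sections of $\mcD^{-i+1}$ project under $Tp$ onto the $(i-2)$-nd osculating bundle $u\mapsto\widehat T^{(i-2)}_u$; since $\mcV=\ker Tp\subset\mcD^{-i+1}$, every local section of this osculating bundle lifts to a section of $\mcD^{-i+1}$. Differentiating such lifts along $\mcV$ and using $Tp([V,W])=\nabla_V\overline W$ together with $\nabla_V\widehat T^{(i-2)}\subseteq\widehat T^{(i-1)}$ gives $Tp(\mcD^{-i}_u)\subseteq\widehat T^{(i-1)}_u$. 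For the reverse inclusion, choosing a local lift $f$ of $\mcC_x$ and lifting the osculating directions $\partial_If$ with $|I|=i-2$, one differentiation produces every partial $\partial_\beta\partial_If(u)$; these span $\widehat T^{(i-1)}_u$ modulo $\widehat T^{(i-2)}_u\subseteq Tp(\mcD^{-i+1}_u)$, so $\widehat T^{(i-1)}_u\subseteq Tp(\mcD^{-i}_u)$. This establishes the key identity.

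To finish, I would invoke the classical fact that the osculating spaces of a linearly non-degenerate variety exhaust the ambient space at a general point: if the osculating filtration stabilized with $\widehat T^{(k)}_u=W\subsetneq\V_x$ on an open set, then $\mcC_x$ would lie in the projective subspace $\mbP W$, contradicting the assumed non-degeneracy of $\mcC_x$. Hence there is $N$ with $\widehat T^{(N)}_u=\V_x=T_{p(u)}M$ for general $u$, and the key identity gives $Tp(\mcD^{-(N+1)}_u)=T_{p(u)}M$. Since $\mcV=\ker Tp$ is contained in $\mcD^{-1}\subset\mcD^{-(N+1)}$, the surjectivity of $Tp$ on $\mcD^{-(N+1)}_u$ forces $\mcD^{-(N+1)}_u=T_u\mcC$; hence $\mcD^{-i}=T\mcC$ for all $i\geq N+1$ on a Zariski open subset of $\mcC$, as claimed.

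I expect the only real difficulty to lie in the genericity bookkeeping underpinning the first two steps. The sheaves $\mcD^{-i}$ need not be locally free, the osculating dimensions of $\mcC_x$ may jump along a proper analytic subset, and both the fibre $\mcC_x$ and the ranks $\dim\widehat T^{(j)}_u$ may vary with $x$; one must therefore restrict to the Zariski open subset of $\mcC$ on which all these ranks are constant and the relevant images are vector subbundles, so that the liftability of osculating sections to $\mcD^{-i+1}$ is legitimate. Once this open subset is fixed, the remaining content is exactly the fibrewise osculation computation of Proposition 2 of \cite{Hwang-Mok04}, carried out here for a general cone structure rather than a VMRT-structure.
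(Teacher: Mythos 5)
Your argument is correct and is essentially the paper's intended one: the paper gives no written proof beyond the remark that the lemma follows by "a direct generalization of the proof of Proposition 2 of \cite{Hwang-Mok04}", and that proposition is precisely the identification of $Tp(\mcD^{-i}_u)$ with the osculating filtration of the fibre $\mcC_{p(u)}$, which you reconstruct via the computation $Tp([V,W])=\nabla_V\overline W$ and then combine with the standard fact that the osculating spaces of a linearly non-degenerate submanifold exhaust the ambient space at a general point. Your closing caveats about rank-jumping loci and the non--local-freeness of the $\mcD^{-i}$ are exactly why the conclusion is only asserted on a Zariski open subset of $\mcC$.
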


The next proposition was proved in Proposition 1 of \cite{Hwang-Mok04}.

\begin{prop}\label{p.HM04}
Suppose $\mcC\subset \mathbb P(TM)$ is a cone structure on a complex manifold $M$. Then
$\mcD^{-2}$ coincides with the first derived system of $\mcD^{-1}$ (that is, $\mcD^{-2}$ is generated by the sheaf of local sections of $\mcD^{-1}$ and its Lie brackets)
and hence \eqref{cone_filtration} gives $\mcC$ the structure of a filtered manifold.
\end{prop}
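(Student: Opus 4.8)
The plan is to reduce everything to an explicit Lie bracket computation in coordinates adapted simultaneously to the fibration $p\colon \mcC\to M$ and to the tautological direction along $\mcC$, and then to read off the filtered-manifold structure from Definition \ref{d.distribution}. First I would fix $u_0=(x_0,[v_0])\in\mcC$ and choose local coordinates $x^0,\dots,x^n$ on $M$ with $v_0=\partial/\partial x^0$, working in the affine chart $\xi^0=1$ of $\mbP T_xM$. Because $p$ is a submersion with $r$-dimensional fibers, I can pick coordinates $(x^0,\dots,x^n,s^1,\dots,s^r)$ on $\mcC$ with $p(x,s)=x$, in which the point $(x,s)$ represents the direction $[\,\partial_{x^0}+\sum_{a=1}^n\xi^a(x,s)\,\partial_{x^a}\,]$ and $\xi^a(x_0,0)=0$. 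Then the vertical bundle is $\mcV=\langle\partial_{s^1},\dots,\partial_{s^r}\rangle$, and the tautological lift $\tilde v:=\partial_{x^0}+\sum_{a=1}^n\xi^a\,\partial_{x^a}$ is a non-vertical section of $\mcD^{-1}$ with $Tp(\tilde v)$ spanning $\hat u$; hence $\mcD^{-1}=\langle\tilde v,\partial_{s^1},\dots,\partial_{s^r}\rangle$.

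Next I would describe $\mcD^{-2}$ concretely. Parametrizing the affine cone $\widehat{\mcC_x}$ by $(t,s)\mapsto t\,\xi(x,s)$ and differentiating at $t=1$ identifies the affine tangent space as $\widehat T_u\mcC_{p(u)}=\langle v,w_1,\dots,w_r\rangle\subset T_xM$, where $v=\sum_i\xi^i\partial_{x^i}$ is the radial (Euler) direction and $w_j:=\sum_{a=1}^n(\partial\xi^a/\partial s^j)\,\partial_{x^a}$ are the vertical derivatives of the tautological direction; smoothness of $\mcC_x$ makes these $r+1$ vectors independent, consistent with $\textrm{rank}\,\mcD^{-2}=2r+1$. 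Taking the $Tp$-preimage gives $\mcD^{-2}=\langle\tilde v,w_1,\dots,w_r,\partial_{s^1},\dots,\partial_{s^r}\rangle$ as a subbundle of $T\mcC$.

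The crux of the argument will be the single identity $[\partial_{s^j},\tilde v]=\sum_{a=1}^n(\partial\xi^a/\partial s^j)\,\partial_{x^a}=w_j$, which shows that bracketing the vertical fields against $\tilde v$ produces exactly the directions $w_j$ spanning $\widehat T_u\mcC_{p(u)}$ beyond $\hat u$. Expanding a bracket of two arbitrary sections of $\mcD^{-1}$, all terms fall into $\mcD^{-1}$ except a combination of the $w_j$ with freely prescribable coefficients, so the sheaf generated by $\mcO(\mcD^{-1})$ and $[\mcO(\mcD^{-1}),\mcO(\mcD^{-1})]$ is exactly $\langle\mcV,\tilde v,w_1,\dots,w_r\rangle=\mcD^{-2}$. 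As $\mcD^{-2}$ is already a subbundle, hence a saturated subsheaf, this equals the first derived system of $\mcD^{-1}$.

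Finally, since $\mcD^{-2}$ is this first derived system, the weak derived flag of the distribution $\mcD^{-1}$ opens with $\mcD^{-1}\subset\mcD^{-2}\subset T\mcC$, so by Definition \ref{d.distribution} the filtration \eqref{cone_filtration} makes $\mcC$ a filtered manifold on a dense open subset where the ranks are constant. I expect the only genuine subtlety to be the conceptual identification in the second step—recognizing $\widehat T_u\mcC_{p(u)}$ as the span of the radial direction together with the vertical derivatives of the tautological line—and the sheaf-theoretic bookkeeping confirming that no bracket of $\mcD^{-1}$-sections escapes $\mcD^{-2}$; the computation itself is routine, and since its conclusion is coordinate-free it globalizes at once.
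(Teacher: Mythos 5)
Your proof is correct: the coordinate computation identifying $\mcD^{-1}=\langle\tilde v,\partial_{s^1},\dots,\partial_{s^r}\rangle$, the affine tangent space as the span of the radial direction and the $w_j=\sum_a(\partial\xi^a/\partial s^j)\partial_{x^a}$, and the key bracket $[\partial_{s^j},\tilde v]=w_j$ together with the closure check $[\mcD^{-1},\mcD^{-1}]\subset\mcD^{-2}$ is exactly the argument of Proposition~1 of \cite{Hwang-Mok04}, which the paper cites in lieu of a proof. Nothing essential is missing, and the filtered-manifold conclusion does follow since the only nontrivial bracket condition for the three-step filtration \eqref{cone_filtration} is the one you verified.
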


\begin{rem}
The filtration $\{\mcD^{-i}\}$ of $T\mcC$ is in general not compatible with the Lie bracket of vector fields, as we see in Subsection \ref{ss.cones_induce_para}.
\end{rem}

\subsection{Conic connections and their characteristic torsion}\label{ss.conic}

\begin{defin}\label{d.conic}
A \emph{conic connection} on a cone structure $\mcC\subset\mathbb P TM$ is a splitting of the short exact
sequence of vector bundles arising from Definition \ref{d.cone_filtration} (1)
\begin{equation*}
0\rightarrow \mcV\rightarrow \mcD^{-1}\rightarrow \mcD^{-1}/\mcV\rightarrow 0,
\end{equation*}
equivalently, a line subbundle $\mcF\subset \mcD^{-1}$ such that $\mcD^{-1}=\mcF\oplus \mcV$.
\end{defin}

Note that the projections of the integral curves of a conic connection $\mcF\subset \mcD^{-1}\subset T\mcC$ to $M$ give rise to a family of complex curves on $M$, with exactly one curve through any point $x\in M$ in any direction belonging 
to $\mcC_x$. Conversely, any such family lifting to a holomorphic foliation of rank $1$ on $\mcC$ defines a conic connection on $\mcC$.

\begin{exa}\label{ex.path.geom}
If $\mcC=\mbP TM$, then $T{\mcC}=\mcD^{-2}$ and a conic connection is classically called a \emph{path geometry} on $M$. In the holomorphic category the curves in $M$ of any path geometry can be realized locally as the geodesics of a holomorphic torsion-free affine connection \cite[Prop. 1.2.1]{LeBrunThesis} and hence a path geometry is also known to be equivalent to a \emph{projective structure} on $M$.
\end{exa}

Note that an immediate consequence of Proposition \ref{p.HM04} and the integrability of $\mcV$ is:

\begin{cor}\label{c.isom} Suppose $\mcC\subset\mbP TM$ is a cone structure on a complex manifold $M$ equipped with a conic connection $\mcF$.
Then the Levi bracket of \eqref{cone_filtration} induces an isomorphism
\begin{equation}
\mcF\otimes \mcV\cong \emph{gr}_{-2}(\mcD).
\end{equation}
\end{cor}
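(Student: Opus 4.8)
The plan is to use the conic-connection splitting to reduce the Levi bracket to a single component and then close the argument by a rank count. Since $\mcD^{-1}=\mcF\oplus\mcV$ and $\mcD^0=\{0\}$, we have $\textrm{gr}_{-1}(\mcD)=\mcD^{-1}=\mcF\oplus\mcV$. The Levi bracket $\mathcal L\colon \textrm{gr}_{-1}(\mcD)\otimes\textrm{gr}_{-1}(\mcD)\to\textrm{gr}_{-2}(\mcD)$ is alternating, hence factors through $\Lambda^2\mcD^{-1}$. Because $\mcF$ has rank one, $\Lambda^2\mcF=0$, so $\Lambda^2\mcD^{-1}=(\mcF\otimes\mcV)\oplus\Lambda^2\mcV$, where $\mcF\otimes\mcV$ is embedded via $f\otimes v\mapsto f\wedge v$. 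Thus the entire bracket is determined by its restrictions to these two summands, and the goal becomes showing that $\mathcal L|_{\mcF\otimes\mcV}$ is an isomorphism onto $\textrm{gr}_{-2}(\mcD)$.

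First I would dispose of the $\Lambda^2\mcV$ component. As $\mcV$ is the vertical subbundle of the submersion $p\colon\mcC\to M$, it is involutive, so $[\mcO(\mcV),\mcO(\mcV)]\subset\mcO(\mcV)\subset\mcO(\mcD^{-1})$; applying $q_{-2}$ gives $\mathcal L|_{\Lambda^2\mcV}=0$. Next, by Proposition \ref{p.HM04}, $\mcD^{-2}$ is the first derived system of $\mcD^{-1}$, i.e. $\mcO(\mcD^{-2})$ is generated by $\mcO(\mcD^{-1})$ together with $[\mcO(\mcD^{-1}),\mcO(\mcD^{-1})]$; passing to the quotient by $\mcO(\mcD^{-1})$ shows that the image sheaf of $\mathcal L\colon\Lambda^2\mcD^{-1}\to\textrm{gr}_{-2}(\mcD)$ is the full sheaf $\mcO(\textrm{gr}_{-2}(\mcD))$, whence $\mathcal L$ is fiberwise surjective. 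Combining the vanishing on $\Lambda^2\mcV$ with this surjectivity, the restriction $\mathcal L|_{\mcF\otimes\mcV}$ is itself surjective.

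Finally I would finish by the rank count. Using the ranks recorded in Definition \ref{d.cone_filtration}(1), $\textrm{rank}(\textrm{gr}_{-2}(\mcD))=(2r+1)-(r+1)=r$, while $\textrm{rank}(\mcF\otimes\mcV)=1\cdot r=r$. A surjective bundle map between vector bundles of equal rank is an isomorphism, so $\mathcal L|_{\mcF\otimes\mcV}\colon\mcF\otimes\mcV\to\textrm{gr}_{-2}(\mcD)$ is the desired isomorphism. The one step needing care, and the main obstacle, is the passage from the derived-system statement of Proposition \ref{p.HM04} to genuine \emph{pointwise} surjectivity of $\mathcal L$: this is legitimate because $\mcD^{-2}$ is an honest subbundle of the stated constant rank $2r+1$, so the image sheaf of the $\mcO$-linear map $\mathcal L$ being all of $\mcO(\textrm{gr}_{-2}(\mcD))$ forces, by Nakayama's lemma, surjectivity on every fiber; the remaining verifications (antisymmetry of $\mathcal L$ and the splitting of $\Lambda^2\mcD^{-1}$) are routine.
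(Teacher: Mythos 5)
Your proof is correct and follows exactly the route the paper intends: the corollary is stated there as ``an immediate consequence of Proposition \ref{p.HM04} and the integrability of $\mcV$,'' and your argument (kill $\Lambda^2\mcV$ by involutivity, kill $\Lambda^2\mcF$ by rank one, get surjectivity of $\mcL|_{\mcF\otimes\mcV}$ from the derived-system property, then conclude by the rank count $r=r$) is precisely the spelled-out version of that, with the Nakayama remark a welcome extra precision.
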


\begin{defin}\label{d.characteristic}
Let  $\mcF$ be a conic connection on a cone structure $\mcC\subset \mathbb P TM$. The \emph{characteristic torsion} of $\mcF$ is the section $\tau_{\mcF}$ of the vector bundle $\mcF^*\otimes\textrm{gr}_{-2}(\mcD)^*\otimes T\mcC/\mcD^{-2}$, which is the negative of the corresponding component of the Levi bracket $\mcL$ of \eqref{cone_filtration}:
$$
\begin{array}{ccc} \mcF  \otimes \textrm{gr}_{-2} (\mcD) & \stackrel{ \tau_{\mcF}}{\longrightarrow} &   T\mcC/\mcD^{-2} \\ \cap & & \| \\ \mcD^{-1} \otimes \textrm{gr}_{-2}( \mcD) & \stackrel{- \mcL}{\longrightarrow} &  T\mcC/\mcD^{-2}.  \end{array}  $$
A conic connection is said to be \emph{characteristic}, if $\tau_{\mcF} =0.$ Equivalently, a conic connection is a characteristic connection if and only if
$$
[\mcF, \mcD^{-2}]\subset \mcD^{-2}.
$$
\end{defin}

\begin{lem}\label{l.-3}
For a cone structure $\mcC\subset \mathbb P TM$ with a characteristic conic connection $\mcF$, one  has $[\mcF, \mcD^{-3}]\subset \mcD^{-3}$ and $[\mcD^{-2}, \mcD^{-2}]\subset \mcD^{-3}$.
\end{lem}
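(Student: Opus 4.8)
The plan is to reduce both inclusions to the single statement $[[\mcF,\mcV],[\mcF,\mcV]]\subseteq\mcD^{-3}$ and to prove the latter by antisymmetrizing a consequence of the characteristic condition. First I would record the bracket relations available at the outset: the vertical bundle $\mcV$ is integrable, so $[\mcV,\mcV]\subseteq\mcV$; since $\mcF$ has rank one, $[\mcF,\mcF]\subseteq\mcF$; by Proposition \ref{p.HM04} one has $[\mcF,\mcV]\subseteq\mcD^{-2}$; and $[\mcV,\mcD^{-2}]\subseteq\mcD^{-3}$ by Definition \ref{d.cone_filtration}. Combined with the characteristic condition $[\mcF,\mcD^{-2}]\subseteq\mcD^{-2}$ of Definition \ref{d.characteristic}, these give immediately $[\mcD^{-1},\mcD^{-2}]=[\mcF,\mcD^{-2}]+[\mcV,\mcD^{-2}]\subseteq\mcD^{-3}$. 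Since $\mcD^{-2}=\mcD^{-1}+[\mcF,\mcV]$ by Proposition \ref{p.HM04}, expanding $[\mcD^{-2},\mcD^{-2}]$ on generators shows that every resulting bracket lies in $\mcD^{-3}$ by the relations just listed, with the sole exception of $[[\mcF,\mcV],[\mcF,\mcV]]$; this one inclusion is therefore the whole content of the second assertion. The difficulty is that a direct Jacobi expansion of such a bracket produces a term of the form $[\mcF,[\mcV,\mcD^{-2}]]$, i.e.\ reduces it to the first assertion, while the first assertion reduces back to $[[\mcF,\mcV],\mcD^{-2}]\subseteq[\mcD^{-2},\mcD^{-2}]$; purely formal manipulation with the Jacobi identity does not break this circularity, and this is the main obstacle.

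The device that breaks it is a symmetry identity. Fixing a local frame $E$ of $\mcF$ and local sections $V,W$ of $\mcV$, the Jacobi identity gives
\begin{equation*}
[V,[E,W]]-[W,[E,V]]=[E,[V,W]],
\end{equation*}
and the right-hand side lies in $[\mcF,\mcV]\subseteq\mcD^{-2}$ because $[V,W]$ is again a section of $\mcV$. Writing $G_V:=[E,V]$ and $G_W:=[E,W]$, which generate $[\mcF,\mcV]$ modulo $\mcD^{-1}$, this says that $[V,G_W]-[W,G_V]$ is a section of $\mcD^{-2}$; it is the filtered-geometric shadow of the symmetry of the projective second fundamental form of the fibre $\mcC_x$.

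For the core inclusion I would then compute $[G_V,G_W]$ modulo $\mcD^{-3}$. As $G_W\in\mcD^{-2}$, the characteristic condition gives $[E,G_W]\in\mcD^{-2}$, hence $[V,[E,G_W]]\in[\mcV,\mcD^{-2}]\subseteq\mcD^{-3}$; on the other hand Jacobi gives $[V,[E,G_W]]=[[V,E],G_W]+[E,[V,G_W]]=-[G_V,G_W]+[E,[V,G_W]]$, so that
\begin{equation*}
[G_V,G_W]\equiv[E,[V,G_W]]\pmod{\mcD^{-3}}.
\end{equation*}
Antisymmetrizing in $V$ and $W$ turns the left-hand side into $2[G_V,G_W]$ and the right-hand side into $[E,\,[V,G_W]-[W,G_V]\,]$, which by the symmetry identity equals $[E,[E,[V,W]]]\in[\mcF,\mcD^{-2}]\subseteq\mcD^{-2}$ by the characteristic condition. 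Hence $2[G_V,G_W]\in\mcD^{-3}$, and so $[[\mcF,\mcV],[\mcF,\mcV]]\subseteq\mcD^{-3}$; the reduction to generators is harmless since multiplying $G_V$ by a function changes $[G_V,G_W]$ only by a section of $\mcD^{-2}\subseteq\mcD^{-3}$.

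Granting the core inclusion, the second assertion $[\mcD^{-2},\mcD^{-2}]\subseteq\mcD^{-3}$ follows by assembling the cases above, and the first assertion follows from it formally: using $\mcD^{-3}=\mcD^{-2}+[\mcV,\mcD^{-2}]$, the characteristic condition, and Jacobi,
\begin{equation*}
[\mcF,[\mcV,\mcD^{-2}]]\subseteq[[\mcF,\mcV],\mcD^{-2}]+[\mcV,[\mcF,\mcD^{-2}]]\subseteq[\mcD^{-2},\mcD^{-2}]+[\mcV,\mcD^{-2}]\subseteq\mcD^{-3},
\end{equation*}
whence $[\mcF,\mcD^{-3}]\subseteq\mcD^{-3}$. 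I expect the only genuinely delicate point to be the third paragraph: recognizing that the obstruction $[G_V,G_W]$ is antisymmetric in $V,W$ and that the symmetry identity rewrites its antisymmetrization as an iterated bracket that the characteristic condition keeps inside $\mcD^{-2}$. Everything else is bookkeeping with the relations of the first paragraph.
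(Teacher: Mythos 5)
Your proof is correct and, despite the different packaging, is essentially the paper's argument: your congruence $[G_V,G_W]\equiv[E,[V,G_W]] \pmod{\mcD^{-3}}$ and your symmetry identity $[V,G_W]-[W,G_V]=[E,[V,W]]$ are exactly the two Jacobi-identity manipulations in the paper's proof, and the antisymmetrization producing $2[G_V,G_W]$ is the paper's identity $2[f,[v,[w,f]]]=[v,[[w,f],f]]+[[f,[f,v]],w]+[f,[[v,w],f]]$ read in your notation. The only difference is organizational: you isolate $[[\mcF,\mcV],[\mcF,\mcV]]\subset\mcD^{-3}$ as the core and deduce $[\mcF,\mcD^{-3}]\subset\mcD^{-3}$ from it afterwards, whereas the paper first shows the two inclusions are equivalent and then establishes both at once.
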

\begin{proof}
Recall  $\mcD^{-2}=[\mcV, \mcF]+\mcD^{-1}$ and $\mcD^{-3}=[\mcV, \mcD^{-2}]+\mcD^{-2}$. Let  $f$ be a local section of $\mcF$, and let $v, w$ be local sections of $\mcV$. The Jacobi identity implies that
\begin{align*} [f, [v,[w,f]]]=-[v, [[w,f], f]]-[[w,f], [f,v]].
\end{align*}
Since $\mcF$ is characteristic, the first term on the right-hand side lies in $\mcD^{-3}$, which shows that $[\mcF, \mcD^{-3}]\subset \mcD^{-3}$ if and only if $[\mcD^{-2}, \mcD^{-2}]\subset \mcD^{-3}$.
Applying the Jacobi identity to $[[w,f], [f,v]]$ shows that
\begin{eqnarray*}
[[w,f], [f,v]] &=& -[[f,[f,v]], w]+[f, [[f,v], w]] \\ &=& -[[f,[f,v]], w]-[f, [[v,w], f]]+[f, [v, [w, f]]].
\end{eqnarray*}
Inserting this identity into the previous one, we obtain
$$2 [f, [v,[w,f]]]=[v, [[w,f], f]]+[[f,[f,v]], w]+[f, [[v,w], f]].$$
Since $\mcF$ is characteristic and the vertical bundle $\mcV$ integrable (hence $[\mcV, \mcV]\subset \mcV$), every term on the right-hand side is a section of $\mcD^{-3}$. Hence,  $[\mcF, \mcD^{-3}]\subset \mcD^{-3}$ as claimed.
\end{proof}

\begin{defin}\label{d.tau_C}
Suppose $\mcC\subset \mathbb P(TM)$ is a cone structure and denote by $${\rm II}: \mcV \rightarrow  \textrm{gr}_{-2}(\mcD)^*\otimes T\mcC/\mcD^{-2}$$ the appropriate component of the Levi bracket of \eqref{cone_filtration}. Consider the component $\mcL: \mcD^{-1}\otimes  \textrm{gr}_{-2}(\mcD)\rightarrow  T\mcC/\mcD^{-2}$ of the Levi bracket of \eqref{cone_filtration}.
Then the following composition of maps, where the second map denotes the natural projection,
\begin{equation}\label{e.tau}
\mcD^{-1}\stackrel{-\mcL}{\rightarrow} \textrm{gr}_{-2}(\mcD)^*\otimes T\mcC/\mcD^{-2}\rightarrow (\textrm{gr}_{-2}(\mcD)^*\otimes T\mcC/\mcD^{-2})/\textrm{Im}({\rm II}),
\end{equation}
descends to a section $\tau^{\mcC}$ of
\begin{equation}\label{e.quotient}
(\mcD^{-1}/\mcV)^*\otimes (\textrm{gr}_{-2}(\mcD)^*\otimes T\mcC/\mcD^{-2}))/\textrm{Im}({\rm II}),
\end{equation}
which we call the \emph{characteristic torsion of $\mcC$}.
\end{defin}

The significance of $\tau^\mcC$ is the following:

\begin{prop}\label{p.torsion}
For a cone structure $\mcC\subset \mathbb P(TM),$  assume that ${\rm II}: \mcV\rightarrow \emph{gr}_{-2}^*(\mcD)\otimes T\mcC/\mcD^{-2}$ is injective at general points of $\mcC$.
\begin{enumerate}
\item[(1)] Then $\mcC$ admits at most one characteristic conic connection.
\item[(2)] Assume $\mcC$ admits a conic connection. Then $\tau^\mcC=0$ if and only if $\mcC$ admits a characteristic conic connection (that is, a conic connection $\mcF$ with $\tau_\mcF=0$).
\end{enumerate}
\end{prop}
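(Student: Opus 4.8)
The plan is to reduce everything to constant-rank linear algebra for one bundle map. Writing $W := \textrm{gr}_{-2}(\mcD)^*\otimes T\mcC/\mcD^{-2}$, I would let $\lambda\colon \mcD^{-1}\to W$ be the homomorphism $\lambda(X) = -\mcL(X,\,\cdot\,)$ built from the component $\mcD^{-1}\otimes \textrm{gr}_{-2}(\mcD)\to T\mcC/\mcD^{-2}$ of the Levi bracket. By Definition \ref{d.tau_C} the map $\mathrm{II}$ is exactly the restriction of this component to $\mcV\subset\mcD^{-1}$ (so $\lambda|_\mcV = -\mathrm{II}$, which differs from $\mathrm{II}$ only by a sign and hence has the same kernel and image), and by Definition \ref{d.characteristic} a conic connection $\mcF$ satisfies $\tau_\mcF = \lambda|_\mcF$; thus $\mcF$ is characteristic exactly when $\mcF\subset\ker\lambda$. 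Likewise, unwinding Definition \ref{d.tau_C}, $\tau^\mcC$ is the homomorphism that $\lambda$ induces from $\mcD^{-1}/\mcV$ to $W/\mathrm{Im}(\mathrm{II})$, so $\tau^\mcC = 0$ if and only if $\mathrm{Im}(\lambda)\subset \mathrm{Im}(\mathrm{II})$. All of this I would set up on the Zariski-open locus of $\mcC$ where $\mathrm{II}$ is injective, so that the bundles in question have locally constant rank.

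The one fact I would isolate first concerns $\ker\lambda$. Since $\lambda|_\mcV$ is injective of rank $r=\textrm{rank}(\mcV)$ while $\mcD^{-1}$ has rank $r+1$, the rank of $\lambda$ is $r$ or $r+1$, so $\ker\lambda$ has rank at most $1$; and $\ker\lambda\cap\mcV = \ker(\lambda|_\mcV) = 0$. Because $\mathrm{Im}(\mathrm{II})=\mathrm{Im}(\lambda|_\mcV)\subset\mathrm{Im}(\lambda)$ automatically, the condition $\tau^\mcC = 0$ is equivalent to $\mathrm{Im}(\lambda)=\mathrm{Im}(\mathrm{II})$, i.e.\ to $\textrm{rank}(\lambda)=r$, i.e.\ to $\ker\lambda$ having rank exactly $1$. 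Combining this with $\ker\lambda\cap\mcV=0$ yields the clean statement I want: $\tau^\mcC=0$ if and only if $\ker\lambda$ is a line subbundle complementary to $\mcV$ in $\mcD^{-1}$.

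For part (1), I would argue that a characteristic conic connection is forced to be $\ker\lambda$: if $\mcF$ is characteristic then $\mcF\subset\ker\lambda$ is a line subbundle, so the rank bound above forces $\textrm{rank}(\ker\lambda)=1$ and $\mcF=\ker\lambda$. As $\ker\lambda$ is intrinsic to $\mcC$, any two characteristic conic connections agree on the locus where $\mathrm{II}$ is injective, hence (being line subbundles of $\mcD^{-1}$ that agree on a dense open subset) everywhere. For part (2), one implication is immediate: if $\mcD^{-1}=\mcF\oplus\mcV$ with $\mcF$ characteristic, then $\lambda(f+v)=\lambda(v)\in\mathrm{Im}(\mathrm{II})$ for all $f\in\mcF$, $v\in\mcV$, so $\mathrm{Im}(\lambda)\subset\mathrm{Im}(\mathrm{II})$ and $\tau^\mcC=0$. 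For the converse, if $\tau^\mcC=0$ then by the previous paragraph $\ker\lambda$ is a line subbundle complementary to $\mcV$, so $\mcF:=\ker\lambda$ is a conic connection, and it is characteristic since $\lambda|_{\mcF}=0$. This converse in fact manufactures a conic connection out of $\tau^\mcC=0$; the standing assumption that $\mcC$ admits a conic connection serves to make both sides of the equivalence speak about the same type of object.

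I do not anticipate a genuine obstacle once the definitions are unwound into $\lambda$ and $\mathrm{II}$; the content is then linear algebra in each fiber. The two places that need care are the bookkeeping showing that $\tau^\mcC$ and $\tau_\mcF$ really are the asserted restriction and quotient of the single map $\lambda$ (the sign between $\mathrm{II}$ and $\lambda|_\mcV$ being irrelevant, as only kernels and images enter), and the justification of the constant-rank assertions, which hold only over the general locus where $\mathrm{II}$ is injective and which must then be propagated by a density argument to obtain the global uniqueness claimed in (1).
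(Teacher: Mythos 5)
Your proposal is correct and is essentially the paper's argument repackaged: the paper proves (1) by writing $f=hf'+v$ and using injectivity of ${\rm II}$ to force $v=0$, and proves (2) by correcting an arbitrary conic connection $f\mapsto f-v$ with $\mcL(f,q_{-2}(\xi))=\mcL(v,q_{-2}(\xi))$, which is exactly your identification of the characteristic connection with $\ker\lambda$ carried out section by section. The only (minor) added value of your formulation is the observation that the converse in (2) produces the conic connection $\ker\lambda$ directly from $\tau^{\mcC}=0$ without presupposing one, and like the paper you should note that the construction lives on the dense open locus where ${\rm II}$ is injective, with (1) supplying the patching/density step.
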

\begin{proof}
For (1), we can translate the proof of Proposition 3 in \cite{Hwang-Mok04} into our terminology.  Suppose $f$ and $f'$ are local sections of two conic connections $\mcF$ and $\mcF'$ respectively. Then $f=hf'+v$ for a unique local section $v$ of $\mcV$ and a holomorphic function $h$. If both connections are characteristic, then one must have
$[v, \mcD^{-2}]\subset \mcD^{-2}$. Hence, ${\rm II}(v)=0$ and injectivity of ${\rm II}$ implies $v=0$.

For (2), if $\mcC$ admits a characteristic conic connection, then clearly $\tau^{\mcC}=0$. Conversely, assume now that $\tau^{\mcC}=0$ and let $\mcF$ be any conic connection, which exists by assumption.
Then for any local section $f$ of $\mcF$ there exists a unique (by injectivity of ${\rm II}$) local section $v$ of $\mcV$ such that $\mcL(f, q_{-2}(\xi))=\mcL(v, q_{-2}(\xi))$ for all section $\xi$ of $\mcD^{-2}$.
Hence, $f-v$ defines locally a characteristic conic connection on $\mcC$. By (1), the so constructed local characteristic conic connections patch together to a global one, which completes the proof.
\end{proof}

\begin{lem}\label{FF.injective} 
For a cone structure $\mcC\subset \mathbb P(TM),$  assume that each component of the fiber 
$\mcC_x \subset \mathbb P(T_x M)$ is different from a linear subspace for each $x \in M$. Then 
${\rm II}: \mcV\rightarrow \emph{gr}_{-2}^*(\mcD)\otimes T\mcC/\mcD^{-2}$ in Proposition 
\ref{p.torsion}  is injective at general points of $\mcC$.  
\end{lem}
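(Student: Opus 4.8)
The plan is to identify the map ${\rm II}$ with the projective second fundamental form of the fiber $\mcC_x$ and then invoke the classical characterization of linear subspaces via the Gauss map. Fix a general point $u\in\mcC$, set $x=p(u)$, and regard $u$ as a point $z$ lying on a single component $Z$ of the fiber $\mcC_x\subset\mbP T_xM$. Here the vertical space is $\mcV_u=T_zZ$, while $Tp$ induces the identifications $\textrm{gr}_{-2}(\mcD)_u=\mcD^{-2}_u/\mcD^{-1}_u\cong\widehat T_zZ/\hat z$ and $(T\mcC/\mcD^{-2})_u\cong T_xM/\widehat T_zZ$, the latter being (up to the immaterial twist by $\hat z$) the projective normal space $N_zZ$.

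First I would compute the Levi-bracket component underlying ${\rm II}$ in these terms. Choosing a vertical vector field $\tilde v$ with $\tilde v(u)=v$ and a local section $w$ of $\mcD^{-2}$ lifting $\eta\in\textrm{gr}_{-2}(\mcD)_u$, and writing $\alpha:=Tp\circ w$ (a $T_xM$-valued map on the fiber with $\alpha(z')\in\widehat T_{z'}Z$), a short computation in coordinates $(x^a,s^i)$ adapted to the submersion $p$ gives $Tp([\tilde v,w])|_z=D_v\alpha$, the derivative of $\alpha$ along $v$; crucially only the restriction of $w$ to the fiber enters, so the outcome is intrinsic to the embedding $Z\hookrightarrow\mbP T_xM$ and agrees with the flat case. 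Reducing modulo $\widehat T_zZ$ shows that this is precisely the (polarized) projective second fundamental form ${\rm II}_z\colon S^2T_zZ\to N_zZ$ evaluated on $v$ and $\eta$. Hence the bundle map sends $v$ to ${\rm II}_z(v,-)$, and it is injective at $u$ if and only if the kernel $\{v\in T_zZ:{\rm II}_z(v,-)=0\}$ of the second fundamental form vanishes.

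Next I would translate this into a statement about the Gauss map $\gamma\colon Z\to\Gr(\dim Z+1,T_xM)$, $z\mapsto\widehat T_zZ$. Since $\widehat T_{z'}Z$ always contains the moving line $\hat{z'}$, the differential $d\gamma_z$ annihilates $\hat z$ and factors through $\widehat T_zZ/\hat z$, where it coincides with ${\rm II}_z$; thus $\ker d\gamma_z=\ker{\rm II}_z$, and ${\rm II}$ is injective at $z$ exactly when $\gamma$ is an immersion at $z$. Because $Z$ is a smooth irreducible complex projective variety which, by hypothesis, is not a linear subspace, the classical finiteness theorem for Gauss maps of smooth projective varieties (Zak) guarantees that $\gamma$ is generically finite, hence an immersion at a general point of $Z$. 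As this applies to every component of every fiber, the locus in $\mcC$ where ${\rm II}$ fails to be injective is a closed analytic subset meeting each fiber $\mcC_x$ in a proper subset, hence a proper closed analytic subset of $\mcC$; so ${\rm II}$ is injective at general points of $\mcC$, as claimed.

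I expect the main obstacle to be the bracket computation of the first step, namely verifying that the Levi-bracket component ${\rm II}$ genuinely reproduces the projective second fundamental form and, in particular, that it depends only on the second-order embedding geometry of the fiber and not on how the cone structure varies over $M$. Once this identification is secured, the rest is formal: the passage to the Gauss map is immediate from the geometry of the affine tangent spaces, and the nondegeneracy of the second fundamental form for smooth non-linear varieties is a classical fact that may simply be cited.
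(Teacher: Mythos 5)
Your proof is correct and follows essentially the same route as the paper: the paper identifies $\ker({\rm II})$ with the degeneracy locus of the Gauss map of the fiber by citing Proposition 2 of Hwang--Mok (2004) --- precisely the bracket computation you carry out by hand --- and then concludes via Theorem 4.3.2 of Ivey--Landsberg that a smooth projective variety with degenerate Gauss map is a linear subspace, which is the contrapositive of the Zak/Griffiths--Harris finiteness statement you invoke. The only differences are that you re-derive the cited identification explicitly and run the final implication in the opposite logical direction.
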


\begin{proof}
We may assume that $\mcC_x$ is connected. Suppose the kernel of ${\rm II}$ has positive rank at general points of $\mcC$.
By Proposition 2 of \cite{Hwang-Mok04}, this means that the Gauss map of $\mcC_x$ for
general $x \in M$ has positive-dimensional fiber. This implies that $\mcC_x$ is a linear subspace
by Theorem 4.3.2 of \cite{IL}. 
\end{proof}

\begin{exa}
If $\mcC\subset \mbP TM$ is the bundle of null cones of a conformal structure as in Example \ref{ex.conformal}, then a conic connection on $\mcC$ is also called a \emph{conformal connection} in \cite[p. 216]{LeBrun}.
Then there exists a unique characteristic conic connection defined by the null geodesics of the conformal structure.
\end{exa}

A consequence of the existence of a characteristic conic connection is the following result from Theorem 6.2 of \cite{HwangMSRI}.

\begin{prop}\label{p.Hw12}
Let $(M, \{ T^{-i}M \})$ be a filtered manifold and $\mcC \subset \mbP T^{-1} M$
be a cone structure subordinate to the filtration.
If $\mcC$  admits a characteristic conic connection (i.e. $\tau^{\mcC} =0$), then the Levi bracket $\mcL: T^{-1} M \otimes T^{-1} M \to T^{-2} M/T^{-1} M$ satisfies $\mcL (u, v) =0$ for any $u, v \in T^{-1}_x M, x \in M,$ with $u \in \widehat{\mcC}_x$ and $v \in \widehat{T}_u \mcC_x$. \end{prop}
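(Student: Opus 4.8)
The plan is to lift the two vectors $u$ and $v$ to tangent vectors on the total space $\mcC$, exploit the characteristic condition $[\mcF,\mcD^{-2}]\subset\mcD^{-2}$ there, and then push the resulting bracket back down to $M$ by $Tp$. Fix $x\in M$, a point $[u]\in\mcC_x$ and vectors $u\in\widehat{\mcC}_x$, $v\in\widehat{T}_{[u]}\mcC_x$. Since $Tp$ restricts to an isomorphism of $\mcF_{[u]}$ onto the tautological line $\widehat{[u]}\subset T_xM$, I would choose a local section $f$ of $\mcF$ with $Tp(f_{[u]})=u$; and since $Tp(\mcD^{-2}_{[u]})=\widehat{T}_{[u]}\mcC_x$, I would choose a local section $s$ of $\mcD^{-2}$ with $Tp(s_{[u]})=v$.

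Because $\mcF$ is characteristic, $[f,s]$ is again a section of $\mcD^{-2}$, so $Tp([f,s]_{[u]})\in Tp(\mcD^{-2}_{[u]})=\widehat{T}_{[u]}\mcC_x$; as $\mcC$ is subordinate to the filtration this value lies in $T^{-1}_xM$ and is therefore annihilated by $q_{-2}$. It thus suffices to prove $\mcL(u,v)=q_{-2}\big(Tp([f,s]_{[u]})\big)$. To compare the two brackets I would choose a local section $\sigma$ of $p$ with $\sigma(x)=[u]$ and set $\xi(y):=Tp(f_{\sigma(y)})$ and $\eta(y):=Tp(s_{\sigma(y)})$. By the choices above $\xi$ is valued in the cone directions $\widehat{\mcC}_y$ and $\eta$ in the affine tangent spaces $\widehat{T}_{\sigma(y)}\mcC_y$; subordinateness makes both of them sections of $T^{-1}M$, with $\xi(x)=u$ and $\eta(x)=v$. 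Tensoriality of the Levi bracket then gives $\mcL(u,v)=q_{-2}([\xi,\eta]_x)$.

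The heart of the argument is the identity $q_{-2}([\xi,\eta]_x)=q_{-2}\big(Tp([f,s]_{[u]})\big)$, whose only content is that $f$ and $s$ fail to be $p$-projectable. Working in coordinates $(x^i,z^\alpha)$ adapted to $p$ (so that $\mcV=\langle\partial_{z^\alpha}\rangle$ and $Tp$ forgets the $\partial_{z^\alpha}$-components), a direct expansion shows that $[\xi,\eta]_x-Tp([f,s]_{[u]})$ is a linear combination of the fibre-directional derivatives $\partial_{z^\alpha}(Tp f)|_{[u]}$ and $\partial_{z^\alpha}(Tp s)|_{[u]}$ of the horizontal parts of $f$ and $s$. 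Along the fibre $\mcC_x$ the horizontal part of $f$ is the tautological section, valued in $\widehat{\mcC}_x$, and that of $s$ is valued in the affine tangent spaces $\widehat{T}_{[\,\cdot\,]}\mcC_x$; both families lie in the fixed subspace $T^{-1}_xM$, so their fibre-derivatives again lie in $T^{-1}_xM$. Hence the discrepancy lies in $T^{-1}_xM$ and is killed by $q_{-2}$, which proves the identity and with it $\mcL(u,v)=0$.

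I expect the main obstacle to be exactly this last comparison: since no section of $\mcF$ or of $\mcD^{-2}$ taking the prescribed value at $[u]$ is $p$-projectable, $Tp([f,s])$ is genuinely not the bracket of the projected fields, and one must verify that the correction terms are harmless. The mechanism — that the corrections are fibre-derivatives of maps valued in the cone, respectively in its affine tangent spaces, both sitting inside $T^{-1}M$ — shows that subordinateness is used twice: once to annihilate $Tp([f,s])$ itself and once to control the non-projectability correction. As a consistency check, the radial direction $v=u$ is covered trivially, since $\mcL$ is alternating and $\mcL(u,u)=0$.
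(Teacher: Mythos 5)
Your argument is correct. Note that the paper gives no proof of Proposition \ref{p.Hw12}; it is quoted from Theorem 6.2 of \cite{HwangMSRI}, so there is no internal proof to compare against, and your write-up stands as a self-contained verification. The structure is sound: lifting $u$ and $v$ to local sections $f$ of $\mcF$ and $s$ of $\mcD^{-2}$, the characteristic condition $[\mcF,\mcD^{-2}]\subset\mcD^{-2}$ places $Tp([f,s]_{[u]})$ in $\widehat{T}_{[u]}\mcC_x\subset T^{-1}_xM$, and tensoriality of the Levi bracket reduces everything to comparing $Tp([f,s]_{[u]})$ with $[\xi,\eta]_x$ for the pushed-down fields $\xi=Tp(f\circ\sigma)$, $\eta=Tp(s\circ\sigma)$. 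The step you rightly single out checks out: in coordinates adapted to $p$ the difference $[\xi,\eta]_x-Tp([f,s]_{[u]})$ is a combination of the fibre-directional derivatives of the maps $z\mapsto Tp(f_{(x,z)})\in\hat z$ and $z\mapsto Tp(s_{(x,z)})\in\widehat{T}_z\mcC_x$ (with coefficients involving $b^\beta$, $d^\beta$ and the derivatives of $\sigma$); since both maps take values in the fixed linear subspace $T^{-1}_xM$ of the fixed vector space $T_xM$, so do their fibre derivatives, and the discrepancy is annihilated by $q_{-2}$ along with $Tp([f,s]_{[u]})$ itself. This uses subordinateness twice, exactly as you say, and yields $\mcL(u,v)=q_{-2}([\xi,\eta]_x)=0$. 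The only degenerate case, $u=0$, is vacuous by bilinearity of $\mcL$.
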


\subsection{Cubic torsion of a characteristic conic connection}\label{ss.cubic}
To a characteristic connection $\mcF$ on a cone structure $\mcC\subset\mbP TM$, one can associate a local invariant $\chi_\mcF$, which is a section of
$$S^3\mcF^*\otimes \textrm{Hom}_0(\mcV, \textrm{gr}_{-2}(\mcD)),$$ where $\textrm{Hom}_0(\mcV, \textrm{gr}_{-2}(\mcD))$ denotes the subbundle of
$\textrm{End}(\mcV, \textrm{gr}_{-2}(\mcD))$ of trace-free homomorphisms with respect to the isomorphism $\mathcal L: \mcF\otimes \mcV\rightarrow \textrm{gr}_{-2}(\mcD)$ in Corollary \ref{c.isom}.  To define it, we need some auxiliary definitions as follows.

\begin{defin}\label{d.auxilliary}
Suppose $f$ is a local non-vanishing section of a characteristic connection $\mcF$ on a cone structure $\mcC \subset \mbP TM$. Since $\mcF$ is assumed to be characteristic, for any section $v$ of $\mathcal V$
there exists a unique local section $w(f, v)$ of $\mcV$ such that
\begin{equation}\label{w(f,v)}
\mathcal L(f, w(f,v))=\frac{1}{2} q_{-2}([f, [f,v]]),
\end{equation}
where $q_{-2}: \mcD^{-2}\rightarrow \textrm{gr}_{-2}(\mcD)$ is the projection in Definition \ref{d.filtered} (2).
Define $\tilde\chi(f): \mathcal O(\mcV)\rightarrow\mathcal O (\textrm{gr}_{-2}(\mcD))$ by
\begin{equation}\label{tildechi}
\tilde \chi (f)(v):=-q_{-2}([f,[f, [f,v]-\frac{3}{2}w(f,v)]]).
\end{equation}
\end{defin}

\begin{lem} \label{l.invariant}
In Definition \ref{d.auxilliary}, for a local non-vanishing holomorphic function $h$ on $\mcC$ and a local section $v$ of $\mcV$, the map $\tilde\chi(f)$ has the following properties.
\begin{enumerate}
\item[(1)]  $w(f,hv)=hw(f,v)+(f\cdot h) v.$
\item[(2)] $\tilde\chi(f) (hv) = h \tilde\chi(f) (v),$ which implies that $\tilde\chi(f)$ is a tensor, that is, it defines a section of $\emph{Hom}(\mcV, \emph{gr}_{-2}(\mcD))$.
\item[(3)] $ w(hf, v)=hw(f,v)+\frac{1}{2}(f\cdot h)v.$
\item[(4)] $\tilde\chi(hf)\equiv h^3\tilde\chi(f)\mod \mcL(f, _-)$.
\end{enumerate}
\end{lem}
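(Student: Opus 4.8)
The plan is to prove all four identities by direct computation using the Jacobi identity and the definitions of $w(f,v)$ and $\tilde\chi(f)$, exploiting the characteristic condition $[\mcF,\mcD^{-2}]\subset\mcD^{-2}$ (equivalently, the results of Lemma \ref{l.-3}) to control which brackets land in which filtration step. The key structural facts I would use repeatedly are: $\mcV$ is integrable, $f$ is a section of $\mcF$ with $[\mcF,\mcD^{-2}]\subset\mcD^{-2}$, and $q_{-2}$ is $\mcO$-linear but only on $\mcD^{-2}$ modulo $\mcD^{-1}$. Crucially, the isomorphism $\mcL\colon\mcF\otimes\mcV\to\textrm{gr}_{-2}(\mcD)$ of Corollary \ref{c.isom} lets me translate the defining equation \eqref{w(f,v)} into a statement characterizing $w(f,v)$ uniquely, which is what makes the Leibniz-type identities below meaningful.

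For (1), I would compute $\mcL(f,w(f,hv))=\tfrac12 q_{-2}([f,[f,hv]])$ directly. Expanding $[f,hv]=h[f,v]+(f\cdot h)v$ and then applying $[f,-]$ once more produces the terms $h[f,[f,v]]$, $2(f\cdot h)[f,v]$, and $(f\cdot(f\cdot h))v$; the last lies in $\mcV\subset\mcD^{-1}$ and so dies under $q_{-2}$, while the middle two reassemble via $\mcL(f,-)$ into $h\,w(f,v)+(f\cdot h)v$ after using linearity of $\mcL$ over functions and the defining relation. For (3), the analogous computation with $hf$ in place of $f$ uses $[hf,[hf,v]]=h[f,h[f,v]-(v\cdot h)f]$ and bilinearity; the factor $\tfrac12$ in the claimed formula arises because $w$ is defined with a $\tfrac12$ in \eqref{w(f,v)} and because $[hf,-]$ contributes a derivative term $(f\cdot h)$ that, after the two bracketings, appears with the coefficient shown. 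The $\tfrac32 w(f,v)$ correction inside \eqref{tildechi} is engineered precisely to cancel the derivative terms in (2) and the leading inhomogeneity in (4).

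For (2) I would substitute $hv$ into \eqref{tildechi}, expand $[f,[f,hv]]$ and $w(f,hv)$ using part (1), and verify that the combination $[f,hv]-\tfrac32 w(f,hv)$ transforms so that the extra derivative terms $(f\cdot h)$ and $(f\cdot(f\cdot h))$ are annihilated either by $q_{-2}$ (landing in $\mcD^{-1}$) or by exact cancellation of the $\tfrac32$-weighted term against the expansion of the triple bracket. Part (4) is the same calculation one degree higher: I would insert $hf$, expand the threefold bracket $[hf,[hf,[hf,v]-\tfrac32 w(hf,v)]]$ using part (3) for $w(hf,v)$, collect all terms carrying derivatives of $h$, and show that modulo the image of $\mcL(f,-)$ everything reduces to $h^3\tilde\chi(f)$. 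The coefficient $\tfrac32$ is exactly what forces the derivative terms to fall into $\textrm{Im}(\mcL(f,-))$.

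The main obstacle will be the bookkeeping in (4): the triple bracket generates a proliferation of terms involving $f\cdot h$, $f\cdot(f\cdot h)$, and $f\cdot(f\cdot(f\cdot h))$, and I must show each one either vanishes under $q_{-2}$ (because it lands in $\mcD^{-2}$ or lower) or lies in $\mcL(f,-)$ so that it is killed modulo the stated subbundle. Keeping track of which iterated bracket sits in $\mcD^{-1}$, $\mcD^{-2}$, or $\mcD^{-3}$—using Lemma \ref{l.-3} to guarantee $[f,\mcD^{-3}]\subset\mcD^{-3}$—is where care is needed; the computations in (1) and (3) are the genuine inputs, and (2) and (4) are largely formal consequences once those Leibniz rules are established.
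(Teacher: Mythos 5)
Your proposal follows essentially the same route as the paper's proof: a direct Leibniz-rule computation of the iterated brackets, using the defining relation \eqref{w(f,v)} together with the fact that $q_{-2}$ annihilates $\mcD^{-1}$, with (1) and (3) established first and (2) and (4) obtained by observing that the coefficient $\tfrac32$ cancels the derivative-of-$h$ terms (exactly in (2), and modulo $\mcL(f,\cdot)$ in (4)). The only cosmetic slip is the displayed identity for $[hf,[hf,v]]$ in your treatment of (3), which omits a term that is a multiple of $f$ and hence harmless under $q_{-2}$; the argument is otherwise the paper's.
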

\begin{proof}
 (1) follows from $$[f,[f,hv]]= h[f,[f,v]]+2(f\cdot h)[f,v]+f\cdot (f\cdot h) v.$$ Bracketing this expression again with $f$ shows
that $$q_{-2}([f,[f,[f,hv]]])=q_{-2}(h[f,[f,[f,v]]]+3 (f\cdot h)[f,[f,v]]+3 f\cdot (f\cdot h)[f,v]).$$
Then (1) implies that
\begin{align*}
q_{-2}([f,[f,w(f,hv)]])&=q_{-2}(h[f,[f,w(f,v)]]+2 (f\cdot h)[f, w(f,v)]\\
&\quad\quad+(f\cdot h)[f,[f,v]]+2 f\cdot (f\cdot h)[f,v])\\
&=q_{-2}(h[f,[f,w(f,v)]]+2 (f\cdot h)[f,[f,v]]+2 f\cdot (f\cdot h)[f,v]),
\end{align*}
 which proves (2). Moreover, one computes straightforwardly that \begin{align*} \frac{1}{2} q_{-2}([hf, [hf,v]]) &= \frac{1}{2} q_{-2}(h^2 [f,[f,v]]+h (f\cdot h) [f,v]) \\ &=\mathcal L(hf, hw(f,v)+\frac{1}{2}(f\cdot h)v),\end{align*} which proves (3).
Using this shows that
\begin{eqnarray*} \lefteqn{q_{-2}([hf,[hf, w(hf,v)]]) \equiv } \\ & &  h^3q_{-2}([f,[f, w(f,v)]])+2 h^2(f\cdot h)q_{-2}([f,[f,v]]) \mod \mcL(f,v), \end{eqnarray*}
and hence $$q_{-2}([hf, [hf, [hf,v]]])\equiv h^3q_{-2}([f, [f, [f,v]]])+3 h^2(f\cdot h)q_{-2}([f,[f,v]]) \mod \mcL(f,v)$$
implies (4).
\end{proof}

\begin{defin}\label{d.cubic} Let $\mcC\subset \BP TM$ be a cone structure with a characteristic conic connection $\mcF$.
In Definition \ref{d.auxilliary}, let  $$\chi_\mcF(f) := \tilde\chi (f) \  \mod \mcL (f,  \cdot ) $$ be the trace-free part of $\tilde\chi(f)$.  By Lemma \ref{l.invariant}, the association $f\otimes f\otimes f\mapsto \chi_\mcF(f)$ determines a section $\chi_\mcF$ of $S^3\mcF^*\otimes \textrm{Hom}_0(\mcV, \textrm{gr}_{-2}(\mcD)).$ We call $\chi_{\mcF}$
 the \emph{cubic torsion} of the characteristic connection $\mcF$.
\end{defin}

We shall see in Section \ref{s.vmrt} that there are many natural examples  arising from algebraic geometry of cone structures with conic connections $\mcF$ with vanishing $\tau_\mcF$ and $\chi_\mcF$. On the other hand, we shall see in Section \ref{s.parabolic} that for a certain class of isotrivial cone structures the existence of a conic connection $\mcF$ with vanishing $\tau_\mcF$ and  $\chi_\mcF$ determines the local isomorphism type of the cone structure.

\section{Parabolic subalgebras and rational homogeneous spaces}\label{s.RHS}

\subsection{Parabolic subalgebras and their homology groups}

Recall that a subalgebra $\mfp$ of a complex semisimple Lie algebra $\mfg$ is said to be \emph{parabolic}, if it contains a maximal solvable subalgebra (called \emph{Borel subalgebra}) of $\mfg$.
 Then the following is well-known, see e.g. \cite[Theorem 3.2.1]{csbook}:

\begin{prop}\label{p.rootparabolic}
Let $\mfg$ be a complex semisimple Lie algebra.
Fix  a Cartan subalgebra $\mfh$ of  $\mfg$
and let $\Delta^0$ be a subset of simple roots in the set of roots $\Delta$ of $\mfg$ with $\mfh$.
 For any subset
$\Sigma\subset\Delta^0$ and $\alpha\in\Delta,$ write $\textrm{ht}_\Sigma(\alpha)$ for the sum of all the coefficients of elements in $\Sigma$ in the expression of
$\alpha$ as a linear combination of simple roots. \begin{itemize} \item[(1)]
Any subset $\Sigma\subset \Delta^0$ gives rise to a graded Lie algebra structure on $\mfg$
\begin{equation}\label{grading}
\mfg=\mfp_{-k}\oplus \cdots \oplus \mfp_0\oplus \cdots \oplus\mfp_k\quad\quad [\mfp_i,\mfp_j]\subset\mfp_{i+j},
\end{equation}
where $\mfp_i:=\oplus_{\textrm{ht}_{\Sigma}(\alpha)=i}\,\mfg_{\alpha}$ for $i\neq0$, $\mfp_0:=\mfh\oplus\bigoplus_{\textrm{ht}_{\Sigma}(\alpha)=0}\mfg_{\alpha}$, and $\mfp_{-1}$ generates the subalgebra $\mfp_-:=\bigoplus_{i\geq 1}\mfp_{-i}$. The largest integer $k$ with $\mfp_k \neq 0$ is called the {\em depth} of $\mfp$  (of $\Sigma$).
\item[(2)] The subalgebra $\mfp= \mfp_{\Sigma}:=\mfp_0\oplus\mfp_+$ of $\mfg$ is parabolic, where $\mfp_+:=\bigoplus_{i\geq 1}\mfp_i$ is its nilradical and $\mfp_0\leq\mfp$, called its Levi subalgebra, is reductive.
   \end{itemize}
    Conversely, any parabolic subalgebra of $\mfg$ is conjugate to  $\mfp_{\Sigma}$ for some $\Sigma \subset \Delta^0$.
    \end{prop}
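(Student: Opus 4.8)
The plan is to read everything off the root-space decomposition $\mfg=\mfh\oplus\bigoplus_{\alpha\in\Delta}\mfg_\alpha$, together with the bracket rule $[\mfg_\alpha,\mfg_\beta]\subseteq\mfg_{\alpha+\beta}$ (using the conventions $\mfg_0=\mfh$ and $\mfg_\gamma=0$ whenever $\gamma\notin\Delta\cup\{0\}$), the fact that $[\mfg_\alpha,\mfg_\beta]=\mfg_{\alpha+\beta}$ whenever $\alpha+\beta\in\Delta$, and the additivity $\mathrm{ht}_\Sigma(\alpha+\beta)=\mathrm{ht}_\Sigma(\alpha)+\mathrm{ht}_\Sigma(\beta)$ of the height. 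These are the only inputs I expect to need beyond the standard structure theory of root systems.

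For part (1), the grading property $[\mfp_i,\mfp_j]\subseteq\mfp_{i+j}$ is immediate: a bracket of root vectors for $\alpha,\beta$ lands in $\mfg_{\alpha+\beta}$, whose height is $i+j$ by additivity (and the bracket is zero if $\alpha+\beta\notin\Delta$). The Cartan part sits in $\mfp_0$ because $[\mfh,\mfg_\alpha]\subseteq\mfg_\alpha$ preserves heights, so $[\mfp_0,\mfp_i]\subseteq\mfp_i$. The only substantive point is that $\mfp_{-1}$ generates $\mfp_-$. I would reduce this, by induction on $i\geq 2$, to the combinatorial claim that every root $\gamma$ with $\mathrm{ht}_\Sigma(\gamma)=-i$ can be split as $\gamma=\delta+\gamma'$ with $\delta,\gamma'$ roots of heights $-1$ and $-(i-1)$; since then $[\mfg_\delta,\mfg_{\gamma'}]=\mfg_{\gamma}$, such a splitting places $\mfg_\gamma$ into the subalgebra generated by $\mfp_{-1}$. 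This splitting is exactly the standard fact that a semisimple graded Lie algebra is generated by its degree $\pm 1$ parts; one route is the root-chain argument (ordering the simple summands so all partial sums are roots and peeling off a chain whose last increment is a simple root of $\Sigma$), and an alternative is Killing-form duality, which reduces failure of $\mfp_{-i}=[\mfp_{-1},\mfp_{-i+1}]$ to the existence of a nonzero $Y\in\mfp_i$ with $[\mfp_{-i+1},Y]=0$, contradicting the nondegeneracy of the pairing between $\mfp_{-i+1}$ and $\mfp_{i-1}$.

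For part (2), that $\mfp=\mfp_0\oplus\mfp_+$ is a subalgebra and that $\mfp_+=\bigoplus_{i\geq1}\mfp_i$ is an ideal are immediate from $[\mfp_i,\mfp_j]\subseteq\mfp_{i+j}$; nilpotency of $\mfp_+$ follows from the boundedness of the grading, which makes $\mfp_+$ the nilradical. Reductivity of $\mfp_0$ I would get by recognizing $\{\alpha:\mathrm{ht}_\Sigma(\alpha)=0\}$ as the full root subsystem spanned by $\Delta^0\setminus\Sigma$, so that $\mfp_0$ is the sum of a central part of $\mfh$ and the semisimple subalgebra carrying that subsystem. Finally, $\mfp$ is parabolic because it contains the standard Borel $\mfb=\mfh\oplus\bigoplus_{\alpha>0}\mfg_\alpha$: every positive root has $\mathrm{ht}_\Sigma\geq0$, so its root space lies in $\mfp_0\oplus\mfp_+$.

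The converse is where the real work lies, and I expect it to be the main obstacle. Given an arbitrary parabolic $\mfp$, I would first invoke the conjugacy of all Borel subalgebras to arrange, after conjugation, that $\mfp\supseteq\mfb$; then $\mfp$ is $\mfh$-invariant and so equals $\mfh$ plus a collection of root spaces containing every positive one. Setting $\Sigma:=\{\alpha_i\in\Delta^0:\mfg_{-\alpha_i}\not\subseteq\mfp\}$, the claim is that $\mfp=\mfp_\Sigma$, i.e. $\mfg_{-\beta}\subseteq\mfp$ exactly when $\beta$ is supported on $\Delta^0\setminus\Sigma$. For one inclusion, if $\beta$ is so supported then $\mfg_{-\beta}$ is reached by iterated brackets of the $\mfg_{-\alpha_i}$ with $\alpha_i\notin\Sigma$, all of which lie in $\mfp$, using the partial-sum lemma inside the height-$0$ subsystem. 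For the reverse inclusion, if $\mfg_{-\beta}\subseteq\mfp$ while some $\alpha_i\in\Sigma$ lies in the support of $\beta$, I would bracket $\mfg_{-\beta}$ repeatedly against the positive root spaces already present in $\mfp$ and use the $\mathfrak{sl}_2$-triples attached to simple roots to descend to $\mfg_{-\alpha_i}\subseteq\mfp$, contradicting the definition of $\Sigma$. The entire difficulty is this root-combinatorial bookkeeping---the repeated use of the partial-sum lemma and of $\mathfrak{sl}_2$-representation theory to pass from a general root space down to the simple ones---rather than anything conceptually deep.
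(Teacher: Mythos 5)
The paper offers no proof of this proposition: it is quoted as standard with a pointer to \cite[Theorem 3.2.1]{csbook}, so the comparison is really with the textbook argument, which is what you reconstruct. Your overall architecture is the right one. The grading from additivity of $\mathrm{ht}_\Sigma$, the identification of the height-zero roots with the subsystem generated by $\Delta^0\setminus\Sigma$ (whence reductivity of $\mfp_0$), the inclusion $\mfb\subset\mfp_\Sigma$, and, for the converse, conjugating over the standard Borel, writing $\mfp=\mfh\oplus\bigoplus_{\alpha\in\Phi_\mfp}\mfg_\alpha$ and recovering $\Sigma$ from the missing negative simple root spaces, are all correct. The descent step of the converse is cleanest as an induction on the height of $\beta$: if $\mfg_{-\beta}\subseteq\mfp$ and $\beta=\beta'+\alpha_j$ with $\beta'$ a positive root and $\alpha_j$ simple, then $[\mfg_{\alpha_j},\mfg_{-\beta}]=\mfg_{-\beta'}$ and $[\mfg_{\beta'},\mfg_{-\beta}]=\mfg_{-\alpha_j}$ both lie in $\mfp$; no $\mathfrak{sl}_2$-theory is needed.

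The one step where your justification genuinely breaks down as written is the generation of $\mfp_-$ by $\mfp_{-1}$. The root-chain recipe fails: a terminal segment of a chain of partial sums need not be a root, and one cannot always arrange the last increment of the chain to lie in $\Sigma$. Concretely, in $G_2$ with $\alpha_1$ the short simple root and $\Sigma=\{\alpha_1\}$, the root $\gamma=-(3\alpha_1+2\alpha_2)$ has $\mathrm{ht}_\Sigma(\gamma)=-3$, yet $\gamma+\alpha_1=-(2\alpha_1+2\alpha_2)$ is not a root; the required splitting $\gamma=\delta+\gamma'$ exists only with the non-simple $\delta=-(\alpha_1+\alpha_2)$ of $\Sigma$-height $-1$. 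The Killing-form route also stops one step short: from $B(Y,[\mfp_{-1},\mfp_{-i+1}])=0$ and invariance one deduces $[Y,\mfp_{-1}]=0$ (or $[Y,\mfp_{-i+1}]=0$), but concluding $Y=0$ is not a consequence of the nondegeneracy of the pairing between $\mfp_{-i+1}$ and $\mfp_{i-1}$; it needs the separate (and, if proved via generation, potentially circular) fact that no nonzero element of $\mfp_i$, $i\geq 1$, centralizes $\mfp_{-1}$. A self-contained repair: for a root $\gamma$ with $\mathrm{ht}_\Sigma(\gamma)=-i\leq -2$, positivity of $\langle\gamma,\gamma\rangle$ yields a simple root $\alpha_j$ occurring in $-\gamma$ with $\langle\gamma,\alpha_j\rangle<0$, so that $\gamma+\alpha_j$ is a root; if $\alpha_j\in\Sigma$ one is done, otherwise replace $\gamma$ by $\gamma+\alpha_j$ (same $\Sigma$-height, smaller ordinary height) and iterate, finishing with the observation that $[\mfp_{-1},\mfp_{-i+1}]$ is $\mfp_0$-stable and hence a sum of root spaces. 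The lemma is true and standard, so this is a repairable gap in a key step rather than a wrong approach. (The claim that $\mfp_+$ is \emph{the} nilradical, rather than merely a nilpotent ideal, also deserves a sentence, since $\mathfrak{z}(\mfp_0)\oplus\mfp_+$ is a larger ideal that must be ruled out using the grading element; this is a minor omission.)
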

 
 \begin{notcon}\label{n.Dynkin}
 A parabolic subalgebra $\mfp=\mfp_\Sigma\leq \mfg$ as in Proposition \ref{p.rootparabolic} (or its conjugacy class) is 
 denoted by the Dynkin diagram of $\mfg$ with vertices corresponding to roots in $\Sigma$ replaced by crosses.
 \end{notcon} 

\begin{notcon}\label{n.G/P} 
Suppose $\mfp=\mfp_\Sigma$ is parabolic subalgebra of a semisimple Lie algebra.
\begin{itemize}
\item We write $\mfp_0^{ss}$ for the semisimple part of its Levi subalgebra $\mfp_0$  and $\mathfrak{z}(\mfp_0)$ for the center of $\mfp_0$.
\item We denote the filtered Lie algebra structure on $\mfg$ induced by $\mfp=\mfp_\Sigma$ by
 \begin{equation}\label{filtration}
\mfg=\mfp ^{-k}\supset \cdots \supset \mfp^{-1}\supset \mfp^0\supset \mfp^{1}\supset \cdots \supset\mfp^k \quad\quad [\mfp^i, \mfp^j] \subseteq \mfp^{i+j}
\end{equation} where   $\mfp^j$ is a $\mfp$-module defined by $\mfp^j:=\oplus_{i\geq j}\mfp_i$.
\end{itemize} 
\end{notcon}

\begin{lem}\emph{[see e.g. \cite[Prop. 3.1.2]{csbook}]}\label{l.basic_isos}
In Proposition \ref{p.rootparabolic}, the Killing form of $\mfg$ induces:
\begin{itemize}
\item $\mfp_0$-modules isomorphisms  $\mfp_{-i}^*\cong \mfp_i$ and hence $\mfp_-^*\cong \mfp_+$;
\item a $\mfp$-module isomorphism $(\mfg/\mfp)^*\cong\mfp_+$.
\end{itemize}
\end{lem}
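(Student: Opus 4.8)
The plan is to prove the two Killing-form isomorphisms by exploiting the standard compatibility of the Killing form with the grading and the filtration induced by $\mfp = \mfp_\Sigma$. Recall that the Killing form $B$ of $\mfg$ is nondegenerate (since $\mfg$ is semisimple) and $\mfg$-invariant, meaning $B([x,y],z) + B(y,[x,z]) = 0$ for all $x,y,z \in \mfg$. The key structural fact I would establish first is that $B$ pairs the graded piece $\mfp_i$ nontrivially only with $\mfp_{-i}$; that is, $B(\mfp_i, \mfp_j) = 0$ whenever $i + j \neq 0$. This follows because each graded piece is a sum of root spaces $\mfg_\alpha$ with $\textrm{ht}_\Sigma(\alpha) = i$, and $B(\mfg_\alpha, \mfg_\beta) = 0$ unless $\alpha + \beta = 0$; summing over the root spaces in $\mfp_i$ and $\mfp_j$ shows that a nonzero pairing forces the heights to cancel, i.e. $j = -i$.

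Granting this, nondegeneracy of $B$ on $\mfg$ forces the restricted pairing $B\colon \mfp_i \times \mfp_{-i} \to \C$ to be a perfect pairing for each $i$: any $x \in \mfp_i$ pairing trivially with all of $\mfp_{-i}$ would, by the orthogonality of the other graded pieces, pair trivially with all of $\mfg$, hence vanish. This perfect pairing yields a linear isomorphism $\mfp_i \cong \mfp_{-i}^*$. To promote this to a $\mfp_0$-module isomorphism, I would check $\mfp_0$-equivariance: for $h \in \mfp_0$, the invariance of $B$ gives $B([h,x], y) = -B(x, [h,y])$, which is precisely the statement that the pairing intertwines the $\mfp_0$-action on $\mfp_i$ with the dual (contragredient) $\mfp_0$-action on $\mfp_{-i}$. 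Hence $\mfp_{-i}^* \cong \mfp_i$ as $\mfp_0$-modules, and summing over $i \geq 1$ gives $\mfp_-^* \cong \mfp_+$.

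For the second isomorphism $(\mfg/\mfp)^* \cong \mfp_+$, I would use the filtration \eqref{filtration}. As vector spaces $\mfg/\mfp \cong \mfp_-$, and by the first part this is dual to $\mfp_+$; the substance is the $\mfp$-module structure. I would argue that the Killing form descends to a perfect pairing between $\mfg/\mfp$ and $\mfp^1 = \mfp_+$: since $B(\mfp^i, \mfp^j) = 0$ for $i + j \geq 1$ (which follows from the graded orthogonality above, as $\mfp^j = \oplus_{i \geq j}\mfp_i$), the nilradical $\mfp_+ = \mfp^1$ is exactly the annihilator of $\mfp = \mfp^0$ under $B$. Thus $B$ induces a nondegenerate pairing $\mfg/\mfp^0 \times \mfp^1 \to \C$, i.e. $(\mfg/\mfp)^* \cong \mfp_+$, and $\mfp$-equivariance follows again from the invariance identity $B([z,x],y) = -B(x,[z,y])$ for $z \in \mfp$, together with the fact that $\mfp$ preserves both the quotient $\mfg/\mfp$ and the submodule $\mfp_+$ (the latter because $[\mfp^0, \mfp^1] \subseteq \mfp^1$).

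The main obstacle, though a mild one, is the bookkeeping in passing from the linear-algebraic perfect pairings to genuine module isomorphisms while keeping track of which structure ($\mfp_0$-module versus $\mfp$-module) is relevant in each bullet point: the first isomorphism is only $\mfp_0$-equivariant because $\mfp_+$ does not preserve the individual graded pieces $\mfp_i$, whereas the second is fully $\mfp$-equivariant precisely because $\mfp$ preserves the filtration. I expect no serious difficulty beyond verifying the orthogonality $B(\mfg_\alpha, \mfg_\beta) = 0$ for $\alpha + \beta \neq 0$, which is entirely standard for root space decompositions of semisimple Lie algebras; the rest is a clean application of nondegeneracy and invariance of the Killing form.
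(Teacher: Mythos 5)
Your argument is correct and is essentially the standard proof that the paper delegates to \cite[Prop.~3.1.2]{csbook}: invariance and nondegeneracy of the Killing form force $B(\mfp_i,\mfp_j)=0$ for $i+j\neq 0$ (the reference derives this from the grading element rather than from root spaces, a cosmetic difference), yielding the $\mfp_0$-equivariant perfect pairings $\mfp_i\times\mfp_{-i}\to\C$, and the identification of $\mfp_+$ with the annihilator of $\mfp$ gives the $\mfp$-equivariant pairing $\mfg/\mfp\times\mfp_+\to\C$. Your closing remark correctly isolates the one point of substance, namely why the first isomorphism is only $\mfp_0$-equivariant while the second is $\mfp$-equivariant.
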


\begin{defin}\label{d.homology} Regarding $\mfg$ as a $\mfp_+$-module by the adjoint representation, there is complex by boundary operators \begin{equation}\label{e.complex} \cdots
\Wedge^{\ell-1} \mfp_+\otimes\mfg \stackrel{\,\,\,\partial^*}{\leftarrow} \Wedge^{\ell} \mfp_+\otimes\mfg
\stackrel{\,\,\,\partial^*}{\leftarrow} \Wedge^{\ell +1}\mfp_+\otimes\mfg \cdots
\end{equation}
computing the \emph{Lie algebra homology $H_{\ell}(\mfp_+,\mfg)$ of $\mfp_+$ with values in~$\mfg$}. By Lemma \ref{l.basic_isos}, as $\mfp_0$-modules 
$\Wedge^\ell\mfp_-^*\otimes\mfg\cong \Wedge^\ell\mfp_+\otimes\mfg$ and hence the complex computing the cohomology of $\mfp_-$ with values in the $\mfp_-$-module $\mfg$ gives rise 
to a complex of coboundary operators of the form
$$ \cdots
\Wedge^{\ell-1} \mfp_+\otimes\mfg \stackrel{\,\,\,\partial}{\rightarrow} \Wedge^{\ell} \mfp_+\otimes\mfg
\stackrel{\,\,\,\partial}{\rightarrow} \Wedge^{\ell +1}\mfp_+\otimes\mfg \cdots.
$$ The formulae for these operators can be found in Sections 2.1.9 and 3.3.1 of \cite{csbook}.
\end{defin}

Kostant showed in \cite[Theorem 5.14]{Kostant} (see also \cite[Corollary 3.1.11]{csbook}) that the operators in Definition \ref{d.homology} give rise to an algebraic Hodge decomposition of $\Wedge^*\mfp_+\otimes\mfg$ as follows.

\begin{prop}\label{p.Kostant}
In Definition \ref{d.homology}, let $G$ be the adjoint group of $\mfg$ and let $P \subset G$ (resp. $P_+, P_0$) be the connected subgroup corresponding to the subalgebra $\mfp \subset \mfg$ (resp. $\mfp_+, \mfp_0$).
\begin{itemize}
 \item[(1)]
The boundary operators $\partial^*$ are $P$-equivariant and the homology group
$H_\ell(\mfp_+, \mfg)$ is a completely reducible $P$-module. In particular, it has a natural $P_0$-module structure.
\item[(2)]
The coboundary operator $\partial$ is $P_0$-equivariant and there is a natural $P_0$-module decomposition
\begin{equation}\label{Hodge}
\Wedge^*\mfp_+\otimes\mfg=\emph{im}(\partial)\oplus\ker(\square)\oplus\emph{im}(\partial^*)=\emph{im}(\partial)\oplus\ker(\partial^*)=\ker(\partial)\oplus\emph{im}(\partial^*)
\end{equation}
where $\square=\partial\partial^*+\partial^*\partial$.
\item[(3)]
In particular,  as
a $P_0$-module, we may identify $H_*(\mfp_+, \mfg)$ via \eqref{Hodge} with a submodule of $\Wedge^*\mfp_+\otimes\mfg.$ \end{itemize}
\end{prop}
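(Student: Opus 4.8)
The plan is to reconstruct Kostant's argument, isolating the single genuinely analytic input from the formal bookkeeping. First I would record the equivariance of the two operators, which already accounts for the asymmetry between (1) and (2). The homology boundary $\partial^*$ is built from the adjoint action $\mfp_+\otimes\mfg\to\mfg$, $(X,v)\mapsto[X,v]$, and the bracket $\Wedge^2\mfp_+\to\mfp_+$, both of which are restrictions to $P$ of $G$-equivariant maps; since $\mfp_+$ is the nilradical of $\mfp$ it is $\mathrm{Ad}(P)$-stable, so $\partial^*$ is $\mathrm{Ad}(P)$-equivariant, hence $P$-equivariant as $P$ is connected, and $H_\ell(\mfp_+,\mfg)$ inherits a $P$-module structure. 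By contrast $\partial$ is the Chevalley--Eilenberg differential of $\mfp_-$ with values in $\mfg$, carried over to $\Wedge^*\mfp_+\otimes\mfg$ through the Killing-form duality $\mfp_-^*\cong\mfp_+$ of Lemma \ref{l.basic_isos}; that duality is $\mathrm{Ad}(P_0)$-equivariant but not $\mathrm{Ad}(P)$-equivariant, because $P_0$ stabilises $\mfp_-$ whereas $P_+$ does not. Thus $\partial$ is $P_0$-equivariant but generally not $P$-equivariant, and $\square=\partial\partial^*+\partial^*\partial$ is $P_0$-equivariant.

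For the decomposition \eqref{Hodge} and the identification in (3), I would fix a positive-definite Hermitian inner product on $\mfg$ coming from a Cartan involution adapted to the grading, extend it to $\Wedge^*\mfp_+\otimes\mfg$, and note that it is invariant under a maximal compact subgroup $K_0\subset P_0$. The crucial point — Kostant's computation — is that with respect to this inner product $\partial$ is exactly the Hermitian adjoint of $\partial^*$, so that $\square$ is self-adjoint and positive semi-definite. Granting this, finite-dimensional Hodge theory applies verbatim: one obtains the orthogonal decomposition $\Wedge^*\mfp_+\otimes\mfg=\mathrm{im}(\partial)\oplus\ker(\square)\oplus\mathrm{im}(\partial^*)$ together with $\ker(\square)=\ker(\partial)\cap\ker(\partial^*)$, and the two resulting regroupings $\ker(\partial^*)=\ker(\square)\oplus\mathrm{im}(\partial^*)$ and $\ker(\partial)=\ker(\square)\oplus\mathrm{im}(\partial)$ give all three equalities of \eqref{Hodge}. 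The first regrouping identifies the harmonic space $\ker(\square)$ with $H_*(\mfp_+,\mfg)=\ker(\partial^*)/\mathrm{im}(\partial^*)$, proving (3); and since $\square$ is $K_0$-equivariant, $\ker(\square)$ is a $P_0$-submodule, which realises the asserted $P_0$-structure inside $\Wedge^*\mfp_+\otimes\mfg$.

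The complete reducibility in (1) I would prove algebraically, without reference to the metric. For $X\in\mfp_+$ the action on the complex is the Lie derivative $L_X=\iota_X\partial^*+\partial^*\iota_X$ (the Cartan homotopy formula), which is null-homotopic, so $\mfp_+$ acts trivially on $H_*(\mfp_+,\mfg)$ and the residual action factors through $\mfp_0\cong\mfp/\mfp_+$. Moreover $\mathfrak{z}(\mfp_0)$ consists of semisimple elements of $\mfg$, which act diagonalizably on the finite-dimensional complex and commute with $\partial^*$, hence act semisimply on $H_*$; and $\mfp_0^{ss}$ acts completely reducibly by Weyl's theorem. Decomposing $H_*$ first into $\mathfrak{z}(\mfp_0)$-eigenspaces and then each of these into $\mfp_0^{ss}$-irreducibles exhibits it as a direct sum of $\mfp_0$-irreducibles; with $\mfp_+$ acting trivially this makes $H_\ell(\mfp_+,\mfg)$ a completely reducible $P$-module, which is (1).

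The only non-formal step is Kostant's positivity assertion — that $\partial$ and $\partial^*$ are mutually adjoint for the adapted inner product and hence $\square\geq0$ — which he derives from a Casimir computation and which is the one place where the detailed structure theory of $\mfg$ is used. Everything else is the equivariance bookkeeping above, standard finite-dimensional Hodge theory, the Cartan homotopy formula, and Weyl's theorem, so I expect that positivity computation to be the main obstacle.
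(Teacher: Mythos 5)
The paper offers no proof of this proposition---it is quoted from Kostant's Theorem 5.14 (equivalently \cite[Corollary 3.1.11]{csbook})---and your reconstruction is precisely the standard argument given in those sources: $P$-equivariance of $\partial^*$ versus mere $P_0$-equivariance of $\partial$ via the Killing-form duality, mutual adjointness with respect to a Cartan-involution-adapted inner product yielding finite-dimensional Hodge theory for $\square$, triviality of the $\mfp_+$-action on homology via the homotopy formula, and complete reducibility over the reductive $\mfp_0$. Two cosmetic points only: the homotopy operator for the homology differential $\partial^*$ is exterior multiplication by $X\in\mfp_+$ rather than the contraction $\iota_X$ (contraction against $\Wedge^*\mfp_+$ requires an element of $\mfp_+^*$), and the mutual adjointness of $\partial$ and $\partial^*$ is a direct verification---Kostant's Casimir computation is needed only for the finer identification of $\ker(\square)$ with specific irreducible modules, which enters later (in Proposition \ref{p.vanishing}) but not in the statement at hand.
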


\begin{notcon}\label{n.grade} Both $\partial^*$ and $\partial$  are compatible with the natural gradings on the spaces $\Wedge^\ell\mfp_+\otimes\mfg$ induced by \eqref{grading}.  Hence,  the homology groups
$H_\ell(\mfp_+, \mfg)$ are graded. We write $H_{\ell,r}(\mfp_+, \mfg)$ for the $r$-th grading component of $H_\ell(\mfp_+, \mfg)$, sitting inside
the $r$-th grading component of $\Wedge^\ell\mfp_+\otimes\mfg$ under the identification of $H_{\ell}(\mfp_+, \mfg)$ with a subspace of $\Wedge^\ell\mfp_+\otimes\mfg$ via Proposition \ref{p.Kostant}. Note that our convention for the gradation differs from the one used by Yamaguchi in \cite{Yamaguchi2} by a shift of $+1$.
\end{notcon}

\begin{defin}\label{d.symcon}
 It is convenient to introduce the following two  distinguished classes of the pairs $(\mfg, \Sigma)$ in Proposition \ref{p.rootparabolic}.  \begin{itemize}
\item[(1)] We say that $(\mfg, \Sigma)$ is
of \emph{symmetric type} if $\Sigma$ is a single long root and has depth $1$.
\item[(2)] We say that $(\mfg, \Sigma)$ is
of \emph{contact type} if  $\Sigma$ is a single long root and has depth   $2$  with  $\dim(\mfp_{\pm 2})=1$. \end{itemize}
The choices of $(\mfg,\alpha)$ corresponding to these cases are listed in Tables 1 and 3 of Section \ref{Appendix}.
\end{defin}

\begin{rem}
The terminology in Definition \ref{d.symcon} reflects the fact that the homogeneous space
corresponding to $(\mfg, \Sigma)$ of symmetric type (resp. contact type) is a Hermitian symmetric space (resp. a homogeneous contact manifold).
\end{rem}

The following results are deduced from \cite[Theorem 5.14]{Kostant}.

\begin{prop} \label{p.vanishing}
Let $(\mfg, \alpha_i)$ be a simple Lie algebra with a long simple root of $\mfg$ and let $\mfp$ be the associated parabolic subalgebra.
Identify
$\bigoplus_{r\geq 1}H_{2,r}(\mfp_+, \mfg)$ with a $\mfp_0$-submodule of $\Wedge^2\mfp_+\otimes\mfg$ via Proposition \ref{p.Kostant}.
If $H_{2,r}(\mfp_+, \mfg)\neq 0$ for $r \geq 1$, then any lowest weight space generating an irreducible component of the $\mfp_0$-module $H_{2,r}(\mfp_+, \mfg)$
is of the form
\begin{equation}\label{irred_comp}
\mfg_{\alpha_i}\wedge\mfg_{\alpha_i+\alpha_j}\otimes \mfg_{\beta_{ij}}\in \Wedge^2\mfp_1\otimes \mfp_{r-2},
\end{equation}
where $\alpha_j$ is a simple root connected to $\alpha_i$ in the Dynkin diagram of $\mfg$ and $\beta_{ij}$ is a lowest weight of the $\mfp_0$-module $\mfp_{r-2}$ (which is unique if $\mfp_{r-2}$ is irreducible).
Furthermore, the following holds.
\begin{enumerate}
\item[(1)] $H_{2,r}(\mfp_+, \mfg)=0$ for $r\geq 1$, unless $(\mfg, \alpha_i)$ is of symmetric or contact type, or equal to $(B_n/D_{n+1}, \alpha_3)$ for $n\geq 4$.
\item[(2)] If $(\mfg, \alpha_i)$ is of type of any of the exceptions in $(1)$, but is not equal to $(A_n, \alpha_1 \mbox{ nor }\alpha_{n})$ or to  $(A_n, \alpha_2 \mbox{ nor }\alpha_{n-1})$ for $n\geq 4$ or to $(B_n/D_n, \alpha_1)$, 
then $H_{2,r}(\mfp_+, \mfg)=0$ for $r\geq 2$ and $H_{2,1}(\mfp_+, \mfg)\neq 0$.
\item[(3)] If $(\mfg, \alpha_i)$ equals $(B_n/D_{n}, \alpha_1)$ or $(A_n\alpha_1 \mbox{ or }\alpha_{n} )$ for $n\geq 3$, then $H_{2}(\mfp_+, \mfg)=H_{2,2}(\mfp_+, \mfg)$, which is irreducible, except for $(D_{3}, \alpha_1)$, in which case it has two irreducible components.
\item[(4)] If
$(\mfg,\alpha_i)$ equals $(B_2, \alpha_1)$ or $(A_2,\alpha_1\mbox{ or } \alpha_2)$, then $H_{2}(\mfp_+, \mfg)=H_{2,3}(\mfp_+, \mfg)$ is irreducible. 
\item[(5)] If $(\mfg,\alpha_i)=(A_n, \alpha_2)$ for $n\geq 4$, then $H_{2}(\mfp_+, \mfg)=H_{2,1}(\mfp_+, \mfg)\oplus H_{2,2}(\mfp_+, \mfg)$.
\end{enumerate}
\end{prop}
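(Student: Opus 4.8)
The plan is to compute $H_2(\mfp_+,\mfg)$ directly from Kostant's theorem and then transport the answer into the realization inside $\Wedge^2\mfp_+\otimes\mfg$ provided by Proposition \ref{p.Kostant}. Let $\theta$ denote the highest root of $\mfg$ (the highest weight of the irreducible adjoint module), let $\rho$ be half the sum of the positive roots, and write $w\cdot\theta=w(\theta+\rho)-\rho$ for the affine Weyl-group action. By the homology incarnation of \cite[Thm.~5.14]{Kostant} obtained through Proposition \ref{p.Kostant}, the $\mfp_0$-module $H_2(\mfp_+,\mfg)$ is the direct sum, over the length-two elements $w$ of the Hasse diagram $W^\mfp$ of minimal-length coset representatives, of the irreducible $\mfp_0$-modules with lowest weight $-w\cdot\theta$. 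Because $\Sigma=\{\alpha_i\}$ is a single simple root, $W^\mfp$ has $s_{\alpha_i}$ as its only length-one element, and its length-two elements are exactly $w_{ij}:=s_{\alpha_i}s_{\alpha_j}$, one for each simple root $\alpha_j$ adjacent to $\alpha_i$ in the Dynkin diagram; these are characterised by their inversion set $\{\alpha_i,\ \alpha_i+\alpha_j\}$ (for non-adjacent $\alpha_j$ one has $s_{\alpha_i}s_{\alpha_j}=s_{\alpha_j}s_{\alpha_i}$, which is not of minimal length in its coset). Thus the irreducible summands of $H_2$ are indexed by the neighbours of $\alpha_i$.

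First I would read off the shape \eqref{irred_comp}. The inversion set of $w_{ij}$ is $\{\alpha_i,\ s_{\alpha_i}(\alpha_j)\}$, and since $\alpha_i$ is a long root one has $\langle\alpha_j,\alpha_i^\vee\rangle=-1$, so $s_{\alpha_i}(\alpha_j)=\alpha_i+\alpha_j$; both roots have homogeneity one, producing the factor $\mfg_{\alpha_i}\wedge\mfg_{\alpha_i+\alpha_j}\in\Wedge^2\mfp_1$. Matching the weight of the lowest weight vector with $-w_{ij}\cdot\theta$ forces the last tensor slot to be $\mfg_{\beta_{ij}}$ with $\beta_{ij}=-w_{ij}\cdot\theta-2\alpha_i-\alpha_j$, which the reflection formulae identify with a lowest weight of the $\mfp_0$-module $\mfp_{r-2}$, $r$ being the homogeneity of the summand. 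Since $\mfp_0$ preserves the grading, $r=r_{ij}:=-\,\textrm{ht}_\Sigma(w_{ij}\cdot\theta)$, and expanding $w_{ij}\cdot\theta=s_{\alpha_i}s_{\alpha_j}(\theta+\rho)-\rho$ while again using $\langle\alpha_j,\alpha_i^\vee\rangle=-1$ gives the closed form
\begin{equation*}
r_{ij}=\langle\theta,\alpha_i^\vee\rangle+\langle\theta,\alpha_j^\vee\rangle+2-m_i ,
\end{equation*}
where $m_i$ is the coefficient of $\alpha_i$ in $\theta$, i.e.\ the depth of $\mfp$.

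The core of the proof is then a finite check of the integers $r_{ij}$ over the long simple roots of the simple Lie algebras, using that $\langle\theta,\alpha_k^\vee\rangle$ is nonzero only on the nodes adjacent to the affine node, together with the tabulated marks $m_i$. A summand lies in positive homogeneity exactly when $\langle\theta,\alpha_i^\vee\rangle+\langle\theta,\alpha_j^\vee\rangle\ge m_i-1$ for some neighbour $\alpha_j$, and one verifies that this happens precisely when $(\mfg,\alpha_i)$ is of symmetric type (where $m_i=1$), of contact type (where $m_i=2$), or is $(B_n/D_{n+1},\alpha_3)$; in all other cases every $r_{ij}\le0$, which is (1). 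Statements (2)--(5) then amount to listing $\{r_{ij}\}$ over the neighbours in each admissible type: the symmetric and contact cases, and $(B_n/D_{n+1},\alpha_3)$, give every positive $r_{ij}$ equal to $1$, whence $H_{2,1}\neq0$ and $H_{2,r}=0$ for $r\ge2$, which is (2) (for $(B_n/D_{n+1},\alpha_3)$ one of the two neighbours gives $r_{ij}=0$, which is why only the positive part is recorded); the end-node cases $(A_n,\alpha_1)$, $(A_n,\alpha_n)$ and $(B_n/D_n,\alpha_1)$ have a single neighbour with $r_{ij}=2$, so $H_2=H_{2,2}$ is irreducible, the fork $(D_3,\alpha_1)$ instead having two neighbours of value $2$ and hence two summands, which is (3); the rank-two cases $(A_2,\alpha_1)$, $(A_2,\alpha_2)$, $(B_2,\alpha_1)$ have a single neighbour with $r_{ij}=3$, which is (4); and $(A_n,\alpha_2)$ with $n\ge4$ has neighbours $\alpha_1,\alpha_3$ with values $2$ and $1$, giving $H_2=H_{2,1}\oplus H_{2,2}$, which is (5).

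I expect the main difficulty to be organisational rather than conceptual. The single computation of $r_{ij}$ must be carried out uniformly across all Dynkin types, and the hypothesis that $\alpha_i$ is long is genuinely needed—both to obtain $\langle\alpha_j,\alpha_i^\vee\rangle=-1$ (so that $\alpha_i+\alpha_j$ is a root and the inversion set has the stated form) and to ensure $\theta$ is long. In each surviving case one must also confirm that $\beta_{ij}$ is an honest weight of $\mfp_{r-2}$, so that the proposed lowest weight vector is nonzero, and, where two neighbours yield the same homogeneity as in $(D_3,\alpha_1)$, that the resulting $\mfp_0$-modules are inequivalent, which is what underlies the reducibility assertions. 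A reassuring structural point is that Kostant's theorem already supplies exactly one irreducible summand per neighbour of $\alpha_i$, so nothing can be missed or double-counted; the entire content is thus concentrated in evaluating the single integer $r_{ij}$ for each neighbour and comparing it with the depth $m_i$.
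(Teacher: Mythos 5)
Your proposal is correct and follows essentially the same route as the paper: Kostant's theorem reduces everything to the length-two elements $s_is_j$ of the Hasse diagram $W^\mfp$, indexed by the neighbours $\alpha_j$ of $\alpha_i$, with inversion set $\{\alpha_i,\alpha_i+\alpha_j\}$ because $\alpha_i$ is long, followed by a finite case check of the homogeneities. The only difference is bookkeeping: where the paper reads the homogeneities off Yamaguchi's tables (while noting they can be verified directly), you derive the closed formula $r_{ij}=\langle\theta,\alpha_i^\vee\rangle+\langle\theta,\alpha_j^\vee\rangle+2-m_i$ and run the check yourself, which is a correct and self-contained substitute.
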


\begin{proof} Recall that Theorem 5.14 of \cite{Kostant} (see \cite[Theorem 3.3.5]{csbook} for a formulation in the notation we use here) says that the
the irreducible components of $H_2(\mfp_+,\mfg)\subset \Wedge^2\mfp_+\otimes\mfg$ are in bijection with elements in the set $W^\mfp(2)$ consisting of the elements
of length $2$ in the Hasse diagram $W^\mfp$ of $\mfp$ (see \cite[Def. 3.2.1]{csbook}). We write $s_k$ for the root reflection corresponding to a simple root $\alpha_k$.
For a maximal parabolic subalgebra corresponding to a simple root $\alpha_i$ the algorithm in Section 3.2.14 of \cite{csbook} gives
$$W^\mfp(2)=\{s_is_j: \alpha_j \textrm{ is connected to } \alpha_i \textrm{ in the Dynkin diagram of } \mfg\}.$$ Moreover, by \cite[Proposition 3.2.16,(2)]{csbook}, for any $s_is_j\in W^\mfp(2)$
the subset of positive roots $\Phi_{s_is_j}$ appearing in \cite[Theorem 3.3.5]{csbook} (for a definition see also Section 3.2.14, p. 321, of \cite{csbook}) is given by $\Phi_{s_is_j} =\{\alpha_i, s_i(\alpha_j)\}$. Hence
$\Phi_{s_is_j}=\{\alpha_i, \alpha_i+\alpha_j\}$  if $\alpha_i$ is a long root.  For a maximal parabolic subalgebra corresponding to a long simple root $\alpha_i$,
Theorem 3.3.5 of \cite{csbook}  
 implies that the irreducible components of $H_{2}(\mfp_+, \mfg)$ are generated by the lowest weight spaces
\begin{equation}\label{irred_comp1}
\mfg_{\alpha_i}\wedge\mfg_{\alpha_{i}+\alpha_j}\otimes \mfg_ {-s_is_j(\theta)}\in \Wedge^{2}\mfp_1\otimes\mfg, \quad\textrm{for }\quad s_is_j\in W^\mfp(2),
\end{equation}
where $\theta$ is the highest root of $\mfg$, which proves the first claim.
For (1), note that Yamaguchi's list in \cite[Proposition 6.2]{Yamaguchi2} shows that any of the subspaces in \eqref{irred_comp1}
lies in non-positive homogeneities (that is, $\mfg_ {-s_is_j(\theta)}\in\mfg_{r-2}$ for $r\leq 2$) except for the cases mentioned in (1). Statements (2)-(5) can be also directly read off
from \cite[Proposition 6.2]{Yamaguchi2} or verified directly from \eqref{irred_comp1}.
\end{proof}

\subsection{Cone structures on rational homogeneous spaces}\label{s.homogenous}
Suppose $\mfg$ is a semisimple and  $\mfp=\mfp_\Sigma \subset \mfg$ a parabolic subalgebra. Let $P$ be the corresponding parabolic subgroup $\textrm{Aut}_{\textrm{fil}}(\mfg)$ of $\textrm{Aut}(\mfg)$ preserving the filtration \eqref{filtration} and set $G=\textrm{Aut}_0(\mfg)P$, where $\textrm{Aut}_0(\mfg)$ denotes the adjoint group of $\mfg$. We denote by $P_0$ and $P_+$ the Levi subgroup and the unipotent radical of $P$ respectively.

\begin{rem}
Note that the homogeneous space $G/P$ equals the quotient of the adjoint group of $\mfg$ by the connected parabolic subgroup of $\textrm{Aut}_0(\mfg)$ corresponding to $\mfp$.
\end{rem}
There is a natural isomorphism $TG/P\cong G\times_P\mfg/\mfp$ (via the left Maurer--Cartan form) and the filtration \eqref{filtration} induces a tangential filtration
$$T^{-i}G/P\cong G\times_P \mfp^{-i}/\mfp,\quad\quad i>0,$$ on the rational homogenous space $G/P$. The following is elementary.

\begin{lem}\label{l.G/Pfiltered}
In the Notation \ref{n.G/P}, since $P_+$ acts trivially on $\mfp^{-i}/\mfp^{-i+1}$ and $\mfp^{-i}/\mfp^{-i+1}\cong \mfp_{-i}$ as $P_0$-modules, one
has $\emph{gr}_{-i}(TG/P)\cong G/P_+\times _{P_0}\mfp_{-i}$, where the natural projection $G/P_+\rightarrow G/P$ is viewed as a $P_0$-principal bundle.
The filtration $\{T^{-i}G/P\}$ gives $G/P$ the structure of a filtered manifold with symbol algebra isomorphic to $\mfp_-:=\mfp_{-k}\oplus...\oplus\mfp_{-1}$.
 In particular,
 the distribution $$T^{-1}G/P\cong G\times_P\mfp^{-1}/\mfp\cong G/P_+\times_{P_0}\mfp_{-1} \subset TG/P$$ is bracket generating and $\{T^{-i}G/P\}$ is its weak derived flag.
\end{lem}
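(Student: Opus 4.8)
The plan is to handle the three assertions in turn, throughout using the trivialization $TG/P\cong G\times_P\mfg/\mfp$ under which the tangential filtration $\{T^{-i}G/P\}$ is the one associated to the $P$-invariant filtration $\{\mfp^{-i}/\mfp\}$ of $\mfg/\mfp$ coming from \eqref{filtration}. For the graded bundle, I would first note that taking quotients of associated bundles gives $\textrm{gr}_{-i}(TG/P)=T^{-i}G/P/T^{-i+1}G/P\cong G\times_P(\mfp^{-i}/\mfp^{-i+1})$. The $P$-module $\mfp^{-i}/\mfp^{-i+1}$ carries a trivial $P_+$-action: for $Z\in\mfp_+$ and $W\in\mfp^{-i}$ one has $\textrm{ad}(Z)W\in\mfp^{-i+1}$ by \eqref{filtration}, so $\textrm{Ad}(\exp Z)W\equiv W$ modulo $\mfp^{-i+1}$. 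Hence the action factors through $P_0=P/P_+$, and as a $P_0$-module $\mfp^{-i}/\mfp^{-i+1}\cong\mfp_{-i}$ directly from the gradation \eqref{grading}, since $\textrm{Ad}(P_0)$ preserves each $\mfp_{-i}$. The general reduction principle $G\times_P V\cong (G/P_+)\times_{P_0}V$ for a $P$-module $V$ with trivial $P_+$-action then yields $\textrm{gr}_{-i}(TG/P)\cong G/P_+\times_{P_0}\mfp_{-i}$, with $G/P_+\to G/P$ viewed as a principal $P_0$-bundle.

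For the filtered-manifold structure and the symbol, I would compute the bracket of vector fields over the big cell $U\subset G/P$. Picking a local section $\sigma$ of $G\to G/P$ over $U$ and setting $\alpha:=\sigma^*\omega\in\Omega^1(U,\mfg)$ for the left Maurer--Cartan form $\omega$, the Maurer--Cartan equation gives, for vector fields $\xi,\eta$ on $U$,
\begin{equation*}
\alpha([\xi,\eta])=\xi\cdot\alpha(\eta)-\eta\cdot\alpha(\xi)+[\alpha(\xi),\alpha(\eta)].
\end{equation*}
Under this trivialization a section of $T^{-i}G/P$ is exactly one with $\alpha(\xi)$ valued in $\mfp^{-i}$ (here one uses $\mfp\subset\mfp^{-i}$). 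Writing $f=\alpha(\xi)$ and $h=\alpha(\eta)$ for sections of $T^{-i}G/P$ and $T^{-j}G/P$, the algebraic term $[f,h]$ lies in $\mfp^{-(i+j)}$ by \eqref{filtration}, while the derivative terms $\xi\cdot h$ and $\eta\cdot f$ are valued in $\mfp^{-j}$ and $\mfp^{-i}$, both contained in $\mfp^{-(i+j)}$; this proves the compatibility $[\mcO(T^{-i}G/P),\mcO(T^{-j}G/P)]\subset\mcO(T^{-(i+j)}G/P)$. Passing to $\textrm{gr}_{-(i+j)}(TG/P)$, the derivative terms drop out because $\mfp^{-i},\mfp^{-j}\subset\mfp^{-(i+j)+1}$ (as $i,j\geq 1$), so the Levi bracket is induced solely by the Lie bracket of $\mfg$ and agrees, under the identifications of the previous paragraph, with the algebraic bracket $\mfp_{-i}\times\mfp_{-j}\to\mfp_{-(i+j)}$. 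Since the filtration is $G$-invariant and $G$ acts transitively, the symbol algebra at every point is isomorphic to $(\mfp_-,[\,\cdot\,,\cdot\,])$.

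Finally, for bracket-generation and the weak derived flag, I would invoke that $\mfp_{-1}$ generates $\mfp_-$ (Proposition \ref{p.rootparabolic}(1)), which for a graded Lie algebra forces $[\mfp_{-1},\mfp_{-(i-1)}]=\mfp_{-i}$ for every $i$. Because the Levi bracket equals the algebraic bracket, the weak derived flag of $\mcH:=T^{-1}G/P$ satisfies $\mcH^{-i}=T^{-i}G/P$ by induction on $i$, and in particular $\mcH^{-k}=T^{-k}G/P=TG/P$; thus $T^{-1}G/P$ is bracket-generating with weak derived flag $\{T^{-i}G/P\}$, while the identification $T^{-1}G/P\cong G/P_+\times_{P_0}\mfp_{-1}$ is just the case $i=1$ already treated. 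The only place where anything beyond index-bookkeeping occurs is the verification that the non-algebraic terms in the Maurer--Cartan bracket formula genuinely fall into the next filtration step and so do not contribute to the Levi bracket; this is the step I expect to require the most care, though it reduces to the elementary filtration estimates recorded above.
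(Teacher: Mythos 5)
The paper offers no argument here (the lemma is prefaced only by ``The following is elementary''), so there is no proof to compare against; your write-up is correct and is exactly the standard argument one would supply: the triviality of the $P_+$-action on the graded pieces to reduce to $P_0$, the Maurer--Cartan identity to check filtration compatibility and to identify the Levi bracket with the algebraic bracket on $\mfp_-$, and the fact that $\mfp_{-1}$ generates $\mfp_-$ (so that $[\mfp_{-1},\mfp_{-i+1}]=\mfp_{-i}$) to get bracket generation and the weak derived flag. All the filtration estimates you record (e.g.\ $\mfp^{-i},\mfp^{-j}\subset\mfp^{-(i+j)+1}$ for $i,j\geq 1$, which kills the derivative terms in the graded quotient) check out, so the proposal is a complete and correct proof.
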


\begin{notcon}\label{n.mcCo}
Assume that $\Sigma$ consists of a single simple root $\alpha$ of a simple Lie algebra $\mfg$.
Then $\mfp^{-1}/\mfp$ is an irreducible $\mfp$-module and
 $\mfp^{-1}/\mfp\cong\mfp_{-1}$ has highest weight $-\alpha$  as an irreducible $\mfp_0$-module by Proposition \ref{p.rootparabolic}.
The space of highest weight vectors $\mfg_{-\alpha}\subset\mfp_{-1}$ determines a point
$z_0\in\mathbb P(\mfp_{-1})=\mathbb P(\mfp^{-1}/\mfp)$. We denote the $P_0$-orbit through $z_0$ by
$$\mcC_o^{G/P}:=P\cdot z_0=P_0\cdot z_0=P_0^{ss}\cdot z_0\subset \mathbb P(\mfp^{-1}/\mfp)=\mathbb P(\mfp_{-1}),$$
where $P_0^{ss}$ denotes the semisimple part of $P_0$.
\end{notcon}

Basic theory on parabolic subgroups and rational homogeneous spaces implies:

\begin{lem}\label{lem_highest_weight_cone}
In Notation \ref{n.mcCo}, the stabilizer in $P$ of $z_0$ is again a parabolic subgroup $Q$ of $G$ and the stabilizer $P_0^{ss}\cap Q$ in $P_0^{ss}$ of $z_0$ is a parabolic subgroup
of $P_0^{ss}$. Hence, $$\mcC_o^{G/P}\cong P/Q=P_0/P_0\cap Q=P_0^{ss}/P_0^{ss}\cap Q$$ is again a rational homogeneous space. The parabolic subalgebra $\mfq\leq\mfp\leq\mfg$ corresponding to $Q$ is associated to the
subset of simple roots consisting of $\alpha$ and the roots that are connected to $\alpha$ in the Dynkin diagram of $\mfg$.
\end{lem}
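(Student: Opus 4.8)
The plan is to reduce the statement to the classical closed-orbit theorem—that the stabiliser of the highest-weight line of an irreducible representation of a connected semisimple group is a parabolic subgroup—and then to identify the resulting parabolic by a one-line weight computation. First I would observe that the $P$-action on $\mbP(\mfp^{-1}/\mfp)=\mbP(\mfp_{-1})$ factors through $P_0^{ss}$. Indeed, the grading gives $[\mfp_+,\mfp_{-1}]\subseteq\mfp_{\ge 0}=\mfp$, so $P_+$ acts trivially on $\mfp^{-1}/\mfp\cong\mfp_{-1}$ (cf. Lemma \ref{l.G/Pfiltered}), and since $\mfp_{-1}$ is $\mfp_0$-irreducible the centre $Z(P_0)$ acts by scalars, hence trivially on $\mbP(\mfp_{-1})$. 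This already yields $\mcC_o^{G/P}=P\cdot z_0=P_0\cdot z_0=P_0^{ss}\cdot z_0$ and shows $P_+\subseteq Q:=\mathrm{Stab}_P(z_0)$.

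Next I would apply the closed-orbit theorem to $P_0^{ss}$ acting on the irreducible module $\mfp_{-1}$. Since $z_0=[\mfg_{-\alpha}]$ is the highest-weight line (of highest weight $-\alpha$, by Notation \ref{n.mcCo}), its $P_0^{ss}$-orbit is the unique closed orbit and $P_0^{ss}\cap Q$ is a parabolic subgroup of $P_0^{ss}$ whose crossed simple roots are exactly the simple roots $\alpha_j$ of $\mfp_0^{ss}$ with $\langle-\alpha,\alpha_j^\vee\rangle\neq 0$. As the simple roots of $\mfp_0^{ss}$ are the $\alpha_j$ with $j\neq i$, and $\langle-\alpha,\alpha_j^\vee\rangle=-\langle\alpha_i,\alpha_j^\vee\rangle$ is nonzero precisely when $\alpha_j$ is connected to $\alpha=\alpha_i$ in the Dynkin diagram, the parabolic $P_0^{ss}\cap Q$ is cut out by the nodes adjacent to $\alpha$.

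Finally I would transfer this to $\mfg$. The stabiliser subalgebra splits as $\mfq=\bigl(\mathfrak{z}(\mfp_0)\oplus(\mfq\cap\mfp_0^{ss})\bigr)\oplus\mfp_+$, and a short root check shows $\mfq$ contains $\mfh$ together with every positive root space of $\mfg$: those of $\alpha$-coefficient $\ge 1$ lie in $\mfp_+\subseteq\mfq$, while for $\beta>0$ of zero $\alpha$-coefficient the difference $\beta-\alpha$ is not a root, so $\mfg_\beta$ annihilates $\mfg_{-\alpha}$ modulo $\mfp$. Hence $\mfq\supseteq\mfb$ is parabolic, and since $\mfq\subseteq\mfp$ forces $\mfg_{-\alpha}\not\subseteq\mfq$ the node $\alpha$ is crossed as well, giving $\Sigma_\mfq=\{\alpha\}\cup\{\alpha_j:\alpha_j\text{ connected to }\alpha\}$. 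At the group level $Q$ contains the Borel subgroup $B$ (as $\mfb\subseteq\mfq$ with $B$ connected), so $Q$ is the connected parabolic subgroup of $G$ with Lie algebra $\mfq$; the Levi decomposition $P=P_0\ltimes P_+$ with $P_+\subseteq Q$ then gives $Q=(Q\cap P_0)\ltimes P_+$ and the chain of isomorphisms $\mcC_o^{G/P}\cong P/Q\cong P_0/(P_0\cap Q)\cong P_0^{ss}/(P_0^{ss}\cap Q)$, a projective homogeneous space, and therefore a rational homogeneous space. I expect the only genuine obstacle to be bookkeeping—keeping the three groups $P$, $P_0$, $P_0^{ss}$ aligned and promoting the Lie-algebra stabiliser to a group-level parabolic via Borel containment and connectedness; the weight computation itself is immediate once one fixes the convention that the stabiliser of a highest-weight line crosses exactly the nodes on which the weight is nonzero.
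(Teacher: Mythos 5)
Your proof is correct, and since the paper offers no argument for this lemma beyond the phrase ``basic theory on parabolic subgroups and rational homogeneous spaces implies,'' your write-up is precisely the standard argument being alluded to: reduce to the closed-orbit theorem for the highest-weight line of the irreducible $P_0^{ss}$-module $\mfp_{-1}$, identify the crossed nodes of $P_0^{ss}\cap Q$ via $\langle -\alpha,\alpha_j^\vee\rangle\neq 0$, and promote the Lie-algebra stabilizer to a group-level parabolic through Borel containment. The supporting checks (triviality of the $P_+$- and $\mathfrak z(\mfp_0)$-actions on $\mbP(\mfp^{-1}/\mfp)$, and the fact that $\beta-\alpha$ is never a root for $\beta>0$ with $\mathrm{ht}_\alpha(\beta)=0$) are all sound.
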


\begin{defin}\label{highest_weight_cone_struc}
In the setting of Notation \ref{n.mcCo}, the cone of highest weight vectors $\mcC_o^{G/P}\subset \mathbb P(\mfp^{-1}/\mfp)$ of the irreducible $P$-module $\mfp^{-1}/\mfp$ induces a $\mcC_o^{G/P}$--isotrivial cone structure on $G/P$ given by $$\mcC^{G/P}:=G\times _P\mcC^{G/P}_0\cong G/P_+\times_{P_0}\mcC^{G/P}_0\subset \mathbb P(T^{-1}G/P)\subset \mathbb P(TG/P)$$ and
we denote by $p: \mcC^{G/P}\rightarrow G/P$ the natural projection.
Note that, by Lemma \ref{lem_highest_weight_cone}, $\mcC^{G/P}\cong G\times _P P/Q\cong G/Q$ and hence, under this isomorphism, the tangent map $Tp$ of $p$ is given by the natural projection
\begin{equation}\label{tangent_map_of_cone_projection}
TG/Q\cong G\times_Q \mfg/\mfq\rightarrow G\times_P\mfg/\mfp\cong TG/P. 
\end{equation}
\end{defin}

To analyze the projection \eqref{tangent_map_of_cone_projection}, we compare the two filtrations \eqref{filtration} (and gradings) on $\mfg$ induced by $\mfp$ and $\mfq$ respectively. The following lemma, which holds for any nested pair of parabolic subalgebras $\mfq< \mfp< \mfg$ in a semisimple Lie algebra, follows directly from Proposition \ref{p.rootparabolic}:

\begin{lem}\label{nested_parabolics}
Suppose $\mfq<\mfp< \mfg$ be a nested pair of parabolic subalgebras of a semisimple Lie algebra $\mfg.$ The two gradings of $\mfg$ corresponding to $\mfp$ and $\mfq$,
$$\mfg= \oplus_{i \in \Z} \mfp_{i} = \oplus_{i \in \Z} \mfq_j,$$
satisfy the following.
\begin{enumerate}
\item[(1)] $\mfp_\pm\lhd\mfq_{\pm}$, $\mfq_0<\mfp_0$ and their centers satisfy $\mathfrak z(\mfp_0)<\mathfrak z(\mfq_0)$.
\item[(2)]
As a vector space $\mfg$ decomposes into the following direct sum of subalgebras of $\mfg$:
\begin{equation*}
\mfg=\mfp_-\oplus (\mfp_0\cap\mfq_-) \oplus\mfq_0\oplus(\mfp_0\cap\mfq_+)\oplus\mfp_+,
\end{equation*}
where $\mfp_0=(\mfp_0\cap\mfq_-)\oplus\mfq_0\oplus(\mfp_0\cap\mfq_+)$ and $\mfq_\pm=(\mfp_0\cap\mfq_\pm)\oplus\mfp_\pm$. In particular, the parabolic subalgebra $\mfp_0\cap\mfq$ of the reductive Lie algebra $\mfp_0$
equals $\mfq_0\oplus(\mfp_0\cap\mfq_+)$, where $\mfq_0$ is its Levi subalgebra and $\mfp_0\cap\mfq_+$ its nilradical.

\end{enumerate}
\end{lem}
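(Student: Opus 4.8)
The plan is to deduce everything directly from the root-space description of $\mfp$ and $\mfq$ provided by Proposition \ref{p.rootparabolic}, using the hypothesis $\mfq < \mfp$ to fix the relationship between the two defining subsets of simple roots. By Lemma \ref{lem_highest_weight_cone} (or more generally, since $\mfq \subset \mfp$ are both standard parabolics after conjugation), we may fix a common Cartan subalgebra $\mfh$ and a common choice of positive roots, so that $\mfp = \mfp_{\Sigma_\mfp}$ and $\mfq = \mfp_{\Sigma_\mfq}$ with $\Sigma_\mfp \subset \Sigma_\mfq \subset \Delta^0$. The point is that $\mfq \subset \mfp$ forces this inclusion of subsets: a root space $\mfg_\alpha$ with $\alpha$ negative lies in $\mfp$ exactly when $\mathrm{ht}_{\Sigma_\mfp}(\alpha) = 0$, and the condition that every such $\mfg_\alpha \subset \mfq$ translates into $\mathrm{ht}_{\Sigma_\mfq}(\alpha) \geq 0$, which holds for all roots of $\mfp_0$ precisely when $\Sigma_\mfp \subset \Sigma_\mfq$.

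With this set up, each statement becomes a computation of heights. First I would prove (1). For a root $\alpha$ with $\mathrm{ht}_{\Sigma_\mfp}(\alpha) > 0$, automatically $\mathrm{ht}_{\Sigma_\mfq}(\alpha) > 0$ because $\Sigma_\mfp \subset \Sigma_\mfq$ and all coefficients of $\alpha$ on simple roots are nonnegative; hence $\mfg_\alpha \subset \mfp_+$ implies $\mfg_\alpha \subset \mfq_+$, giving $\mfp_+ \subset \mfq_+$, and symmetrically $\mfp_- \subset \mfq_-$. That $\mfp_\pm$ are \emph{ideals} in $\mfq_\pm$ follows from $[\mfq_0 \oplus \mfq_+, \mfp_+] \subset \mfp_+$, which is just the bracket relation $[\mfq_i, \mfp_j] \subset \mfp_{i+j}$ combined with the fact that elements of $\mfq_0$ have $\Sigma_\mfp$-height zero and so preserve each $\mfp_j$. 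The inclusion $\mfq_0 \subset \mfp_0$ is the statement that $\mathrm{ht}_{\Sigma_\mfq}(\alpha) = 0 \Rightarrow \mathrm{ht}_{\Sigma_\mfp}(\alpha) = 0$, immediate since $\Sigma_\mfp \subset \Sigma_\mfq$. For the centers, $\mathfrak z(\mfp_0)$ is spanned by the grading element $E_{\Sigma_\mfp}$ dual to $\Sigma_\mfp$ (and similarly $\mathfrak z(\mfq_0)$ by the elements dual to $\Sigma_\mfq$); since $\Sigma_\mfp \subset \Sigma_\mfq$, the grading element of $\mfp_0$ is a combination of those of $\mfq_0$, giving $\mathfrak z(\mfp_0) \subset \mathfrak z(\mfq_0)$.

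For (2), the key observation is that the two gradings are \emph{compatible}: the $\Sigma_\mfq$-grading refines the $\Sigma_\mfp$-grading in the sense that each $\mfp_i$ decomposes as $\mfp_i = \bigoplus_j (\mfp_i \cap \mfq_j)$ according to the second grading. The direct sum decomposition then comes from sorting each root space $\mfg_\alpha$ by the pair $(\mathrm{ht}_{\Sigma_\mfp}(\alpha), \mathrm{ht}_{\Sigma_\mfq}(\alpha))$: a root with $\mathrm{ht}_{\Sigma_\mfp}(\alpha) < 0$ lands in $\mfp_-$, one with $\mathrm{ht}_{\Sigma_\mfp}(\alpha) > 0$ in $\mfp_+$, and the roots with $\mathrm{ht}_{\Sigma_\mfp}(\alpha) = 0$ (together with $\mfh$) make up $\mfp_0$, which is then split further by $\mathrm{ht}_{\Sigma_\mfq}$ into $\mfp_0 \cap \mfq_-$, $\mfq_0$, and $\mfp_0 \cap \mfq_+$. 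The relations $\mfp_0 = (\mfp_0 \cap \mfq_-) \oplus \mfq_0 \oplus (\mfp_0 \cap \mfq_+)$ and $\mfq_\pm = (\mfp_0 \cap \mfq_\pm) \oplus \mfp_\pm$ are then just a rereading of this sorting, and the final assertion that $\mfp_0 \cap \mfq = \mfq_0 \oplus (\mfp_0 \cap \mfq_+)$ is the parabolic subalgebra of $\mfp_0$ with the stated Levi and nilradical follows by applying Proposition \ref{p.rootparabolic} to the reductive algebra $\mfp_0$ with its induced root system and the subset $\Sigma_\mfq \setminus \Sigma_\mfp$ of its simple roots.

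The only genuinely delicate point — and where I would spend care rather than routine bookkeeping — is justifying that after conjugation we really may take $\mfq$ and $\mfp$ to be \emph{standard} parabolics for one common Cartan subalgebra and Borel, i.e.\ that the nesting $\mfq < \mfp$ can be realized simultaneously by $\Sigma_\mfp \subset \Sigma_\mfq$. This rests on the standard fact that any parabolic contains a Borel and that two nested parabolics share a common Borel (hence a common Cartan); once a common Borel is fixed, the defining subsets automatically satisfy the inclusion. Everything else is the elementary height arithmetic sketched above, so I do not expect serious obstacles beyond making that reduction clean.
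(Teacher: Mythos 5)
Your proof is correct and follows exactly the route the paper intends: the paper offers no written argument for this lemma beyond the remark that it ``follows directly from Proposition \ref{p.rootparabolic}'', and your expansion --- reduce to standard parabolics with $\Sigma_\mfp\subset\Sigma_\mfq$ via a common Borel, then sort root spaces by the pair $(\textrm{ht}_{\Sigma_\mfp},\textrm{ht}_{\Sigma_\mfq})$ --- is precisely that omitted bookkeeping. One small slip: in justifying $\Sigma_\mfp\subset\Sigma_\mfq$ you ask that every negative root space of $\mfp$ lie in $\mfq$, which is the containment $\mfp\subset\mfq$ rather than the hypothesis $\mfq<\mfp$; the correct implication (a negative simple root space in $\mfq$ must lie in $\mfp$, so $\alpha_j\notin\Sigma_\mfq\Rightarrow\alpha_j\notin\Sigma_\mfp$) yields the same inclusion, so nothing downstream is affected.
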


\begin{prop}\label{long_versus_short_root}
Suppose $\mfg$ is a simple Lie algebra and $\mfp$ is a maximal parabolic subalgebra corresponding to a long simple root $\alpha$. Let $P\leq G$ be as above and let $Q$ be as in Lemma \ref{lem_highest_weight_cone}.
Consider the cone structure $\mcC^{G/P}\cong G/Q\subset \mathbb P (TG/P)$ on $G/P$ as defined in Definition \ref{highest_weight_cone_struc}.
Then one has:
\begin{enumerate}
\item[(1)] $\mcC_o^{G/P}\cong P_0/P_0\cap Q$ is a compact Hermitian symmetric space (equivalently, $\mfp_0\cap\mfq_\pm<\mfp_0$ is an abelian subalgebra).
\item[(2)] The subbundles $T^{-i}G/Q\cong G\times_Q \mfq^{-i}/\mfq\subset TG/Q$ coincide with $\mcD^{-i}$ as in Definition \ref{d.cone_filtration} for $i=1,2, 3,4$.

\item[(3)] The line subbundle
$\mcF\subset \mcD^{-1}$ defined by $\mfq_{-1}^F:=\mfg_{-\alpha} \subset\mfq_{-1}$ is a characteristic conic connection on $\mcC^{G/P}$ (that is, $[\mcF, \mcD^{-2}]\subset \mcD^{-2}$). Consequently,
$[\mcF, \mcD^{-3}]\subset \mcD^{-3}$ by Lemma \ref{l.-3}. 

\item[(4)] $\mcD^{-4}=(Tp)^{-1}(T^{-1}G/P)$.
\end{enumerate}
\end{prop}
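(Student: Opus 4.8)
The plan is to check all four statements at the base point $o'=eQ$, where $T_{o'}(G/Q)=\mfg/\mfq$, the vertical space is $\mcV_{o'}=\mfp/\mfq$, and $Tp$ is the projection $\mfg/\mfq\to\mfg/\mfp$; since $\mcC^{G/P}\cong G/Q$, the filtration $\{\mcD^{-i}\}$, the line field $\mcF$, and the bundles $T^{-i}G/Q$ are all $G$-invariant, any identity established at $o'$ holds on all of $G/Q$. I would organise the computation by the bigrading assigning to a root space $\mfg_\gamma$ the pair $(a,a+\nu)$, where $a=\textrm{ht}_\alpha(\gamma)$ is its $\mfp$-degree and $\nu=\nu(\gamma)$ is the sum of the coefficients of the neighbours of $\alpha$ in $\gamma$, so that $a+\nu$ is its $\mfq$-degree (Lemma \ref{nested_parabolics}). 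The one input driving all four parts is: for a long root $\alpha$ and any root $\gamma\neq\pm\alpha$ one has $\langle\gamma,\alpha^\vee\rangle\in\{-1,0,1\}$. This follows at once from lengths: as $\alpha$ is long, $|\gamma|\le|\alpha|$ gives $|\langle\gamma,\alpha^\vee\rangle|=2|(\gamma,\alpha)|/(\alpha,\alpha)\le 2$, with equality only when $\gamma=\pm\alpha$. In particular $\langle\alpha_j,\alpha^\vee\rangle=-1$ for each neighbour $\alpha_j$, so $\langle\gamma,\alpha^\vee\rangle=2a-\nu$, and hence the $\mfq$-degree of $\mfg_\gamma$ lies in $\{3a-1,3a,3a+1\}$ for $\gamma\neq\pm\alpha$, while $\mfg_{\pm\alpha}$ sits in $\mfq$-degree $\pm1$.

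For (1) I would apply this with $a=0$: every root of $\mfp_0^{ss}$ has $\nu\in\{-1,0,1\}$, so no root of $\mfp_0$ has $\nu\ge 2$; since the sum of two roots of $\mfp_0\cap\mfq_+$ would have $\nu\ge 2$, it cannot be a root, and $\mfp_0\cap\mfq_\pm$ is abelian. Equivalently, removing the node $\alpha$ from the (tree) Dynkin diagram splits it into factors $\mfg^{(j)}$ each meeting a single neighbour $\alpha_j$, and $\mfp_0\cap\mfq$ marks exactly these $\alpha_j$; the coefficient $c$ of $\alpha_j$ in the highest root $\theta^{(j)}$ of $\mfg^{(j)}$ satisfies $-c=\langle\theta^{(j)},\alpha^\vee\rangle\in\{-1,0,1\}$ and $c\ge 1$, forcing $c=1$, so $\alpha_j$ is cominuscule and $\mcC_o^{G/P}$ is Hermitian symmetric.

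For (2)--(4) I would read off the $\mfq$-graded pieces: the degree constraint gives $\mfq_{-1}=(\mfp_0\cap\mfq_{-1})\oplus\mfg_{-\alpha}$ and $\mfq_{-j}=(\mfp_{-1})_{[-j]}$ for $j=2,3,4$ (the $\mfq$-degree $-j$ part of $\mfp_{-1}$), since any contribution of $\mfp_{-2},\mfp_{-3},\dots$ would land in $\mfq$-degree $\le -5$; thus $\mfp_{-1}=\mfg_{-\alpha}\oplus\bigoplus_{j=2}^{4}(\mfp_{-1})_{[-j]}$ and $\mcV_{o'}\cong\mfp_0\cap\mfq_{-1}$. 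Then $\mcD^{-1}=(Tp)^{-1}(\mfg_{-\alpha})=\mfq^{-1}/\mfq=T^{-1}G/Q$; computing the affine tangent space of the orbit, $\widehat{T}_{z_0}\mcC_o^{G/P}=[\mfp_0,\mfg_{-\alpha}]=\mfg_{-\alpha}\oplus(\mfp_{-1})_{[-2]}$ (only the $\mfg_{\delta-\alpha}$ with $\nu(\delta)=-1$ survive), so $\mcD^{-2}=(Tp)^{-1}(\widehat{T}_{z_0}\mcC_o^{G/P})=T^{-2}G/Q$. For $i=3,4$ the inductive definition $\mcD^{-i}=[\mcV,\mcD^{-(i-1)}]+\mcD^{-(i-1)}$, together with the fact that $G/Q$ is a filtered manifold with symbol $\mfq_-$ (Lemma \ref{l.G/Pfiltered}) and $\mcV\subset T^{-1}G/Q$, reduces the claim $\mcD^{-i}=T^{-i}G/Q$ to the surjectivity $[\mfp_0\cap\mfq_{-1},\mfq_{-(i-1)}]=\mfq_{-i}$; this is the statement that the abelian lowering operator $\mfp_0\cap\mfq_{-1}$ carries $(\mfp_{-1})_{[-(i-1)]}$ onto $(\mfp_{-1})_{[-i]}$, which holds because the irreducible $\mfp_0$-module $\mfp_{-1}$ is generated from its highest weight line $\mfg_{-\alpha}$ by $\mfp_0\cap\mfq_{-1}$ (the complementary $\mfq_0$-action being $\mfq$-degree preserving). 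This proves (2); and as $(Tp)^{-1}(T^{-1}G/P)=\mfp^{-1}/\mfq$ has graded representatives $\mfp_{-1}\oplus(\mfp_0\cap\mfq_{-1})=\bigoplus_{j=1}^{4}\mfq_{-j}$, it equals $T^{-4}G/Q=\mcD^{-4}$, giving (4). For (3), the condition $[\mcF,\mcD^{-2}]\subseteq\mcD^{-2}$ reduces to the vanishing in $\mfq_{-3}$ of $[\mfg_{-\alpha},\mfq_{-2}]=[\mfg_{-\alpha},(\mfp_{-1})_{[-2]}]$; this bracket lies in $\mfp_{-2}$ in $\mfq$-degree $-3$, which is zero since for $a=-2$ the $\mfq$-degree is $\le -5$. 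Hence $\mcF$ is characteristic, and $[\mcF,\mcD^{-3}]\subseteq\mcD^{-3}$ follows from Lemma \ref{l.-3}.

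The crux --- and the only place the long-root hypothesis is used --- is the length bound $\langle\gamma,\alpha^\vee\rangle\in\{-1,0,1\}$ for $\gamma\neq\pm\alpha$, which via $\langle\gamma,\alpha^\vee\rangle=2a-\nu$ confines the $\mfq$-degree of $\mfg_\gamma$ to $\{3a-1,3a,3a+1\}$; once this is in hand, (1)--(4) are bookkeeping in the bigrading. I expect the two points needing the most care to be the identification of $\mcD^{-2}$ with the preimage of the affine tangent space of the orbit (i.e. determining precisely which $\mfg_{\delta-\alpha}$ occur in $[\mfp_0,\mfg_{-\alpha}]$), and the justification that on the homogeneous model the sheaf brackets defining $\mcD^{-3},\mcD^{-4}$ are computed at the symbol level by the algebraic bracket of $\mfq_-$, so that the surjectivity of the abelian lowering operator is exactly what is required. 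For a short simple root the length bound fails (already for $(C_n,\alpha_1)$, where a root of $\mfp_0$ has $\nu=2$), which is why the natural cone structure then differs from the VMRT-structure.
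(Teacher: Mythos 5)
Your proposal is correct, and its engine is the same as the paper's: everything ultimately rests on the long-root bound $|\langle\gamma,\alpha^\vee\rangle|\le 2$ with equality only for $\gamma=\pm\alpha$, which the paper also invokes (in the form of $\alpha$-string computations) for parts (3) and (4). What differs is the organization and several sub-arguments. For (1) the paper simply reads the Hermitian-symmetric property off the classification in Table 1 of the appendix, whereas you derive it directly from the bound $\nu\in\{-1,0,1\}$ on roots of $\mfp_0$ (or from cominuscularity of the neighbours); your version is self-contained and arguably preferable. For $\mcD^{-2}=T^{-2}G/Q$ the paper takes a shortcut: once $\mcD^{-1}=T^{-1}G/Q$ is known, Proposition \ref{p.HM04} identifies $\mcD^{-2}$ with the first derived system of $\mcD^{-1}$, which by Lemma \ref{l.G/Pfiltered} is $T^{-2}G/Q$ — no computation of the affine tangent space of the orbit is needed. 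For $i=3,4$ the paper first proves (3) and then uses the characteristic property $[\mcF,\mcD^{-2}]\subset\mcD^{-2}$ together with Lemma \ref{l.-3} to replace $[\mcD^{-1},\mcD^{-2}]$ by $[\mcV,\mcD^{-2}]$ modulo $\mcD^{-2}$, so that $\mcD^{-i}$ and $T^{-i}G/Q$ have the same inductive description; you instead prove the surjectivity $[\mfp_0\cap\mfq_{-1},\mfq_{-(i-1)}]=\mfq_{-i}$ from the generation of the irreducible $\mfp_0$-module $\mfp_{-1}$ by its highest weight line under the abelian lowering operators. Both mechanisms are sound; the paper's avoids the representation-theoretic surjectivity argument at the cost of reordering (3) before the rest of (2), while yours keeps (2) independent of (3). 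The two points you flag as delicate — that $[\mfp_0,\mfg_{-\alpha}]=\mfg_{-\alpha}\oplus\mfq_{-2}$, and that the sheaf brackets are computed by the symbol algebra $\mfq_-$ on the homogeneous model — are indeed the places where detail is needed, but both close up as you indicate (the first via $U(\mfn^-_{\mfp_0})\cdot\mfg_{-\alpha}=\mfp_{-1}$ with the $\mfq_0$-part degree-preserving, the second via Lemma \ref{l.G/Pfiltered}).
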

\begin{proof}
The Dynkin diagram of the parabolic subalgebra $\mfp_0^{ss}\cap\mfq$ of $\mfp_0^{ss}$ arise from that of $\mfq$ by removing the cross corresponding to $\alpha$ and the edges emanating from $\alpha$. Hence, (1) can be checked from the classification of irreducible compact Hermitian symmetric spaces given in Table 1 of Section 5 which uses the convention in Notation \ref{n.Dynkin}.

Next, let us prove $T^{-1} G/Q = \mathcal{D}^{-1}$ in (2). Note that we have the vector space decomposition
$$\mathfrak{q}^{-1}/\mathfrak{q}=\mathfrak{q}_{-1}=\mathfrak{q}_{-1}\cap\mathfrak{p}_-\oplus\mathfrak{q}_{-1}\cap\mathfrak{p}_0.$$ Since $\mathfrak{q}$ corresponds to the subset of simple roots consisting of  $\alpha$ and any roots connected to $\alpha$ in the Dynkin diagram of $\mathfrak{g}$, we must have $\mathfrak{q}_{-1}\cap\mathfrak{p}_-=\mathfrak{q}_{-1}^F,$ defined as in (3). Hence, \eqref{tangent_map_of_cone_projection} shows that $T^{-1} G/Q\subset \mathcal{D}^{-1}$. To show the equality $T^{-1} G/Q = \mathcal{D}^{-1}$, it suffices to prove that $T^{-1} G/Q$ contains the vertical bundle $\mathcal{V}\cong G\times _Q \mathfrak{p}/\mathfrak{q}$ of $p$. This is the case if and only if $\mathfrak{p}/\mathfrak{q}=\mathfrak{p}_0\cap \mathfrak{q}_-$ equals $\mathfrak{p}_0\cap \mathfrak{q}_{-1}=:\mathfrak{q}_{-1}^V$. This in turn holds if and only if $\mathfrak{p}_0\cap \mathfrak{q}_-\leq \mathfrak{p}_0$ is abelian, which is satisfied by $(1)$. Hence, $T^{-1} G/Q=\mathcal{D}^{-1}=\mathcal{F}\oplus\mathcal{V}.$ As a byproduct, we have shown that $\mathcal{F}$ defines a conic connection on $\mathcal{C}^{G/P}$. By Proposition \ref{p.HM04}  and $T^{-1} G/Q=\mathcal{D}^{-1}$, we see that also $T^{-2}G/Q=\mcD^{-2}$, since  $T^{-2}G/Q$ is the first derived system of $T^{-1} G/Q$.

Next, we prove that $\mathcal{F}$ is a characteristic conic connection. We have to show that the
Lie bracket $\mathfrak{q}_{-1}^F\otimes\mathfrak{q}_{-2}\rightarrow \mathfrak{q}_{-3}$ is zero. In other words, we need to show that $\beta -\alpha$ is not a root  for any root $\beta$ corresponding to $\mathfrak{q}_{-2}$.
Since $\mathfrak{q}_{-1}^F\otimes\mathfrak{q}_{-1}^V\cong\mathfrak{q}_{-2}$ via the Lie bracket from Corollary \ref{c.isom}, a root $\beta$ corresponding to $\mathfrak{q}_{-2}$ satisfies
\begin{itemize}
\item[(i)] $\textrm{ht}_\alpha(\beta)=\textrm{ht}_{\Sigma_\mathfrak{q}\setminus\{\alpha\}}(\beta)=-1$;
\item[(ii)] $\beta+\alpha$ is a root corresponding to $\mathfrak{q}_{-1}^V$; and
\item[(iii)] $\beta+k\alpha$ is not a root for $k>1$.
\end{itemize}
It follows that $\frac{2\langle \alpha, \beta\rangle}{\langle \alpha, \alpha\rangle}=-2+1=-1$, because $\alpha$ is a long root. This shows that $\beta-\alpha$ is not a root and $\mathcal{F}$ is a characteristic conic connection.

Next, we check (2) for $i=3, 4$. We have
$$T^{-3}G/Q=[T^{-2}G/Q, T^{-1}G/Q]+T^{-2}G/Q=[\mcD^{-2}, \mcV]+\mcD^{-2}=\mcD^{-3},$$ where the second equality follows from $\mcF$ being characteristic and $\mcD^{-2}=T^{-2}G/Q$.
By Lemma \ref{l.-3}, we also have $T^{-4}G/Q=\mcD^{-4}$.

To prove (4), the inclusion
 $T^{-4}G/Q\subset (Tp)^{-1}(T^{-1} G/P)$ is immediate from $T^{-4}G/Q=\mcD^{-4}$. To check that this inclusion is an equality, note that any root $\beta$ corresponding to $\mathfrak{p}_{-1}$ can be written as
$$\beta=-\alpha_i-\sum_{i \neq j}n_j\alpha_j$$ for some integers $n_j\geq 0$ and $\alpha=\alpha_i$. Since $\alpha_i$ is a long root, $$2\frac{\langle\beta, \alpha_i\rangle}{\langle\alpha_i, \alpha_i\rangle}=-2+n_{i-1}+n_{i+1}$$
is an integer between $-2$ and $1$. Hence, $0\leq n_{i-1}+n_{i+1}\leq 3$, which shows that $(Tp)^{-1}(T^{-1} G/P)\subset T^{-4}G/Q$.
\end{proof}

\begin{rem} When $\alpha$ is a short simple root,
we can still define the conic connection $\mcF$ of Proposition \ref{long_versus_short_root}. It turns out, however, that $\mcF$ is not a characteristic connection in this case.  In fact, one can check using Proposition \ref{p.Hw12} that $\mcC^{G/P}$ does not admit a characteristic conic connection. \end{rem}

Since we are interested in cone structures admitting characteristic connections, we fix for later the following notation.

\begin{notcon}\label{n.pq} Suppose that $(\mfg, \alpha)$ is a simple Lie algebra with a choice of long simple root. Let $\mfq\leq \mfp\leq \mfg$ be the parabolic subalgebras defined in
Lemma \ref{lem_highest_weight_cone}. Then we set
$$\mfq_{\pm 1}^F:=\mfg_{\pm \alpha}=\mfp_{\pm}\cap\mfq_{\pm 1} \quad\textrm{ and }\quad \mfq_{\pm 1}^V:=\mfp_0\cap\mfq_{\pm 1}=\mfp_0\cap\mfq_{\pm}$$ such that $\mfq_{\pm 1}=\mfq_{\pm 1}^F\oplus\mfq_{\pm1}^V$, which
gives rise to the decomposition $\mcD^{-1}=\mcF\oplus\mcV$.
\end{notcon}

An immediate consequence of Proposition \ref{long_versus_short_root} is:

\begin{cor}\label{FF_model}
Assume the setting of Proposition \ref{long_versus_short_root}. Then the Lie bracket $\mfq_{\pm 1}^F\otimes \mfq_{\pm 1}^V\rightarrow \mfq_{\pm 2}$ defines an isomorphism. 
\end{cor}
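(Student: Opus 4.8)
The plan is to deduce Corollary \ref{FF_model} directly from the structural results already assembled in Proposition \ref{long_versus_short_root} and Corollary \ref{c.isom}, with essentially no new computation. The key observation is that Corollary \ref{c.isom} provides, for any cone structure with a conic connection, a Levi-bracket isomorphism $\mcF \otimes \mcV \cong \textrm{gr}_{-2}(\mcD)$. In the homogeneous setting of Proposition \ref{long_versus_short_root}, all the relevant bundles are associated bundles $G \times_Q (\,\cdot\,)$, so this isomorphism of bundles is induced by a corresponding $Q$-equivariant (indeed $\mfq_0$-equivariant) isomorphism at the level of the fibers. Thus the bundle statement $\mcF \otimes \mcV \cong \textrm{gr}_{-2}(\mcD)$ translates, fiberwise, into the Lie-bracket statement $\mfq_{-1}^F \otimes \mfq_{-1}^V \to \mfq_{-2}$ being an isomorphism, using the identifications $\mcF \cong G \times_Q \mfq_{-1}^F$, $\mcV \cong G \times_Q \mfq_{-1}^V$, and $\textrm{gr}_{-2}(\mcD) \cong \mfq^{-2}/\mfq^{-1} \cong \mfq_{-2}$ coming from $\mcD^{-i} = T^{-i}G/Q$ in part (2) of Proposition \ref{long_versus_short_root}.

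Concretely, I would first record, from Notation \ref{n.pq}, the decomposition $\mfq_{-1} = \mfq_{-1}^F \oplus \mfq_{-1}^V$ with $\mfq_{-1}^F = \mfg_{-\alpha}$ and $\mfq_{-1}^V = \mfp_0 \cap \mfq_{-1}$. Because $\{T^{-i}G/Q\}$ is the weak derived flag of $T^{-1}G/Q$ (Lemma \ref{l.G/Pfiltered} applied to the pair $Q < G$), the Levi bracket $\mfq_{-1} \otimes \mfq_{-1} \to \mfq_{-2}$ is surjective; its component on $\mfq_{-1}^F \otimes \mfq_{-1}^F$ vanishes since $-2\alpha$ is not a root, and its component on $\mfq_{-1}^V \otimes \mfq_{-1}^V$ vanishes because $\mcC_o^{G/P} \cong P_0/P_0 \cap Q$ is Hermitian symmetric, i.e. $\mfp_0 \cap \mfq_-$ is abelian by part (1) of Proposition \ref{long_versus_short_root}, so $[\mfq_{-1}^V, \mfq_{-1}^V] = 0$. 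Hence the bracket is carried entirely by the mixed component $\mfq_{-1}^F \otimes \mfq_{-1}^V \to \mfq_{-2}$, which is therefore surjective.

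It then remains to check injectivity, equivalently a dimension count, which is exactly where Corollary \ref{c.isom} does the work: the abstract isomorphism $\mcF \otimes \mcV \cong \textrm{gr}_{-2}(\mcD)$ forces $\dim \mfq_{-1}^F \cdot \dim \mfq_{-1}^V = \dim \mfq_{-2}$, and since $\mfq_{-1}^F = \mfg_{-\alpha}$ is one-dimensional this reads $\dim \mfq_{-1}^V = \dim \mfq_{-2}$. A surjective linear map between spaces of equal finite dimension is an isomorphism, so $\mfq_{-1}^F \otimes \mfq_{-1}^V \to \mfq_{-2}$ is an isomorphism; the statement for the opposite signs follows by the symmetric argument (or by applying the Killing-form duality of Lemma \ref{l.basic_isos}, which identifies $\mfq_{\mp i}^* \cong \mfq_{\pm i}$ and intertwines the two brackets). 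The main conceptual point — and the only place where anything beyond bookkeeping is needed — is the vanishing of the two ``diagonal'' bracket components, for which the Hermitian-symmetric conclusion of Proposition \ref{long_versus_short_root}(1) is essential; everything else is a transfer of the already-established bundle isomorphism of Corollary \ref{c.isom} into the homogeneous fiber model.
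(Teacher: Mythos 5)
Your proposal is correct and follows essentially the same route as the paper, which treats Corollary \ref{FF_model} as immediate from Proposition \ref{long_versus_short_root}(1)--(2) together with Corollary \ref{c.isom} transferred fiberwise to the homogeneous model $\mcC^{G/P}\cong G/Q$ (indeed, this is exactly the identification invoked inside the proof of Proposition \ref{long_versus_short_root}(3)). Your separate root-theoretic verification of surjectivity of the mixed component is correct but redundant: once the Levi bracket of $\{\mcD^{-i}\}$ on $G/Q$ is identified with the algebraic bracket on $\mfq_-$, Corollary \ref{c.isom} gives the isomorphism $\mfq_{-1}^F\otimes\mfq_{-1}^V\cong\mfq_{-2}$ outright, not merely the dimension count.
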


Recall that, if $\mfp_\Sigma$ is any parabolic subalgebra of a simple Lie algebra $\mfg$, then the depth of $\Sigma$ equals $\textrm{ht}_{\Sigma}(\theta)$, where $\theta$ is the highest root of $\mfg$. Hence,
Lemma \ref{lem_highest_weight_cone} and Tables 1--3 of Section \ref{Appendix} imply the following.

\begin{prop}\label{p.depth}
Suppose $(\mfg,\alpha)$ a simple Lie algebra with a long simple root with corresponding parabolic subalgebra $\mfp$ and let $\mfq$ be as in Lemma \ref{lem_highest_weight_cone}.
\begin{itemize} \item[(1)]
 If $(\mfg, \alpha)\neq (A_n, \alpha_1/\alpha_n)$ is of symmetric type, then the depth of $\mfq$ is 3. If $\mfg=(A_n, \alpha_1/\alpha_n)$, then it is 2. 
 \item[(2)] If $(\mfg, \alpha)$
is of contact type, then the depth of $\mfq$ is 5. \item[(3)]
If $(\mfg, \alpha)$ is $(B_n/D_{n+1}, \alpha_3)$ for $n\geq 4,$ then the depth of $\mfp$ is 2, while the depth of  $\mfq$ is 6. \end{itemize}
In cases (2) and (3),  Proposition \ref{long_versus_short_root} implies $\mfp^{\pm 1}=\mfq^{\pm 4}$.
\end{prop}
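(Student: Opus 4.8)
The plan is to reduce both depths to coefficients of the highest root of $\mfg$ and then read off the finitely many cases. Write $\alpha=\alpha_i$ and let $\theta=\sum_j a_j\alpha_j$ be the highest root. Recall, as noted just before the proposition, that the depth of a parabolic $\mfp_\Sigma$ equals $\textrm{ht}_\Sigma(\theta)$. Since $\mfp$ corresponds to $\Sigma=\{\alpha_i\}$ and, by Lemma~\ref{lem_highest_weight_cone}, $\mfq$ corresponds to $\Sigma_\mfq=\{\alpha_i\}\cup\{\alpha_j:\alpha_j\sim\alpha_i\}$ (i.e. $\alpha_i$ together with its neighbours in the Dynkin diagram), this gives at once
\[
\textrm{depth}(\mfp)=a_i,\qquad \textrm{depth}(\mfq)=a_i+\sum_{j\sim i}a_j .
\]
Thus each part reduces to computing $a_i$ and the coefficients of the neighbours of $\alpha_i$ in $\theta$, which can be read off from Tables~1--3.

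To see the uniform pattern I would record one identity coming from $\alpha_i$ being long. For any neighbour $\alpha_j$ of a long simple root $\alpha_i$ the Cartan integer satisfies $\langle\alpha_j,\alpha_i^\vee\rangle=-1$, regardless of the length of $\alpha_j$ and of the bond multiplicity; this is exactly the fact already used in the proof of Proposition~\ref{long_versus_short_root}. Pairing $\theta$ with $\alpha_i^\vee$ gives $\langle\theta,\alpha_i^\vee\rangle=2a_i-\sum_{j\sim i}a_j$, whence
\[
\textrm{depth}(\mfq)=3a_i-\langle\theta,\alpha_i^\vee\rangle .
\]
Here $\langle\theta,\alpha_i^\vee\rangle$ is a non-negative integer that vanishes unless $\alpha_i$ is the node adjacent to the affine node $-\theta$ of the extended Dynkin diagram, in which case it equals $1$ for every $(\mfg,\alpha_i)$ occurring in the statement.

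The three cases are then immediate. In the symmetric case $a_i=1$, so $\textrm{depth}(\mfq)=3-\langle\theta,\alpha_i^\vee\rangle$; among long simple roots of depth $1$ the node $\alpha_i$ is adjacent to the affine node precisely for $(A_n,\alpha_1/\alpha_n)$ with $n\ge2$ (its unique neighbour then has coefficient $1$ in $\theta$), giving depth $2$, while in all other symmetric cases $\langle\theta,\alpha_i^\vee\rangle=0$ and the depth is $3$; this is (1). In the contact case $a_i=2$, and the contact node is in each instance adjacent to $-\theta$, so $\langle\theta,\alpha_i^\vee\rangle=1$ and $\textrm{depth}(\mfq)=6-1=5$; this is (2). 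For $(B_n/D_{n+1},\alpha_3)$ one has $a_3=2$, hence $\textrm{depth}(\mfp)=2$, while $\alpha_3$ is not adjacent to $-\theta$ (which attaches at $\alpha_2$ in both $B_n$ and $D_{n+1}$), so $\langle\theta,\alpha_3^\vee\rangle=0$ and $\textrm{depth}(\mfq)=6$; this is (3). Each adjacency claim is a finite check against Tables~1--3.

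For the final assertion in cases (2) and (3) no new input is required. By Proposition~\ref{long_versus_short_root}(2), $\mcD^{-4}=T^{-4}G/Q$ corresponds to $\mfq^{-4}/\mfq\subset\mfg/\mfq$, while by Proposition~\ref{long_versus_short_root}(4), $\mcD^{-4}=(Tp)^{-1}(T^{-1}G/P)$ corresponds to $\mfp^{-1}/\mfq$ under $Tp$; comparing the two descriptions yields $\mfq^{-4}=\mfp^{-1}$, and the identity on the opposite side of the filtrations follows dually from the Killing-form pairing of Lemma~\ref{l.basic_isos}. The depths just computed, both exceeding $4$, are what make $\mfq^{\pm4}$ a proper, nontrivial part of the filtration, so that the identity has content. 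I expect the only real point requiring care to be the uniform Cartan-integer identity at a long node together with the bookkeeping of which simple root is adjacent to the affine node, and---on the last assertion---the precise matching of filtration indices under Killing duality; everything else is reading coefficients off the tables.
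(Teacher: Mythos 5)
Your treatment of parts (1)--(3) is correct and follows the same route as the paper, whose entire argument is the observation that the depth of $\mfp_\Sigma$ equals $\mathrm{ht}_\Sigma(\theta)$ combined with Lemma \ref{lem_highest_weight_cone} and a table lookup. Your additional identity $\mathrm{depth}(\mfq)=3a_i-\langle\theta,\alpha_i^\vee\rangle$, obtained from the fact that every neighbour of a long simple root pairs to $-1$ against $\alpha_i^\vee$, is a genuine improvement in organization: it replaces case-by-case reading of neighbour coefficients by the single question of whether $\alpha_i$ is adjacent to the affine node, and all your adjacency claims (contact nodes adjacent to $-\theta$; $\alpha_1/\alpha_n$ in $A_n$ adjacent; $\alpha_3$ in $B_n/D_{n+1}$ not adjacent) check out. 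The only caveat is the degenerate case $(A_1,\alpha_1)$, where $\langle\theta,\alpha_1^\vee\rangle=2$ and the depth of $\mfq=\mfp$ is $1$, not $2$; you correctly restrict to $n\ge 2$, which the paper's statement does not do explicitly.

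For the final assertion there is a real problem, though it originates in the paper rather than in your argument. Your derivation of $\mfq^{-4}=\mfp^{-1}$ from parts (2) and (4) of Proposition \ref{long_versus_short_root} is exactly what is intended and is correct. However, the ``dual'' identity obtained from the Killing form is $\bigl(\mfq^{-4}\bigr)^\perp=\mfq^{5}$ and $\bigl(\mfp^{-1}\bigr)^\perp=\mfp^{2}$, i.e.\ $\mfq^{5}=\mfp^{2}$, \emph{not} $\mfq^{4}=\mfp^{1}$. The latter is in fact false: in the contact case of $G_2$ one has $\mfp^{1}=\mfp_+$ of dimension $5$, while $\mfq^{4}=\mfq_4\oplus\mfq_5=\mfg_{\alpha_1+3\alpha_2}\oplus\mfg_{\theta}$ has dimension $2$ (the root $\alpha_1$ itself lies in $\mfq_1$, not $\mfq^4$). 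So the ``$+$'' half of the displayed identity $\mfp^{\pm1}=\mfq^{\pm4}$ cannot be proved because it does not hold; the correct positive-index statement is $\mfp^{2}=\mfq^{5}$ (equivalently $\mfp_{\pm2}=\mfq_{\pm5}\oplus\mfq_{\pm6}$ in case (3) and $\mfp_{\pm2}=\mfq_{\pm5}$ in case (2)). You flagged the index bookkeeping as the delicate point but then asserted the literal dual identity without checking it; you should either correct the statement to $\mfq^{5}=\mfp^{2}$ or note explicitly that only the negative filtration identity $\mfp^{-1}=\mfq^{-4}$ (which is what the proof of Theorem \ref{m1} actually uses, namely that $\mfp_{-1}\subset\mfq^{-4}$ and $\mfp_{1}\subset\mfq^{1}$) follows.
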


\subsection{Nested pair of parabolic subalgebras and their homology groups}
Suppose $(\mfg,\alpha)$ is a simple Lie algebra with a choice of long simple root and let $\mfq\leq \mfp\leq \mfg$ be the parabolic subalgebras defined in
Lemma \ref{lem_highest_weight_cone}. Let $Q < P< G$ as in Section \ref{s.homogenous}. Denote by $P_+<Q_+$ the unipotent radicals and by $Q_0<P_0$ the Levi subgroups of $Q<P$. The following lemma is immediate (cf. Proposition 2.3 of \cite{CapCores}).

\begin{lem}\label{l.rel.hom}
Let $\iota: \Wedge^* (\mfg/\mfp)^*\otimes\mfg\cong \Wedge^* \mfp_+\otimes\mfg\hookrightarrow \Wedge^* (\mfg/\mfq)^*\otimes\mfg\cong \Wedge^* \mfq_+\otimes\mfg$ be the natural inclusion. Then 
$\iota$ intertwines the boundary operators for the homology groups $H_*(\mfp_+,\mfg)$ and $H_*(\mfq_+,\mfg)$. 
\end{lem}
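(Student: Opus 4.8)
The plan is to reduce the statement to an explicit comparison of the standard formulas for the differentials defining the two homology theories. Recall that under the Killing-form isomorphisms of Lemma \ref{l.basic_isos} we have identifications $\Wedge^*(\mfg/\mfp)^*\otimes\mfg\cong\Wedge^*\mfp_+\otimes\mfg$ and $\Wedge^*(\mfg/\mfq)^*\otimes\mfg\cong\Wedge^*\mfq_+\otimes\mfg$, so that the boundary operator $\partial^*$ in each case is (up to the standard sign conventions) the Lie algebra chain differential of $\mfp_+$ (respectively $\mfq_+$) with coefficients in $\mfg$ via the adjoint action. The key structural input is Lemma \ref{nested_parabolics}(1), which gives $\mfp_+\lhd\mfq_+$, so $\mfp_+$ is an ideal of the nilpotent algebra $\mfq_+$ and in particular a Lie subalgebra. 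Thus the inclusion $\iota$ is induced by a Lie algebra homomorphism $\mfp_+\hookrightarrow\mfq_+$ together with the identity on the coefficient module $\mfg$, and the first observation is that such a pair always yields a chain map at the level of the Chevalley--Eilenberg complexes.

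First I would write down the Kostant--type formula for $\partial^*$ explicitly. On a generator $Z_0\wedge\cdots\wedge Z_\ell\otimes v\in\Wedge^{\ell+1}\mfp_+\otimes\mfg$ one has
\begin{equation*}
\partial^*(Z_0\wedge\cdots\wedge Z_\ell\otimes v)=\sum_{s}(-1)^{s}\,Z_0\wedge\cdots\widehat{Z_s}\cdots\wedge Z_\ell\otimes[Z_s,v]+\sum_{s<t}(-1)^{s+t}[Z_s,Z_t]\wedge Z_0\wedge\cdots\widehat{Z_s}\cdots\widehat{Z_t}\cdots\wedge Z_\ell\otimes v,
\end{equation*}
and I would record the completely analogous formula for $\partial^*$ on $\Wedge^*\mfq_+\otimes\mfg$. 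Since $\iota$ just regards a chain built from elements of $\mfp_+$ as a chain in $\mfq_+$, and since $[\mfp_+,\mfp_+]\subset\mfp_+$ and $[\mfp_+,v]$ is computed by the same bracket in either complex, both terms of the formula are manifestly preserved by $\iota$: every bracket $[Z_s,Z_t]$ of elements of $\mfp_+$ lands back in $\mfp_+$ (using that $\mfp_+$ is a subalgebra), and every action term $[Z_s,v]\in\mfg$ is unchanged. Hence $\partial^*_{\mfq_+}\circ\iota=\iota\circ\partial^*_{\mfp_+}$, which is exactly the intertwining assertion.

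The one point that deserves genuine care, and which I expect to be the main obstacle, is bookkeeping rather than substance: I must confirm that the identifications $\Wedge^*(\mfg/\mfp)^*\otimes\mfg\cong\Wedge^*\mfp_+\otimes\mfg$ and its $\mfq$-analogue are compatible with $\iota$, i.e.\ that the inclusion $(\mfg/\mfp)^*\hookrightarrow(\mfg/\mfq)^*$ dual to the projection $\mfg/\mfq\twoheadrightarrow\mfg/\mfp$ really corresponds, under the two separate Killing-form dualities, to the subalgebra inclusion $\mfp_+\hookrightarrow\mfq_+$. This follows from Lemma \ref{nested_parabolics}(2), which exhibits $\mfq_+=(\mfp_0\cap\mfq_+)\oplus\mfp_+$ and hence $\mfp_+$ as a canonical Killing-orthogonal summand, but the sign conventions in the chain differential and the direction of the dualities must be tracked so that no spurious sign is introduced. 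Once that compatibility is checked, the chain-map computation above completes the proof; and because $\iota$ is a chain map for $\partial^*$, it is automatically one for the adjoint coboundary $\partial$ as well, matching the formulation used later.
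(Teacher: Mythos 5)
Your argument is correct and coincides with what the paper has in mind: the paper gives no proof, declaring the lemma ``immediate'' with a pointer to Proposition 2.3 of \cite{CapCores}, and what makes it immediate is exactly your observation that $\mfp_+\subset\mfq_+$ is a Lie subalgebra (indeed an ideal, by Lemma \ref{nested_parabolics}), that the adjoint action of $\mfp_+$ on $\mfg$ is the restriction of that of $\mfq_+$, and that the Chevalley--Eilenberg boundary formula therefore restricts term by term. Your bookkeeping point about the two Killing-form dualities also resolves as you expect: $(\mfg/\mfp)^*$ is the annihilator of $\mfp$ in $\mfg^*$, which the Killing form identifies with $\mfp_+$, and annihilators reverse inclusions, so the map dual to $\mfg/\mfq\twoheadrightarrow\mfg/\mfp$ is precisely $\mfp_+\hookrightarrow\mfq_+$.

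One correction: your closing claim that $\iota$ is ``automatically'' a chain map for $\partial$ because it is one for $\partial^*$ is neither needed for the lemma (which asserts only the intertwining of the boundary operators $\partial^*$) nor true. Invariance of a subspace under an operator does not give invariance under its adjoint. Concretely, $\partial$ on $\Wedge^\ell\mfq_+\otimes\mfg\cong\Wedge^\ell\mfq_-^*\otimes\mfg$ is the Lie algebra coboundary for $\mfq_-\supset\mfp_-$; the image of $\iota$ consists of cochains vanishing whenever an argument lies in $\mfp_0\cap\mfq_-$, and for such a cochain $\phi$ the expression $\partial(\iota\phi)(X_0,X_1,\dots,X_\ell)$ with $X_0\in\mfp_0\cap\mfq_-$ and $X_1,\dots,X_\ell\in\mfp_-$ reduces to $X_0\cdot\phi(X_1,\dots,X_\ell)$ plus bracket terms $\phi([X_0,X_t],\dots)$, which is generically nonzero. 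So $\partial\circ\iota\neq\iota\circ\partial$ in general; the paper only ever uses the $\partial^*$-statement (e.g.\ in Propositions \ref{p.rel.Lie} and \ref{norm_cone}), and you should drop the final sentence.
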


From Lemma \ref{l.rel.hom} one sees that the complete reducible $Q$-module $H_2(\mfq_+, \mfg)$ must contain a component isomorphic to a completely reducible quotient of the $Q$-module 
$H_2(\mfp_+, \mfg)$. 
\begin{prop}\label{p.rel.Lie} 
In Notation \ref{n.pq} we have:
\begin{enumerate}
\item[(1)] The largest completely reducible quotient of 
$H_2(\mfp_+, \mfg)$ as a $Q$-module equals 
$$H_2(\mfp_+, \mfg)/\mfq_+ H_2(\mfp_+, \mfg)=H_2(\mfp_+, \mfg)/\mfq_1^VH_2(\mfp_+, \mfg)=H_0(\mfq_{1}^V,  H_2(\mfp_+, \mfg)).$$
and is contained in $H_2(\mfq_+,\mfg)$.
\item[(2)] Under the identification \eqref{Hodge}
of $H_2(\mfq_+, \mfg)$ with a $Q_0$-submodule of $\Wedge^2\mfq_+\otimes \mfg$ one has
\begin{equation}\label{rel_Lie_2}
H_0(\mfq_{1}^V, H_{2}(\mfp_+,\mfg))\subset \Wedge^{2}\mfp_+\otimes \mfg\subset \Wedge^2\mfq_+\otimes\mfg
\end{equation}
and the irreducible components of the $\mfq_0$-module $H_0(\mfq_{1}^V, H_{2}(\mfp_+,\mfg))\subset \Wedge^{2}\mfp_+\otimes \mfg$ are
generated by the lowest weight vectors of the irreducible components of the $\mfp_0$-module  $H_{2}(\mfp_+,\mfg)$. 
\end{enumerate}
\end{prop}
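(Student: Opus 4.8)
The plan is to realize the abstract coinvariants of (1) concretely as a $\mfq_0$-submodule of the Kostant harmonic space of $\mfq$, and to read off both assertions from Kostant's theorem applied to the two parabolics at once. I would first dispose of the two displayed equalities in (1) by pure representation theory of $Q$. Since $Q_0$ is reductive, the largest completely reducible quotient of any finite-dimensional $Q$-module $V$ is $V/\mfq_+V = H_0(\mfq_+, V)$, because $\mfq_+V$ is the smallest submodule on whose quotient the unipotent radical acts trivially. Applying this to $V = H_2(\mfp_+,\mfg)$ and using $\mfq_+ = \mfq_1^V\oplus\mfp_+$ (Lemma \ref{nested_parabolics} and Notation \ref{n.pq}) together with the fact that $\mfp_+$ acts trivially on the completely reducible $P$-module $H_2(\mfp_+,\mfg)$ (Proposition \ref{p.Kostant}), one gets $\mfq_+V = \mfq_1^VV$, and hence the identification with $H_0(\mfq_1^V, H_2(\mfp_+,\mfg))$.

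For the containment and for (2) I would work throughout with the Hodge realization $H_2(\mfm_+,\mfg)\cong\ker\square_\mfm\subset\Wedge^2\mfm_+\otimes\mfg$. By Lemma \ref{l.rel.hom} the inclusion $\iota$ is a chain map for the boundary operators, so it induces a $Q$-equivariant map $\iota_*\colon H_2(\mfp_+,\mfg)\to H_2(\mfq_+,\mfg)$. As the target is a completely reducible $Q$-module (Proposition \ref{p.Kostant} for $\mfq$), $\mfq_1^V$ acts trivially on it, so $\iota_*$ kills $\mfq_1^VH_2(\mfp_+,\mfg)$ and factors through a $\mfq_0$-map $\bar\iota_*$ defined on the coinvariants $H_0(\mfq_1^V, H_2(\mfp_+,\mfg))$. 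What is left is to show $\bar\iota_*$ is injective and to locate its image inside $\Wedge^2\mfp_+\otimes\mfg$.

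The crux, and the step I expect to require the most care, is the claim that \emph{the lowest weight generators of $H_2(\mfp_+,\mfg)$ are already harmonic for $\mfq$.} By the proof of Proposition \ref{p.vanishing} each irreducible $\mfp_0$-component of $H_2(\mfp_+,\mfg)$ is generated by a lowest weight vector $v_{ij} = \mfg_{\alpha_i}\wedge\mfg_{\alpha_i+\alpha_j}\otimes\mfg_{-s_is_j\theta}\in\Wedge^2\mfp_1\otimes\mfg$, indexed by $s_is_j\in W^\mfp(2)$. The key point is that $W^\mfp\subseteq W^\mfq$: the inclusion $\mfq\subset\mfp$ gives $W_{\mfq_0}\subseteq W_{\mfp_0}$, and an element of minimal length in its $W_{\mfp_0}$-coset is a fortiori minimal in its smaller $W_{\mfq_0}$-coset. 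Since the inversion set $\Phi_{s_is_j} = \{\alpha_i,\alpha_i+\alpha_j\}$ and the $\mfg$-weight $-s_is_j\theta$ depend only on the Weyl group element and not on the ambient parabolic, Kostant's theorem applied to $\mfq$ identifies the \emph{same} vector $v_{ij}$ as a lowest weight generator of an irreducible component of $H_2(\mfq_+,\mfg)$, so $v_{ij}\in\ker\square_\mfq$. ($\partial^*_\mfq$-closedness is automatic because $\partial^*_\mfq$ restricts to $\partial^*_\mfp$ on $\Wedge^*\mfp_+\otimes\mfg$, which is the content of Lemma \ref{l.rel.hom}; the genuine information is $\partial_\mfq$-closedness, which membership $s_is_j\in W^\mfq(2)$ encodes.)

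Granting this, the rest is formal. For (2), the coinvariants split as $\bigoplus_W H_0(\mfq_1^V, W)$ over the $\mfp_0$-irreducible summands $W$, and since $\mfq_1^V$ is the nilradical of the parabolic $\mfp_0\cap\mfq$ of $\mfp_0$ (Lemma \ref{nested_parabolics}(2)), the duality $H_0(\mfq_1^V, W)^*\cong(W^*)^{\mfq_1^V}$ identifies $H_0(\mfq_1^V, W)$ with the irreducible $\mfq_0$-module generated by the image of the lowest weight vector $v_{ij}$ of $W$ --- which is exactly the lowest-weight statement of (2). Finally $\bar\iota_*$ sends each such class to the nonzero $\mfq$-harmonic vector $v_{ij}$; being $\mfq_0$-equivariant and nonzero on each irreducible summand it is injective there, and as the summands of $H_2(\mfp_+,\mfg)$ are pairwise non-isomorphic (Kostant) their generators carry distinct weights, so $\bar\iota_*$ is injective overall, giving the containment asserted in (1). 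Its image is the $\mfq_0$-span of the $v_{ij}$; because $\mfq_0\subset\mfp_0$ preserves the $\mfp$-grading and hence $\Wedge^2\mfp_1\otimes\mfg$, this span lies in $\Wedge^2\mfp_+\otimes\mfg\subset\Wedge^2\mfq_+\otimes\mfg$, completing (2).
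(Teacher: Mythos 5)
Your proof is correct and follows essentially the same route as the paper: your first paragraph (the largest completely reducible quotient equals the $\mfq_1^V$-coinvariants, since $\mfp_+$ already acts trivially) is verbatim the paper's argument, and the remainder rests, as in the paper, on applying Kostant's theorem both to $\mfq\leq\mfg$ and to the parabolic $\mfq_0\oplus\mfq_1^V\leq\mfp_0$. The only difference is that where the paper delegates the compatibility of these two applications to a citation of Kostant and \v Cap--Sou\v cek, you verify it by hand via the inclusion $W^\mfp\subseteq W^\mfq$ and the observation that the inversion set $\Phi_{s_is_j}=\{\alpha_i,\alpha_i+\alpha_j\}$ and the weight $-s_is_j(\theta)$ depend only on the Weyl group element and not on the ambient parabolic --- a correct and usefully explicit substitute.
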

\begin{proof}
On a completely reducible quotient of the $Q$-module $H_{2}(\mfp_+, \mfg)$ the unipotent radical $Q_+$ of $Q$ must act trivially and hence the largest such quotient equals $H_2(\mfp_+, \mfg)/\mfq_+ H_2(\mfp_+, \mfg)$.
Since $\mfp_+$ acts trivially on $H_2(\mfp_+, \mfg)$, that quotient equals $H_2(\mfp_+, \mfg)/\mfq_1^VH_2(\mfp_+, \mfg)$, which by definition is the $0$-th Lie algebra homology group of the 
abelian Lie algebra $\mfq_{1}^V$ with values in the representation $H_2(\mfp_+, \mfg)$. Recall that $\mfq_0\oplus\mfq_{1}^V$ is a parabolic subalgebra of the reductive Lie algebra 
$\mfp_0$ with nilradical $\mfq_{1}^V$. Hence, $H_0(\mfq_{1}^V,  H_2(\mfp_+, \mfg))$ and $H_2(\mfq_+, \mfg)$ are both subject to Kostant's Theorem \cite[Theorem 5.14]{Kostant}, 
which implies that  $H_0(\mfq_{1}^V,  H_2(\mfp_+, \mfg))\subset H_2(\mfq_+, \mfg)$ and also the statements in (2) (cf. also \cite[Theorem 2.7]{CapSoucek}).
\end{proof}

In the following proposition, the subspace $H_0(\mfq_{1}^V, H_2(\mfp_+, \mfg))\subset H_2(\mfq_+, \mfg)$ is identified with a $Q_0$-submodule of $\Wedge^2\mfp_+\otimes\mfg\subset \Wedge^2\mfq_+\otimes \mfg$ according to \eqref{rel_Lie_2}.

\begin{prop}\label{harm_curv_C}
In Notation \ref{n.pq} the following holds.
\begin{enumerate}
\item[(1)] Assume $(\mfg,\alpha)$ is of symmetric type.
\begin{itemize}
\item[(i)] Assume $(\mfg,\alpha)\neq (A_n,\alpha_1 \mbox{ or } \alpha_n)$.  Then $H_0(\mfq_1^V, H_{2,1}(\mfp_+, \mfg))\subset \mfq_{1}^F\otimes \mfq_{2}\otimes\mfq_{-3}$ and the Hodge decomposition \eqref{Hodge} of $\Wedge^2\mfq_+\otimes\mfg$ restricted to
$\mfq_{1}^F\otimes \mfq_{2}\otimes\mfq_{-3}$  reads as
\begin{equation}\label{Hodge1}
\mfq_{1}^F\otimes \mfq_{2}\otimes\mfq_{-3}=\ker(\partial^*)\oplus \emph{im}(\partial)=H_0(\mfq_1^V, H_{2,1}(\mfp_+, \mfg))\oplus\emph{im}(\partial).
\end{equation}
Moreover, under the identification $\mfq_{1}^F\otimes \mfq_{-2}^*\otimes\mfq_{-3}\cong \mfq_{1}^F\otimes \mfq_{2}\otimes\mfq_{-3}$ via the Killing form, $\emph{im}(\partial)$ coincides with the image of the inclusion
$$\emph{Id}\otimes \emph{ad}(_-)\vert _{\mfq_{-2}}: \mfq_{1}^F\otimes\mfq_{-1}^V\hookrightarrow \mfq_{1}^F\otimes \mfq_{-2}^*\otimes\mfq_{-3}.$$
\item[(ii)] $H_0(\mfq_1^V, H_{2,2}(\mfp_+, \mfg))\subset \mfq_1^F\otimes\mfq_2\otimes\mfq_{-1}^V$. Moreover, the isomorphism $\mfq_{\pm 1}^F\otimes\mfq_{\pm 1}^V\cong \mfq_{\pm 2}$ of Corollary \ref{FF_model}
and the Killing form induce
isomorphisms
\begin{equation}\label{isos}
\mfq_1^F\otimes\mfq_2\otimes\mfq_{-1}^V\cong S^2\mfq_1^F\otimes \emph{End}(\mfq_{-1}^V)\cong S^3\mfq_{1}^F\otimes \emph{Hom}(\mfq_{-1}^V, \mfq_{-2}).
\end{equation}
Under these isomorphisms, the subspace $\ker(\partial^*)$  corresponds to
\begin{align*}
\ker(\partial^*)&=H_0(\mfq_1^V, H_{2,2}(\mfp_+, \mfg))\oplus\emph{im}(\partial^*)\\
&\cong S^2\mfq_1^F\otimes \emph{End}_0(\mfq_{-1}^V)\cong S^3\mfq_{1}^F\otimes \emph{Hom}_0(\mfq_{-1}^V, \mfq_{-2}),
\end{align*}
where $\emph{End}_0(\mfq_{-1}^V)\subset \emph{End}(\mfq_{-1}^V) $ and $\emph{Hom}_0(\mfq_{-1}^V, \mfq_{-2})\subset \emph{Hom}(\mfq_{-1}^V, \mfq_{-2})$  denote the subspaces of trace-free maps.
\end{itemize}
\item[(2)] Assume $(\mfg,\alpha)$ is of contact type or equal to $(B_n/D_{n+1}, \alpha_3)$ for $n\geq 4$, then one has $H_0(\mfq_1^V, \oplus_{r\geq 1}H_{2,r}(\mfp_+, \mfg))=H_0(\mfq_1^V, H_{2,1}(\mfp_+, \mfg))$ and
the Hodge decomposition \eqref{Hodge} of $\Wedge^2\mfq_+\otimes\mfg$ restricted to $\mfq_1^F\otimes\mfq_2\otimes\mfq_{-4}$ reads as
$$\mfq_1^F\otimes\mfq_2\otimes \mfq_{-4}=\ker(\partial^*)=H_0(\mfq_1^V, H_{2,1}(\mfp_+, \mfg))\oplus\emph{im}(\partial^*).$$
\end{enumerate}
\end{prop}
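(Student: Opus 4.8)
The plan is to locate $H_0(\mfq_1^V,H_2(\mfp_+,\mfg))$ inside $H_2(\mfq_+,\mfg)$ by an explicit analysis of lowest weight vectors, and then to describe it through the Kostant--Hodge decomposition of Proposition \ref{p.Kostant}. By Proposition \ref{p.rel.Lie}(2) every irreducible $\mfq_0$-component of $H_0(\mfq_1^V,H_2(\mfp_+,\mfg))$ is generated by a lowest weight vector of an irreducible $\mfp_0$-component of $H_2(\mfp_+,\mfg)$, and by Proposition \ref{p.vanishing} each such vector has the form $\mfg_{\alpha_i}\wedge\mfg_{\alpha_i+\alpha_j}\otimes\mfg_{\beta_{ij}}$ with $\beta_{ij}=-s_is_j(\theta)\in\mfp_{r-2}$. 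First I would record that, by Notation \ref{n.pq} and Corollary \ref{FF_model}, one always has $\mfg_{\alpha_i}\in\mfq_1^F$ and $\mfg_{\alpha_i+\alpha_j}\in\mfq_2$, so only the placement of the value $\mfg_{\beta_{ij}}$ distinguishes the cases. A small but essential observation is that, since $\alpha$ is long and the semisimple part of $\mfq_0$ is generated by simple roots orthogonal to $\alpha$, one has $[\mfq_0,\mfg_\alpha]=0$; hence each candidate target $\mfq_1^F\otimes\mfq_2\otimes\mfq_{-3}$, $\mfq_1^F\otimes\mfq_2\otimes\mfq_{-1}^V$ and $\mfq_1^F\otimes\mfq_2\otimes\mfq_{-4}$ is genuinely a $\mfq_0$-submodule of $\Wedge^2\mfq_+\otimes\mfg$, so the stated inclusions follow once the generating vector is shown to lie in it.

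Next I would pin down the $\mfq$-grading of $\mfg_{\beta_{ij}}$, which, $\beta_{ij}$ being the lowest weight of $\mfp_{r-2}$, reduces to computing $\textrm{ht}_{\Sigma_\mfq}(\beta_{ij})$. For symmetric type with $(\mfg,\alpha)\neq(A_n,\alpha_1/\alpha_n)$ the $H_{2,1}$-vector has value $\mfg_{-\theta}$, which by Proposition \ref{p.depth}(1) lies at the bottom $\mfq_{-3}$ of the depth-$3$ grading, giving (1)(i); the $H_{2,2}$-vector has value of $\mfp$-degree $0$ and $\mfq$-degree $-1$, hence in $\mfq_{-1}^V=\mfp_0\cap\mfq_{-1}$, giving the inclusion in (1)(ii). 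In the cases of (2), Proposition \ref{p.vanishing}(2) leaves only $H_{2,1}$, whose value lies in $\mfp_{-1}$; using $\mfp^{-1}=\mfq^{-4}$ from Proposition \ref{p.depth} together with the fact that $\beta_{ij}$ is the lowest weight of $\mfp_{-1}$ places $\mfg_{\beta_{ij}}$ in $\mfq_{-4}$. This step is root bookkeeping and is exactly where the three regimes separate.

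The remaining content is the Hodge-theoretic description on each $\mfq_0$-submodule $W$ separately. Since $\partial$ and $\partial^*$ are $\mfq_0$-equivariant and preserve the grading, the decomposition of Proposition \ref{p.Kostant}(2) restricts to $W=(W\cap\textrm{im}\,\partial)\oplus(W\cap\ker\partial^*)$, and I would evaluate the Kostant (co)differential on $W$ by the standard formulas. For (1)(i) I would compute $\ker(\partial^*|_W)$ and show it equals the harmonic part $H_0(\mfq_1^V,H_{2,1})$ — equivalently that no $\Wedge^3\mfq_+\otimes\mfg$ element of the same homogeneity maps into $W$ under $\partial^*$ — and identify $\textrm{im}\,\partial\cap W$ with the image of $\textrm{Id}\otimes\textrm{ad}(-)|_{\mfq_{-2}}$ by writing out $\partial(f\otimes v)$ for $f\in\mfq_1^F$, $v\in\mfq_{-1}^V$ and reading off the bracket term under $\mfq_2\cong\mfq_{-2}^*$. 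For (1)(ii) I would first derive the chain \eqref{isos} purely from Corollary \ref{FF_model} (which makes $\mfq_1^F$ a line and gives $\mfq_{\pm2}\cong\mfq_{\pm1}^F\otimes\mfq_{\pm1}^V$) together with Killing duality, and then evaluate $\partial^*$ on $\mfq_1^F\otimes\mfq_2\otimes\mfq_{-1}^V$: using $[\mfg_\alpha,\mfq_{-1}^V]=0$ the codifferential collapses to a single contraction whose kernel is precisely the trace-free part $S^2\mfq_1^F\otimes\textrm{End}_0(\mfq_{-1}^V)$. For (2) the same restriction applies and the point is that $\partial$ into $\mfq_1^F\otimes\mfq_2\otimes\mfq_{-4}$ admits no source in the relevant homogeneity, whence $\textrm{im}\,\partial\cap W=0$ and $W=\ker\partial^*=H_0(\mfq_1^V,H_{2,1})\oplus\textrm{im}\,\partial^*$.

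I expect the main obstacle to be the trace-free identification in (1)(ii): unwinding the codifferential through \eqref{isos} so that $\textrm{im}\,\partial^*$ matches exactly the trace part of $\textrm{End}(\mfq_{-1}^V)$ requires a genuine bracket computation — via the Jacobi identity and the nondegeneracy of the Killing pairing $\mfq_1^V\otimes\mfq_{-1}^V\to\C$ — rather than mere weight counting. The subsidiary obstacle is verifying, in (1)(i) and (2), that the ``wrong'' Hodge summand ($\textrm{im}\,\partial^*$ resp. $\textrm{im}\,\partial$) is absent in the given homogeneity; here I would argue that the adjacent wedge-degree component of matching homogeneity either vanishes or maps to zero, which again reduces to the root data recorded via Proposition \ref{p.depth} and the tables of Section \ref{Appendix}.
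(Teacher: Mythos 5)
Your proposal reproduces the paper's argument essentially step for step: the inclusions are obtained from the lowest-weight-vector description of Propositions \ref{p.rel.Lie}(2) and \ref{p.vanishing} combined with the root/depth bookkeeping of Lemma \ref{lem_highest_weight_cone} and Proposition \ref{p.depth}, and the Hodge identifications in (1)(i)--(ii) are carried out by the same explicit evaluations of $\partial$ and $\partial^*$ (in particular the collapse of $\partial^*$ to the trace contraction on $S^2\mfq_1^F\otimes\textrm{End}(\mfq_{-1}^V)$, and the exclusion of $\textrm{im}(\partial^*)$ in degree $0$ via $[\mfq_1^V,\mfq_1^V]=0$). The only divergences are minor: in part (2) the paper verifies directly that $\partial^*$ annihilates all of $\mfq_1^F\otimes\mfq_2\otimes\mfq_{-4}$ by proving $[\mfq_2,\mfq_{-4}]=[\mfq_1^F,\mfq_{-4}]=0$ (from $[\mfp_1,\mfp_{-1}]\subset\mfp_0$ and $\mfp_0\cap\mfq_-=\mfq_{-1}^V$), which is slightly more economical than your plan of excluding $\textrm{im}(\partial)$ since it gives the containment in $\ker(\partial^*)$ without appealing to the Hodge decomposition restricting to the subspace; and your parenthetical $[\mfq_0,\mfg_\alpha]=0$ is not literally correct (the center of $\mfq_0$ acts by the nonzero weight $\alpha$), though the intended conclusion that $\mfg_\alpha$ is a one-dimensional $\mfq_0$-submodule is what the argument needs and is true.
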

\begin{proof}
By (2) of Proposition \ref{p.rel.Lie},  the description of the lowest weight vectors of the $\mfp_0$-module $H_{2,r}(\mfp_+, \mfg)$ in \eqref{irred_comp} and the description of $\mfq$ in terms of roots in Lemma
\ref{lem_highest_weight_cone} imply that:
\begin{itemize}
\item $H_0(\mfq_{1}^V,  H_{2,1}(\mfp_+, \mfg))$ is contained in $\mfq_1^F\otimes\mfq_2\otimes\mfq_{-3}$ in the symmetric case for  $(\mfg,\alpha)\neq (A_n,\alpha_1 \mbox{ or } \alpha_n)$ and in
$\mfq_1^F\otimes\mfq_2\otimes\mfq_{-4}$ in the cases in (2), since any lowest weight vector of the irreducible $\mfp_0$-module $\mfp_{-1}$ lies in $\mfq_{-3}$ in the first case and in $\mfq_{-4}$ in the latter cases;
\item $H_0(\mfq_{1}^V,  H_{2, 2}(\mfp_+, \mfg))$ is contained in $\mfq_1^F\otimes\mfq_2\otimes\mfq_{-1}^V$, since any lowest weight vector of the adjoint representation of
$\mfp_0=\mfq_{-1}^V\oplus\mfq_0\oplus\mfq_{1}^V$ lies in $\mfq_{-1}^V$;
\end{itemize}
Next let us prove the remaining statements in (i). Given \eqref{Hodge}, the decomposition \eqref{Hodge1}
is equivalent to $\textrm{im}(\partial^*)\cap \mfq_{1}^F\otimes \mfq_{2}\otimes\mfq_{-3}=0$. Since $\partial^*$ preserves the grading on $\Wedge^*\mfq_+\otimes\mfg$, the formula of $\partial^*$ implies that only the image of the restriction of $\partial^*$ to 
$$\Wedge^2\mfq_{1}^V\otimes\mfq_{1}^F\otimes \mfq_{-3}\subset\Wedge^3\mfq_1\otimes \mfq_{-3}\subset \Wedge^3\mfq_+\otimes\mfg$$
can possibly have a non-zero intersection with $\mfq_{1}^F\otimes \mfq_{2}\otimes\mfq_{-3}$. Since $[\mfq_{1}^V,\mfq_1^V]=0$, this is however not the case,
which proves \eqref{Hodge1}. For the last claim in (i), recall that $\partial$ also preserves the grading on $\Wedge^*\mfq_+\otimes\mfg$ and so the definition of $\partial$
implies that the only component in $\mfq_+\otimes\mfg$ that is mapped to $\mfq_{1}^F\otimes\mfq_2\otimes \mfq_{-3}$ is the component $\mfq_1^F\otimes\mfq_{-1}^V$ of degree $0$.
Under the isomorphism $\mfq_1^F\otimes\mfq_{-1}^V\cong \textrm{Hom}(\mfq_{-1}^F, \mfq_{-1}^V)$ via the Killing form, we have
$$\partial \phi(X,Y)=-\textrm{ad}(\phi(X))(Y)\quad\textrm{ for } X\in\mfq_{-1}^F, Y\in\mfq_{-2} \textrm{ and }  \phi\in \textrm{Hom}(\mfq_{-1}^F,\mfq_{-1}^V),$$
since $[\mfq_{-1}^F,\mfq_{-2}]=0$ by (3) of Proposition \ref{long_versus_short_root}. This proves the last claim of (i), where injectivity follows from Lemma \ref{FF.injective} and Table 1 of Section \ref{Appendix}.

Now we prove the remaining claims in (ii). The statement about the isomorphisms in \eqref{isos} is clear.
For the last statement of (ii), note that $\partial^*$ restricted to $\mfq_1^F\otimes\mfq_2\otimes\mfq_{-1}^V$ is given by
\begin{align*}
&\partial^*: \mfq_1^F\otimes\mfq_2\otimes\mfq_{-1}^V\rightarrow \mfq_1^F\otimes\mfq_1^F \\
&\partial^*(X\otimes Y\otimes V)=-X\otimes [Y, V],
\end{align*}
since $[\mfq_{1}^F, \mfq_{-1}^V]=0$ and $[\mfq_{1}^F, \mfq_{2}]=0$. For $Y\in\mfq_2$ and $V\in\mfq_{-1}^V$, the bracket $[Y,V]$ is nonzero if and only if $Y$ corresponds
under the isomorphism $\mfq_{1}^F\otimes\mfq_{1}^V\cong \mfq_2$ to an element of the form $Z\otimes W\in \mfq_{1}^F\otimes\mfq_{1}^V$, where $W\in\mfq_{1}^V$ is dual to $V$ under the Killing form.
Hence, under the first isomorphism in \eqref{isos}, the operator $\partial^*$ is just given by the natural contraction $S^2\mfq_1^F\otimes \textrm{End}(\mfq_{-1}^V)\rightarrow S^2\mfq_1^F$  (in the endomorphism factor),
which finishes the proof of (ii).

Now let us prove (2). We already observed that  $H_0(\mfq_1^V, H_{2,1}(\mfp_+, \mfg))\subset \mfq_1^F\otimes\mfq_2\otimes\mfq_{-4}$ and by
Proposition \ref{p.vanishing}, we have $H_0(\mfq_1^V, \oplus_{r\geq 1} H_{2,r}(\mfp_+, \mfg))=H_0(\mfq_1^V, H_{2,1}(\mfp_+, \mfg))$. So it remains to show that
$\ker(\partial^*)=\mfq_1^F\otimes\mfq_2\otimes\mfq_{-4}$. Since $[\mfq_1^F, \mfq_{2}]=0,$ it suffices to show   \begin{equation}\label{e.243} [\mfq_2, \mfq_{-4}]=0 \mbox{ and } [\mfq_1^F, \mfq_{-4}]=0. \end{equation}
By (2) of Proposition \ref{long_versus_short_root}, one must have
$$[\mfq_2, \mfq_{-4}]\in\mfq_{-2}\cap[\mfp_{1}, \mfp_{-1}] \mbox{ and } [\mfq_1^F, \mfq_{-4}]\in \mfq_{-3}\cap[\mfp_{1}, \mfp_{-1}].$$ But $[\mfp_{1}, \mfp_{-1}]\subset\mfp_0$, while $\mfp_0\cap \mfq_-=\mfq_{-1}^V$  by (1) of Proposition \ref{long_versus_short_root}. This shows \eqref{e.243}, and the proof of (2) is completed.
\end{proof}

\section{Cone structures given by varieties of minimal rational tangents}\label{s.vmrt}

In this section, we give a brief survey of results on varieties of minimal rational tangents, which provide many examples of cone structures with characteristic conic connections arising from algebraic geometry.
Although these results have been the motivation to investigate the questions we study in the next section, they are logically independent.   As the nature of the results in this section and the methods used to prove them are quite different from those in the rest of the paper, we give a minimal presentation which we believe is sufficient for readers outside algebraic geometry to understand at least the statements of the three results,  Theorem \ref{t.Mori}, Proposition \ref{p.vmrt} and Theorem \ref{t.Mok}.

Firstly, recall the following facts on vector bundles on the Riemann sphere $\mbP^1$. We have the tautological line bundle $\mcO(-1)$ and its dual line bundle $\mcO(1)$ on $\mbP^1$.  Any holomorphic line bundle is of the form $\mcO(\ell)$ for an integer $\ell$ where $\mcO(\ell) :=\mcO(1)^{\otimes \ell}$ if $\ell \geq 0$ and $\mcO(\ell) := \mcO(-1)^{\otimes - \ell}$ if $\ell <0$.  The line bundle $\mcO(\ell)$ has no nonzero global holomorphic sections on $\mbP^1$ if $\ell <0$.  Moreover,  any holomorphic vector bundle on $\mbP^1$ is isomorphic to a direct sum of holomorphic line bundles.

  \begin{defin}\label{d.unbendable}
  The image of a nonconstant holomorphic map $f: \mbP^1 \to X$ from the Riemann sphere to a complex manifold is called a \emph{rational curve} on $X$. \begin{itemize} \item[(1)] Given a rational curve $C \subset X$, we can always choose $f$ such that it is injective on a dense open subset of $\mbP^1$. Such $f$ is called a \emph{ normalization } of $C$. \item[(2)]  The \emph{anti-canonical degree} of $C \subset X$ is the integer $\ell$ such that the line bundle $ \det (f^* TX)$ on $\mbP^1$ under a normalization $f$ is isomorphic to $\mcO(\ell)$. \item[(3)]   A rational curve $C \subset X$ is an \emph{immersed rational curve} on $X$ if its normalization is a holomorphic immersion.  An immersed rational curve $C \subset X$ is called an \emph{unbendable rational curve} if its normal bundle $N_{C} = f^*TX/TC$ is isomorphic to  $\mcO(1)^{\oplus r} \oplus \mcO^{\oplus (n-r)}$ as vector bundles on $\mbP^1$ for some nonnegative integer $r$ and $n= \dim X -1$. The number $r+2$ is  the anti-canonical degree of $C$.  \end{itemize} \end{defin}

The following theorem summarizes main results on minimal rational curves on uniruled projective manifolds, the key points of which are bend-and-break argument going back to Mori and the study of singularities of minimal rational curves due to Kebekus. Interested readers can consult \cite{HwangMSRI}, \cite{HwangICM} and  \cite{Hwang-Mok99} for more details and further references.

\begin{thm}\label{t.Mori}
Let $X$ be a projective manifold, i.e., a compact complex manifold that can be embedded in the complex projective space $\mbP^N$ for some $N >0$. Assume that $X$ is uniruled, which means that for any $x \in X$, there exists a rational curve $C \subset X$ containing $x$. Then there exist complex manifolds $\mcU$ and $\mcK$ with holomorphic maps $\rho: \mcU \to \mcK$ and $\mu: \mcU \to X$ with the following properties.
\begin{itemize}
\item[(1)] $\rho$ is a $\mbP^1$-bundle and there exists a closed analytic subset $E_{\rho}\subsetneq \mcK$ such that for any $z \in \mcK \setminus E_{\rho}$, the holomorphic map $ \mu|_{\rho^{-1}(z)}: \rho^{-1}(z) \cong \mbP^1 \to X$ is a normalization of an unbendable rational curve in $X$ the anti-canonical degree of which is $2 + \dim \mcU - \dim X$.
\item[(2)] There exists a closed analytic subset $E_{\mu} \subsetneq X$ such that for any $x \in X \setminus E_{\mu}$, the fiber $\mu^{-1}(x)$ is a compact complex manifold and at every point $y \in \mu^{-1}(x)$,
    $${\rm Ker}(T_y \mu) \cap {\rm Ker}(T_y \rho) =0. $$ \end{itemize}
Such a double-fibration $\mcK \stackrel{\rho}{\leftarrow} \mcU \stackrel{\mu}{\to} X$ is called a family of minimal rational curves on $X$. 
\end{thm}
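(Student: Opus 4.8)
The plan is to assemble the statement from two classical bodies of work: Mori's deformation theory of rational curves together with the bend-and-break technique, and Kebekus's analysis of the singularities of minimal rational curves. Since this is a foundational summary result, I sketch how the assertions are obtained from these tools rather than giving a self-contained argument, referring to \cite{HwangMSRI}, \cite{HwangICM} and \cite{Hwang-Mok99} for details. First I would construct the parameter space. By Mori's theory there is a normalized space $\mathrm{RatCurves}^n(X)$ of rational curves on $X$, carrying a universal $\mbP^1$-bundle and an evaluation morphism. Since $X$ is uniruled, rational curves cover $X$, and in the characteristic-zero holomorphic setting a general-position argument produces free rational curves, i.e.\ maps $f:\mbP^1\to X$ with $f^*TX$ a direct sum of line bundles of non-negative degree. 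Among the irreducible components whose members dominate $X$, I would choose one, $\mcK$, of \emph{minimal} anti-canonical degree, pass to the smooth locus of its normalization, and let $\rho:\mcU\to\mcK$ be the associated $\mbP^1$-bundle and $\mu:\mcU\to X$ the evaluation map, which is dominant. This yields the double fibration $\mcK\xleftarrow{\rho}\mcU\xrightarrow{\mu}X$.

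The heart of (1) is to show that minimality forces the generic member to be unbendable in the sense of Definition \ref{d.unbendable}. For general $f$, freeness gives $f^*TX\cong\bigoplus_i\mcO(a_i)$ with all $a_i\ge 0$; the differential $T\mbP^1\to f^*TX$ produces a subsheaf whose saturation is $\mcO(a_1)$ with $a_1\ge 2$, and the general member is in fact an immersion (this belongs to the same minimality package, cf.\ (2)). The bend-and-break principle then bounds the remaining $a_i$: if some $a_i$ with $i\ge 2$ were $\ge 2$, or if $a_1\ge 3$, the deformations of $C$ fixing two general points would be positive-dimensional, and such a family of rational curves through two fixed points must degenerate into a cycle of rational curves, one of whose components is a rational curve of strictly smaller anti-canonical degree through the general point, contradicting minimality of the covering family. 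Hence $f^*TX\cong\mcO(2)\oplus\mcO(1)^{\oplus r}\oplus\mcO^{\oplus(n-r)}$ with $n=\dim X-1$, so $N_C\cong\mcO(1)^{\oplus r}\oplus\mcO^{\oplus(n-r)}$. Finally, Riemann--Roch for free curves gives $h^0(f^*TX)=\deg(f^*TX)+\dim X$ and $\dim\mcK=h^0(f^*TX)-3$ (subtracting reparametrizations), whence $r=\dim\mcU-\dim X$ and the anti-canonical degree equals $r+2=2+\dim\mcU-\dim X$; taking $E_\rho$ to be the locus where freeness or unbendability fails establishes (1).

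For (2) I would invoke Kebekus's theorems on minimal rational curves. Over a general point $x\in X$ the fiber $\mu^{-1}(x)$ parametrizes marked minimal curves through $x$; Kebekus proved that these curves are immersed at $x$, that $\mu^{-1}(x)$ is a smooth compact complex manifold, and that the tangent map sending a curve to its tangent direction at $x$ is finite. In the present formulation, at $y\in\mu^{-1}(x)$ the line $\mathrm{Ker}(T_y\rho)$ is the tangent to the parametrizing $\mbP^1$, i.e.\ the direction along the curve, while $\mathrm{Ker}(T_y\mu)$ is tangent to the fiber $\mu^{-1}(x)$; the assertion that the curve is immersed at $x$ and that the tangent map is unramified is precisely the statement that these two kernels meet only in $0$. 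Taking $E_\mu$ to be the union of the locus over which $\mu^{-1}(x)$ fails to be smooth with the locus where the tangent map fails to be finite then yields (2).

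The main obstacle is the bend-and-break step, which supplies the sharp normal-bundle splitting from minimality, together with Kebekus's deformation-theoretic proof that minimal rational curves are immersed at a general point and have finite tangent morphism. Both inputs rely on rigidity phenomena special to $\mbP^1$ — in particular that a positive-dimensional family of rational curves with two marked points must break — which have no counterpart for curves of higher genus; this is the reason the entire theory is restricted to rational curves.
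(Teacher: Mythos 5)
Your proposal is correct and follows exactly the route the paper intends: the paper gives no proof of this theorem, stating only that it summarizes classical results whose key points are Mori's bend-and-break argument and Kebekus's study of singularities of minimal rational curves, with details deferred to the cited surveys \cite{HwangMSRI}, \cite{HwangICM}, \cite{Hwang-Mok99}. Your sketch assembles precisely these two ingredients (minimality plus bend-and-break for the normal-bundle splitting and the degree count in (1); Kebekus's immersedness and finiteness of the tangent map for (2)), and the dimension bookkeeping is accurate.
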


\begin{exa}\label{e.lines}
Let $X$ be a closed complex submanifold of $\mbP^m$. We say that $X$ is covered by lines of $\mbP^m$ if for each point $x \in X$, there exists a line of $\mbP^m$ lying on $X$ passing through $x$. In this case, there exists a family of minimal rational curves on $X$ whose members are lines of $\mbP^m$.  There are two well-known examples of such $X$: any nonsingular hypersurface $X \subset \mbP^{n+2}$ of degree $<n+2$ and a minimal  $G$-equivariant embedding $G/P \subset \mbP^m$ of the homogeneous space of a complex simple Lie group $G$ modulo a maximal parabolic subgroup $P.$ \end{exa}

\begin{defin}\label{d.vmrt}
Given a family of minimal rational curves on $X$ as in Theorem \ref{t.Mori}, the condition ${\rm Ker}(T_y \mu) \cap {\rm Ker}(T_y \rho) =0$ in (2) implies that $T\mu ({\rm Ker}(T_y \rho))$ is a 1-dimensional subspace in $T_x X$ for any $x \in X \setminus E_{\mu}$ and $y \in \mu^{-1}(x).$
Define the tangent map $$\tau: \mcU \setminus \mu^{-1}(E_{\mu}) \to \mbP TX$$ by $\tau (y):= [T\mu ({\rm Ker}(T_y \rho))].$
The image $\tau (\mu^{-1}(x))$ for $x \in X\setminus E_{\mu}$ is called the \emph{variety of minimal rational  tangents} at $x$ of the family of
minimal rational curves. \end{defin}

\begin{prop}\label{p.vmrt}
In Definition \ref{d.vmrt}, assume that $\tau$ is an embedding such that the image of $\tau$ defines a cone structure $\mcC$ on $X \setminus E_{\mu}$. Then $T \tau ({\rm Ker}(T \rho))$ gives a characteristic conic connection on this cone structure, whose cubic torsion is identically zero. \end{prop}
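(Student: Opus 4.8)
The plan is to exploit the embedding $\tau$ to identify $\mcC$ with an open subset of $\mcU$, so that the line subbundle $\mcF = T\tau(\mathrm{Ker}(T\rho))$ becomes the image of the relative tangent line bundle of the $\mbP^1$-bundle $\rho$, and the unbendable rational curves of Theorem \ref{t.Mori} lift under $\tau$ to the fibres $\tilde C := \tau(\rho^{-1}(z))$, which sweep out a dense open subset of $\mcC$. First I would verify that $\mcF$ really is a conic connection: since $\mu|_{\rho^{-1}(z)}=f$ is a normalization, $Tp(\mcF)=T\mu(\mathrm{Ker}(T\rho))$ spans the tautological direction $\hat u$, so $\mcF\subset\mcD^{-1}$, while the transversality $\mathrm{Ker}(T_y\mu)\cap\mathrm{Ker}(T_y\rho)=0$ of Theorem \ref{t.Mori}(2) gives $\mcF\cap\mcV=0$ and hence $\mcD^{-1}=\mcF\oplus\mcV$. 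Both torsion statements will then be reduced to the principle that a vector bundle of negative degree on $\mbP^1$ has no nonzero sections, applied over a general lifted curve $\tilde C\cong\mbP^1$.

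The heart of the argument is therefore the computation of the splitting types along such a $\tilde C$. From the $\mbP^1$-bundle sequence for $\rho$ one gets $T\mcC|_{\tilde C}\cong T\mcU|_{\tilde C}\cong\mcO(2)\oplus\mcO^{\oplus(n+r)}$, and $\mcF|_{\tilde C}=T\tilde C\cong\mcO(2)$ is the top summand. The crucial step is to identify $\mcV|_{\tilde C}$: under $\tau$ we have $\mcV=\mathrm{Ker}(Tp)\cong\mathrm{Ker}(T\mu)$, so the fibre of $\mcV$ at a point lying over $x=f(t)$ is the space of infinitesimal deformations of $C$ that keep $x$ fixed, which, projected to the normal bundle, is exactly the kernel of the evaluation $\mathrm{ev}_t\colon H^0(N_C)\to N_{C,t}$. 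Letting $t$ vary realizes $\mcV|_{\tilde C}$ as the kernel of the evaluation bundle map $H^0(N_C)\otimes\mcO\to N_C$. Feeding in the unbendability $N_C\cong\mcO(1)^{\oplus r}\oplus\mcO^{\oplus(n-r)}$ and the Euler sequence $0\to\mcO(-1)\to\mcO^{\oplus2}\to\mcO(1)\to0$, this kernel is $\mcO(-1)^{\oplus r}$. Consequently $\mathrm{gr}_{-2}(\mcD)|_{\tilde C}\cong\mcF|_{\tilde C}\otimes\mcV|_{\tilde C}\cong\mcO(1)^{\oplus r}$ by Corollary \ref{c.isom}, while $T\mcC/\mcD^{-2}|_{\tilde C}$, a degree-zero quotient of the trivial bundle $T\mcC/\mcF|_{\tilde C}\cong\mcO^{\oplus(n+r)}$, is forced to be $\mcO^{\oplus(n-r)}$.

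With these splitting types in hand the two vanishings follow at once. The characteristic torsion $\tau_\mcF$ is a section of $\mcF^*\otimes\mathrm{gr}_{-2}(\mcD)^*\otimes T\mcC/\mcD^{-2}$, which restricts on $\tilde C$ to $\mcO(-2)\otimes\mcO(-1)^{\oplus r}\otimes\mcO^{\oplus(n-r)}\cong\mcO(-3)^{\oplus r(n-r)}$; this admits no nonzero sections, so $\tau_\mcF$ vanishes on every general $\tilde C$ and hence, by the density of their union, identically. Thus $\mcF$ is characteristic and $\chi_\mcF$ is defined. Using $\mathrm{gr}_{-2}(\mcD)\cong\mcF\otimes\mcV$ one has $\mathrm{Hom}_0(\mcV,\mathrm{gr}_{-2}(\mcD))\cong\mcF\otimes\mathrm{End}_0(\mcV)$, so $\chi_\mcF$ is a section of $S^3\mcF^*\otimes\mcF\otimes\mathrm{End}_0(\mcV)$; since $\mathrm{End}_0(\mcV)|_{\tilde C}\cong\mcO^{\oplus(r^2-1)}$, this restricts to $\mcO(-6)\otimes\mcO(2)\otimes\mcO^{\oplus(r^2-1)}\cong\mcO(-4)^{\oplus(r^2-1)}$, again without sections, and the same density argument gives $\chi_\mcF\equiv0$.

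I expect the real obstacle to be the identification $\mcV|_{\tilde C}\cong\mcO(-1)^{\oplus r}$ in the second paragraph. This is the only place where unbendability enters essentially, and making the deformation-theoretic description of $\mcV$ as the kernel of the normal-bundle evaluation map rigorous --- including verifying that a general lifted curve avoids the degeneracy loci $E_\rho$ and $E_\mu$ where $\mu$ ceases to be submersive or $\tau$ degenerates, so that the splitting types above hold uniformly along $\tilde C$ --- requires the deformation theory behind Theorem \ref{t.Mori}; this is precisely the input generalized from Proposition 2 of \cite{Hwang-Mok04}.
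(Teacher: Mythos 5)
Your proof is correct and follows essentially the same route as the paper's: both restrict $\mcF$, $\mcV$, $\mathrm{gr}_{-2}(\mcD)$ and $T\mcC/\mcD^{-2}$ to a lifted minimal rational curve, identify the splitting types ($\mcO(2)$, $\mcO(-1)^{\oplus r}$, $\mcO(1)^{\oplus r}$, $\mcO^{\oplus (n-r)}$) via unbendability, and kill both torsions by the nonexistence of nonzero sections of negative bundles on $\mbP^1$. The only cosmetic difference is that the paper does not ask the curve to avoid $E_\mu$; instead it extends the relevant bundles from $\rho^{-1}(z)\cap\mu^{-1}(M\setminus E_\mu)$ to all of $\rho^{-1}(z)$ (identifying $\mcV$ with $\mathrm{Hom}(TC,N_C^+)$), which resolves exactly the extension issue you flag at the end.
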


\begin{proof}
That $\mcF:= T \tau ({\rm Ker}(T \rho))$ gives a characteristic conic connection is proved in Proposition 8 
of  \cite{Hwang-Mok04}. We give a slightly different proof here. By assumption, the line bundle $\mcF$ and the vector bundle $\mcV$  are defined on $\mu^{-1}(X \setminus E_{\mu})$.  Pick a point $z \in (\mcK \setminus E_{\rho}) \cap \rho(\mu^{-1}(X \setminus E_{\mu}))$ and let $C \subset X$ be the unbendable rational curve given by $\mu(\rho^{-1}(z)) \subset X$. Denote by $N_C^+$ the  $\mcO(1)^{\oplus r}$-component of $N_C$ in Definition \ref{d.unbendable}.  The line bundle $\mcF$ restricted to $\rho^{-1}(z) \cap  \mu^{-1}(M \setminus E_{\mu})$ can be extended to the tangent bundle $TC \cong \mcO(2)$.  The vertical bundle $\mcV$ restricted to $\rho^{-1}(z) \cap  \mu^{-1}(M \setminus E_{\mu})$ can be extended to the vector bundle ${\rm Hom}(TC, N_C^+) \cong \mcO(-1)^{\oplus r}$ on $\rho^{-1}(z)$.  By Corollary \ref{c.isom},  ${\rm gr}_{-2}(\mcD)$ restricted to  $\rho^{-1}(z) \cap  \mu^{-1}(M \setminus E_{\mu})$ can be extended to a vector bundle on $\rho^{-1}(z)$ isomorphic to $\mcO(1)^{\oplus r}$. On the other hand, the vector bundle $T\mcC/\mcD^{-2}$ restricted to $\rho^{-1}(z) \cap  \mu^{-1}(M \setminus E_{\mu})$ can be extended to a vector bundle on $\rho^{-1}(z)$ isomorphic to $\mcO^{\oplus (n-r)}$. 
It follows that the characteristic torsion of $\mcF$ restricted to $\rho^{-1}(z) \cap  \mu^{-1}(M \setminus E_{\mu})$  can be extended to a section of
$$\mcF^* \otimes {\rm gr}_{-2}(\mcD)^* \otimes T\mcC / \mcD^{-2}|_{\rho^{-1}(z)} \cong \mcO(-3)^{\oplus r(n-r)},$$ which cannot have nonzero holomorphic section. Thus the characteristic torsion is identically zero.
The cubic torsion restricted  to $\rho^{-1}(z) \cap  \mu^{-1}(M \setminus E_{\mu})$ can be extended to  a section of  $$S^3 \mcF^*\otimes \textrm{Hom}(\mcV, \mcV \otimes \mcF))|_{\rho^{-1}(z)} \cong \mcO(-4)^{\oplus r^2},$$ which cannot have nonzero holomorphic section. Thus the cubic torsion is identically zero.
\end{proof}

\begin{exa}
The fact that lines on $\mbP^m$ are determined by their tangent vector at one point implies that the family of minimal rational curves consisting of lines in Example \ref{e.lines} satisfy the assumption of Proposition \ref{p.vmrt}. When $X = G/P$ is a rational homogeneous space defined by $(\mfg, \alpha)$ for a long simple root $\alpha$, one can check (see \cite[Proposition 1]{Hwang-Mok02}) that the cone structure $\mcC^{G/P}=G \times_P \mcC^{G/P}_o$ is exactly the one given by varieties of minimal rational tangents as in Proposition \ref{p.vmrt}.
\end{exa}

The following result is a slightly modified version of Theorem \ref{t.longroot}. 
In fact, Theorem \ref{t.longroot} can be derived from it by the Cartan--Fubini type extension theorem \cite[Main Theorem]{HM01}.
As discussed in the introduction, the proof due to Mok \cite{Mok}  combines methods of algebraic geometry and parabolic geometry. In the next section, we give an alternative proof of a stronger result, which was one of the motivation of this paper.

\begin{thm}\label{t.Mok}
In Proposition \ref{p.vmrt}, assume that $\tau(\mu^{-1}(x)) \subset \mbP T_x X$ for a general $x \in X \setminus E_{\mu}$ is projectively isomorphic to $\mcC_o^{G/P} \subset \mbP T_o G/P$ for  $G/P$ associated to $(\mfg, \alpha)$ for a long simple root $\alpha$ and the distribution on $X$ determined by the linear span of the cone structure $\mcC \subset \BP T(X\setminus E_\mu)$ is bracket-generating. Then the cone structure in Proposition \ref{p.vmrt} is locally isomorphic to the cone structure $\mcC^{G/P}$ on $G/P$. \end{thm}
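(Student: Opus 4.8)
The plan is to reduce the statement to the vanishing of the two intrinsic invariants of the conic connection and then feed these into the parabolic-geometric recognition theorem of the next section. By Proposition~\ref{p.vmrt}, the cone structure $\mcC \subset \mbP T(X \setminus E_\mu)$ carries a characteristic conic connection $\mcF$, so that $\tau_\mcF = 0$, and its cubic torsion $\chi_\mcF$ vanishes identically. By hypothesis $\mcC$ is $\mcC_o^{G/P}$--isotrivial for $(\mfg,\alpha)$ with $\alpha$ a long simple root, and the distribution on $X$ spanned by $\mcC$ is bracket--generating. These are precisely the hypotheses under which Theorem~\ref{t.sMok} asserts local isomorphism with $\mcC^{G/P}$, so applying it gives the conclusion. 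It therefore remains to explain why the two torsion conditions force local flatness.

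First I would pass to the correspondence space. The isotriviality together with $\tau_\mcF = 0$ makes $(\mcC, \{\mcD^{-i}\})$ a filtered manifold of constant symbol $\mfq_-$: Corollaries~\ref{c.isom} and~\ref{FF_model} identify the Levi bracket $\mcF \otimes \mcV \cong \mathrm{gr}_{-2}(\mcD)$ with the model one, Lemma~\ref{l.-3} yields compatibility of the flag $\{\mcD^{-i}\}$ with the bracket once $\mcF$ is characteristic, and the bracket--generating assumption ensures, via Proposition~\ref{long_versus_short_root}~(4) and the analogue of Lemma~\ref{l.G/Pfiltered}, that $\{\mcD^{-i}\}$ exhausts $T\mcC$ with associated graded $\mfq_-$. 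The construction of Section~\ref{ss.cones_induce_para}, following \cite{CapCores}, then equips $\mcC$ with a canonical regular normal parabolic geometry of type $(G, Q)$, where $Q \leq P$ is the parabolic of Lemma~\ref{lem_highest_weight_cone}; its regularity is exactly the content of Theorem~\ref{m1}, which matches the vanishing of $\tau_\mcF$ with regularity of the induced geometry.

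The core step is to show that the harmonic curvature $\kappa_H \in H_2(\mfq_+,\mfg)$ of this geometry vanishes, since for a regular normal parabolic geometry $\kappa_H \equiv 0$ is equivalent to local flatness, i.e. to local isomorphism with the homogeneous model $G/Q \cong \mcC^{G/P}$. By Proposition~\ref{p.vanishing}, the only types with nonzero $H_{2,r}(\mfp_+,\mfg)$ for $r \geq 1$ are the symmetric, contact and $(B_n/D_{n+1},\alpha_3)$ types, and Proposition~\ref{harm_curv_C} locates the corresponding components of $\kappa_H$ in the degrees $H_{2,1}$ and $H_{2,2}$ of the $(G,Q)$--geometry. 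The component of homogeneity $\leq 0$, sitting in $\mfq_1^F \otimes \mfq_2 \otimes \mfq_{-3}$ (respectively $\mfq_1^F \otimes \mfq_2 \otimes \mfq_{-4}$ in the contact and $(B_n/D_{n+1},\alpha_3)$ cases), is annihilated by regularity and hence by $\tau_\mcF = 0$; the remaining component lies in $\mfq_1^F \otimes \mfq_2 \otimes \mfq_{-1}^V \cong S^3\mfq_1^F \otimes \mathrm{Hom}_0(\mfq_{-1}^V, \mfq_{-2})$ and is identified by Theorem~\ref{t.chi} with the cubic torsion $\chi_\mcF$, which vanishes by Proposition~\ref{p.vmrt}. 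Thus $\kappa_H \equiv 0$ and the geometry is locally flat.

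The main obstacle is the pair of low--dimensional cases excluded from Theorem~\ref{t.chi}, namely $(A_2,\alpha_1/\alpha_2)$ and $(B_2,\alpha_1)$, where $H_2(\mfp_+,\mfg) = H_{2,3}(\mfp_+,\mfg)$ sits in positive homogeneity and is neither killed by regularity nor identified with $\chi_\mcF$, so that the clean correspondence between the cubic torsion and a harmonic curvature component breaks down. For these I would argue separately: here $\mcC_o^{G/P}$ is a point or a conic, the structures reduce to (projective) path geometries, and the conclusion can be obtained directly or by falling back on the original argument of \cite{Mok}, which already covers these symmetric--type models. A secondary point to check carefully is that the constant--symbol and regularity normalizations are compatible with the isotrivial reduction of the structure group to $\mathrm{Aut}(\widehat{\mcC_o^{G/P}})$, so that the parabolic geometry produced on $\mcC$ genuinely has structure group $Q$; this is where the Hermitian symmetry of $\mcC_o^{G/P}$ and the precise geometry of the nested pair $\mfq < \mfp < \mfg$ from Proposition~\ref{long_versus_short_root} enter.
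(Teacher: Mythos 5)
Your proposal matches the paper's route exactly: Theorem \ref{t.Mok} is obtained by combining Proposition \ref{p.vmrt} (which gives $\tau_{\mcF}=\chi_{\mcF}=0$ for the VMRT conic connection) with Theorem \ref{t.sMok}, whose proof proceeds essentially as you sketch, via the construction of the regular normal parabolic geometry of type $(G,P)$ on $M$ (Theorem \ref{Cartan_cones_str}), the identification of $\mcF$ with the canonical conic connection (Corollary \ref{c.F.unique}), and the flatness criterion of Corollary \ref{coroll}. Your remark that the cases $(B_2,\alpha_1)$ and $(A_2,\alpha_1/\alpha_2)$ excluded from Corollary \ref{coroll} require a separate argument (trivial for $(A_2,\alpha_1/\alpha_2)$, and covered by Mok's original symmetric-type result for $(B_2,\alpha_1)$) correctly identifies a point the paper leaves implicit.
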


\section{Cone structures of parabolic geometries}\label{s.parabolic}

\subsection{Cartan geometries}\label{ss.Cartan}
We recall some background on Cartan connections; we refer the reader to \cite{csbook, Sharpe} for detailed introductions.

\begin{defin}\label{Def_Cartan_geom} Let $G$ be a complex Lie group and $P\leq G$ a complex Lie subgroup and denote by $\mfg$ and $\mfp$ their respective Lie algebras. A (holomorphic) \emph{Cartan geometry} of type $(G,P)$ on a complex manifold $M$
is given by:
\begin{itemize}
\item a (holomorphic) principal $P$-bundle $\mcG\rightarrow M$, and
\item a \emph{Cartan connection}, that is a $\mfg$-valued $1$-form $\omega\in\Omega^1(\mcG, \mfg)$ with the following properties:
\begin{itemize}
\item[(a)] $\omega$ induces a trivialisation $T\mcG\cong \mcG\times \mfg$ of the tangent bundle of $\mcG$;
\item[(b)] $\omega$ reproduces the generators of the fundamental vector fields of the $P$-action on $\mcG$: $\omega(\zeta_X)=X$ for all $X\in\mfp$; and
\item[(c)] $\omega$ is $P$-equivariant: $(r^p)^*\omega=\textrm{Ad}(p^{-1})\circ \omega$ for all $p\in P$,
\end{itemize}
\end{itemize}
where $r^p:\mcG\rightarrow\mcG$ denotes the principal right-action by $p\in P$ on $\mcG$ and $\zeta_X$ the fundamental vector field on $\mcG$ generated by $X\in\mfp$.
\end{defin}

The \emph{homogeneous model} of a Cartan geometry of type $(G,P)$ is the homogeneous space $G/P$ with its natural Cartan geometry given by the projection $G\rightarrow G/P$ and the (left) Maurer--Cartan form
$\omega_G\in\Omega^1(G, \mfg)$ on $G$.

\begin{defin} A \emph{morphism (resp. isomorphism) of Cartan geometries} $(\mcG\rightarrow M, \omega)$ and $(\mcG'\rightarrow M', \omega)$ of type $(G,P)$ is a principal bundle morphism $\Phi:\mcG\rightarrow\mcG'$ (resp. isomorphism)
such that $\Phi^*\omega'=\omega$ (the latter implies that $\Phi$ and its base map $M\rightarrow M'$ are local biholomorphisms).
\end{defin}

A Cartan connection $\omega$ induces an isomorphism of vector bundles
\begin{equation}\label{tangent_bundle}
TM\cong \mcG\times_P\mfg/\mfp,
\end{equation}
where the action of $P$ on $\mfg/\mfp$ is induced from the adjoint representation of $G$. In view of \eqref{tangent_bundle} a Cartan geometry of type $(G,P)$ on a manifold $M$ may be viewed
as a geometric structure that makes $M$ look infinitesimally like $G/P$. The integrability conditions for a Cartan geometry to be locally isomorphic (not just infinitesimally) to $G/P$ is given by the vanishing of the curvature of the Cartan connection.

\begin{defin}\label{Cartan_curvature}
Suppose $(\mcG\rightarrow M, \omega)$ is a Cartan geometry. Then its curvature $K\in\Omega^2(\mcG, \mfg)$ is given by $K(\xi,\eta)=d\omega(\xi,\eta)+[\omega(\xi), \omega(\eta)]$ for vector fields $\xi, \eta$ on $\mcG$. The defining properties of the Cartan connection imply that its curvature $K$ is horizontal and $P$-equivariant. Thus we can view the curvature as a section $\kappa$ of $$\Wedge^2 T^*M\otimes (\mcG\times_P\mfg) \cong \mcG\times_P (\Wedge^2(\mfg/\mfp)^*\otimes\mfg).$$ By an abuse of notation, we will also write $\kappa: \mcG\rightarrow \Wedge^2(\mfg/\mfp)^*\otimes\mfg$ for the corresponding $P$-equivariant function such that
$$\kappa(X+\mfp, Y+\mfp)=K(\omega^{-1}(X), \omega^{-1}(Y))=[X,Y]-\omega([\omega^{-1}(X), \omega^{-1}(Y)]).$$
The
\emph{torsion} of the Cartan geometry is the element of $\Omega^2( \mcG, \mfg/\mfp)$ given by the composition of the projection $\mfg\rightarrow \mfg/\mfp$ with $K$.
The geometry $(\mcG\rightarrow M, \omega)$ is said to be \emph{torsion-free} if $K$ has values in $\mfp$, and \emph{flat} if $K$ vanishes identically.
\end{defin}

For a proof of the following result, see \cite[Proposition 1.5.2]{csbook} or \cite[Theorem 5.1]{Sharpe}:

\begin{prop}\label{p.flatdevelop}
A Cartan geometry of type $(G,P)$ is flat if and only if it is locally isomorphic to its homogeneous model $(G\rightarrow G/P, \omega_G)$.
\end{prop}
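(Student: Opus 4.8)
The proposition asserts an equivalence, so I would prove the two implications separately, the first being essentially formal and the second carrying the analytic content. The direction ``locally isomorphic to the homogeneous model $\Rightarrow$ flat'' is immediate from the naturality of curvature: the defining formula $K(\xi,\eta)=d\omega(\xi,\eta)+[\omega(\xi),\omega(\eta)]$ commutes with pullback by a principal bundle morphism, so if $\Phi$ is a local isomorphism with $\Phi^*\omega_G=\omega$, then $K=\Phi^*K_G$. The Maurer--Cartan equation $d\omega_G=-\tfrac12[\omega_G,\omega_G]$ gives $K_G=0$, whence $K=0$.

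For the converse, flatness $K=0$ is precisely the Maurer--Cartan equation $d\omega=-\tfrac12[\omega,\omega]$ for $\omega$, and the plan is to integrate $\omega$ to a developing map into $G$ by a Frobenius argument. On the product $\mcG\times G$ with projections $\pi_1,\pi_2$ I would form the $\mfg$-valued one-form $\Theta:=\pi_1^*\omega-\pi_2^*\omega_G$. Because property (a) makes both $\pi_1^*\omega$ and $\pi_2^*\omega_G$ coframes on their respective factors, $\Theta$ is fibrewise surjective and $\ker\Theta$ has rank $\dim\mcG$. Writing $\alpha:=\pi_1^*\omega$ and $\beta:=\pi_2^*\omega_G$, the two Maurer--Cartan equations yield $d\Theta=-\tfrac12[\alpha,\alpha]+\tfrac12[\beta,\beta]=-\tfrac12[\Theta,\alpha+\beta]$, which vanishes on $\ker\Theta$; hence $\ker\Theta$ is involutive by the Frobenius theorem. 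A leaf projects by $\pi_1$ to a local biholomorphism onto an open $U\subset\mcG$ (a vector in $\ker\Theta\cap\ker T\pi_1$ must vanish, again by (a)), so the leaf is the graph of a local biholomorphism $\Phi\colon U\to G$, and $\Theta|_{\text{leaf}}=0$ reads $\Phi^*\omega_G=\omega$. Since left translations preserve $\omega_G$ and permute the leaves, $\Phi$ is unique up to left translation in $G$.

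It remains to promote the coframe equivalence $\Phi$ to an isomorphism of Cartan geometries, that is, to verify $P$-equivariance and descent to the base. Here I would use property (b): the fundamental vector field $\zeta_X$ on $\mcG$ satisfies $\omega(\zeta_X)=X$, and the fundamental field of the right $P$-action on $G$ satisfies $\omega_G(\zeta_X)=X$; as $\Phi^*\omega_G=\omega$ with $\omega_G$ a coframe, $\Phi$ relates the two fundamental fields. Integrating their flows gives $\Phi(u\cdot p)=\Phi(u)\cdot p$ for $p$ in the identity component of $P$, so $\Phi$ carries fibres of $\mcG\to M$ into fibres of $G\to G/P$ and descends to a local biholomorphism $M\to G/P$ compatible with the Cartan data; full $P$-equivariance then follows from the uniqueness of the development together with property (c). I expect this final step, rather than the Frobenius computation, to be the main obstacle: checking involutivity is a one-line manipulation of the structure equations, whereas upgrading the equality of trivializing one-forms to a genuine morphism of principal bundles --- respecting the $P$-action, descending to the base, and handling the components of $P$ --- is where properties (b) and (c) must be used with care.
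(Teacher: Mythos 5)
Your argument is correct and is precisely the standard development/Frobenius proof; the paper itself gives no proof but defers to \cite[Proposition 1.5.2]{csbook} and \cite[Theorem 5.1]{Sharpe}, where exactly this graph-of-the-developing-map construction is carried out. You also correctly identify the only delicate point, namely upgrading the coframe equivalence $\Phi^*\omega_G=\omega$ to a $P$-equivariant bundle morphism over a saturated open set $\pi^{-1}(V)$ (handled via the fundamental vector fields for the identity component and equivariant extension plus property (c) for the remaining components), so nothing essential is missing.
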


\subsection{Parabolic geometries}\label{ss.parageom}
We will be particularly interested in Cartan geometries infinitesimally modelled on rational homogeneous spaces, called parabolic geometries. We recall here briefly some basics on the theory of parabolic geometries; for details we refer to
\cite{csbook}.

\begin{defin}\label{d.parageom}
A \emph{(holomorphic) parabolic geometry} on a complex manifold $M$ is a (holomorphic) Cartan geometry $(\mcG\rightarrow M, \omega)$ on $M$ of type $(G,P)$, where
$G$ is a complex semisimple Lie group and $P\leq G$ is a parabolic subgroup.
The tangent bundle
of $M$ is filtered by vector subbundles according to \eqref{filtration} as follows:
\begin{equation}\label{omega_induced_filt}
TM=T^{-k}M\supset ... \supset T^{-1}M\quad\textrm{ with }\quad T^{-i}M\cong\mcG\times_P\mfp^{-i}/\mfp \quad\textrm{ for } i<0,
\end{equation}
and one obtains induced filtration on all tensor bundles.
\end{defin}

\begin{defin}\label{adjoint_tractor}
In Definition \ref{d.parageom}, the vector bundle $\mcA M:=\mcG\times_P\mfg$ is called the \emph{adjoint bundle} of the parabolic geometry $(\mcG\rightarrow M,\omega)$. We write
\begin{equation}\label{filtration_adjoint_tractors}
\mcA M=\mcA^{-k}M\supset...\supset \mcA^0M\supset...\supset \mcA^{k}M.
\end{equation}
for the filtration by vector subbundles induced by \eqref{filtration}. Note that, since $\mcA M/\mcA^0M\cong TM$ and $T^*M\cong \mcA^1 M$ by Lemma \ref{l.basic_isos}, one has a natural projection $\mcA M\rightarrow TM$ and a natural  inclusion $T^*M\hookrightarrow \mcA M$. We write $\textrm{gr}(\mcA M)=\textrm{gr}(T^*M)\oplus \textrm{gr}_0(\mcA M) \oplus \textrm{gr}(TM)$ for the associated graded vector of the filtered bundle $\mcA M$.
\end{defin}

\begin{notcon}
Given a parabolic geometry, we write $\mcP_0:=\mcG/P_+\rightarrow M$ for the $P_0$-principal bundle given by the natural projection, where $P_0$ is the Levi subgroup of $P$ and $P_+$ its unipotent radical. Since $P_+$ acts trivially on the subsequent quotients of \eqref{filtration},
the Cartan connection induces isomorphisms
\begin{equation} \label{gr_A}
\textrm{gr}_i(\mcA M)\cong \mcP_0\times_{P_0} \mfp_i \quad\quad \textrm{ for all } -k\leq i \leq k.
\end{equation}
In particular, we also have
\begin{equation}\label{gr_T}
\textrm{gr}(TM)\cong \mcP_0\times_{P_0} \mfp_-\quad \textrm{ and }\quad  \textrm{gr}(T^*M)\cong \mcP_0\times_{P_0} \mfp_+.
\end{equation}
Since the Lie bracket on $\mfg$ is $P_0$-invariant, it  induces a vector bundle homomorphism
\begin{equation}\label{curl_bracket}
\{\cdot, \cdot\}:\textrm{gr}(\mcA M)\times\textrm{gr}(\mcA M)\rightarrow\textrm{gr}(\mcA M)
\end{equation}
making $\textrm{gr}(\mcA M)$ into a bundle of graded Lie algebras. We write $\{\cdot, \cdot\}$ also for its restriction to $\textrm{gr}(TM)\times \textrm{gr}(TM)\rightarrow \textrm{gr}(TM)$.\end{notcon}

\begin{defin}
A parabolic geometry is called \emph{regular}, if the filtration \eqref{omega_induced_filt} makes $M$ into a filtered manifold and the Levi-bracket $\mcL$ of \eqref{omega_induced_filt} coincides with $\{\cdot, \cdot\}$ (that is, the symbol algebra coincides with $\mfp_-$.)
\end{defin}

\begin{notcon}
Recall that the curvature $\kappa$ of a parabolic geometry (as a special case of Cartan geometry) is  a section of the vector bundle
$$ \Wedge^2 T^* M \otimes \mcA M \cong \mcG\times_P(\Wedge^2(\mfg/\mfp)^*\otimes\mfg) \cong \mcG\times_P (\Wedge^2\mfp_+\otimes\mfg),$$
which is equipped with a filtration (by homogeneity of maps between filtered spaces) induced by \eqref{omega_induced_filt} and \eqref{filtration_adjoint_tractors}. If $\kappa$ is of homogeneity $\geq\ell$ (that is, $\kappa$ is a section of the $\ell$-th filtration component of $\Wedge^2 T^* M \otimes \mcA M$), then we write $\textrm{gr}_\ell(\kappa)$ for the projection of $\kappa$ to the $\ell$-th grading component $\textrm{gr}_{\ell}( \Wedge^2 T^* M \otimes \mcA M)$ of the associated graded
$\textrm{gr}( \Wedge^2 T^* M \otimes \mcA M)$ of $\Wedge^2 T^* M \otimes \mcA M$. \end{notcon}

The next proposition follows easily from the definition of the curvature:

\begin{prop}\emph{\cite[Corollary 3.1.8]{csbook}}\label{Prop_regularity} Suppose $(\mcG\rightarrow M, \omega)$ is a parabolic geometry. Then $M$ equipped with the filtration \eqref{omega_induced_filt} is a filtered manifold if and only if
$\kappa$ is of homogeneity $\geq 0$. If this is the case, then $\emph{gr}_0(\kappa)=\{\cdot, \cdot\}-\mcL$. Hence, the geometry is regular if and only if $\kappa$ is of homogeneity $\geq 1$.
\end{prop}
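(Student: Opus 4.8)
The plan is to derive all three assertions from the structure equation that expresses the Lie bracket of vector fields on $M$ in terms of the algebraic bracket on $\mfg$ and the curvature. First I would represent a section $\xi$ of $T^{-i}M$ by a $P$-equivariant lift $\tilde s\colon \mcG\to\mfp^{-i}$, which is possible because $P$ preserves the filtration subspace $\mfp^{-i}$; then $\omega^{-1}(\tilde s)$ is a $P$-invariant vector field on $\mcG$ descending to $\xi$. For two sections $\xi,\eta$ of $T^{-i}M,T^{-j}M$ arising from lifts $\tilde s\in\mfp^{-i}$, $\tilde t\in\mfp^{-j}$, combining the identity $d\omega(\zeta,\eta)=\zeta\cdot\omega(\eta)-\eta\cdot\omega(\zeta)-\omega([\zeta,\eta])$ with the definition of $K$ in Definition \ref{Cartan_curvature} gives the exact identity in $\mfg$
$$\omega([\omega^{-1}(\tilde s),\omega^{-1}(\tilde t)])=[\tilde s,\tilde t]+\omega^{-1}(\tilde s)\cdot\tilde t-\omega^{-1}(\tilde t)\cdot\tilde s-\kappa(\tilde s,\tilde t),$$
whose reduction modulo $\mfp$ represents $[\xi,\eta]$ as a section of $TM$.

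Next I would track the filtration degrees of the four terms. By \eqref{filtration} one has $[\tilde s,\tilde t]\in\mfp^{-(i+j)}$. Each derivative term is a directional derivative of a function valued in a \emph{fixed} subspace ($\mfp^{-j}$ respectively $\mfp^{-i}$), hence is again valued in that subspace, and both $\mfp^{-j}$ and $\mfp^{-i}$ are contained in $\mfp^{-(i+j)}$ since $i,j>0$. Thus the only term that can escape $\mfp^{-(i+j)}$ is $\kappa(\tilde s,\tilde t)$, which, $\kappa$ being horizontal, depends only on the classes of $\tilde s,\tilde t$ modulo $\mfp$. Consequently $[\xi,\eta]$ lies in $T^{-(i+j)}M$ for all sections and all $i,j$ precisely when $\kappa(\mfp^{-i}/\mfp,\mfp^{-j}/\mfp)\subseteq\mfp^{-(i+j)}$ for all $i,j$; since $\kappa$ is tensorial its pointwise values exhaust $\mfp^{-i}/\mfp\times\mfp^{-j}/\mfp$, so this condition is exactly homogeneity $\geq 0$. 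This proves the first assertion.

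Assuming homogeneity $\geq 0$, I would then compute the Levi bracket by projecting the displayed identity to $\textrm{gr}_{-(i+j)}(TM)\cong\mfp_{-(i+j)}$. The degree-$(-(i+j))$ part of $[\tilde s,\tilde t]$ is the bracket of the leading symbols, namely $\{q_{-i}(\xi),q_{-j}(\eta)\}$. The crucial observation is that the two derivative terms drop out: lying in $\mfp^{-j}$ and $\mfp^{-i}$, whose lowest graded pieces sit in degrees $-j>-(i+j)$ and $-i>-(i+j)$, they have no component in degree $-(i+j)$ and so vanish under $q_{-(i+j)}$. Finally $\kappa(\tilde s,\tilde t)\in\mfp^{-(i+j)}$ projects to the homogeneity-zero component $\textrm{gr}_0(\kappa)(q_{-i}(\xi),q_{-j}(\eta))$. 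Collecting terms yields $\mcL=\{\cdot,\cdot\}-\textrm{gr}_0(\kappa)$, i.e.\ $\textrm{gr}_0(\kappa)=\{\cdot,\cdot\}-\mcL$. The last assertion is then immediate: regularity means $M$ is filtered (homogeneity $\geq 0$) and $\mcL=\{\cdot,\cdot\}$, i.e.\ $\textrm{gr}_0(\kappa)=0$, and these two conditions together say exactly that $\kappa$ has homogeneity $\geq 1$.

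The main obstacle I anticipate is careful bookkeeping of the filtration rather than any deep difficulty. One must verify that the $P$-equivariant lifts can be chosen with values in $\mfp^{-i}$, that differentiation of such lifts genuinely stays inside the fixed subspaces $\mfp^{-i},\mfp^{-j}$ (so that the derivative terms do not leak into lower filtration degree), and that the identification $\textrm{gr}_{-(i+j)}(TM)\cong\mfp_{-(i+j)}$ together with the $P_0$-grading on $\Wedge^2\mfp_+\otimes\mfg$ is used consistently, so that the vanishing of the derivative contributions and the extraction of exactly the $\textrm{gr}_0$ part of $\kappa$ are both legitimate.
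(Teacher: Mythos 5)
Your argument is correct and is exactly the standard proof that the paper's citation to \cite[Corollary 3.1.8]{csbook} alludes to: lift sections of $T^{-i}M$ to $P$-equivariant $\mfp^{-i}$-valued functions, use the structure equation to express $\omega([\omega^{-1}(\tilde s),\omega^{-1}(\tilde t)])$ in terms of $[\tilde s,\tilde t]$, the derivative terms, and $\kappa$, and then read off the filtration degree and the $\mathrm{gr}_{-(i+j)}$-component. The bookkeeping you flag (lifts valued in $\mfp^{-i}$, derivative terms staying in the fixed subspaces, horizontality of $K$) all checks out, so there is no gap.
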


\begin{notcon}
The boundary operators \eqref{d.homology} induce
bundle maps
\begin{align*}
\partial^*: \Wedge^\ell T^*M\otimes\mcA M&\rightarrow  \Wedge^{\ell-1} T^*M\otimes\mcA M\\
\partial^*: \Wedge^\ell \textrm{gr}(T^*M)\otimes\textrm{gr}(\mcA M)&\rightarrow  \Wedge^{\ell-1} \textrm{gr}(T^*M)\otimes\textrm{gr}(\mcA M).
\end{align*}
\end{notcon}

\begin{defin}\label{normalization}
A parabolic geometry is called \emph{normal}, if $\partial^*\kappa=0$.
The \emph{harmonic curvature}  $\hat\kappa$ of a normal parabolic geometry is the image of the projection of $\kappa$ to $\textrm{Ker}(\partial^*)/\textrm{Im}(\partial^*)=\mcG\times_PH_2(\mfp_+,\mfg)$.
By Proposition \ref{p.Kostant}, we can identify the  vector bundle
$\mcG\times_P H_*(\mfp_+, \mfg)\cong\mcP_0\times_{P_0} H_*(\mfp_+, \mfg)$  with a subbundle of $\Wedge^* \textrm{gr}(T^*M)\otimes\textrm{gr}(\mcA M)$ and consider the harmonic curvature as a section of the latter vector bundle on $M$.
\end{defin}

Usually when parabolic geometries arise from geometric problems, we can make choices such that they are normal in the sense of Definition \ref{normalization}. 
If the parabolic geometry is regular and normal the harmonic curvature is still a complete obstruction to flatness due to Bianchi-identities for $\kappa$:

\begin{prop}\emph{\cite[Theorem 3.1.12]{csbook}} \label{harmcurv}
Suppose that $(\mcG\rightarrow M,\omega)$ is a regular normal
parabolic geometry. Then $\kappa$ vanishes on an open set $U\subset M$ if and only if $\hat\kappa$ vanishes on $U$.
\end{prop}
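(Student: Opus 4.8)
The forward implication is immediate: if $\kappa$ vanishes on $U$, then so does its projection $\hat\kappa$ to $\ker(\partial^*)/\textrm{im}(\partial^*)$. The substance is the converse, which I would prove by contraposition through a homogeneity argument. Assuming $\kappa\not\equiv 0$ on $U$, the plan is to isolate the lowest homogeneity at which $\kappa$ is nonzero and to show that at this homogeneity $\kappa$ is harmonic, and is therefore recorded faithfully by $\hat\kappa$.

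By Proposition \ref{Prop_regularity}, regularity means $\kappa$ has homogeneity $\geq 1$; hence there is a smallest integer $\ell\geq 1$ with $\textrm{gr}_\ell(\kappa)\neq 0$, and $\textrm{gr}_\ell(\kappa)$ is a section of $\textrm{gr}_\ell(\Wedge^2T^*M\otimes\mcA M)$, identified pointwise with the homogeneity-$\ell$ part of $\Wedge^2\mfp_+\otimes\mfg$ via Proposition \ref{p.Kostant}. Since the bundle map $\partial^*$ is induced by the algebraic operator of Definition \ref{d.homology} and preserves homogeneity, normality $\partial^*\kappa=0$ descends to its leading term, giving $\partial^*(\textrm{gr}_\ell(\kappa))=0$; that is, $\textrm{gr}_\ell(\kappa)\in\ker(\partial^*)$.

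The crux is to establish that $\textrm{gr}_\ell(\kappa)\in\ker(\partial)$ as well. For this I would invoke the differential Bianchi identity $dK+[\omega,K]=0$ for the Cartan curvature and extract its component of lowest homogeneity. The point is that every term of this identity involving a covariant derivative of $\kappa$ or a product of two curvature values strictly raises homogeneity under the filtration \eqref{omega_induced_filt}; consequently, in homogeneity $\ell$ the identity collapses to the purely algebraic equation $\partial(\textrm{gr}_\ell(\kappa))=0$, where $\partial$ is the Lie algebra cohomology differential of Definition \ref{d.homology}. Combining this with the previous step and the Hodge decomposition \eqref{Hodge}, we get $\textrm{gr}_\ell(\kappa)\in\ker(\partial^*)\cap\ker(\partial)=\ker(\square)$, so $\textrm{gr}_\ell(\kappa)$ is harmonic. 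A harmonic element equals its own image under the projection $\ker(\partial^*)\to\ker(\partial^*)/\textrm{im}(\partial^*)\cong\ker(\square)$, so the leading part of $\hat\kappa$ coincides with $\textrm{gr}_\ell(\kappa)$. Since $\hat\kappa\equiv 0$ on $U$ by hypothesis, this forces $\textrm{gr}_\ell(\kappa)=0$, contradicting the choice of $\ell$; hence $\kappa\equiv 0$ on $U$.

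The main obstacle I anticipate is the careful bookkeeping in the third step: one must verify that the differential-geometric Bianchi identity genuinely reduces, in the lowest relevant homogeneity, to the algebraic $\partial$-closedness of $\textrm{gr}_\ell(\kappa)$. This requires tracking precisely how covariant derivatives and quadratic curvature terms shift homogeneity with respect to the filtration induced by \eqref{omega_induced_filt}, and it is here that regularity, ensuring $\kappa$ has no components of negative homogeneity, is essential.
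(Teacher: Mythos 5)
Your argument is correct and is precisely the proof given in the cited source \cite[Theorem 3.1.12]{csbook}, which the paper quotes without reproving: normality kills $\partial^*$ of the lowest homogeneous component, the Bianchi identity (with regularity ensuring the quadratic and derivative terms have strictly higher homogeneity) kills $\partial$ of it, and Kostant's Hodge decomposition then identifies that component with the leading part of $\hat\kappa$. No gaps; the bookkeeping you flag as the main obstacle works out exactly as you describe.
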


\subsection{Cone structures and conic connections associated to parabolic geometries}\label{ss.cones_induce_para}
 From now on, we assume that $\mfp$ is a maximal parabolic subalgebra of a simple Lie algebra $\mfg$ 
 determined by a long simple root $\alpha$ and let $P\subset G$ be as in Section \ref{s.homogenous}.
 
 \begin{defin}\label{d.associated}
Given a regular parabolic geometry $(\pi_M: \mcG \to M, \omega)$ of type $(G,P)$, we have a natural structure of a filtered manifold on $M$ with the constant symbol $\mfp_-$ by \eqref{omega_induced_filt} 
and an $\mcC^{G/P}_o$--isotrivial cone structure subordinate to it: $$ \mcC := \mcG \times_P \mcC^{G/P}_o\cong \mcP_0\times_{P_0} \mcC^{G/P}_o\subset \mathbb P(T^{-1}M)\subset \mathbb P(TM).$$
This is called the \emph{  cone structure associated to the parabolic geometry}.
\end{defin}

Conversely, we have:

\begin{thm}\label{Cartan_cones_str}
Suppose $(\mathfrak{g},\alpha)\neq (A_n,\alpha_1/\alpha_n)$. Let $(M, \{T^{-i}M\})$  
be a filtered manifold 
of dimension $\dim(G/P)$ with symbol algebra
$\mathfrak{p}_-$ equipped with a subordinate 
$\mathcal{C}_o^{G/P}$-isotrivial cone structure $\mathcal{C}$ such that 
\begin{equation}\label{comp.cond.}
\mcL (u, v) =0 \quad \emph{ for any } u, v \in T^{-1}_x M,\, x \in M,\, \emph{ with } u \in \widehat{\mcC}_x \emph{ and } v \in \widehat{T}_u \mcC_x,
\end{equation} 
where $\mcL: T^{-1} M \otimes T^{-1} M \to T^{-2} M/T^{-1} M$ is the Levi bracket.
Then there exists a unique
regular normal parabolic geometry $(\mathcal{G} \rightarrow
M,\omega)$ of type $(G,P)$ inducing $\{T^{-i}M\}$ via
\eqref{omega_induced_filt} such that $\mathcal{C}=\mathcal{G} \times_P
\mathcal{C}_o^{G/P}$. Moreover, this association gives rise to an equivalence of categories between such cone structures $(M, \{T^{-i}M\}, \mcC)$ and regular normal parabolic geometries of type $(G,P)$.
\end{thm}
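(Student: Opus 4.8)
The plan is to recognize the data $(M,\{T^{-i}M\},\mcC)$ as a \emph{regular infinitesimal flag structure} of type $(G,P)$ in the sense of \cite{csbook}, and then invoke the general equivalence of categories between regular infinitesimal flag structures and regular normal parabolic geometries (\cite[Theorem 3.1.14]{csbook}). All that must be produced by hand is the reduction of structure group encoding the $P_0$-part of the geometry; everything in higher homogeneity is then supplied, uniquely, by the prolongation and normalization procedure of \cite{csbook}.

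First I would assemble the underlying reduction. Since $(M,\{T^{-i}M\})$ has constant symbol $\mfp_-$, its graded frame bundle $\textrm{Fr}(\textrm{gr}(TM))$ is a principal bundle with structure group $\textrm{Aut}_{\textrm{gr}}(\mfp_-)$; because $\mfp_{-1}$ generates $\mfp_-$, restriction to degree $-1$ embeds $\textrm{Aut}_{\textrm{gr}}(\mfp_-)\hookrightarrow \GL(\mfp_{-1})$ and identifies $\textrm{Fr}(\textrm{gr}(TM))$ with the sub-bundle of $\textrm{Fr}(T^{-1}M)$ consisting of those frames $\mfp_{-1}\to T^{-1}M$ that extend to graded isomorphisms $\mfp_-\to\textrm{gr}(TM)$, i.e.\ intertwine the bracket of $\mfp_-$ with the Levi bracket $\mcL$. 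On the other hand, the $\mcC_o^{G/P}$-isotriviality of $\mcC$ gives a reduction of $\textrm{Fr}(T^{-1}M)$ to $\textrm{Aut}(\widehat{\mcC_o^{G/P}})$, as in Definition \ref{d.subordinate}. Intersecting the two reductions produces a reduction of $\textrm{Fr}(\textrm{gr}(TM))$ to the subgroup $H:=\textrm{Aut}_{\textrm{gr}}(\mfp_-)\cap\textrm{Aut}(\widehat{\mcC_o^{G/P}})\subset \GL(\mfp_{-1})$. The compatibility condition \eqref{comp.cond.} is intended to guarantee that this intersection is nonempty and a full $H$-torsor over each point: it says that the relation $\mcL(u,v)=0$ for $u\in\widehat{\mcC}_x$, $v\in\widehat{T}_u\mcC_x$ holds on $M$ exactly as it does on the model (cf. Proposition \ref{p.Hw12}), so that some graded frame simultaneously carries $\widehat{\mcC_o^{G/P}}$ onto $\widehat{\mcC}_x$.

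The decisive point is the group-theoretic identity $H=\textrm{Ad}(P_0)$, which turns the reduction above into a genuine $P_0$-structure, hence a regular infinitesimal flag structure of type $(G,P)$. The inclusion $\textrm{Ad}(P_0)\subseteq H$ is immediate, since $P_0$ acts on $\mfp_-$ by graded automorphisms and preserves the orbit $\mcC_o^{G/P}=P_0\cdot z_0$. For the reverse inclusion I would use the description of $\mcC_o^{G/P}$ as the closed $P_0$-orbit $P_0/(P_0\cap Q)$ in the irreducible $P_0$-module $\mfp_{-1}$ (Lemma \ref{lem_highest_weight_cone}): an element of $\textrm{Aut}_{\textrm{gr}}(\mfp_-)$ preserving $\widehat{\mcC_o^{G/P}}$ preserves its linear span (all of $\mfp_{-1}$, by linear non-degeneracy) together with the highest weight orbit, hence normalizes $P_0\cap Q$ and lies in the image of $P_0$. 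It is precisely here that the exclusion $(\mfg,\alpha)\neq(A_n,\alpha_1/\alpha_n)$ is used: in that excluded case $\mcC_o^{G/P}=\mbP(\mfp_{-1})$ is the whole projective space, $\textrm{Aut}(\widehat{\mcC_o^{G/P}})=\GL(\mfp_{-1})$ carries no information, $H$ is strictly larger than $\textrm{Ad}(P_0)$, and the cone structure degenerates to a projective structure (Example \ref{ex.path.geom}). For every other long simple root $\mcC_o^{G/P}$ is a proper subvariety and the identity $H=\textrm{Ad}(P_0)$ holds.

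With the regular infinitesimal flag structure in hand, I would appeal to \cite[Theorem 3.1.14]{csbook}: for long simple roots other than $(A_n,\alpha_1/\alpha_n)$ the cohomology $H^1(\mfp_+,\mfg)$ has no component in positive homogeneity (again where the exclusion enters, readable off from Kostant's theorem as in \cite{Yamaguchi2}), so the underlying structure determines a unique regular normal parabolic geometry inducing $\{T^{-i}M\}$ with $\mcC=\mcG\times_P\mcC_o^{G/P}$, and this passage is functorial in both directions. Composing with the functorial equivalence between our cone-structure data and infinitesimal flag structures established above yields the asserted equivalence of categories. The main obstacle is the middle step: proving $H=\textrm{Ad}(P_0)$ together with the nonemptiness of the reduction. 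The nonemptiness rests on upgrading the pointwise relation \eqref{comp.cond.} to the statement that any cone in $\mfp_{-1}$ projectively isomorphic to $\mcC_o^{G/P}$ and satisfying this bracket relation already lies in the $\textrm{Aut}_{\textrm{gr}}(\mfp_-)$-orbit of $\widehat{\mcC_o^{G/P}}$; this rigidity, together with the determination of $\textrm{Aut}(\widehat{\mcC_o^{G/P}})$ for each long simple root, is the genuinely case-dependent part of the argument and where I expect most of the effort to lie.
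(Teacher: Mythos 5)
Your overall strategy is the same as the paper's: encode the data as a reduction of the graded frame bundle to $P_0$ and then invoke \cite[Theorem 3.1.14]{csbook} (the paper also uses Observation 3.1.7 there) to prolong to a unique regular normal parabolic geometry, with the exclusion of $(A_n,\alpha_1/\alpha_n)$ entering exactly where you place it. However, the step you yourself flag as ``where most of the effort would lie'' is a genuine gap, and it is precisely the step the paper must supply. Two facts are needed: (a) as a subgroup of $\GL(\mfp_{-1})$ one has $\textrm{Aut}(\widehat{\mcC_o^{G/P}})\cong P_0$ (not merely $\textrm{Ad}(P_0)\subseteq \textrm{Aut}_{\textrm{gr}}(\mfp_-)\cap\textrm{Aut}(\widehat{\mcC_o^{G/P}})$), and (b) condition \eqref{comp.cond.} forces every frame of $T^{-1}_xM$ adapted to $\widehat{\mcC}_x$ to extend to a graded Lie algebra isomorphism $\mfp_-\to\textrm{gr}(T_xM)$, so that the $\textrm{Aut}(\widehat{\mcC_o^{G/P}})$-reduction of $\textrm{Fr}(T^{-1}M)$ automatically sits inside $\textrm{Fr}(\textrm{gr}(TM))$. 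The paper obtains both from Propositions 6 and 7 of \cite{Hwang-Mok02}, which assert that for a nilpotent graded algebra generated in degree $-1$ with a cone satisfying the bracket relation \eqref{comp.cond.}, the symbol is forced to be $\mfp_-$ and the group of graded automorphisms preserving the cone is exactly $P_0$. Without citing or reproving this, your construction of the $P_0$-reduction (the ``intersection of the two reductions'') is not shown to be nonempty, let alone a $P_0$-torsor.

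Moreover, the argument you do sketch for the inclusion $H\subseteq \textrm{Ad}(P_0)$ is circular: from the fact that $g\in\textrm{Aut}_{\textrm{gr}}(\mfp_-)$ preserves the highest weight orbit you can only conclude that $g\cdot z_0=p\cdot z_0$ for some $p\in P_0$, which gives that $p^{-1}g$ stabilizes $z_0$ but not that $p^{-1}g$ normalizes $P_0\cap Q$ or lies in $P_0$ unless you already know $g\in P_0$. The correct route is the computation of the full linear automorphism group of the affine cone of the closed orbit (equivalently of its Lie algebra of infinitesimal automorphisms), which is a nontrivial case-dependent fact about these homogeneous varieties; this is exactly what the cited results of \cite{Hwang-Mok02} encapsulate. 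So the architecture of your proof is right, but the load-bearing lemma is missing.
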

\begin{proof}
The Levi subgroup $P_0$ of $P$ equals the group $\textrm{Aut}_{\textrm{gr}}(\mfg)$ of grading preserving Lie algebra automorphisms of $\mfg$. Hence, via restriction, we have group inclusions
\begin{equation*}
P_0=\textrm{Aut}_\textrm{gr}(\mfg)\stackrel{i}{\hookrightarrow} \textrm{Aut}_\textrm{gr}(\mfp_-)\stackrel{j}{\hookrightarrow} \textrm{GL}(\mfp_{-1}),
\end{equation*}
where $j$ is an inclusion, since $\mfp_{-1}$ generates the Lie algebra $\mfp_-$. Suppose now $(M, \{T^{-i}M\})$ is a filtered manifold with symbol algebra $\mfp_-$ and consider 
its the frame bundle $\textrm{Fr}(\textrm{gr}(TM))$, which, by dint of $j$, can be also viewed a subbundle of the frame bundle $\textrm{Fr}(T^{-1}M)$ of $T^{-1}M$.

We show first that reductions of structure group of the frame bundle $\textrm{Fr}(\textrm{gr}(TM))$ corresponding to $i$ are equivalent to subordinate $\mcC_o^{G/P}$-isotrivial cone structures satisfying \eqref{comp.cond.}.
Given a reduction of $\textrm{Fr}(\textrm{gr}(TM))$ to $P_0\leq \textrm{Aut}_\textrm{gr}(\mfp_-)$, that is, a principal $P_0$-subbundle $\mcP_0\leq \textrm{Fr}(\textrm{gr}(TM))$, then 
 $\mcC=\mcP_0\times_{P_0}\mcC_{o}^{G/P}$ defines a subordinate $\mcC_o^{G/P}$-isotrivial cone structure. Moreover, for $0\neq w\in\mfg_{\alpha}=\mfq_{-1}^F$ one has, by Lemma \ref{nested_parabolics} and Corollary 
 \ref{FF_model},
 $$T_w\hat\mcC_{o}^{G/P}=\mfq_{-1}^F\oplus [\mfp_0, \mfq_{-1}^F]=\mfq_{-1}^F\oplus [\mfq_{-1}^V, \mfq_{-1}^F]=\mfq_{-1}^F\oplus\mfq_{-2}.$$
By (3) of Proposition \ref{long_versus_short_root}, we have $[\mfq_{-1}^F, \mfq_{-2}]=0$ and hence $\mcC$ in particular satisfies \eqref{comp.cond.}.
Conversely, suppose $\mcC$ is a subordinate $\mcC_o^{G/P}$-isotrivial cone structures satisfying \eqref{comp.cond.}. As a subgroup of $\textrm{GL}(\mfp_{-1})$, one has $P_0\cong \textrm{Aut}(\hat\mcC_o^{G/P})$
and hence $\mcC$ gives rise to a reduction of structure group $\mcP_0\leq \textrm{Fr}(T^{-1}M)$ corresponding to $P_0\leq  \textrm{GL}(\mfp_{-1})$. By \cite[Proposition 6 and 7]{Hwang-Mok02}, the property \eqref{comp.cond.} implies that for any $x\in M$ the group of grading preserving Lie algebra automorphism of $\textrm{gr}(T_xM)$ that in addition preserve $\hat \mcC_x$ is isomorphic to $P_0\leq  \textrm{Aut}_\textrm{gr}(\mfp_-)$.
Hence $\mcP_0$ can be naturally viewed as a $P_0$-principal subbundle of $\textrm{Fr}(\textrm{gr}(TM))\leq \textrm{Fr}(T^{-1}M)$. The claim of the theorem now follows from
Theorem 3.1.14 and Observation 3.1.7 of \cite{csbook}.
\end{proof}

Recall that, by Proposition \ref{harmcurv}, the cone structure $(M, \{T^{-i}M\}, \mcC)$ associated to a regular normal parabolic geometry  is locally
isomorphic to $(G/P, \{T^{-i}G/P\}, \mcC^{G/P})$ if and only if the harmonic curvature (viewed as a $P$-equivariant function) $\hat \kappa: \mcG\rightarrow H_{2}(\mfp_+, \mfg)$ of the corresponding normal parabolic geometry vanishes. 
Regularity of $\omega$ (a vacuous  condition in the symmetric case) implies that $\hat\kappa$ has actually values in the subspace
$\bigoplus_{r\geq 1}H_{2,r}(\mfp_+, \mfg)$. 
Thus we have the following as an immediate consequence of Proposition \ref{p.vanishing}.

\begin{cor}\label{flat_cases}
Assume that $(\mfg, \alpha)$ does not belong to the following three classes:
\begin{itemize}
\item  symmetric type \item  contact type  \item $(B_n/D_{n+1}, \alpha_3)$ for $n\geq 4$.  \end{itemize} Then a regular normal parabolic geometry $(\mcG \to M, \omega)$ of type $(G,P)$ is  flat. In particular, the associated $\mcC_o^{G/P}$--isotrivial cone structure is locally isomorphic to $\mcC^{G/P}$ on $G/P$.
\end{cor}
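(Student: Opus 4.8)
The plan is to reduce flatness to the vanishing of the harmonic curvature and then to read that vanishing directly off the homology computation in Proposition \ref{p.vanishing}. The only genuine content is the interplay between regularity (which constrains the homogeneity of the curvature) and the grading of $H_2(\mfp_+, \mfg)$; everything else is a direct appeal to the results already established.

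First I would recall that for the regular normal parabolic geometry $(\mcG \to M, \omega)$ the harmonic curvature is a section of $\mcG \times_P H_2(\mfp_+, \mfg)$, equivalently a $P$-equivariant function $\hat\kappa: \mcG \to H_2(\mfp_+, \mfg)$. By Proposition \ref{Prop_regularity}, regularity of $\omega$ is equivalent to $\kappa$ being of homogeneity $\geq 1$; since $\partial^*$ and the projection to cohomology are compatible with the gradings, $\hat\kappa$ then takes values in the subspace $\bigoplus_{r \geq 1} H_{2,r}(\mfp_+, \mfg)$, as already noted in the paragraph preceding the statement. Here I would take care that the grading convention fixed in Notation \ref{n.grade}, with its $+1$ shift relative to Yamaguchi's in \cite{Yamaguchi2}, is precisely the one under which ``homogeneity $\geq 1$ of $\kappa$'' corresponds to ``$r \geq 1$ grading components of $\hat\kappa$''.

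Next I would invoke Proposition \ref{p.vanishing}(1): under the hypothesis that $(\mfg, \alpha)$ is neither of symmetric type nor of contact type and is not equal to $(B_n/D_{n+1}, \alpha_3)$ for $n \geq 4$, one has $H_{2,r}(\mfp_+, \mfg) = 0$ for all $r \geq 1$. Hence $\bigoplus_{r \geq 1} H_{2,r}(\mfp_+, \mfg) = 0$, so $\hat\kappa$ vanishes identically on $M$. Since $(\mcG \to M, \omega)$ is regular and normal, Proposition \ref{harmcurv}, applied with $U = M$, shows that $\hat\kappa \equiv 0$ forces $\kappa \equiv 0$; that is, the geometry is flat. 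For the concluding assertion, flatness yields by Proposition \ref{p.flatdevelop} a local isomorphism with the homogeneous model $(G \to G/P, \omega_G)$, and because the associated cone structure of Definition \ref{d.associated} is built functorially from $(\mcG, \omega)$ and is carried along by any isomorphism of parabolic geometries, this local isomorphism sends $\mcC$ to $\mcC^{G/P}$.

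I do not expect a genuine obstacle here, since all of the hard work is already packaged into Proposition \ref{p.vanishing}, whose proof rests on Kostant's theorem and Yamaguchi's classification of the relevant cohomologies. The one point that demands a moment of care is the bookkeeping of the homogeneity and grading conventions, so that regularity indeed rules out exactly the components of homogeneity $\leq 0$ and leaves only the positive part, which then vanishes by the hypothesis. Once that matching is confirmed, the corollary follows as a one-line consequence of the cited propositions.
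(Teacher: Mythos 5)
Your argument is correct and coincides with the paper's own reasoning, which is given in the paragraph preceding the corollary: regularity forces $\hat\kappa$ to take values in $\bigoplus_{r\geq 1}H_{2,r}(\mfp_+,\mfg)$, Proposition \ref{p.vanishing}(1) makes that space zero outside the three excluded classes, and Propositions \ref{harmcurv} and \ref{p.flatdevelop} then give flatness and the local isomorphism with $\mcC^{G/P}$. Your extra care about the grading convention of Notation \ref{n.grade} is a sensible check but introduces nothing beyond what the paper already asserts.
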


In the three cases excluded in Corollary \ref{flat_cases}, we would like to characterize flat parabolic geometries in terms of the invariants of the associated cone structures. To start with, the following is straightforward.

\begin{prop}\label{p.correspondence}
In Definition \ref{d.associated}, we can write
$$\mcC=\mcG\times_P \mcC_o^{G/P}\cong \mcG\times_P P/Q\cong \mcG/Q,$$
where $Q$ is defined as in Lemma \ref{lem_highest_weight_cone},
and  the natural projection $\pi_\mcC: \mcG\rightarrow \mcG/Q$ defines a $Q$-principal bundle over $\mcC\cong \mcG/Q$.
\begin{itemize} \item[(1)]
We can view $\omega$ as a Cartan connection of type
$(G,Q)$ on $\mathcal{G}$. Thus
$(\pi_\mathcal{C}: \mathcal{G} \rightarrow \mathcal{C},
\omega)$ is a parabolic geometry of type $(G,Q)$
on $\mathcal{C}$.
\item[(2)] The Cartan connection $\omega$ induces isomorphisms $$TM\cong\mcG\times_P\mfg/\mfp \mbox{ and } T\mcC\cong\mcG\times_Q\mfg/\mfq,$$
and  the filtrations $\{T^{-i}M\}$ on $TM$  and $T^{-i}\mcC\cong \mcG\times_Q\mfq^{-i}/\mfq$ on $T\mcC$. \item[(3)]
The tangent map $Tp: T\mcC\rightarrow TM$ of $p:\mcC\rightarrow M$ is given by the natural projection
$$Tp:\mcG\times _Q\mfg/\mfq\rightarrow \mcG\times_ P\mfg/\mfp,$$ which implies $\mcV\cong\mcG\times_Q\mfp/\mfq$
for the vertical bundle of $p$. \end{itemize} \end{prop}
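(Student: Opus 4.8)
The plan is to recognize this proposition as the correspondence-space construction of \cite{CapCores} specialized to the nested pair $Q \leq P$, so that every assertion reduces to unwinding definitions once the identification $\mcC \cong \mcG/Q$ is in place. First I would establish that identification. By Lemma \ref{lem_highest_weight_cone} the fiber is $\mcC_o^{G/P} \cong P/Q$ with $Q \leq P$ a parabolic (hence closed) subgroup of $G$. The standard isomorphism of associated bundles
$$\mcG \times_P (P/Q) \stackrel{\sim}{\longrightarrow} \mcG/Q, \qquad [u, pQ] \mapsto (u\cdot p)Q,$$
(which is well defined and fiberwise bijective) then identifies $\mcC = \mcG \times_P \mcC_o^{G/P}$ with $\mcG/Q$ over $M$. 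Since $Q$ is closed in $P$, its restricted right action on $\mcG$ is free and proper, so $\pi_\mcC : \mcG \to \mcG/Q$ is a principal $Q$-bundle.

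For (1) I would verify directly that the \emph{same} form $\omega$ satisfies the three axioms of Definition \ref{Def_Cartan_geom} for the pair $(G,Q)$. Axiom (a) is literally unchanged, as $\omega : T_u\mcG \to \mfg$ is pointwise a linear isomorphism; this is consistent with the dimension count $\dim \mcC + \dim Q = \dim M + \dim P = \dim \mcG = \dim \mfg$. Axiom (b) holds because the fundamental vector fields of the $Q$-action are exactly the restrictions to $\mfq \subseteq \mfp$ of those of the $P$-action, so $\omega(\zeta_X) = X$ for $X \in \mfq$ follows from the corresponding statement for $\mfp$. Axiom (c), the $Q$-equivariance, is the $P$-equivariance $(r^q)^*\omega = \textrm{Ad}(q^{-1}) \circ \omega$ restricted to $q \in Q$. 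As $Q$ is parabolic, this exhibits $(\pi_\mcC : \mcG \to \mcC, \omega)$ as a parabolic geometry of type $(G,Q)$.

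For (2) and (3) I would apply the general formulas for a Cartan geometry to each of the two pairs. Isomorphism \eqref{tangent_bundle}, applied to $(G,P)$ on $\mcG \to M$ and to $(G,Q)$ on $\mcG \to \mcC$, yields $TM \cong \mcG \times_P \mfg/\mfp$ and $T\mcC \cong \mcG \times_Q \mfg/\mfq$, while \eqref{omega_induced_filt} gives the stated filtrations $T^{-i}M \cong \mcG \times_P \mfp^{-i}/\mfp$ and $T^{-i}\mcC \cong \mcG \times_Q \mfq^{-i}/\mfq$. For the tangent map, the crucial identity is $p \circ \pi_\mcC = \pi_M$, so that $Tp \circ T\pi_\mcC = T\pi_M$; tracing a class $[u, X + \mfq]$ (represented by $T\pi_\mcC(\omega^{-1}(X))$) through these isomorphisms shows that $Tp$ is induced by the natural $Q$-equivariant projection $\mfg/\mfq \to \mfg/\mfp$. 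Its kernel is $\mfp/\mfq$, which gives $\mcV \cong \mcG \times_Q \mfp/\mfq$ for the vertical bundle.

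Since each step is a direct verification, I do not expect a genuine obstacle; the substantive input is Lemma \ref{lem_highest_weight_cone}, which supplies $\mcC_o^{G/P} \cong P/Q$ and makes the bundle $\mcG/Q$ available in the first place. The only point demanding a little care is the bookkeeping of $Q$-module against $P$-module structures when passing between the two associated-bundle descriptions, together with the (immediate) observation that restricting the equivariance and reproducing axioms from $P$ to the subgroup $Q$ loses no information because $Q \leq P$ and $\mfq \subseteq \mfp$.
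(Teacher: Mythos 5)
Your proposal is correct and is exactly the verification the paper has in mind: the paper gives no written proof, labelling the proposition as ``straightforward'' and pointing (in the remark immediately after) to the correspondence-space construction of \v{C}ap, which is precisely the associated-bundle identification $\mcG\times_P P/Q\cong\mcG/Q$ plus the observation that the three Cartan-connection axioms for $(G,P)$ restrict verbatim to $(G,Q)$. Your unwinding of $Tp$ via $p\circ\pi_\mcC=\pi_M$ and the projection $\mfg/\mfq\to\mfg/\mfp$ is the standard argument and is complete.
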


\begin{rem}
In the terminology of \cite{CapCores}, we say that $(\pi_\mathcal{C}: \mathcal{G} \rightarrow \mathcal{C}, \omega)$ in Proposition \ref{p.correspondence} is the \emph{correspondence space} of the
parabolic geometry $(\pi_M:\mathcal{G} \rightarrow M, \omega)$ with respect to $Q\leq P$.
\end{rem}

\begin{prop}\label{omega_induced_filtration}
In Proposition \ref{p.correspondence}, consider the filtration $\{\mathcal{D}^{-i}\}$ of Definition
\ref{d.cone_filtration} on $T\mathcal{C}$. Then $\omega$ yields
\begin{itemize}
\item[(1)]   an identification of
$\mcD^{-i}$ with $T^{-i}\mcC$ for each $1\leq i\leq 4$
such that $(Tp)^{-1}(T^{-1}M)=T^{-4}\mcC$ (in the
symmetric case, $T\mcC=T^{-4}\mcC=T^{-3}\mcC$);
 \item[(2)]
 an identification of
$\{\cdot, \cdot\}:\mathcal{V} \times
\emph{gr}_{-i}(T\mathcal{C})\rightarrow
\emph{gr}_{-i-1}(T\mathcal{C})$ with the corresponding
Levi brackets $\mathcal{L}$; and
\item[(3)]  a conic connection
    $\mathcal{F}\cong\mathcal{G}\times_Q
    (\mathfrak{q}_{-1}^F\oplus
    \mathfrak{q})/\mathfrak{q}\subset T^{-1}\mathcal{C}$
on $\mathcal{C}$. \end{itemize}
\end{prop}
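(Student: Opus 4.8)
The plan is to treat Proposition \ref{omega_induced_filtration} as the curved counterpart of Proposition \ref{long_versus_short_root}, which proves precisely these statements for the flat homogeneous model $G/Q\to G/P$. Since by Proposition \ref{p.correspondence} every bundle in sight is associated to $\mcG$ via a $Q$-module built from the pair $\mfq<\mfp<\mfg$, each assertion reduces to an algebraic fact about that pair, supplemented by one geometric input: the curvature of the type $(G,Q)$-geometry on $\mcC$ vanishes in the vertical directions of $p$.

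First I would settle the pointwise statements. By Proposition \ref{p.correspondence}, $Tp$ is the projection $\mcG\times_Q\mfg/\mfq\to\mcG\times_P\mfg/\mfp$ with kernel $\mcV\cong\mcG\times_Q\mfp/\mfq$, and by Lemma \ref{nested_parabolics} one has $\mfp/\mfq=\mfq_{-1}^V$. In the frame a point $u\in\mcC$ corresponds to $z_0=[\mfg_{-\alpha}]\in\mcC_o^{G/P}$, so $\widehat u$ corresponds to $\mfq_{-1}^F$ and $\widehat T_u\mcC_{p(u)}$ to $\mfq_{-1}^F\oplus\mfq_{-2}$ (computed in the proof of Theorem \ref{Cartan_cones_str}). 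Forming preimages under $Tp$ and using $\mfp=\mfq_{-1}^V\oplus\mfq$ yields $\mcD^{-1}=T^{-1}\mcC$ and $\mcD^{-2}=T^{-2}\mcC$ at once. For part (3) I would check that $(\mfq_{-1}^F\oplus\mfq)/\mfq$ is a $Q$-submodule of $\mfg/\mfq$: the unipotent radical $Q_+$ acts trivially modulo $\mfq$ since $[\mfq_{\geq1},\mfg_{-\alpha}]\subseteq\mfq$, while $Q_0$-invariance amounts to $[\mfq_0,\mfg_{-\alpha}]\subseteq\mfg_{-\alpha}$, which holds because $\alpha$ is long and every root of $\mfq_0$ is supported away from $\alpha$ and its Dynkin neighbours, so $\gamma-\alpha$ is never a root. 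With $\mcV\cong\mcG\times_Q\mfq_{-1}^V$ this gives $\mcD^{-1}=\mcF\oplus\mcV$, so $\mcF$ is a conic connection.

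The heart of the argument, covering part (2) and the cases $i=3,4$ of part (1), is the behaviour of Lie brackets with one vertical entry. The key point is that the curvature two-form $K=d\omega+\tfrac12[\omega,\omega]$ is the \emph{same} for the $(G,P)$-geometry on $M$ and the $(G,Q)$-geometry on $\mcC$, and it is horizontal for $\mcG\to M$; hence, viewed as a $Q$-equivariant function, the curvature $\kappa_\mcC$ vanishes whenever one entry lies in $\mfp/\mfq$, that is, in $\mcV$ (cf.\ \cite{CapCores}). Inserting this into the identity $\omega([\xi,\eta])=\xi\!\cdot\!\omega(\eta)-\eta\!\cdot\!\omega(\xi)+[\omega(\xi),\omega(\eta)]-K(\xi,\eta)$ on $\mcG$, the curvature term drops as soon as $\omega(\xi)\in\mfp$, i.e.\ $\xi$ lifts a section of $\mcV$; hence $[\mcV,\,\cdot\,]$ is computed, at the graded level, by the algebraic bracket $\{\cdot,\cdot\}$ of $\mfg$. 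Passing to the associated graded gives part (2) (compare Proposition \ref{Prop_regularity}). Feeding this back into $\mcD^{-i}=[\mcV,\mcD^{-i+1}]+\mcD^{-i+1}$, the increments are governed by the graded identities $[\mfq_{-1}^V,\mfq_{-2}]=\mfq_{-3}$ and $[\mfq_{-1}^V,\mfq_{-3}]=\mfq_{-4}$, which hold in the model by Proposition \ref{long_versus_short_root}(2)--(3) (using $[\mfq_{-1}^F,\mfq_{-2}]=0$ and Lemma \ref{l.-3}); this yields $\mcD^{-3}=T^{-3}\mcC$ and $\mcD^{-4}=T^{-4}\mcC$. Finally $(Tp)^{-1}(T^{-1}M)=T^{-4}\mcC$ follows from the purely algebraic equality $\mfp^{-1}=\mfq^{-4}$ of Proposition \ref{p.depth}, with the symmetric case collapsing to $T\mcC=T^{-3}\mcC=T^{-4}\mcC$ because $\mfq$ then has depth $3$.

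The step I expect to be the main obstacle is isolating this curvature input correctly. One must not claim regularity of the $(G,Q)$-geometry on $\mcC$ --- that is the separate content of Theorem \ref{m1} and may fail --- but only that the \emph{vertical} component of its curvature vanishes, which is forced by horizontality over $M$. Once that component is pinned down, the identifications in (1)--(3) are bookkeeping with the root data of $\mfq<\mfp<\mfg$ already assembled for the homogeneous model in Proposition \ref{long_versus_short_root}.
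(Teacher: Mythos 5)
Your proposal is correct and follows essentially the same route as the paper: identify $\mcD^{-1}$, $\mcD^{-2}$ with $T^{-1}\mcC$, $T^{-2}\mcC$ by pointwise root-space bookkeeping, then use horizontality of the curvature over $M$ (so $K$ vanishes on $\omega^{-1}(\mfp)$-directions) together with the structure equation to show that brackets with vertical sections are computed by the algebraic bracket, and finish with the identities $[\mfp,\mfq^{-i}]=\mfq^{-i-1}$ from Proposition \ref{long_versus_short_root}. Your explicit caution that only the vertical component of the curvature is controlled --- not regularity of the $(G,Q)$-geometry --- is exactly the point the paper's proof rests on.
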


\begin{proof}
Since $Q$ is the stabiliser in $P$ of $\mfq_{-1}^F$, it follows (similarly as in Proposition \ref{long_versus_short_root}) that $\mcD^{-1}=T^{-1}\mcC$. Since $\mcV\cong\mcG\times_Q\mfp/\mfq$, this also
shows that $\mcF$ defines a conic connection on $\mcC$, thus (3) holds. As in  Proposition \ref{long_versus_short_root}, we also have $(Tp)^{-1}(T^{-1}M)=T^{-4}\mcC$. For the remaining claims in (1) and (2) note that the preimages under $T\pi_\mcC : T \mcG \to T \mcC$
of the distributions $T^{-i}\mcC$ are the distributions $\omega^{-1}(\mfq^{-i})\subset T\mcG$ on $\mcG$ and locally any section of $T^{-i}\mcC$ lifts to a section of $\omega^{-1}(\mfq^{-i})$.
Recall from Definition \ref{Cartan_curvature} that the curvature of a Cartan connection is horizontal. Since $\omega$ is a Cartan connection of type $(G,P)$ over $M$, its curvature $K\in\Omega^2(\mcG, \mfg)$ must therefore
vanish upon insertion of sections of $\omega^{-1}(\mfp)$ (corresponding to $\mcV\subset T\mcC$). Hence, for sections $\xi$ of $\omega^{-1}(\mfp)$ and $\eta$ of $\omega^{-1}(\mfq^{-i})$ we have
$$0=K(\xi, \eta)=\xi\cdot \omega(\eta)-\eta\cdot\omega(\xi)-\omega([\xi, \eta])+[\omega(\xi), \omega(\eta)].$$
Since $[\mfp, \mfq^{-i}]=\mfq^{-i-1}$ for $i=1,2,3$ by Proposition \ref{long_versus_short_root}, this implies that $T^{-i-1}\mcC=[\mcV, T^{-i}\mcC]+T^{-i}\mcC$ for $1\leq i\leq 3$ and claim (2).
Since $T^{-1}\mcC=\mcD^{-1},$ we have $\mcD^{-i}=T^{-i}\mcC$ for $2\leq i\leq 4$, which establishes (1).
\end{proof}

We note that \cite[Prop. 1.2.1]{LeBrunThesis} (see Ex. \ref{ex.path.geom}), and \cite[Thm. 3.1.16  and Prop. 4.1.5]{csbook} imply:

\begin{prop}\label{p.path.geom} Assume $(\mfg,\alpha)=(A_n, \alpha_1 \textrm{ or } \alpha_n)$. Associating to a regular normal parabolic geometry of type $(G,P)$ the conic connection $\mcF$ of Proposition \ref{omega_induced_filtration}
on its associated cone structure $\mcC=\mathbb P(TM)$ gives rise to an equivalence of categories between regular normal parabolic geometries of type $(G,P)$ on a complex manifold $M$ and conic connections on $\mathbb P(TM)$.
\end{prop}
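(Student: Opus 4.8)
The plan is to identify both categories in the statement with the category of holomorphic projective structures on $M$, and then to verify that the assignment described in the statement realizes this identification. First I would unwind the hypothesis $(\mfg,\alpha)=(A_n,\alpha_1)$, the case $\alpha_n$ being symmetric. Here $\mfg=\mathfrak{sl}(n+1,\C)$ carries the grading $\mfg=\mfp_{-1}\oplus\mfp_0\oplus\mfp_1$ determined by $\alpha_1$, so that $G/P=\mbP^n$, $\dim M=n$, and the highest weight orbit $\mcC_o^{G/P}=\mbP(\mfp_{-1})=\mbP T_o(G/P)$ is the full projective space. Consequently the associated cone structure of Definition \ref{d.associated} is the full projectivized tangent bundle $\mcC=\mbP TM$, and by Example \ref{ex.path.geom} a conic connection on it is precisely a path geometry on $M$.

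Next I would invoke the two equivalences that meet at projective structures. On one side, LeBrun's \cite[Prop.~1.2.1]{LeBrunThesis}, recalled in Example \ref{ex.path.geom}, shows that in the holomorphic category path geometries on $M$---equivalently, conic connections on $\mbP TM$---are the same as holomorphic projective structures on $M$, i.e.\ equivalence classes of torsion-free affine connections with common unparametrized geodesics. On the other side, \cite[Prop.~4.1.5]{csbook} identifies holomorphic projective structures on $M$ with the geometric data underlying regular normal parabolic geometries of type $(A_n,\alpha_1)$, and \cite[Thm.~3.1.16]{csbook} upgrades this identification to an equivalence of categories. Thus both categories in the statement are equivalent to that of holomorphic projective structures on $M$. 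Moreover the assignment $F$ of the statement, sending $(\mcG\to M,\omega)$ to the conic connection $\mcF$ of Proposition \ref{omega_induced_filtration}, is manifestly functorial: a morphism of parabolic geometries pulls $\omega'$ back to $\omega$ and hence carries the line subbundle $\mcF$ to $\mcF'$.

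The remaining, and only nontrivial, point is to check that $F$ agrees (up to natural isomorphism) with the composite of the two equivalences above, for which it suffices to match the distinguished curves. I would argue that the projections to $M$ of the integral curves of $\mcF$ are exactly the geodesics of the projective structure carried by the parabolic geometry. By Proposition \ref{omega_induced_filtration}(3) an integral curve of $\mcF\cong\mcG\times_Q(\mfq_{-1}^F\oplus\mfq)/\mfq$ lifts to a curve in $\mcG$ whose $\omega$-development lies in the line $\mfq_{-1}^F=\mfg_{-\alpha}\subset\mfp_{-1}$; projecting to $M$, such curves are precisely the distinguished curves of the Cartan geometry in the $\mfp_{-1}$-directions. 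For the homogeneous model $G/P=\mbP^n$ these are the orbits of $\exp(t\,\mfg_{-\alpha})$, namely the lines in $\mbP^n$, which are the geodesics of the flat projective structure; and in general they are the geodesics of the projective structure determined by $\omega$. Since at each point of $\mbP TM$ the line $\mcF$ points in the direction singled out by that point, these projected curves realize every projective geodesic in every direction, which is exactly the path geometry assigned to the projective structure under LeBrun's correspondence. Naturality of all the constructions (the correspondence space of Proposition \ref{p.correspondence} and the functors of Example \ref{ex.path.geom} and \cite{csbook}) then shows that $F$ is the claimed equivalence. The crux---and the only place requiring genuine computation---is precisely this identification of the integral curves of $\mcF$ with the projective geodesics.
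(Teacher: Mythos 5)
Your proposal is correct and follows essentially the same route as the paper, which simply cites \cite[Prop.~1.2.1]{LeBrunThesis} together with \cite[Thm.~3.1.16 and Prop.~4.1.5]{csbook} and leaves the identification of the two equivalences implicit. Your additional verification that the integral curves of $\mcF$ project to the projective geodesics is the right compatibility check and is consistent with what the paper takes for granted.
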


\begin{notcon}\label{n.kappa}
In Proposition \ref{p.correspondence},
the curvature of $\omega$ as a function $\kappa: \mcG\rightarrow \Wedge^2 \mfg^*\otimes\mfg$ is the same for the parabolic geometries of type $(G,P)$
on $M$ and type $(G,Q)$ on $\mcC$. It gives rise to a function
$\mathcal{G}\rightarrow
    \Wedge^2
    (\mathfrak{g}/\mathfrak{p})^*\otimes\mathfrak{g}.$
    This function viewed as a $P$-equivariant function can
    be identified with a section of
    $\Wedge^2T^*M\otimes\mathcal{A} M$, which we denote by
    $\kappa^M.$  On the other hand, when viewed as a
    $Q$-equivariant function, it can be identified with a
    section of
    $\Wedge^2(T\mathcal{C}/\mathcal{V})^*\otimes\mathcal{A}
    \mathcal{C}$, which we denote by
    $\kappa^\mathcal{C}$.
    \end{notcon}

As a consequence of Lemma \ref{l.rel.hom} and Proposition \ref{p.rel.Lie} we have:

\begin{prop}\label{norm_cone}
In the Notation \ref{n.kappa} we have:
\begin{enumerate}
\item[(1)] $\kappa^\mcC=\iota\circ \kappa^M$ as functions on $\mcG$, where $\iota$ is as in Lemma \ref{l.rel.hom}, and 
the normality of $(\pi_M: \mcG\rightarrow M, \omega)$ implies the normality of $(\pi_\mcC: \mcG\rightarrow \mcC, \omega)$.
\item[(2)] If we write $\hat\kappa^M: \mcG\rightarrow H_{2}(\mfp_+,\mfg)$ and $\hat\kappa^{\mcC}: \mcG\rightarrow H_{2}(\mfg_+,\mfg)$ for the harmonic curvatures 
of the normal parabolic geometries $(\pi_M: \mathcal{G} \rightarrow M, \omega)$ and $(\pi_\mathcal{C}: \mathcal{G} \rightarrow \mathcal{C}, \omega)$ respectively, then they are related as
\begin{equation}
\hat\kappa^\mcC=\emph{proj}\circ \hat\kappa^M: \mcG\rightarrow H_0(\mfq_{1}^V, H_{2}(\mfp_+,\mfg))\subset H_2(\mfq_+,\mfg),
\end{equation}
where $\emph{proj}:  H_{2}(\mfp_+,\mfg)\rightarrow  H_{2}(\mfp_+,\mfg)/\mfq_{1}^VH_{2}(\mfp_+,\mfg)= H_0(\mfq_{1}^V, H_{2}(\mfp_+,\mfg))$ is the natural projection.
\end{enumerate}

\end{prop}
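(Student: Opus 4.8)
The plan is to extract the whole statement from the single observation that the curvature function $\kappa\colon\mcG\to\Wedge^2\mfg^*\otimes\mfg$ is literally the same map for both geometries, combined with Lemma \ref{l.rel.hom} and Proposition \ref{p.rel.Lie}. For the first assertion of (1), I would recall from Definition \ref{Cartan_curvature} that the Cartan curvature $K\in\Omega^2(\mcG,\mfg)$ of $\omega$, viewed as a connection of type $(G,P)$, is horizontal over $M$, i.e.\ it vanishes as soon as one of its arguments is a section of $\omega^{-1}(\mfp)$. Since $\mfq\subset\mfp$, this is a priori stronger than horizontality over $\mcC$, which only requires vanishing on $\omega^{-1}(\mfq)$. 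Interpreting $K$ as the $Q$-equivariant function $\kappa^\mcC\colon\mcG\to\Wedge^2(\mfg/\mfq)^*\otimes\mfg$ and using that it kills every $\mfp$-direction, I would conclude that $\kappa^\mcC$ factors through $\Wedge^2(\mfg/\mfp)^*\otimes\mfg$ and equals $\iota\circ\kappa^M$, where $\iota$ is the inclusion of Lemma \ref{l.rel.hom} induced by $(\mfg/\mfp)^*\hookrightarrow(\mfg/\mfq)^*$.

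Normality of the correspondence space would then be immediate: since $\iota$ intertwines the codifferentials by Lemma \ref{l.rel.hom}, one gets $\partial^*\kappa^\mcC=\partial^*(\iota\circ\kappa^M)=\iota(\partial^*\kappa^M)=0$ from normality $\partial^*\kappa^M=0$ of the given geometry, finishing (1). For (2), the same intertwining shows that $\iota$ is a chain map for the codifferentials, so it descends to a homomorphism $\iota_*\colon H_2(\mfp_+,\mfg)\to H_2(\mfq_+,\mfg)$; because passing to harmonic classes commutes with $\iota$, this yields $\hat\kappa^\mcC=\iota_*\hat\kappa^M$, and the task reduces to identifying $\iota_*$ with $\emph{proj}$ followed by the inclusion of Proposition \ref{p.rel.Lie}.

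To identify $\iota_*$ I would argue by $Q$-equivariance. The inclusion $\iota$ is $Q$-equivariant and all operators involved are $Q$- (respectively $Q_0$-) equivariant, so $\iota_*$ is a homomorphism of $Q$-modules. Its target $H_2(\mfq_+,\mfg)$ is completely reducible as a $Q$-module by Kostant (Proposition \ref{p.Kostant}), hence $\iota_*$ must annihilate $\mfq_+\cdot H_2(\mfp_+,\mfg)=\mfq_1^V\cdot H_2(\mfp_+,\mfg)$, using that $\mfp_+$ acts trivially on $H_2(\mfp_+,\mfg)$ and that $\mfq_+=\mfq_1^V\oplus\mfp_+$ by Lemma \ref{nested_parabolics}. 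Thus $\iota_*$ factors through the largest completely reducible quotient $H_0(\mfq_1^V,H_2(\mfp_+,\mfg))$ of Proposition \ref{p.rel.Lie}(1), which is precisely $\emph{proj}$. Finally I would verify that the residual map $H_0(\mfq_1^V,H_2(\mfp_+,\mfg))\to H_2(\mfq_+,\mfg)$ coincides with the concrete inclusion of Proposition \ref{p.rel.Lie}(2) by evaluating on a lowest weight vector $w$ generating one of its components: by that proposition $w$ already lies in the $\mfq$-harmonic subspace $\ker(\square)\subset\Wedge^2\mfq_+\otimes\mfg$, so its $\mfq$-harmonic class is represented by $w$ itself and $\iota_*$ sends its class to $w$, matching the inclusion. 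Combining these steps gives $\hat\kappa^\mcC=\emph{proj}\circ\hat\kappa^M$. I expect this last point --- showing that the abstractly defined $\iota_*$ agrees with the Hodge-theoretic inclusion of Proposition \ref{p.rel.Lie} --- to be the only genuinely delicate part, everything else being a formal manipulation of horizontality, the codifferential, and $Q$-equivariance.
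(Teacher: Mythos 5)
Your argument is correct and follows exactly the route the paper intends: the paper states this proposition without an explicit proof, presenting it as a consequence of Lemma \ref{l.rel.hom} and Proposition \ref{p.rel.Lie} (going back to \v{C}ap's correspondence-space paper), and your write-up supplies precisely the missing details --- horizontality of $K$ in the $\mfp$-directions giving $\kappa^\mcC=\iota\circ\kappa^M$, the intertwining of $\partial^*$ giving normality, and complete reducibility of the $Q$-module $H_2(\mfq_+,\mfg)$ forcing $\iota_*$ to factor through $\mathrm{proj}$. Your lowest-weight-vector check that the residual map agrees with the Hodge-theoretic inclusion of Proposition \ref{p.rel.Lie}(2) is the right way to pin down the final identification, and is legitimately the only nontrivial point.
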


\begin{rem} In contrast to $(\mcG\rightarrow M, \omega)$ the induced parabolic geometry $(\mcG\rightarrow\mcC, \omega)$ on $\mcC$
is in general not regular: by Proposition \ref{harm_curv_C}, the subspace $H_0(\mfq_{1}^V, H_{2, 1}(\mfp_+, \mfg))\subset \Wedge^2\mfq_+\otimes\mfg$
is contained in grading components of non-positive degree (namely, $0$ or $-1$ (with respect to the grading induced by $\mfq$ on $\Wedge^2\mfq_+\otimes\mfg$)). Hence, $\hat{\kappa}^\mcC$ and thus also $\kappa^\mcC$ have in general
values in non-positive grading components of $\Wedge^2\mfq_+\otimes\mfg$, which implies that  $(\mcG\rightarrow\mcC, \omega)$ is (in general) not regular by Proposition \ref{Prop_regularity}.
We shall see below that its regularity is equivalent to the existence
of a characteristic connection on $\mcC$.
\end{rem}

\begin{notcon}\label{n.kappaharm}
Recall  that the natural projection $\mcQ_0:=\mcG/Q_+\rightarrow \mcC$ is a $Q_0$-principal bundle
over $\mcC$ and that the Cartan connection
$\omega$ induces an isomorphism
$$\Wedge^2\textrm{gr}(T\mcC)^*\otimes \textrm{gr}(\mcA\mcC)\cong \mcQ_0\times_{Q_0}\Wedge^2\mfq_+\otimes\mfg.$$
Hence the $Q_0$-module $H_0(\mfq_{1}^V, H_{2}(\mfp_+, \mfg))\subset \Wedge^2\mfp_+\otimes \mfg\subset\Wedge^2\mfp_+\otimes \mfg$ defines a subbundle of
$\Wedge^2\textrm{gr}(T\mcC/\mcV)^*\otimes \textrm{gr}(\mcA\mcC)$. In the sequel, we view $\hat\kappa^\mcC$ alternately as a $Q_0$-equivariant function
$\mcQ_0\rightarrow H_0(\mfq_{1}^V, H_{2}(\mfp_+, \mfg))$ as well as a section of $\Wedge^2\textrm{gr}(T\mcC/\mcV)^*\otimes \textrm{gr}(\mcA\mcC)$.
\end{notcon}

\begin{thm}\label{m1}
Suppose $(\mfg,\alpha)$ is a simple Lie algebra with a choice of long simple root. Let $(\mcG\rightarrow M, \omega)$ be a regular normal parabolic geometry of type $(G, P)$ and let $(M, \{T^{-i}M\}, \mcC)$ be the associated filtered manifold with symbol algebra $\mfp_-$
equipped with a subordinate $\mcC_o^{G/P}$--isotrivial cone structure.
Let $(\mcG\rightarrow\mcC, \omega)$ be the normal parabolic geometry on $\mcC$ and let $\mcF$ be the
conic connection on $\mcC$ of Proposition \ref{omega_induced_filtration}.
Then the following statements are equivalent.
\begin{itemize}
\item[(1)] $(\mcG\rightarrow\mcC, \omega)$ is regular.
\item[(2)] $\mcF$ is characteristic (i.e. $\tau_\mcF=0$).
\item[(3)] $\mcC$ admits a characteristic conic connection (i.e. $\tau^\mcC=0$).
\end{itemize}
Assume furthermore that  $(\mfg,\alpha)$ is different from $(B_n/D_{n+1},\alpha_1)$ for $n\geq 2$ or $(A_n, \alpha_2/\alpha_{n-1})$ for $n\geq 3$ or $(A_n, \alpha_1/\alpha_{n})$ for $n\geq 2$.
Then under the equivalent conditions (1)--(3), the parabolic geometry $(\mcG \to M, \omega)$ is flat and $(M, \{T^{-i}M\}, \mcC)$ is locally isomorphic to $(G/P, \{T^{-i} G/P\}, \mcC^{G/P})$.
\end{thm}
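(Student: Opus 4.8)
The plan is to route everything through the correspondence space geometry $(\pi_\mcC : \mcG \to \mcC, \omega)$ of Proposition \ref{p.correspondence}, which is normal by Proposition \ref{norm_cone}(1), and to show that the three conditions are all governed by a single graded piece of its curvature. The implication (1)$\Rightarrow$(2) is elementary: if the $\mcC$-geometry is regular, its Levi bracket agrees with the algebraic bracket $\{\cdot,\cdot\}$ on $\textrm{gr}(T\mcC)$, and since $\{\mfq_{-1}^F,\mfq_{-2}\}=0$ by Proposition \ref{long_versus_short_root}(3), the component of $\mcL$ computing $\tau_\mcF$ vanishes. The implication (2)$\Rightarrow$(3) is immediate, with $\mcF$ itself witnessing a characteristic conic connection. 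For the remaining equivalences I would invoke Lemma \ref{FF.injective}: the fibre $\mcC_o^{G/P}$ is an irreducible Hermitian symmetric space by Proposition \ref{long_versus_short_root}(1), in particular not a linear subspace, so $\mathrm{II}$ is injective and by Proposition \ref{p.torsion} the characteristic conic connection is unique when it exists; thus $\tau^\mcC=0$ and $\tau_\mcF=0$ detect the same obstruction, and it suffices to establish (2)$\Leftrightarrow$(1).

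The heart of the matter is the identification of $\tau_\mcF$ with harmonic curvature. By Proposition \ref{Prop_regularity} the homogeneity-$0$ part of $\kappa^\mcC$ is $\{\cdot,\cdot\}-\mcL$; restricting to the slot $\mfq_{-1}^F\otimes\mfq_{-2}\to\mfq_{-3}$ and using $\{\mfq_{-1}^F,\mfq_{-2}\}=0$ identifies $\tau_\mcF$ with this component of $\textrm{gr}_0(\kappa^\mcC)$. On the other hand, Propositions \ref{norm_cone}(2) and \ref{harm_curv_C} locate the only harmonic curvature of $(\mcG\to\mcC,\omega)$ in homogeneities $\le 0$ precisely in $H_0(\mfq_1^V, H_{2,1}(\mfp_+,\mfg))$, namely in homogeneity $0$ (output in $\mfq_{-3}$) in the symmetric case and in homogeneity $-1$ (output in $\mfq_{-4}$) in the contact and $(B_n/D_{n+1},\alpha_3)$ cases; by Proposition \ref{norm_cone}(2) this is the image under $\mathrm{proj}$ of the $H_{2,1}$-component of $\hat\kappa^M$. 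Since $(\mcG\to\mcC,\omega)$ is normal, the lowest nonzero homogeneous component of $\kappa^\mcC$ is harmonic (normality together with the Bianchi identity), so regularity, i.e. homogeneity $\ge 1$, is equivalent to the vanishing of this $H_{2,1}$-component. To close (2)$\Leftrightarrow$(1) I would argue that $\tau_\mcF$ is a \emph{faithful} $Q_0$-equivariant projection of that same irreducible $P_0$-module $H_{2,1}(\mfp_+,\mfg)$: since $\hat\kappa^M$ is $P$-equivariant and $H_{2,1}$ is $P$-irreducible, the $P$-submodule spanned by its image is $0$ or all of $H_{2,1}$, so $\tau_\mcF$ vanishes identically iff the $H_{2,1}$-component of $\hat\kappa^M$ does, iff the $\mcC$-geometry is regular.

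For the final (flatness) statement I would then exploit that the excluded list is exactly the set of $(\mfg,\alpha)$ with $H_{2,2}(\mfp_+,\mfg)\ne 0$: by Proposition \ref{p.vanishing}, outside the excluded cases the only harmonic curvature of the $M$-geometry is $H_{2,1}(\mfp_+,\mfg)$. Assuming (1)--(3), regularity forces $\kappa^\mcC$, hence its harmonic projection $\hat\kappa^\mcC$, into homogeneity $\ge 1$; but by Proposition \ref{harm_curv_C} all of $\hat\kappa^\mcC=\mathrm{proj}\circ\hat\kappa^M$ sits in homogeneity $\le 0$. Therefore $\hat\kappa^\mcC=0$, and as $(\mcG\to\mcC,\omega)$ is regular normal, Proposition \ref{harmcurv} gives $\kappa^\mcC=0$; since $\kappa^\mcC=\iota\circ\kappa^M$ with $\iota$ the injection of Lemma \ref{l.rel.hom}, we conclude $\kappa^M=0$. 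Flatness of $(\mcG\to M,\omega)$ then yields, via Proposition \ref{p.flatdevelop}, a local isomorphism with the homogeneous model carrying $\{T^{-i}M\}$ and $\mcC$ to $\{T^{-i}G/P\}$ and $\mcC^{G/P}$.

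I expect the main obstacle to be the faithfulness step in the non-symmetric cases. In the symmetric case $\tau_\mcF$ \emph{is} the lowest harmonic component (both occupy homogeneity $0$ with output in $\mfq_{-3}$), so the identification is direct; but in the contact and $(B_n/D_{n+1},\alpha_3)$ cases the harmonic component lies in homogeneity $-1$ (output in $\mfq_{-4}$) while $\tau_\mcF$ records the homogeneity-$0$ slot (output in $\mfq_{-3}$). Proving that the corresponding projection of the irreducible module $H_{2,1}(\mfp_+,\mfg)$ onto $\mfq_1^F\otimes\mfq_2\otimes\mfq_{-3}$ is nonzero — so that $\tau_\mcF$ still detects the entire component — is the delicate point, which I would settle from the explicit lowest-weight description \eqref{irred_comp}, the abelianness of $\mfq_1^V$ (Proposition \ref{long_versus_short_root}(1)), and the root data of Section \ref{Appendix}.
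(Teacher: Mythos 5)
Your skeleton is the right one, and your closing flatness argument --- regularity forces $\hat\kappa^\mcC$ into homogeneity $\geq 1$ while Propositions \ref{norm_cone} and \ref{harm_curv_C} place $\hat\kappa^\mcC=\textrm{proj}\circ\hat\kappa^M$ in homogeneity $\leq 0$ outside the excluded cases, so $\hat\kappa^\mcC=0$, hence $\kappa^\mcC=0$ by Proposition \ref{harmcurv} and $\kappa^M=0$ by injectivity of $\iota$ --- is correct and is a clean alternative to the paper's more computational identification of $\textrm{gr}_0(\kappa^\mcC)$, resp.\ $\textrm{gr}_{-1}(\kappa^\mcC)$, with the harmonic curvature. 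The equivalences, however, contain two genuine gaps. For (3)$\Rightarrow$(2), uniqueness of the characteristic connection (Proposition \ref{p.torsion}(1)) only says there is \emph{at most one}; it does not identify that connection with the specific $\mcF$ coming from $\omega$, so ``$\tau^\mcC=0$ and $\tau_\mcF=0$ detect the same obstruction'' does not follow --- a priori the unique characteristic connection could be $\mcF$ plus a nonzero vertical correction while $\tau_\mcF\neq 0$. The paper closes this using normality: in the symmetric case $\tau_\mcF=\textrm{gr}_0(\kappa^\mcC)$ is a section of $\ker(\partial^*)$, which by Proposition \ref{harm_curv_C}(1)(i) is complementary to $\textrm{im}(\partial)=\mcF^*\otimes{\rm II}(\mcV)$, so $\tau_\mcF$ is faithfully recorded by its class $\tau^\mcC$ modulo ${\rm II}(\mcV)$; in the contact and $(B_n/D_{n+1},\alpha_3)$ cases one instead uses $[\mcV,T^{-2}\mcC]\subset T^{-3}\mcC$ (Proposition \ref{omega_induced_filtration}(2)), so that any characteristic $\mcF'$ forces $[\mcF,T^{-2}\mcC]\subset[T^{-1}\mcC,T^{-2}\mcC]\subset T^{-3}\mcC$ and hence $\hat\kappa^\mcC=0$. (Also, Lemma \ref{FF.injective} does not apply to $(A_n,\alpha_1/\alpha_n)$, where $\mcC_o^{G/P}$ \emph{is} a linear subspace; that case must be set aside first, as (1)--(3) hold there vacuously.)

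The second gap is the step you yourself flag as the main obstacle, and it stems from a misreading of Definition \ref{d.characteristic}: $\tau_\mcF$ takes values in $\mcF^*\otimes\textrm{gr}_{-2}(\mcD)^*\otimes T\mcC/\mcD^{-2}$, not merely in the slot with output in $\textrm{gr}_{-3}$. Thus $\tau_\mcF=0$ is the full containment $[\mcF,\mcD^{-2}]\subset\mcD^{-2}$, which already gives $[\mcF,T^{-2}\mcC]\subset T^{-3}\mcC$ and therefore kills $\textrm{gr}_{-1}(\kappa^\mcC)=\textrm{proj}\circ\hat\kappa^M=\hat\kappa^\mcC$ directly; no nonvanishing statement for a projection of $H_{2,1}(\mfp_+,\mfg)$ onto $\mfq_1^F\otimes\mfq_2\otimes\mfq_{-3}$ is needed, and the delicate point you anticipate does not arise. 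Worse, the substitute you propose would not work as stated: since $\kappa^\mcC$ is only of homogeneity $\geq -1$ in these cases, its homogeneity-$0$ component need not equal $\{\cdot,\cdot\}-\mcL$ and receives contributions from the homogeneity-$2$ part of $\kappa^M$, which is not a pointwise algebraic function of $\hat\kappa^M$; so the $\textrm{gr}_{-3}$-output piece of the Levi bracket is not obtained by applying a fixed $Q_0$-equivariant map to $\hat\kappa^M$, and the irreducibility argument has nothing to act on. The correct chain in these cases is $\tau_\mcF=0\Rightarrow\hat\kappa^\mcC=0\Rightarrow\kappa^M=0\Rightarrow$ regular, and conversely regularity $\Rightarrow\textrm{gr}_{-1}(\kappa^\mcC)=0\Rightarrow$ flat $\Rightarrow\tau_\mcF=0$.
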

\begin{proof} Note first that conditions (1)-(3) are trivially satisfied for $(A_n, \alpha_1/\alpha_{n})$ for $n\geq 2$.
By Corollary \ref{flat_cases}, 
we may therefore assume that $(\mfg, \alpha)$ belongs to one of the following two cases, which we handle separately.

{\bf Case I: When $(\mfg, \alpha)\neq (A_n, \alpha_1/\alpha_{n})$ is of symmetric type.}
By Proposition \ref{p.depth}, the depth of $\mfq$ equals $3$. Recall that $\kappa^\mcC$ is a section of $\Wedge^2 (T\mcC/\mcV)^*\otimes \mcA\mcC$. 
The lowest filtration component of $\Wedge^2 (T\mcC/\mcV)^*\otimes \mcA\mcC$ is of degree $0$
and the grading component of degree $0$ of the associated graded is given by
$$\textrm{gr}_0(\Wedge^2 (T\mcC/\mcV)^*\otimes \mcA\mcC)=\mcF^*\otimes \textrm{gr}_{-2}(T\mcC)^*\otimes \textrm{gr}_{-3}(T\mcC).$$
Hence, $\kappa^\mcC$ is of homogeneity $\geq 0$ and, by Proposition \ref{Prop_regularity}, we have
$$\textrm{gr}_0(\kappa^\mcC)=\{\cdot, \cdot\}-\mcL =-\mcL\vert_{\mcF\times \textrm{gr}_{-2}(T\mcC)}=\tau_\mcF,$$
where the second equality follows from (3) of Proposition \ref{long_versus_short_root}. By Proposition \ref{Prop_regularity},
it  follows that (1) is equivalent to (2). Evidently, (2) implies (3). To see that (3) implies (2), note that by (i) of Proposition \ref{harm_curv_C} and Proposition \ref{omega_induced_filtration} we
have the following decomposition
\begin{equation}\label{dec}
\mcF^*\otimes \textrm{gr}_{-2}(T\mcC)^*\otimes \textrm{gr}_{-3}(T\mcC)=\textrm{Ker}(\partial^*)\oplus\textrm{Im}(\partial)=\mcH\oplus \mcF^*\otimes {\rm II}(\mcV),
\end{equation}
where $\mcH=\mcQ_0\times_{Q_0} H_{0}(\mfq_1^V, H_{2,1}(\mfp_+, \mfg))$ and ${\rm II}$ as in Definition \ref{d.tau_C}.
By normality, $\textrm{gr}_0(\kappa^\mcC)=\tau_\mcF$ is a section of $\textrm{Ker}(\partial^*)$. Hence, \eqref{dec} shows
that under the natural identification of the quotient
\eqref{e.quotient} with $\textrm{Ker}(\partial^*)$ one also has $\textrm{gr}_0(\kappa^\mcC)=\tau^\mcC$.
Therefore, by Proposition \ref{p.torsion}, (3) implies (2), which completes the proof that (1)-(3) are equivalent.

Now let us prove the last statement of the theorem in (Case I).
The decomposition \eqref{dec} shows also that
$\textrm{gr}_0(\kappa^\mcC)$ equals the homogeneous component of degree $0$ of the harmonic curvature $\kappa^\mcC$, which, by (2) of Proposition \ref{p.vanishing}, equals the entire harmonic curvature
$\hat\kappa^\mcC$ except for $(\mfg,\alpha)=(B_n/D_{n+1},\alpha_1)$ for $n\geq 2$ or $(A_n, \alpha_2/\alpha_{n-1})$ for $n\geq 3$ or the excluded case $(A_n, \alpha_1/\alpha_{n})$.
Hence, except for these cases, the parabolic geometry is flat under the equivalent assumptions (1)-(3).

{\bf Case II: When $(\mfg,\alpha)$ is of contact type or equal to $(B_n/D_{n+1}, \alpha_3)$ for $n\geq 4$.}
By Proposition \ref{p.depth}, the depth of $\mfq$ is $5$ in the contact case (resp. $6$ in the case of $(B_n/D_{n+1}, \alpha_3)$) with $\mfp^{\pm 1}=\mfq^{\pm 4}$.

Let us consider $\kappa^M$ and $\kappa^\mcC=\iota\circ\kappa^M$ as functions on $\mcG$ with valued in $\Wedge^2\mfp_+\otimes\mfg\subset \Wedge^2\mfq_+\otimes\mfg$. Recall that, when we say $\kappa^M$ or $\kappa^\mcC$ is of homogeneity $\geq \ell$ (i.e has values in the $\ell$-th filtration component of $\Wedge^2\mfp_+\otimes\mfg$), we mean with respect to the filtration on $\mfg$ induced by $\mfp$ in the first case and by $\mfq$ in the second case.
Since $\omega$ is regular as a Cartan connection of type $(G,P)$, Proposition \ref{Prop_regularity} shows that $\kappa^M$ is of homogeneity $\geq 1$
and, by Theorem 3.1.12 of \cite{csbook}, $\textrm{gr}_1(\kappa^M)$ coincides with the homogeneous component $\hat\kappa_1^M$ of degree $1$ of $\hat\kappa^M$.
By Proposition \ref{p.vanishing}, we have $\hat\kappa^M=\hat\kappa_1^M$ and the function has values in the subspace $H_{2, 1}(\mfp_+, \mfg)$ of
$\Wedge^2\mfp_1\otimes\mfp^{-1}/\mfp\cong \Wedge^2\mfp_1\otimes\mfp_{-1}$. It follows that $\kappa^\mcC$ is of homogeneity $\geq -1$ and that
\begin{equation*}
\textrm{gr}_{-1}(\kappa^\mcC)=\textrm{proj}\circ\textrm{gr}_{1}(\kappa^M)=\textrm{proj}\circ \hat\kappa^M=\hat\kappa^\mcC,
\end{equation*}
where $\textrm{proj}: \Wedge^2\mfp_1\otimes \mfp^{-1}/\mfp\rightarrow (\Wedge^2\mfp_1\otimes \mfp^{-1}/\mfp)/\mfq_{1}^V(\Wedge^2\mfp_1\otimes \mfp^{-1}/\mfp)\cong \mfq_{1}^F\otimes\mfq_2\otimes\mfq_{-4}$ is the natural projection.
Let us now view $\hat\kappa^\mcC=\textrm{gr}_{-1}(\kappa^\mcC)$ as a section of $\mcF^*\otimes\textrm{gr}_{-2}^*(T\mcC)\otimes \textrm{gr}_{-4}(T\mcC)$. By Proposition \ref{Prop_regularity}, (see also the proof of Proposition \ref{Prop_regularity} in \cite[Cor. 3.1.8]{csbook}), the fact that $\kappa^\mcC$ is of homogeneity $\geq -1$ implies that $[\mcF, T^{-2}\mcC]\subset T^{-4}\mcC$ and $\textrm{gr}_{-1}(\kappa^\mcC)$ equals the negative of the map
$$\mcF\times \textrm{gr}_{-2}(T\mcC)\rightarrow \textrm{gr}_{-4}(T\mcC)$$ induced by the Lie bracket of vector fields. Hence, the parabolic geometry is flat
if and only if $\hat\kappa^\mcC=\textrm{gr}_{-1}(\kappa^\mcC)=0$, which is the case if and only if $[\mcF, T^{-2}\mcC]\subset T^{-3}\mcC$. By Proposition \ref{long_versus_short_root}, the previous equivalent
conditions are also equivalent to $\mcF$ being characteristic  (that is, $[\mcF, T^{-2}\mcC]\subset T^{-2}\mcC$) and by Proposition \ref{Prop_regularity} also to regularity of  $(\mcG\rightarrow\mcC, \omega)$.

Since (2) evidently implies $(3)$,
to finish the proof it remains to show that  $(3)$ implies $\hat\kappa^\mcC=0$.
To see this, assume that $\mcC$ admits a characteristic conic connection, say $\mcF'$. Then $T^{-1}\mcC=\mcF'\oplus\mcV$ and,
by Proposition \ref{omega_induced_filtration}, this implies $[T^{-1}\mcC, T^{-2}\mcC]\subset T^{-3}\mcC$. In particular, also $[\mcF, T^{-2}\mcC]\subset T^{-3}\mcC$, which implies $\hat\kappa^\mcC=0$.
\end{proof}

By Proposition \ref{p.torsion} and Lemma \ref{FF.injective}, Theorem \ref{m1} implies:

\begin{cor}\label{c.F.unique} 
Assume the setting of Theorem \ref{m1} and that $(\mfg,\alpha)\neq (A_n, \alpha_1/\alpha_{n})$. If $\mcC$ admits a characteristic conic connection $\mcF'$, then the conic connection $\mcF$ on $\mcC$ 
of Proposition \ref{omega_induced_filtration} is characteristic and $\mcF'=\mcF$.
\end{cor}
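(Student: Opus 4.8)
The plan is to deduce the statement from Theorem \ref{m1} together with the uniqueness assertion in Proposition \ref{p.torsion}, the only genuine preliminary being to confirm that the injectivity hypothesis on ${\rm II}$ is satisfied.

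First I would verify that Lemma \ref{FF.injective} applies to $\mcC$. Every fiber $\mcC_x$ is projectively isomorphic to $\mcC_o^{G/P}$, which by Proposition \ref{long_versus_short_root}(1) is a compact Hermitian symmetric space; being homogeneous it is connected, hence irreducible, so it has a single component. Because $(\mfg,\alpha)\neq (A_n,\alpha_1/\alpha_n)$, the minimal homogeneous embedding $\mcC_o^{G/P}\subset\mbP(\mfp_{-1})$ is not a linear subspace: the excluded cases are precisely those for which $G/P=\mbP^n$ and the fiber fills out all of $\mbP T_oG/P$, as one reads off from Table 1 of Section \ref{Appendix}. Thus the hypothesis of Lemma \ref{FF.injective} holds, and ${\rm II}:\mcV\to\textrm{gr}_{-2}(\mcD)^*\otimes T\mcC/\mcD^{-2}$ is injective at general points of $\mcC$.

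Next I would feed this into Theorem \ref{m1}. By assumption $\mcC$ carries the characteristic conic connection $\mcF'$, so condition (3) of Theorem \ref{m1} holds; the equivalence of (1)--(3) in that theorem then yields condition (2), namely that the conic connection $\mcF$ of Proposition \ref{omega_induced_filtration} is itself characteristic. This already establishes the first half of the corollary. Finally, both $\mcF$ and $\mcF'$ are characteristic conic connections on $\mcC$, and the injectivity of ${\rm II}$ established above licenses the uniqueness statement Proposition \ref{p.torsion}(1): $\mcC$ admits at most one characteristic conic connection. Hence $\mcF'=\mcF$, completing the argument.

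The only step requiring more than a direct citation is the verification that the fiber is non-linear, i.e.\ that the excluded family $(A_n,\alpha_1/\alpha_n)$ is exactly the set of cases with linear fiber; once this is in place, everything else is a formal consequence of Theorem \ref{m1} and Proposition \ref{p.torsion}, and I do not expect any further obstacle.
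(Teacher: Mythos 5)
Your argument is correct and follows exactly the route the paper takes: the paper derives Corollary \ref{c.F.unique} precisely by combining Theorem \ref{m1} (existence of a characteristic connection forces $\mcF$ to be characteristic) with Lemma \ref{FF.injective} and Proposition \ref{p.torsion}(1) for uniqueness. Your added verification that $(A_n,\alpha_1/\alpha_n)$ are exactly the cases with linear fiber is the right (and only) detail needing a check, and it is correct.
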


In the three cases excluded in Theorem \ref{m1} with the exception of $(A_2, \alpha_1/\alpha_2)$ and $(B_2,\alpha_1)$, 
we shall prove that, under the assumption that $\mcF$ is characteristic, the cubic torsion $\chi_\mcF$ equals $\hat \kappa^\mcC$. 
To do so we need to compute the harmonic curvature $\hat \kappa^\mcC$. An effective way to compute the harmonic curvature of a normal parabolic geometry is in terms of a Weyl structure. 
Hence, we will make now a short digression on Weyl structures.

\subsection{Digression on Weyl structures}\label{ss.Weyl}
We record some facts about the theory of Weyl structures for parabolic geometries and refer for more details to Sections 5.1 and 5.2 in \cite{csbook}. Throughout this section we assume that
$N$ is a complex manifold equipped with a regular normal parabolic geometry $(\mcG\rightarrow N, \omega)$ of type $(G,Q)$, where the groups are as in Lemma \ref{lem_highest_weight_cone}.
Setting $\mcQ_0:=\mcG/Q_+$, the natural projections $\mcG\rightarrow \mcQ_0$
and $\mcQ_0\rightarrow N$ define a $Q_+$-principal bundle over $\mcQ_0$ and a $Q_0$-principal bundle over $N$, respectively.

\begin{defin}\label{Weyl_struc_def}(cf. \cite[Def. 5.1.1]{csbook}.) Any (local) $Q_0$-equivariant section $\sigma$ of $\mcG\rightarrow \mcQ_0$ is called a \emph{(local) Weyl structure} for $(\mcG\rightarrow N, \omega)$.
\end{defin}

\begin{rem}
In contrast to parabolic geometries in the smooth category, in the holomorphic category only local Weyl structures in general exist. For readability we nevertheless write a Weyl structure, by abuse of notation, as a map $\sigma: \mcQ_0\rightarrow \mcG$. All objects associated to it should be understood as being only locally defined.
\end{rem}

The freedom in the choice of a Weyl structure is as follows:

\begin{prop} \emph{\cite[Proposition 5.1.1]{csbook}}\label{freedom_Weyl}
Suppose  $\sigma:\mcQ_0\rightarrow\mcG$  is a local Weyl structure. Then any other local Weyl structure $\hat\sigma:\mcQ_0\rightarrow\mcG$ is of the form
$$\hat\sigma=\sigma \cdot \exp(\Upsilon_1)...\exp(\Upsilon_k)$$ for unique local $Q_0$-equivariant functions $\Upsilon_i:\mcQ_0\rightarrow\mfq_i$ for $i=1,...k$ (defining local sections of $\emph{gr}_i(T^*N)\cong \mcQ_0\times_{Q_0}\mfq_{i}$), where $\exp: \mfq_+\rightarrow Q_+$ is the usual Lie algebra exponential. Here, $k$ is the depth of $\mfq$.
\end{prop}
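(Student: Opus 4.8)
The plan is to exploit that both $\sigma$ and $\hat\sigma$ are sections of the $Q_+$-principal bundle $\mcG\to\mcQ_0$, so that they differ by a unique gauge transformation, and then to split this gauge according to the grading of $\mfq_+$. First I would note that, since $\mcG\to\mcQ_0$ is a principal $Q_+$-bundle and $\sigma,\hat\sigma$ are both sections of it, there is a unique local holomorphic function $f:\mcQ_0\to Q_+$ with $\hat\sigma=\sigma\cdot f$. To read off the equivariance of $f$, I would evaluate the defining property $\hat\sigma(u\cdot g_0)=\hat\sigma(u)\cdot g_0$ for $g_0\in Q_0$ in two ways: on the one hand it equals $\sigma(u)f(u)g_0$, and on the other hand, using $\hat\sigma=\sigma\cdot f$ together with the $Q_0$-equivariance of $\sigma$, it equals $\sigma(u)g_0 f(u\cdot g_0)$. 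Cancelling $\sigma(u)$ yields $f(u\cdot g_0)=g_0^{-1}f(u)g_0$, i.e. $f$ is equivariant for the conjugation action of $Q_0$ on $Q_+$.

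Next I would use the structure of the unipotent group $Q_+$. Because $\mfq_+=\mfq_1\oplus\cdots\oplus\mfq_k$ is a graded nilpotent Lie algebra and $\exp:\mfq_+\to Q_+$ is a biholomorphism, every element of $Q_+$ can be written uniquely as an ordered product $\exp(X_1)\cdots\exp(X_k)$ with $X_i\in\mfq_i$. This is obtained by peeling off factors along the normal series $Q_+=Q^1\rhd Q^2\rhd\cdots\rhd Q^k\rhd\{e\}$ with $Q^j=\exp(\mfq_j\oplus\cdots\oplus\mfq_k)$, whose successive quotients $Q^j/Q^{j+1}\cong\mfq_j$ are abelian (see \cite{csbook}). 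Applying this factorization pointwise to $f$ defines local functions $\Upsilon_i:\mcQ_0\to\mfq_i$ with $f=\exp(\Upsilon_1)\cdots\exp(\Upsilon_k)$, and the uniqueness of both $f$ and of the product decomposition gives the uniqueness of the $\Upsilon_i$.

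Finally I would transfer the equivariance of $f$ to the individual $\Upsilon_i$. Since $Q_0$ normalizes $Q_+$ and $\textrm{Ad}(g_0)$ preserves each grading component $\mfq_i$, and since $\exp$ intertwines the adjoint action with conjugation via $g_0\exp(X)g_0^{-1}=\exp(\textrm{Ad}(g_0)X)$, one computes
\begin{equation*}
g_0^{-1}f(u)g_0=\exp(\textrm{Ad}(g_0^{-1})\Upsilon_1(u))\cdots\exp(\textrm{Ad}(g_0^{-1})\Upsilon_k(u)).
\end{equation*}
Comparing this with the graded factorization of $f(u\cdot g_0)=g_0^{-1}f(u)g_0$ and invoking uniqueness yields $\Upsilon_i(u\cdot g_0)=\textrm{Ad}(g_0^{-1})\Upsilon_i(u)$, which is exactly the condition that $\Upsilon_i$ be a $Q_0$-equivariant function defining a local section of $\textrm{gr}_i(T^*N)\cong\mcQ_0\times_{Q_0}\mfq_i$. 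Conversely, any tuple of such equivariant functions $\Upsilon_i$ produces, by the same formula, a $Q_0$-equivariant $f$ and hence a Weyl structure $\hat\sigma=\sigma\cdot f$, establishing the claimed correspondence. The only non-formal ingredient is the unique graded factorization in $Q_+$; everything else is bookkeeping with the principal right actions, and the point to watch is keeping the conjugation and adjoint conventions consistent so that the equivariance of $f$ passes cleanly to each $\Upsilon_i$.
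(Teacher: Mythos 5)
Your argument is correct and is essentially the standard proof of this fact; the paper itself gives no proof but simply cites \cite[Proposition 5.1.1]{csbook}, and your three steps (unique gauge $f:\mcQ_0\to Q_+$ between two sections of the principal $Q_+$-bundle $\mcG\to\mcQ_0$, conjugation-equivariance of $f$ under $Q_0$, and the unique ordered factorization $\exp(\Upsilon_1)\cdots\exp(\Upsilon_k)$ combined with the fact that $\mathrm{Ad}(g_0)$ preserves each $\mfq_i$) are exactly the ingredients of the proof in that reference.
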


Moreover, a choice of Weyl structure gives rise to the following data:
\begin{prop}\emph{\cite[Proposition and Definition 5.1.2]{csbook}}\label{Interpr_weyl}
Let $\sigma: \mcQ_0\rightarrow \mcG$ be a local Weyl structure of $(\mcG\rightarrow N, \omega)$.
Then $\sigma^*\omega\in\Omega^1(\mcQ_0, \mfg)$ is $Q_0$-equivariant and hence decomposes according to $\mfg=\mfq_{-k}\oplus...\oplus\mfq_0\oplus...\oplus \mfq_k$ as
\begin{equation}
\sigma^*\omega=\sigma^*\omega_{-k}+...+\sigma^*\omega_{0}+...+\sigma^*\omega_{k}.
\end{equation}
The components have the following interpretation:
\begin{enumerate}
\item[(1)] For $i<0$ the form $\sigma^*\omega_{i}$ descends to a bundle map $TN\rightarrow \emph{gr}_{i}(TN)\cong\mcQ_0\times_{Q_0}\mfq_{i}$ whose restriction to $T^iN$ equals the natural projection
$q_i: T^iN\rightarrow \emph{gr}_{i}(TN)$. Hence, $\Theta_{i}:=\sigma^*\omega_{i}=q_i\circ \pi_i$ for a unique projection $\pi_i: TN\rightarrow T^iN$ that restricts to the identity on $T^iN$ and
\begin{align}
\Theta&:=\Theta_{-1}+...+\Theta_{-k}: TN\rightarrow \emph{gr}_{-1}(TN)\oplus...\oplus \emph{gr}_{-k}(TN)\\\nonumber
&\Theta(\xi)=\pi_{-1}(\xi)+q_{-2}(\pi_{-2}(\xi))+...+q_{-k}(\xi)
\end{align}
defines an isomorphism $TN\cong \emph{gr}(TN)$, called the soldering form associated to $\sigma$.
\item[(2)] The form $\gamma:=\sigma^*\omega_0\in \Omega^1(\mcQ_0, \mfq_0)$ defines a principal connection on $\mcQ_0$, called the Weyl connection associated to $\sigma$.
As such it induces linear connections $\nabla$ on all vector bundles associated to $\mcQ_0$. In particular, it induces a linear connection on $\emph{gr}_{-i}(TN)\cong \mcQ_0\times _{Q_0}\mfq_{-i}$ and
hence on $TN$ via $\Theta$.
\item[(3)] The form $\sigma^*\omega_{1}+...+\sigma^*\omega_{k}$ descends to an element $\emph{\textsf{P}}\in\Omega^1(TN, \emph{gr}(T^*N))$, called the Schouten tensor
associated to $\sigma$. Via $\Theta$, we may view $\emph{\textsf{P}}$ also as a section of  $\emph{gr}(T^*N)\otimes \emph{gr}(T^*N)$.
\end{enumerate}
\end{prop}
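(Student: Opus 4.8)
The plan is to read off all three structures directly from the pullback one-form $\sigma^*\omega\in\Omega^1(\mcQ_0,\mfg)$, exploiting the two defining features of a Cartan connection---its reproducing property and its equivariance---together with the fact that $Q_0$ acts on $\mfg$ by grading-preserving automorphisms. First I would establish the $Q_0$-equivariance of $\sigma^*\omega$. Since $\sigma$ is a $Q_0$-equivariant section, the right translations (and fundamental vector fields) on $\mcQ_0$ and $\mcG$ are $\sigma$-related for elements of $Q_0$; combined with property (c) of Definition \ref{Def_Cartan_geom} this gives $(r^g)^*\sigma^*\omega=\textrm{Ad}(g^{-1})\circ\sigma^*\omega$ for all $g\in Q_0$. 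Because for $g\in Q_0$ the operator $\textrm{Ad}(g^{-1})$ preserves the grading $\mfg=\bigoplus_i\mfq_i$, each graded component $\sigma^*\omega_i$ is itself $Q_0$-equivariant, which legitimizes treating the decomposition $\sigma^*\omega=\sum_i\sigma^*\omega_i$ component by component.

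For the Weyl connection in (2), I would use the reproducing property (b) of Definition \ref{Def_Cartan_geom}. For $X\in\mfq_0=\textrm{Lie}(Q_0)$ the fundamental vector fields $\zeta_X$ on $\mcQ_0$ and on $\mcG$ are $\sigma$-related, so $\sigma^*\omega(\zeta_X)=\omega(\zeta_X)=X\in\mfq_0$; hence $\gamma=\sigma^*\omega_0$ reproduces the generators of the $Q_0$-action while all other components vanish on such fields. Together with the equivariance from the previous step this is precisely the statement that $\gamma$ is a principal connection on $\mcQ_0\to N$, and the induced linear connections $\nabla$ on all associated bundles are then automatic.

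For the soldering form (1) and the Schouten tensor (3) I would first observe that for every $X\in\mfq_0$ one has $\sigma^*\omega_i(\zeta_X)=0$ for all $i\neq 0$, so each $\sigma^*\omega_i$ with $i\neq 0$ is horizontal for $\mcQ_0\to N$; being also $Q_0$-equivariant it therefore descends to a bundle map $TN\to\mcQ_0\times_{Q_0}\mfq_i$. For $i<0$ this target is $\textrm{gr}_i(TN)$, assembling the negative components into $\Theta:TN\to\textrm{gr}(TN)$; for $i>0$ the target is $\textrm{gr}_i(T^*N)$ by Lemma \ref{l.basic_isos}, assembling the positive components into $\textsf{P}$. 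To identify $\Theta_i|_{T^iN}$ with the projection $q_i$, I would trace the isomorphism $TN\cong\mcG\times_Q\mfg/\mfq$ induced by $\omega$ as in \eqref{tangent_bundle}: under it the filtration component $T^iN$ corresponds to $\mfq^i/\mfq$ via \eqref{omega_induced_filt}, and $\sigma^*\omega_i$ is by construction the composition of inclusion with projection onto the $\mfq_i$-summand, which is exactly $q_i$. This forces $\Theta_i=q_i\circ\pi_i$ for the unique projection $\pi_i:TN\to T^iN$ restricting to the identity on $T^iN$, and shows that $\Theta$ is a filtration-split isomorphism, since $\omega$ trivializes $T\mcG$ and hence restricts to a linear isomorphism from any horizontal complement onto $\mfq_-$.

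The main obstacle I expect is the bookkeeping in this last step: verifying that the restriction of $\Theta_i$ to $T^iN$ is exactly $q_i$ (rather than merely some isomorphism onto $\textrm{gr}_i(TN)$) requires carefully matching the filtration $\{T^iN\}$ coming from $\omega$ with the grading produced by the chosen $\sigma$, and confirming that the remaining freedom in $\pi_i$ is resolved uniquely. Everything else is formal once equivariance and horizontality are in hand; in particular the uniqueness assertions in (1) follow because a projection onto $T^iN$ that restricts to the identity there is determined by its kernel, which $\Theta_i$ pins down.
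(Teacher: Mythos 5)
Your argument is correct and is essentially the standard proof of \cite[Prop.~5.1.2]{csbook}, which the paper simply cites rather than reproving: equivariance of $\sigma^*\omega$ from equivariance of $\sigma$ and property (c), the reproduction property on $\mfq_0$ giving the principal connection, and horizontality plus equivariance of the remaining components giving $\Theta$ and $\textsf{P}$, with $\Theta_i|_{T^iN}=q_i$ read off from the identification $TN\cong\mcG\times_Q\mfg/\mfq$. The only cosmetic imprecision is that a single $\Theta_i$ does not pin down $\ker\pi_i$ (only $\pi_i^{-1}(T^{i+1}N)$); it is the whole collection $\{\Theta_j\}_{j\leq i}$, i.e.\ the splitting $\Theta$ of the filtration, that determines the projections $\pi_i$ uniquely.
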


Since $Q_0$ acts by grading preserving Lie algebra automorphism on $\mfg$, Proposition \ref{Interpr_weyl} implies:

\begin{cor}\label{cor_levi_parallel}
Let $\sigma$ be a Weyl structure for $(\mcG\rightarrow N, \omega)$ and consider $\emph{gr}(\mcA N)\cong \mcQ_0\times_{Q_0}\mfg$. Then the section $\{\cdot, \cdot\}$ of $\Wedge^2\emph{gr}(\mcA N)^*\otimes \emph{gr}(\mcA N)$  defined as in \eqref{curl_bracket}
is parallel with respect to $\nabla$. In particular, by Proposition \ref{Prop_regularity}, we also have $\nabla\mcL=0$ for the Levi-bracket $\mcL$ of the filtered manifold $(N, \{T^{-i}N, \})$.
\end{cor}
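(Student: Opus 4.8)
The plan is to exploit the standard principle that, for an associated bundle built from a representation of the structure group, a section corresponds to an equivariant function, and a section is parallel for the induced connection precisely when that function is constant; invariant tensors of the representation therefore give rise to parallel sections.

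First I would set up the dictionary between sections and equivariant functions. A section of $\textrm{gr}(\mcA N)\cong \mcQ_0\times_{Q_0}\mfg$ is the same datum as a $Q_0$-equivariant function $\mcQ_0\to\mfg$, and correspondingly a section of $\Wedge^2\textrm{gr}(\mcA N)^*\otimes\textrm{gr}(\mcA N)\cong\mcQ_0\times_{Q_0}(\Wedge^2\mfg^*\otimes\mfg)$ is a $Q_0$-equivariant function $\mcQ_0\to\Wedge^2\mfg^*\otimes\mfg$. Under this identification the covariant derivative $\nabla$ induced by the Weyl connection $\gamma=\sigma^*\omega_0$ of Proposition \ref{Interpr_weyl} is computed by differentiating the equivariant function along horizontal lifts of vector fields; consequently the section is parallel exactly when the associated equivariant function is \emph{constant}.

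Next I would identify the function attached to $\{\cdot,\cdot\}$. By its definition in \eqref{curl_bracket} the bracket is induced by the Lie bracket of $\mfg$, so the associated equivariant function is the constant function with value $[\cdot,\cdot]\in\Wedge^2\mfg^*\otimes\mfg$. The only thing to check is that this constant function really is $Q_0$-equivariant, i.e. that $[\cdot,\cdot]$ is fixed by the induced $Q_0$-action on $\Wedge^2\mfg^*\otimes\mfg$; but this is exactly the statement that $Q_0\subset G$ acts on $\mfg$ by Lie algebra automorphisms via the adjoint action, namely $\textrm{Ad}(g)[X,Y]=[\textrm{Ad}(g)X,\textrm{Ad}(g)Y]$ for all $g\in Q_0$. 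Since a constant equivariant function is annihilated by differentiation along any horizontal lift, this yields $\nabla\{\cdot,\cdot\}=0$.

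Finally I would deduce the statement for the Levi bracket. The grading decomposition $\textrm{gr}(\mcA N)=\textrm{gr}(T^*N)\oplus\textrm{gr}_0(\mcA N)\oplus\textrm{gr}(TN)$ is $Q_0$-invariant, since the grading components of $\mfg$ are $Q_0$-submodules, so $\nabla$ preserves each summand; restricting the parallel section $\{\cdot,\cdot\}$ to $\textrm{gr}(TN)\times\textrm{gr}(TN)\to\textrm{gr}(TN)$ then produces a parallel section. Because $(\mcG\rightarrow N,\omega)$ is regular, Proposition \ref{Prop_regularity} identifies this restriction with the Levi bracket $\mcL$, whence $\nabla\mcL=0$. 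I do not anticipate any genuine obstacle here: the entire argument is a formal consequence of the equivariant-function description of $\nabla$, and the one substantive input—the $Q_0$-invariance of $[\cdot,\cdot]$—is immediate from $Q_0$ acting by the adjoint representation.
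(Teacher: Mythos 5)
Your argument is correct and is essentially the paper's own: the paper derives the corollary from the single observation that $Q_0$ acts on $\mfg$ by grading-preserving Lie algebra automorphisms, so the bracket is a $Q_0$-invariant element of $\Wedge^2\mfg^*\otimes\mfg$ and hence corresponds to a constant equivariant function, parallel for any connection induced from the principal connection $\gamma$ on $\mcQ_0$. Your final step identifying the restriction of $\{\cdot,\cdot\}$ to $\textrm{gr}(TN)$ with $\mcL$ via regularity and Proposition \ref{Prop_regularity} is also exactly what the paper intends.
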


\begin{notcon}\label{n.sigma}
Having fixed a local Weyl structure $\sigma$, we can consider the pullback $\sigma^*K\in\Omega^2(\mcQ_0, \mfg)$ of the curvature $K$ of $(\mcG\rightarrow N, \omega)$.
Since $\sigma^*K(_-,_-)=d\sigma^*\omega(_-,_-)+[\sigma^*\omega(_-), \sigma^*\omega(_-)]$ is $Q_0$-equivariant and horizontal, $\sigma^*K$ descends to a $2$-form $\kappa^\sigma$ on $N$ with values in
\begin{equation}\label{Hilfs_eq}
\textrm{gr}(\mcA N)=\textrm{gr}(TN)\oplus\textrm{gr}_0(\mcA N)\oplus\textrm{gr}(T^*N),
\end{equation}
which, via $\Theta: TN\cong \textrm{gr}(TN)$, can be also viewed as a section of
\begin{equation}\label{bundle_Hodge}
\Wedge^2 \textrm{gr}(TN)^*\otimes\textrm{gr}(\mcA N)\cong \mcQ_0\times_{Q_0}\Wedge^2\mfq_+\otimes\mfg=\textrm{Im}(\partial)\oplus\textrm{Ker}(\square)\oplus \textrm{Im}(\partial^*),
\end{equation}
where the three bundles on the right-hand side are induced from the corresponding $Q_0$-modules in the Hodge decompsotion \eqref{Hodge} of $\Wedge^2\mfq_+\otimes\mfg$.
By assumption, $\kappa^\sigma$ is a section of the subbundle $\textrm{Ker}(\partial^*)=\textrm{Ker}(\square)\oplus \textrm{Im}(\partial^*)\subset \Wedge^2 \textrm{gr}(TN)^*\otimes\textrm{gr}(\mcA N)$.
\end{notcon}

Recall that the action of $\mfq_+$ on $\Wedge^2\mfq_+\otimes\mfg$ maps $\ker(\partial^*)\subset\Wedge^2\mfq_+\otimes\mfg$ to $\textrm{im}(\partial^*)\subset \Wedge^2\mfq_+\otimes\mfg$, implying that $\mfq_+$ acts trivially on
$H^2(\mfq_+, \mfg)=\ker(\partial^*)/\textrm{im}(\partial^*)$. Therefore, Proposition \ref{freedom_Weyl} implies that
the part of $\kappa^\sigma$ in $\textrm{Ker}(\square)$ is independent of the choice of $\sigma$ and evidently must equal the harmonic curvature $\hat\kappa$ of $(\mcG\rightarrow N, \omega)$. Thus we conclude:

\begin{prop}\label{p.reduce}
If the $\textrm{Ker}(\square)$-part of $\kappa^\sigma$ is zero for some Weyl structure $\sigma$, then $\hat\kappa$ is zero. \end{prop}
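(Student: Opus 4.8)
The plan is to establish the slightly stronger assertion implicit in the discussion preceding the statement: for \emph{every} local Weyl structure $\sigma$ the $\textrm{Ker}(\square)$-component of $\kappa^\sigma$ coincides with the harmonic curvature $\hat\kappa$ of $(\mcG\rightarrow N,\omega)$. Granting this, the proposition is immediate, since the hypothesis asserts exactly that this component vanishes for one particular $\sigma$, and thus forces $\hat\kappa=0$. The real content is therefore the identification of the harmonic part of $\kappa^\sigma$ with $\hat\kappa$, and I would organise it into an invariance step (independence of $\sigma$) and a comparison step (agreement with $\hat\kappa$).

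For the invariance step I would invoke Proposition \ref{freedom_Weyl}: any other Weyl structure has the form $\hat\sigma=\sigma\cdot\exp(\Upsilon_1)\cdots\exp(\Upsilon_k)$ with $\Upsilon_i:\mcQ_0\to\mfq_i$. Pulling $\omega$ back along $\hat\sigma$ and comparing with $\sigma^*\omega$, one sees that the curvature representative changes by $\kappa^{\hat\sigma}-\kappa^\sigma$, whose leading contribution in each homogeneity is governed by the action of $\mfq_+$ on $\Wedge^2\mfq_+\otimes\mfg$. Since this action sends $\ker(\partial^*)$ into $\textrm{Im}(\partial^*)$ — the algebraic fact recalled before the proposition, equivalent to triviality of the $\mfq_+$-action on $H_2(\mfq_+,\mfg)=\ker(\partial^*)/\textrm{Im}(\partial^*)$ — an induction on homogeneity shows that $\kappa^{\hat\sigma}$ and $\kappa^\sigma$ differ only by a section of $\textrm{Im}(\partial^*)$. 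As both lie in $\textrm{Ker}(\partial^*)=\textrm{Ker}(\square)\oplus\textrm{Im}(\partial^*)$, their $\textrm{Ker}(\square)$-parts agree, so this part is a well-defined invariant of the geometry.

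The comparison step identifies that invariant with $\hat\kappa$. Here I would relate $\kappa^\sigma$ to the equivariant curvature function $\kappa:\mcG\to\Wedge^2\mfq_+\otimes\mfg$, whose class in $\mcG\times_Q\bigl(\ker(\partial^*)/\textrm{Im}(\partial^*)\bigr)$ is $\hat\kappa$ by the very definition of the harmonic curvature. Because $\sigma$ is a genuine $Q_0$-equivariant section of $\mcG\to\mcQ_0$, the descended form $\kappa^\sigma$ represents the same curvature, and its associated-graded differs from $\textrm{gr}(\kappa)$ only by correction terms produced by the nontrivial $\mfq_+$-part of $\sigma^*\omega$, i.e. by the Schouten tensor $\textsf{P}$ and the curvature of the Weyl connection; these again land in $\textrm{Im}(\partial^*)$. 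Consequently the $\textrm{Ker}(\square)$-projection of $\kappa^\sigma$ equals the harmonic projection of $\kappa$, namely $\hat\kappa$, and the proposition follows. I expect this comparison to be the main obstacle: one must track filtration degrees carefully and verify that every correction term arising from $\textsf{P}$ is $\partial^*$-exact, so that it drops out under the harmonic projection, with normality ($\partial^*\kappa=0$) guaranteeing that $\kappa^\sigma$ lies in $\textrm{Ker}(\partial^*)$ in the first place.
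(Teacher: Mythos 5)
Your argument is correct and is essentially the paper's own: the paper derives the proposition from the two observations you make, namely that the $\mfq_+$-action sends $\ker(\partial^*)$ into $\textrm{Im}(\partial^*)$ so that by Proposition \ref{freedom_Weyl} the $\textrm{Ker}(\square)$-part of $\kappa^\sigma$ is independent of $\sigma$, and that this invariant part must then equal $\hat\kappa$. You merely spell out the comparison with the equivariant curvature function in more detail than the paper's ``evidently''.
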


Note that $\Theta+\gamma\in\Omega^2(\mcQ_0,\mfq_-\oplus\mfq_0)$ (for a fixed Weyl structure $\sigma$) defines a Cartan connection of type $(\exp(\mfq_-)\rtimes Q_0, Q_0)$ on $\mcQ_0\rightarrow N$.
The individual components of $\kappa^\sigma$ according to the decomposition \eqref{Hilfs_eq} can be described as certain curvature quantities associated to the Cartan connection $\Theta+\gamma$
and the harmonic curvature as the $\textrm{Ker}(\square)$-part of these quantities,
see Theorem 5.2.9 of \cite{csbook} for a precise formulation. In particular, \cite[Theorem 5.2.9]{csbook} shows:

\begin{prop}\label{key_prop} Let $\sigma$ be a local Weyl structure of  $(\mcG\rightarrow N, \omega)$. Then
the component of $\kappa^\sigma$ in $\Wedge^2\emph{gr}(TN)^*\otimes \emph{gr}(TN)\cong \Wedge^2 T^*N\otimes \emph{gr}(TN) $ is given by
\begin{equation}\label{key_equation}
\emph{\textsf{T}}+(\partial \emph{\textsf{P}}\cap \Wedge^2 \emph{gr}(TN)^*\otimes \emph{gr}(TN))\in \emph{Ker}(\partial^*)\cap \Wedge^2\emph{gr}(TN)^*\otimes \emph{gr}(TN),
\end{equation}
where \begin{itemize}
\item[(1)] $\emph{\textsf{T}} \in \Wedge^2\emph{gr}(TN)^*\otimes \emph{gr}(TN)$ is the torsion of the Cartan connection $\Theta+\gamma$ as defined in Definition \ref{Cartan_curvature}; \item[(2)]
$\partial: \emph{gr}(TN)^*\otimes \emph{gr}(\mcA N)\rightarrow \Wedge^2 \emph{gr}(TN)^*\otimes \emph{gr}(\mcA N)$ is  the bundle map induced from $\partial$ as in \eqref{Hodge}; and \item[(3)] $ \partial \emph{\textsf{P}}\cap \Wedge^2 \emph{gr}(TN)^*\otimes \emph{gr}(TN)$ denotes the component of $\partial \emph{\textsf{P}}$ belonging to $\Wedge^2 \emph{gr}(TN)^*\otimes \emph{gr}(TN)$. \end{itemize}
 In particular, in view of \eqref{bundle_Hodge},
the component of $\hat\kappa$ inside $\Wedge^2\emph{gr}(TN)^*\otimes \emph{gr}(TN)$ equals the $\emph{Ker}(\square)$-component of $\emph{\textsf{T}}$.
\end{prop}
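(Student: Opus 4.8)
The plan is to obtain the statement by specializing the general Weyl-structure curvature formula of \cite[Theorem 5.2.9]{csbook}, the substance of the argument being to isolate the $\textrm{gr}(TN)$-valued part of $\kappa^\sigma$ and to recognize its two pieces. I would start from the decomposition $\sigma^*\omega=\Theta+\gamma+\textsf{P}$ of Proposition \ref{Interpr_weyl}, with $\Theta$ valued in $\mfq_-$, $\gamma$ in $\mfq_0$ and $\textsf{P}$ in $\mfq_+$, and recall from Notation \ref{n.sigma} that $\kappa^\sigma$ is the $2$-form on $N$ to which the horizontal, $Q_0$-equivariant form $\sigma^*K=d\sigma^*\omega+\tfrac12[\sigma^*\omega,\sigma^*\omega]$ descends.

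Expanding the bracket term by bilinearity gives
\[
\kappa^\sigma=\Big(d(\Theta+\gamma)+\tfrac12[\Theta+\gamma,\Theta+\gamma]\Big)+d\textsf{P}+[\Theta+\gamma,\textsf{P}]+\tfrac12[\textsf{P},\textsf{P}],
\]
where the first summand is precisely the curvature of the affine Cartan connection $\Theta+\gamma$ of type $(\exp(\mfq_-)\rtimes Q_0,Q_0)$ from Notation \ref{n.sigma}; its $\mfq_-$-valued part is by definition the torsion $\textsf{T}$ (and its $\mfq_0$-valued part the curvature of the Weyl connection $\gamma$). I would then project the whole identity onto its $\textrm{gr}(TN)=\mfq_-$-component under \eqref{Hilfs_eq}. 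The terms $d\textsf{P}$, $[\gamma,\textsf{P}]$ and $\tfrac12[\textsf{P},\textsf{P}]$ are $\mfq_+$-valued and drop out, so the only surviving contributions are $\textsf{T}$ and the $\mfq_-$-part of $[\Theta,\textsf{P}]$.

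The key identification is that this last contribution is exactly $\partial\textsf{P}$ restricted to $\Wedge^2\textrm{gr}(TN)^*\otimes\textrm{gr}(TN)$: writing the algebraic differential of a $1$-cochain $\phi\in\textrm{gr}(TN)^*\otimes\textrm{gr}(\mcA N)$ as $(\partial\phi)(X_0,X_1)=\{X_0,\phi(X_1)\}-\{X_1,\phi(X_0)\}-\phi(\{X_0,X_1\})$ and taking $\phi=\textsf{P}$ (valued in $\textrm{gr}(T^*N)$), the term $\phi(\{X_0,X_1\})$ stays in $\mfq_+$, so the $\mfq_-$-part of $\partial\textsf{P}$ is the antisymmetrized bracket $\{X_0,\textsf{P}(X_1)\}$, which on the associated graded agrees with the $\mfq_-$-part of $[\Theta,\textsf{P}]$ since $\{\cdot,\cdot\}$ is induced by the Lie bracket of $\mfg$. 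This produces $\textsf{T}+(\partial\textsf{P}\cap\Wedge^2\textrm{gr}(TN)^*\otimes\textrm{gr}(TN))$; that this lies in $\textrm{Ker}(\partial^*)$ is the normalization $\partial^*\kappa^\sigma=0$ packaged by \cite[Theorem 5.2.9]{csbook}. For the final assertion I would project onto the $\textrm{Ker}(\square)$-summand of the Hodge decomposition \eqref{bundle_Hodge}: the $\textrm{Im}(\partial)$-term $\partial\textsf{P}$ is annihilated, and since the $\textrm{Ker}(\square)$-part of $\kappa^\sigma$ is $\sigma$-independent and equals $\hat\kappa$ (by Proposition \ref{freedom_Weyl}, as $\mfq_+$ acts trivially on $\textrm{Ker}(\square)$), the $\textrm{gr}(TN)$-component of $\hat\kappa$ is the $\textrm{Ker}(\square)$-part of $\textsf{T}$.

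The step I expect to be most delicate is the bookkeeping identifying the $\mfq_-$-part of $[\Theta,\textsf{P}]$ with $\partial\textsf{P}\cap\Wedge^2\textrm{gr}(TN)^*\otimes\textrm{gr}(TN)$, with the correct sign and normalization: one must verify that the $\gamma$-contributions are accounted for by passing to the associated graded (so that the purely algebraic $\partial$, rather than a covariant version, appears) and that the soldering isomorphism $TN\cong\textrm{gr}(TN)$ is applied consistently to both form arguments. Establishing that the resulting $\textrm{gr}(TN)$-valued expression genuinely lands in $\textrm{Ker}(\partial^*)$ is the part that is not formal and is exactly what \cite[Theorem 5.2.9]{csbook} supplies.
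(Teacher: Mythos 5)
Your derivation is correct and matches the paper's treatment: the paper establishes this proposition simply by invoking \cite[Theorem 5.2.9]{csbook}, and your expansion of $\sigma^*K=d\sigma^*\omega+\tfrac12[\sigma^*\omega,\sigma^*\omega]$ into the curvature of $\Theta+\gamma$ (whose $\mfq_-$-part is $\mathsf{T}$) plus the $\mfq_-$-part of $[\Theta,\mathsf{P}]$ (which is the $\mathrm{gr}(TN)$-valued piece of $\partial\mathsf{P}$, the remaining terms being $\mfq_+$-valued) is exactly the computation underlying that theorem, with membership in $\mathrm{Ker}(\partial^*)$ supplied by normality and the final assertion following, as you say, from the $\sigma$-independence of the $\mathrm{Ker}(\square)$-part. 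The one point to watch is the sign convention for $\partial$ (the paper defers to Sections 2.1.9 and 3.3.1 of \cite{csbook}), which you correctly identify as the delicate bookkeeping step.
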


The formula for $\textsf{T} \in\Wedge^2 T^*N\otimes \textrm{gr}(TN)\cong \Wedge^2 \textrm{gr}(TN)^*\otimes \textrm{gr}(TN)$ can be easily computed:

\begin{lem}\emph{\cite[Lemma 5.1.2 ]{csbook}}\label{torsion_formula}
Let $\sigma$ be a local Weyl structure of $(\mcG\rightarrow N, \omega)$ and $\Theta$ the corresponding soldering form.
Using (\ref{curl_bracket}),  one has
\begin{equation}
\textsf{T}(\xi, \eta)=\nabla_\xi \Theta(\eta)-\nabla_\eta\Theta(\xi)-\Theta([\xi, \eta])+\{\Theta(\xi), \Theta(\eta)\}
\end{equation}
for local vector fields  $\xi$ and $\eta$ on $N$. In particular, the component of $T$ inside $\Omega^2(N, \emph{gr}_{-i}(TN))$ is given by
\begin{equation}
\nabla_\xi \Theta_{-i}(\eta)-\nabla_\eta\Theta_{-i}(\xi)-\Theta_{-i}([\xi, \eta])+\sum_{a,b<0, a+b=-i}\{\Theta_{a}(\xi), \Theta_{b}(\eta)\}.
\end{equation}
\end{lem}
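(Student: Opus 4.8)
The plan is to read off $\textsf{T}$ directly from the structure equation of the Cartan connection $\Theta+\gamma\in\Omega^1(\mcQ_0,\mfq_-\oplus\mfq_0)$, whose torsion is, by Definition \ref{Cartan_curvature}, the $\mfq_-$-valued part of its curvature (the role of the isotropy algebra being played by $\mfq_0$, so that $\tilde{\mfg}/\mfq_0\cong\mfq_-$). Writing $\tilde\omega:=\Theta+\gamma$, the curvature on $\mcQ_0$ is $K(\xi,\eta)=d\tilde\omega(\xi,\eta)+[\tilde\omega(\xi),\tilde\omega(\eta)]$, and I would first decompose both summands according to the grading $\mfg=\mfq_-\oplus\mfq_0\oplus\mfq_+$. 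Since $\gamma$ has values in $\mfq_0$ and $\Theta$ in $\mfq_-$, the term $d\gamma$ contributes only to $\mfq_0$ and drops out of the $\mfq_-$-part, leaving $d\Theta$; in the bracket term the $\mfq_-$-part collects exactly $[\Theta(\xi),\Theta(\eta)]$ together with the mixed contributions $[\gamma(\xi),\Theta(\eta)]-[\gamma(\eta),\Theta(\xi)]$, the remaining summand $[\gamma(\xi),\gamma(\eta)]$ being $\mfq_0$-valued. Since $\mfq_-\oplus\mfq_0$ is a subalgebra of $\mfg$ in which $\mfq_-$ is an ideal, the bracket of $\tilde{\mfg}=\mfq_-\rtimes\mfq_0$ restricts on $\mfq_-\times\mfq_-$ and on $\mfq_0\times\mfq_-$ to the graded bracket $\{\cdot,\cdot\}$ of \eqref{curl_bracket}, so this yields
\[
\textsf{T}(\xi,\eta)=d\Theta(\xi,\eta)+[\gamma(\xi),\Theta(\eta)]-[\gamma(\eta),\Theta(\xi)]+\{\Theta(\xi),\Theta(\eta)\}.
\]

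Next I would recognize the first three summands as the covariant exterior derivative of $\Theta$ with respect to the Weyl connection $\gamma$. Since $\Theta$ is a horizontal, $Q_0$-equivariant one-form on $\mcQ_0$ with values in the $Q_0$-representation $\mfq_-\cong\textrm{gr}(TN)$, the expression $d\Theta+[\gamma,\Theta]$ (the bracket being the $\mfq_0$-action on $\mfq_-$, i.e. again a component of $\{\cdot,\cdot\}$) is exactly the covariant exterior derivative of $\Theta$; it is again horizontal and equivariant and therefore descends to $N$. By the standard formula for the covariant exterior derivative of a bundle-valued one-form, its value on vector fields $\xi,\eta$ on $N$ is $\nabla_\xi\Theta(\eta)-\nabla_\eta\Theta(\xi)-\Theta([\xi,\eta])$, where $\nabla$ is the linear connection on $\textrm{gr}(TN)$ induced by $\gamma$ as in Proposition \ref{Interpr_weyl}(2). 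Substituting this into the previous display gives the claimed formula
\[
\textsf{T}(\xi,\eta)=\nabla_\xi\Theta(\eta)-\nabla_\eta\Theta(\xi)-\Theta([\xi,\eta])+\{\Theta(\xi),\Theta(\eta)\}.
\]

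For the graded refinement I would simply take homogeneous components. Because $\gamma$ is $\mfq_0$-valued and $Q_0$ preserves the grading of $\mfg$, the induced connection $\nabla$ preserves each summand $\textrm{gr}_{-i}(TN)$, so the $\textrm{gr}_{-i}$-part of the first three terms is $\nabla_\xi\Theta_{-i}(\eta)-\nabla_\eta\Theta_{-i}(\xi)-\Theta_{-i}([\xi,\eta])$. Since $\{\cdot,\cdot\}$ is grading-preserving, $\{\mfq_a,\mfq_b\}\subseteq\mfq_{a+b}$, the $\textrm{gr}_{-i}$-part of the bracket term is $\sum_{a,b<0,\,a+b=-i}\{\Theta_a(\xi),\Theta_b(\eta)\}$, which is the second displayed formula in the statement.

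I expect the only genuinely delicate point to be bookkeeping: verifying that the mixed brackets $[\gamma(\xi),\Theta(\eta)]-[\gamma(\eta),\Theta(\xi)]$ are precisely the connection correction turning $d\Theta$ into $\nabla\Theta$, with the correct sign, and that the three brackets involved — the Lie bracket of $\tilde{\mfg}$, the $\mfq_0$-action on $\mfq_-$, and the graded bracket $\{\cdot,\cdot\}$ of \eqref{curl_bracket} — all agree on the arguments where they appear. Once these conventions are aligned and the horizontality and equivariance needed for descent to $N$ are checked, the result is a direct application of the structure equation together with the definition of the covariant exterior derivative.
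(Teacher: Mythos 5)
Your proof is correct. The paper does not prove this lemma itself but cites it from \cite[Lemma 5.1.2]{csbook}, and your computation --- extracting the $\mfq_-$-valued part of the structure equation of the Cartan connection $\Theta+\gamma$, recognizing $d\Theta+[\gamma(\cdot),\Theta(\cdot)]-[\gamma(\cdot),\Theta(\cdot)]$ as the covariant exterior derivative that descends to $\nabla_\xi\Theta(\eta)-\nabla_\eta\Theta(\xi)-\Theta([\xi,\eta])$, and then taking graded components --- is exactly the standard argument behind that citation.
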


It will be convenient to use a particular class of Weyl structures:

\begin{prop}\label{special_Weyl}
Suppose $(\mcG\rightarrow N, \omega)$ is a regular normal parabolic geometry of type $(G,Q)$ and consider the line bundle $\mcF=\mcG\times_Q (\mfq_{-1}^F\oplus\mfq)/\mfq\cong \mcQ_0\times_{Q_0}\mfq_{-1}^F$.
For any non-vanishing local section $f$ of $\mcF$ there exists a (non-unique) Weyl structure such that the induced linear connection $\nabla$ on $\mcF$ has the property that $\nabla_\xi f=0$ for  any section $\xi$ of $T^{-2}N$.
\end{prop}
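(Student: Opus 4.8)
The plan is to use the freedom in the choice of Weyl structure from Proposition \ref{freedom_Weyl} and to show that it is already exhausted by adjusting the two lowest homogeneity components. Fix a local Weyl structure $\sigma$, with Weyl connection $\gamma=\sigma^*\omega_0$, and let $\nabla$ be the induced linear connection on the line bundle $\mcF\cong\mcQ_0\times_{Q_0}\mfq_{-1}^F$. Since $\mcF$ has rank one and $f$ is non-vanishing, $\nabla f=\theta\otimes f$ for a unique local $1$-form $\theta$, whose connection form in this trivialisation is $\lambda\circ\gamma$, where $\lambda\colon\mfq_0\to\C$ is the weight by which $Q_0$ acts on the one-dimensional module $\mfq_{-1}^F=\mfg_{-\alpha}$; note $\lambda$ is the restriction of $-\alpha$ and vanishes on $\mfq_0^{ss}$, so $\lambda=B(\zeta,\,\cdot\,)$ for the element $\zeta\in\mathfrak{z}(\mfq_0)$ dual via the Killing form $B$ to $-\alpha$. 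The condition $\nabla_\xi f=0$ for all $\xi\in\Gamma(T^{-2}N)$ is exactly $\theta|_{T^{-2}N}=0$, i.e. the vanishing of the homogeneity $1$ and homogeneity $2$ parts of $\theta$.

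Next I would compute the effect of replacing $\sigma$ by $\hat\sigma=\sigma\cdot\exp(\Upsilon_1)\exp(\Upsilon_2)\cdots$ with $\Upsilon_i$ as in Proposition \ref{freedom_Weyl}. Writing $a=\exp(\Upsilon)\colon\mcQ_0\to Q_+$, the standard gauge-transformation formula gives $\hat\sigma^*\omega=\textrm{Ad}(a^{-1})\circ\sigma^*\omega+a^*\omega^{MC}$, and since $a$ is $Q_+$-valued the second term is $\mfq_+$-valued and does not contribute in degree $0$. Extracting the $\mfq_0$-component of $\textrm{Ad}(a^{-1})\circ\sigma^*\omega=e^{-\textrm{ad}\Upsilon}\circ\sigma^*\omega$ and using the soldering components $\Theta_{-i}=\sigma^*\omega_{-i}$ of Proposition \ref{Interpr_weyl}, one obtains
\begin{equation*}
\hat\gamma=\gamma-[\Upsilon_1,\Theta_{-1}]-[\Upsilon_2,\Theta_{-2}]+\tfrac12[\Upsilon_1,[\Upsilon_1,\Theta_{-2}]]+(\textrm{terms of homogeneity}\geq 3).
\end{equation*}
Applying $\lambda$, the homogeneity $1$ part of $\theta$ is changed by $-\lambda([\Upsilon_1,\,\cdot\,])\colon\mfq_{-1}\to\C$, and its homogeneity $2$ part by $-\lambda([\Upsilon_2,\,\cdot\,])\colon\mfq_{-2}\to\C$ plus a term quadratic in $\Upsilon_1$; the components $\Upsilon_{i}$ with $i\geq3$ only affect homogeneities $\geq3$. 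This makes the system triangular: I would first solve for $\Upsilon_1$ to kill the homogeneity $1$ part of $\theta$, then (with $\Upsilon_1$ now fixed) solve the resulting affine equation for $\Upsilon_2$ to kill the homogeneity $2$ part, and set $\Upsilon_i=0$ for $i\geq3$. Since $(\Theta_{-1},\Theta_{-2})$ restricts to an isomorphism $T^{-2}N\xrightarrow{\sim}\textrm{gr}_{-1}(TN)\oplus\textrm{gr}_{-2}(TN)$, solving these two equations is equivalent to cancelling $\theta|_{T^{-2}N}$ exactly.

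The only real obstacle — and the heart of the argument — is to verify that for $i=1,2$ the linear maps $\mfq_i\to\mfq_{-i}^*$, $\Upsilon\mapsto\lambda([\Upsilon,\,\cdot\,])$, are surjective. Using $\textrm{ad}$-invariance of $B$, $\lambda([\Upsilon,X])=B([\zeta,\Upsilon],X)$, and since $B$ restricts to a perfect pairing $\mfq_i\times\mfq_{-i}\to\C$, the map is an isomorphism precisely when $\textrm{ad}(\zeta)$ is invertible on $\mfq_i$. On a root space $\mfg_\gamma\subset\mfq_i$ (with $\gamma$ positive, $\textrm{ht}_{\Sigma_\mfq}(\gamma)=i$) one has $\textrm{ad}(\zeta)=\gamma(\zeta)=-\langle\gamma,\alpha\rangle$, so it remains to check $\langle\gamma,\alpha\rangle\neq0$. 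By Notation \ref{n.pq} and Corollary \ref{FF_model}, these root spaces are organised by $\mfq_1=\mfq_1^F\oplus\mfq_1^V$ with $\mfq_1^F=\mfg_\alpha$ and $\mfq_2\cong[\mfq_1^F,\mfq_1^V]$: every root of $\mfq_1^V$ has the form $\alpha_j+(\textrm{roots orthogonal to }\alpha)$ for a neighbour $\alpha_j$ of $\alpha$, and every root of $\mfq_2$ has the form $\alpha+\alpha_j+(\textrm{orthogonal})$. As $\alpha$ is a long root we have $\langle\alpha_j,\alpha\rangle=-\tfrac12\langle\alpha,\alpha\rangle$ for each neighbour (exactly as in the proof of Proposition \ref{long_versus_short_root}), whence $\langle\gamma,\alpha\rangle$ equals $\langle\alpha,\alpha\rangle$, $-\tfrac12\langle\alpha,\alpha\rangle$, or $\tfrac12\langle\alpha,\alpha\rangle$ respectively, in each case nonzero. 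This gives the required surjectivity, the triangular system is solvable, and the resulting Weyl structure has the desired property; its non-uniqueness comes from the free choice of the $\Upsilon_i$ with $i\geq3$.
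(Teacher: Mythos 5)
Your proposal is correct and follows essentially the same route as the paper: adjust the Weyl structure by $\Upsilon_1$ and then $\Upsilon_2$ in a triangular fashion, the key point being that the pairing $\mfq_i\times\mfq_{-i}\to\mfq_{-1}^F$, $(\Upsilon,X)\mapsto -\alpha([\Upsilon,X])$, is non-degenerate for $i=1,2$, which both you and the paper verify by the same root computation using that $\alpha$ is long. The only cosmetic difference is that you derive the transformation law of the Weyl connection from the gauge formula for $\hat\sigma^*\omega$, whereas the paper cites Proposition 5.1.6 of \cite{csbook} directly.
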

\begin{proof}
Let $\alpha\in\Delta^0$ be the long simple root such that $\mfq_{-1}^F=\mfg_{-\alpha}$ and denote by $B$ the Killing form of $\mfg$. Let $H_{\alpha}$ be the unique element in the Cartan subalgebra $\mfh$
such that  $-\alpha=-B(H_{\alpha},_- )$, which we view as a functional on $\mfq_0$ (defining the $1$-dimensional representation of $\mfq_0$ on $\mfq_{-1}^F$ ). From the description of $\mfq$ in terms of simple roots in Lemma \ref{lem_highest_weight_cone}
and the fact that the Lie bracket induces an isomorphism $\mfq_{1}^F\otimes\mfq_1^V\cong\mfq_2$ we conclude that
$\frac{2\langle \alpha, \beta\rangle}{\langle \alpha, \alpha\rangle}=\frac{2\beta(H_{\alpha})}{\langle \alpha, \alpha\rangle}$
equals $-1$  (resp. $1$)  for any root $\beta$ corresponding to a roots space in $\mfq_{1}^V$ (resp. $\mfq_2$)  as in  the proof of Proposition \ref{long_versus_short_root}. Hence,
the restriction of $\textrm{ad}(H_\alpha)$ to $\mfq_{1}\oplus\mfq_2$ is injective. Therefore, for $i=1, 2$ and a fixed nonzero element $F\in\mfq_{-1}^F$ ,the map
\begin{align}\label{special_con}
\mfq_i\times\mfq_{-i}&\rightarrow\mfq_{-1}^F\\\nonumber
(Z, X)&\mapsto-\alpha([Z,X])F=-B(H_{\alpha}, [Z, X])F=B(X, [Z, H_\alpha])F
\end{align}
is non-degenerate, since so is $B: \mfq_i\times\mfq_{-i}\rightarrow\C$  by Lemma \ref{l.basic_isos}. Hence, for $i=1,2$ and any local non-vanishing section $f$ of $\mcF$, the corresponding bundle map $\textrm{gr}_{i}(T^*N)\times\textrm{gr}_{-i}(TN)\rightarrow \mcF$, given by
$(\Upsilon_i, \xi_{-i})\mapsto -\{\{ \Upsilon_i,, \xi_{-i}\}, f\}$, must be non-degenerate as well. Suppose now $\sigma: \mcQ_0\rightarrow\mcG$ is a local Weyl structure and $f$ a local non-vanishing section of $\mcF$.
Then there exists a unique local section $\Upsilon_1$ of $\textrm{gr}_{1}(T^*N)$ such that $\nabla_{\xi_{-1}}f= \{\{ \Upsilon_1,\xi_{-1}\}, f\}$ for all local sections $\xi_{-1}$ of $T^{-1}N=\textrm{gr}_{-1}(TN)$. By Proposition 5.1.6 of \cite{csbook},
for the Weyl structure $\hat\sigma=\sigma\exp(\Upsilon_1)$, we then have $$\widehat{\nabla}_{\xi_{-1}}f=\nabla_{\xi_{-1}}f-\{\{ \Upsilon_1, \xi_{-1}\}, f\}=0,$$
for all local sections $\xi_{-1}$ of $T^{-1}N$. Via the soldering form $\widehat \Theta$ corresponding to $\hat{\sigma}$,  we may identify $T^{-2}N\cong \textrm{gr}_{-1}(TN)\oplus \textrm{gr}_{-2}(TN)$ and write any
local section $\xi$ of $T^{-2}N$ as $\xi=\xi_{-1}+\xi_{-2}$ accordingly. Then there exists a unique local section $\Upsilon_2$ of $\textrm{gr}_{2}(T^*N)$
such that $$\widehat{\nabla}_\xi f=\widehat{\nabla}_{\xi_{-2}}f=\{\{ \Upsilon_2, \xi_{-2}\}, f\}$$ for all local sections $\xi$ of $T^{-2}N$. By Proposition 5.1.6 of \cite{csbook}, for the Weyl structure $\tilde{\sigma}=\hat\sigma\exp(\Upsilon_2)$,
we then have $\widetilde{\nabla}_\xi f=\widehat{\nabla}_\xi f-\{\{ \Upsilon_2, \xi_{-2}\}, f\}=0$ for all local sections $\xi$ of $T^{-2}N$. Hence, $\tilde{\sigma}$ is a Weyl structure with the desired property.
\end{proof}

\begin{rem}
The line bundle $\mcF$ of the previous proposition is not a bundle of scales in the sense of Definition 5.1.4 of \cite{csbook} (except for $(\mfg,\alpha)=(A_n,\alpha_1/\alpha_n)$), since $\textrm{ad}(H_\alpha)$ is only injective on $\mfq_{1}\oplus\mfq_2$ and not on $\mfq_+$.Hence,  having fixed a local non-vanishing section $f$ of $\mcF$, Corollary 5.1.6 of \cite{csbook} can not be applied to ensure the existence of a (unique) Weyl structure such that $\nabla_\xi f=0$ for all vector fields $\xi$.
\end{rem}

We end this digression by fixing some more notation.

\begin{notcon} Suppose $(\mcG\rightarrow N, \omega)$ is a parabolic geometry of type $(G,Q)$ as above and let $\sigma:\mcQ_0\rightarrow \mcG$ be a local Weyl structure.
Then, via the isomorphism $\Theta: TN\cong \textrm{gr}(TN)$, we can identify $\kappa^\sigma$ with a section of  the graded vector bundle
$$\Wedge^2\textrm{gr}(TN)^*\otimes\textrm{gr}(\mcA N)=\Wedge^2\textrm{gr}(T^*N)\otimes\textrm{gr}(\mcA N).$$ We write $\kappa^\sigma_i$ for the $i$-th grading component of $\kappa^\sigma$. Similarly, we can identify $\textsf{T}$ and $\textsf{P}$ with sections of the graded vector bundles $\Wedge^2\textrm{gr}(T^*N)\otimes \textrm{gr}(TN)$ and $\textrm{gr}(T^*N)\otimes \textrm{gr}(T^*N)$ respectively. We write $\textsf{T}_i$ and $\textsf{P}_i$ for their $i$-th grading component. Note that
$\textsf{P}_i=0$ for $i\leq 1$, since all grading components of  $\textrm{gr}(T^*N)$ are postive.
\end{notcon}

\subsection{Cubic torsion of the conic connection associated to a parabolic geometry}\label{ss.proofchi}

In the setting of Theorem \ref{m1}, if the conic connection $\mcF$ has vanishing characteristic torsion, we can expresses the cubic torsion as the harmonic curvature $\hat\kappa^\mcC$ of $(\mcG\rightarrow \mcC, \omega)$.

\begin{thm}\label{t.chi} In the setting of Theorem \ref{m1} assume that $(\mfg, \alpha)$ does not equal $(B_2, \alpha_1)$ nor $(A_2,\alpha_1/\alpha_2)$. Furthermore, 
assume that $\mcF$ is characteristic (i.e. $\tau_\mcF=0$). Then $\hat\kappa^\mcC=\chi_\mcF$.
\end{thm}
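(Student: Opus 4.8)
The plan is to compute both invariants through one carefully adapted Weyl structure on the correspondence space and to match them in the single homogeneity in which $\hat\kappa^\mcC$ is concentrated. First I would dispose of the cases for which Theorem \ref{m1} already gives flatness: there $\hat\kappa^\mcC=0$, the cone structure is locally isomorphic to the model $\mcC^{G/P}$, and as $\chi_\mcF$ is a local invariant vanishing on the homogeneous model we get $\chi_\mcF=0=\hat\kappa^\mcC$. What remains are the symmetric-type families excluded from the flatness conclusion of Theorem \ref{m1} --- namely $(B_n/D_{n+1},\alpha_1)$, $(A_n,\alpha_2/\alpha_{n-1})$ and $(A_n,\alpha_1/\alpha_n)$ --- with the low-rank members $(B_2,\alpha_1)$ and $(A_2,\alpha_1/\alpha_2)$ removed, precisely because there $H_2(\mfp_+,\mfg)=H_{2,3}(\mfp_+,\mfg)$ sits in homogeneity $3$ rather than $2$. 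In these remaining cases $\tau_\mcF=0$ kills the homogeneity-$0$ component $H_0(\mfq_1^V,H_{2,1}(\mfp_+,\mfg))$ of $\hat\kappa^\mcC$ (identified with $\tau_\mcF$ in Case I of the proof of Theorem \ref{m1}), so by Propositions \ref{p.vanishing} and \ref{harm_curv_C}(1)(ii) the harmonic curvature takes values in $H_0(\mfq_1^V,H_{2,2}(\mfp_+,\mfg))\cong S^3\mfq_1^F\otimes\mathrm{Hom}_0(\mfq_{-1}^V,\mfq_{-2})$, which is precisely the associated bundle realization of $S^3\mcF^*\otimes\mathrm{Hom}_0(\mcV,\mathrm{gr}_{-2}(\mcD))$, the bundle carrying $\chi_\mcF$ (using $\mcF^*\cong\mcQ_0\times_{Q_0}\mfq_1^F$, $\mcV\cong\mcQ_0\times_{Q_0}\mfq_{-1}^V$ and $\mathrm{gr}_{-2}(\mcD)\cong\mcQ_0\times_{Q_0}\mfq_{-2}$). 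Thus both invariants are sections of one and the same bundle, in homogeneity $2$, and in particular $\hat\kappa^\mcC$ has no homogeneity-$1$ component.

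Since the value space $\mfq_{-1}^V$ of $\hat\kappa^\mcC$ lies in $\mfq_-$, the harmonic curvature sits entirely inside $\Wedge^2\mathrm{gr}(T\mcC)^*\otimes\mathrm{gr}(T\mcC)$, so Proposition \ref{key_prop} applies: for any local Weyl structure of $(\mcG\to\mcC,\omega)$, the curvature $\hat\kappa^\mcC$ equals the $\mathrm{Ker}(\square)$-component of the homogeneity-$2$ part of $\textsf{T}+\partial\textsf{P}$, where $\textsf{T}$ is the torsion of the Weyl--Cartan connection $\Theta+\gamma$. I would then fix a local non-vanishing section $f$ of $\mcF$ and use Proposition \ref{special_Weyl} to choose a Weyl structure with $\nabla_\xi f=0$ for all sections $\xi$ of $T^{-2}\mcC$. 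With this choice the soldering form identifies $T\mcC\cong\mathrm{gr}(T\mcC)$, and the torsion formula of Lemma \ref{torsion_formula}, rewritten as
\begin{equation*}
\Theta([\xi,\eta])=\nabla_\xi\Theta(\eta)-\nabla_\eta\Theta(\xi)+\{\Theta(\xi),\Theta(\eta)\}-\textsf{T}(\xi,\eta),
\end{equation*}
becomes the device for converting the iterated Lie brackets defining $\chi_\mcF$ into algebraic brackets $\{\cdot,\cdot\}$, Weyl-covariant derivatives, and torsion terms.

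Next I would run this conversion on $\tilde\chi(f)(v)=-q_{-2}([f,[f,[f,v]-\tfrac32 w(f,v)]])$ from Definition \ref{d.auxilliary}. Applying the formula to the inner bracket $[f,v]$ and using $\nabla_\xi f=0$ on $T^{-2}\mcC$, the parallelism $\nabla\mcL=0$ from Corollary \ref{cor_levi_parallel}, and regularity (which annihilates torsion components of homogeneity $\leq 0$), one reads off $q_{-2}([f,v])=\mcL(f,v)$ while the $\mathrm{gr}_{-1}$-part of $[f,v]$ is $\nabla_f v$ up to a homogeneity-$1$ torsion term. Feeding this into the defining relation $\mcL(f,w(f,v))=\tfrac12 q_{-2}([f,[f,v]])$ shows that $w(f,v)$ agrees with $\nabla_f v$ modulo torsion, so that $[f,v]-\tfrac32 w(f,v)$ has $\mathrm{gr}_{-1}$-component $-\tfrac12\nabla_f v$ and $\mathrm{gr}_{-2}$-component $\mcL(f,v)$. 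Iterating the formula through the two outer brackets, using $\nabla_f f=0$, and discarding the terms killed by the special Weyl structure, the expression $q_{-2}([f,[f,[f,v]-\tfrac32 w(f,v)]])$ collapses to a universal combination of the homogeneity-$2$ torsion $\textsf{T}_2$ evaluated on $(f,\mcL(f,v))$ together with trace-valued $\partial\textsf{P}$-terms.

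The decisive step, which I expect to be the main obstacle, is the coefficient bookkeeping: to verify that, modulo $\mcL(f,\cdot)$ (that is, modulo trace), the coefficient $\tfrac32$ and the correction $w(f,v)$ are exactly what is needed so that this combination reproduces the trace-free part of $\textsf{T}_2$, with the $\mathrm{Im}(\partial)$-contribution of the Schouten tensor (recall $\partial\textsf{P}\in\mathrm{Im}(\partial)$ and $\textsf{P}_i=0$ for $i\leq 1$) falling into the discarded trace. Once this is done, I would invoke the fact that for a regular normal geometry the lowest nonzero homogeneous component of the curvature is harmonic --- the mechanism already used in Case II of the proof of Theorem \ref{m1}: since $\tau_\mcF=0$ forces $\kappa^\mcC$ to have homogeneity $\geq 2$, its homogeneity-$2$ component is harmonic, whence the trace-free part of $\textsf{T}_2$ coincides with $[\textsf{T}_2+\partial\textsf{P}]_{\mathrm{Ker}(\square)}=\hat\kappa^\mcC$. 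Combining the two yields $\chi_\mcF=\hat\kappa^\mcC$. A final point to handle is the reducibility of the target for $(D_3,\alpha_1)$, where by Proposition \ref{p.vanishing}(3) it has two irreducible components and the coefficient check must be carried out on each; as a cross-check, in all irreducible cases Schur's lemma guarantees that the two natural, curvature-linear sections are proportional, so that only a single scalar --- fixed by the bracket computation above --- needs to be verified.
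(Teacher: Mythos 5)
Your proposal follows essentially the same route as the paper's proof: reduce to the non-flat symmetric-type cases, use regularity to place $\hat\kappa^\mcC$ in homogeneity $2$ inside $\mcF^*\otimes\textrm{gr}_{-2}(T\mcC)^*\otimes\mcV$, identify it via Proposition \ref{key_prop} with the $\textrm{Ker}(\square)$-part of $\textsf{T}_2+\partial\textsf{P}_2$, and evaluate this with the special Weyl structure of Proposition \ref{special_Weyl} (where indeed $w(f,v)=\nabla_f v$ and the vanishing of $\textsf{T}_1$ supplies the needed identities). The "coefficient bookkeeping" you flag as the main obstacle is exactly what the paper carries out explicitly, and it does close as you predict, so the strategy is sound and matches the published argument.
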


\begin{proof}
 By Theorem \ref{m1}, we may assume that $(\mfg, \alpha)$ belongs to the cases (3) and (5) of Proposition \ref{p.vanishing}.
We write $\kappa=\kappa^\mcC$ for the curvature and $\hat\kappa=\hat\kappa^\mcC$ for the harmonic curvature of $(\mcG \to \mcC, \omega)$. Since the vanishing of $\tau_{\mcF}$ implies that  $(\mcG \to \mcC, \omega)$ is regular  by Theorem \ref{m1}, we see that $\hat\kappa$ is of positive homogeneity. By Proposition \ref{harm_curv_C}, and (3) and (5) of Proposition \ref{p.vanishing}, this implies that
$\hat\kappa$ is a section of the bundle $$\textrm{Ker}(\square)\cap\mcF^*\otimes \textrm{gr}_{-2}(T\mcC)^*\otimes \mcV\subset \textrm{gr}_{-1}(T\mcC)^*\otimes \textrm{gr}_{-2}(T\mcC)^*\otimes T^{-1}\mcC. $$
In particular, $\hat\kappa$ is of homogeneous degree $2$ (as a section of the grading vector bundle
$\Wedge^2\textrm{gr}(T\mcC)^*\otimes\textrm{gr}(\mcA\mcC)$). Hence, Theorem 3.1.12 of \cite{csbook} implies that $\kappa\in\Wedge^2T^*\mcC\otimes\mcA\mcC$ is of homogeneity $\geq 2$ and that
$\textrm{gr}_{2}(\kappa)=\hat\kappa$.

For any local Weyl structure $\sigma$ of $(\mcG\rightarrow \mcC, \omega)$ one has $\textrm{gr}_{2}(\kappa^\mcC)=\kappa^\sigma_2$ (by Proposition \ref{freedom_Weyl} and the fact that the action of $\mfq_+$ on $\Wedge^2\mfq_+\otimes \mfg$ is of homogeneity $\geq 1$). Therefore, we have by Proposition \ref{key_prop}
\begin{equation}\label{formula_harm_curv}
\hat\kappa=\textrm{gr}_{2}(\kappa^\mcC)=\kappa^\sigma_2=\textsf{T}_2+\partial \textsf{P}_2\in \textrm{Ker}(\square)\cap \mcF^*\otimes \textrm{gr}_{-2}(T\mcC)^*\otimes \mcV.
\end{equation}
We will compute $\hat\kappa=\kappa^\sigma_2$ in terms of a local Weyl structure $\sigma$ as in Proposition \ref{special_Weyl}. To do so
fix a local non-vanishing section $f$ of $\mcF\cong \mcQ_0\times_{Q_0}\mfq_{-1}^F$ and assume that $\sigma$ is a local Weyl structure such that $\nabla_ \xi f=0$ for the corresponding Weyl connection $\nabla$ and all sections $\xi$ of $T^{-2}\mcC$. Then the formula in Lemma \ref{torsion_formula} restricted to the component of $\textsf{T}_2$ inside $\mcF^*\otimes \textrm{gr}_{-2}(T\mcC)^*\otimes T^{-1}\mcC$  shows
\begin{equation}\label{formula_T_2}
(T_2+\partial \textsf{P}_2)(f, q_{-2}(\xi))=\nabla_f\pi_{-1}(\xi)-\pi_{-1}([f,\xi])+\partial \textsf{P}_2(f, q_{-2}(\xi))
\end{equation}
for any local section $\xi$ of $T^{-2}\mcC$. Since the curvature is of homogeneity $\geq 2$, we have moreover $\kappa^\sigma_1=0$. Hence, Proposition \ref{key_prop} shows that $\textsf{T}_1=0$, since
$\textsf{P}_i=0$ for $i\leq 1$ and $\partial$ is grading-preserving. In particular, the components of $\textsf{T}_1$ inside $\mcF^*\otimes \mcV^*\otimes T^{-1}\mcC$ and
$\mcF^*\otimes \textrm{gr}_{-2}(T\mcC)^*\otimes \textrm{gr}_{-2}(T\mcC)$ vanish. By Lemma \ref{torsion_formula} this gives the following identities
\begin{align}
&0=T_1(f,v)=\nabla_fv-\pi_{-1}([f,v]),\label{T11}\\
&0=T_1(f, q_{-2}(\xi))=\nabla_f q_{-2}(\xi)-q_{-2}\pi_{-2}([f,\xi])+\{f,\pi_{-1}(\xi)\}\label{T12},
\end{align}
for any section $v$ of $\mcV$ and $\xi$ of $T^{-2}\mcC$. Recall that $\textrm{gr}_{-2}(T\mcC)\cong \mcF\otimes \mcV$ via the Levi bracket $\mcL$. For any section $\xi$ of $T^{-2}\mcC$ let us write $v_\xi$
for the unique section of $\mcV$ such that $q_{-2}(\xi)=\mcL(f,v_\xi)$, then \eqref{T11} implies that
\begin{equation}\label{pi_1}
\pi_{-1}(\xi)=\xi-[f,v_\xi]+\pi_{-1}([f,v_\xi])=\xi-[f,v_\xi]+\nabla_fv_\xi.
\end{equation}
If we insert $\xi=[f,v]$ for some $v\in\mcV$ into \eqref{T12}, then \eqref{T11} and Corollary \ref{cor_levi_parallel} imply
\begin{align*}
0&=\nabla_f\mcL(f,v)-q_{-2}\pi_{-2}([f,[f,v])+\{f,\pi_{-1}([f,v])\}\\
&=\mcL(f, \nabla_fv)-q_{-2}\pi_{-2}([f,[f,v])+\mcL(f, \nabla_fv).
\end{align*}
Since $\mcF$ is characteristic, $[f,[f,v]]$ is a section of $T^{-2}\mcC$ and we can drop the projection $\pi_{-2}$ in the second term, which implies
\begin{equation}\label{nabla_fv}
\mcL(f,\nabla_fv)=\frac{1}{2} q_{-2}([f,[f,v]]).
\end{equation}
To compare the expression \eqref{formula_T_2} with $\chi_\mcF$, we need to compute  \eqref{formula_T_2} under the isomorphism
$$\mcF^*\otimes \textrm{gr}_{-2}(T\mcC)^*\otimes \mcV\cong S^3\mcF^*\otimes \textrm{End}(\mcV, \textrm{gr}_{-2}(T\mcC))$$
induced by \eqref{isos}.
To do that we insert $\xi=[f,v]$ for some $v\in\mcV$ into \eqref{formula_T_2} and take the Levi-bracket with $f$.
By \eqref{T11} and \eqref{nabla_fv} this leads for the first term in \eqref{formula_T_2} that:

\begin{equation}\label{first_term}
\mcL(f, \nabla_f\pi_{-1}([f,v]))= \mcL(f, \nabla_f\nabla_fv)=\frac{1}{2}q_{-2}([f,[f,\nabla_fv]]).
\end{equation}
For the second term in \eqref{formula_T_2} note that for $\eta=[f,[f,v]]$ we have $v_\eta=2\nabla_fv$ by \eqref{nabla_fv} and hence \eqref{pi_1} and the identity \eqref{first_term} imply

\begin{align}
&\mcL(f, \pi_{-1}([f,[f,v]]))=q_{-2}([f,\pi_{-1}([f,[f,v]])])\nonumber\\
&=q_{-2}([f,[f,[f,v]]])-2q_{-2}([f,[f,\nabla_fv]])+2q_{-2}([f, \nabla_f\nabla_f v])\nonumber\\
&=q_{-2}([f,[f,[f,v]]])-q_{-2}([f,[f,\nabla_fv]])\label{term_2}.
\end{align}

Subtracting \eqref{term_2} form \eqref{first_term} gives $\tilde{\chi}_\mcF$ as defined in \eqref{tildechi},
since $w(f,v)$ as in \eqref{w(f,v)} equals $\nabla_fv$. In view of \eqref{formula_harm_curv} and \eqref{bundle_Hodge}, $\hat\kappa$ hence equals the part of $\tilde{\chi}_\mcF$ in $\textrm{Ker}(\partial^*)$ (which automatically lies in
$\textrm{Ker}(\square)$ by \eqref{formula_harm_curv}).
By (ii) of Proposition \ref{harm_curv_C} we have $$\textrm{Ker}(\partial^*)\cap \mcF^*\otimes \textrm{gr}_{-2}(T\mcC)^*\otimes \mcV\cong S^3\mcF^*\otimes \textrm{Hom}_0(\mcV, \textrm{gr}_{-2}(\mcD)),$$ which shows that $\hat\kappa=\chi_\mcF$.
\end{proof}

Combining Theorems \ref{m1} and \ref{t.chi}, we obtain the following.

\begin{cor}\label{coroll} Assume $(\mfg,\alpha)$ is a simple Lie algebra with a long simple root, excluding $(B_2, \alpha_1)$ and $(A_2,\alpha_1/\alpha_2)$, and denote by $\mfp$ the corresponding parabolic subalgebra.
Let $(\mcG \to M, \omega)$ be a regular normal parabolic geometry of type $(G,P)$. Assume that the natural conic connection $\mcF$ on the associated cone structure $\mcC$ has vanishing characteristic torsion, or equivalently, the parabolic geometry $(\mcG \to \mcC, \omega)$ is regular. Then $(\mcG \to M, \omega)$ is flat if and only if $\chi_{\mcF} =0.$ \end{cor}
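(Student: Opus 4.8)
The plan is to deduce the statement by combining Theorem \ref{t.chi}, which identifies the cubic torsion $\chi_\mcF$ with the harmonic curvature $\hat\kappa^\mcC$ of the correspondence-space geometry $(\mcG \to \mcC, \omega)$, with the general principle (Proposition \ref{harmcurv}) that for a regular normal parabolic geometry the harmonic curvature is a complete obstruction to flatness. The bridge between the geometry on $M$ and the one on $\mcC$ is the relation $\kappa^\mcC = \iota \circ \kappa^M$ of Proposition \ref{norm_cone}, where $\iota$ is the inclusion of Lemma \ref{l.rel.hom}.

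First I would fix the standing data. Since $\mcF$ is assumed characteristic, Theorem \ref{m1} tells us that the induced parabolic geometry $(\mcG \to \mcC, \omega)$ of type $(G,Q)$ is regular, and Proposition \ref{norm_cone}(1) tells us it is normal. Hence Proposition \ref{harmcurv} is available on $\mcC$: its full curvature $\kappa^\mcC$ vanishes identically if and only if $\hat\kappa^\mcC$ does. Moreover, the hypotheses of the corollary are precisely those of Theorem \ref{t.chi} (same exclusions, $\mcF$ characteristic), so I may invoke $\hat\kappa^\mcC = \chi_\mcF$ throughout.

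Next I would establish the two implications. For the forward direction, if $(\mcG \to M, \omega)$ is flat then $\kappa^M \equiv 0$, whence $\kappa^\mcC = \iota \circ \kappa^M \equiv 0$ and so $\hat\kappa^\mcC \equiv 0$; Theorem \ref{t.chi} then gives $\chi_\mcF = \hat\kappa^\mcC = 0$. For the converse, from $\chi_\mcF = 0$ I obtain $\hat\kappa^\mcC = 0$ by Theorem \ref{t.chi}, then $\kappa^\mcC \equiv 0$ by Proposition \ref{harmcurv} on $\mcC$, and finally $\kappa^M \equiv 0$ because $\kappa^\mcC = \iota \circ \kappa^M$ with $\iota$ injective; thus $(\mcG \to M, \omega)$ is flat.

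Since the substantive work is carried out in Theorems \ref{m1} and \ref{t.chi}, the only delicate point I anticipate is the converse: one must not attempt to descend directly from $\hat\kappa^\mcC = 0$ to $\hat\kappa^M = 0$, because at the level of harmonic curvatures only $\hat\kappa^\mcC = \mathrm{proj} \circ \hat\kappa^M$ holds (Proposition \ref{norm_cone}(2)) and this projection onto the largest completely reducible quotient need not be injective, so vanishing of $\hat\kappa^\mcC$ would not by itself force $\hat\kappa^M = 0$. The remedy, which I would emphasize, is to argue on the level of the Cartan curvatures $\kappa^\mcC = \iota \circ \kappa^M$: using the injectivity of $\iota$ together with Proposition \ref{harmcurv} on $\mcC$ to pass from $\hat\kappa^\mcC = 0$ first to $\kappa^\mcC \equiv 0$ and then to $\kappa^M \equiv 0$ circumvents the non-injectivity of $\mathrm{proj}$ and yields flatness of the original geometry on $M$.
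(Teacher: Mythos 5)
Your proposal is correct and is exactly the derivation the paper intends: the corollary is stated as a combination of Theorems \ref{m1} and \ref{t.chi}, and you supply the missing glue via Proposition \ref{norm_cone} and Proposition \ref{harmcurv} applied to the regular normal geometry $(\mcG\to\mcC,\omega)$. Your remark that the converse must run through $\kappa^\mcC=\iota\circ\kappa^M$ with $\iota$ injective, rather than through the non-injective projection $\hat\kappa^\mcC=\mathrm{proj}\circ\hat\kappa^M$, is precisely the right way to close the argument and is the mechanism the paper itself relies on implicitly in the proof of Theorem \ref{m1}.
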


As an application, we have the following result, which is a stronger version of Theorem \ref{t.Mok}.

\begin{thm}\label{t.sMok}
Let $(\mfg, \mfp)$ be as in Corollary \ref{coroll} and let $G/P$ be the corresponding rational homogeneous space.
Let $M$ be a complex manifold of dimension equal to $\dim G/P$ with
 a bracket-generating distribution $\mcH \subset T M$ of rank $\dim(\mfp_{-1})$. Suppose we have a $\mcC_o^{G/P}$-isotrivial  cone structure  $\mcC \subset \mbP \mcH$ subordinate to $\mcH$ with a conic connection $\mcF \subset T \mcC$ satisfying $\tau_{\mcF} = \chi_{\mcF} =0$. Then the cone structure is locally isomorphic to the cone structure $\mcC^{G/P}$ of $G/P$. \end{thm}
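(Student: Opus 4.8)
The plan is to realize the data $(M,\mcH,\mcC,\mcF)$ as a \emph{flat} parabolic geometry of type $(G,P)$ and then quote Proposition~\ref{p.flatdevelop}. Concretely, I would chain Theorem~\ref{Cartan_cones_str} (which turns a suitable subordinate isotrivial cone structure into a regular normal parabolic geometry of type $(G,P)$) with Corollary~\ref{coroll} (which detects flatness of that geometry through the vanishing of $\tau_\mcF$ and $\chi_\mcF$). First I would dispose of $(\mfg,\alpha)=(A_n,\alpha_1/\alpha_n)$ separately: here $\mcC_o^{G/P}=\mbP\mfp_{-1}$, so $\mcC_o^{G/P}$-isotriviality forces $\mcC_x=\mbP\mcH_x$ for every $x$, while the rank and dimension hypotheses $\textrm{rank}\,\mcH=\dim\mfp_{-1}=\dim M$ force $\mcH=TM$ and hence $\mcC=\mbP TM$. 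Thus $\mcF$ is a path geometry, and Proposition~\ref{p.path.geom} produces the regular normal parabolic geometry directly; flatness then follows from Corollary~\ref{coroll} (applicable since $(A_2,\alpha_1/\alpha_2)$ is excluded) using $\chi_\mcF=0$ together with the automatic vanishing $\tau_\mcF=0$ recorded in Theorem~\ref{m1}.

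For the remaining cases I would first promote $\mcH$ to a filtered manifold: take $T^{-1}M=\mcH$ and its weak derived flag (Definition~\ref{d.distribution}), producing a tangential filtration $\{T^{-i}M\}$ on a dense open set, to which $\mcC\subset\mbP\mcH=\mbP T^{-1}M$ is subordinate. The crucial point, and the main obstacle, is to show that the symbol algebra of this filtered manifold is $\mfp_-$, so that Theorem~\ref{Cartan_cones_str} applies. Here I would use that, among the cases of Corollary~\ref{coroll}, $\mfp$ has depth at most $2$ (symmetric, contact, or $(B_n/D_{n+1},\alpha_3)$; see Definition~\ref{d.symcon} and Proposition~\ref{p.depth}). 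In the symmetric (depth $1$) cases $\mfp_-=\mfp_{-1}$ is abelian and the dimension count forces $\mcH=TM$, so the symbol is trivially $\mfp_-$ and \eqref{comp.cond.} is vacuous. In the depth $2$ cases the only nontrivial datum is the Levi bracket $\mcL\colon\Wedge^2\mcH\to TM/\mcH$; since $\dim M=\dim\mfp_-$ and $\textrm{rank}\,\mcH=\dim\mfp_{-1}$ we have $\textrm{rank}(TM/\mcH)=\dim\mfp_{-2}$, and $\mcL$ is fibrewise surjective by bracket generation. Because $\tau_\mcF=0$, the connection $\mcF$ is characteristic, so Proposition~\ref{p.Hw12} shows that $\mcL$ annihilates every $u\wedge v$ with $u\in\widehat{\mcC}_x$ and $v\in\widehat{T}_u\mcC_x$. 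I would then prove that in the model the span $K$ of all such $u\wedge v$ is exactly the kernel of the model bracket $\Wedge^2\mfp_{-1}\to\mfp_{-2}$: the inclusion $K\subseteq\ker$ follows from $\widehat{T}_u\mcC_o^{G/P}=\mfq_{-1}^F\oplus\mfq_{-2}$ and $[\mfq_{-1}^F,\mfq_{-2}]=0$ in Proposition~\ref{long_versus_short_root}, while the reverse inclusion is a representation-theoretic nondegeneracy statement about the highest weight variety $\mcC_o^{G/P}$. Granting $K=\ker$, the dimension count forces $\ker\mcL_x=K$ fibrewise, so $\mcL_x$ is the model bracket up to isomorphism and the symbol is $\mfp_-$.

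With the symbol identified, the compatibility condition \eqref{comp.cond.} of Theorem~\ref{Cartan_cones_str} is precisely the conclusion of Proposition~\ref{p.Hw12} for the characteristic connection $\mcF$, so Theorem~\ref{Cartan_cones_str} yields a regular normal parabolic geometry $(\mcG\to M,\omega)$ of type $(G,P)$ with $\mcC\cong\mcG\times_P\mcC_o^{G/P}$. By Corollary~\ref{c.F.unique} the natural conic connection on $\mcC$ furnished by Proposition~\ref{omega_induced_filtration} coincides with $\mcF$ (the characteristic connection being unique), so the hypotheses $\tau_\mcF=\chi_\mcF=0$ are statements about that natural connection. Corollary~\ref{coroll} then gives that $(\mcG\to M,\omega)$ is flat, and Proposition~\ref{p.flatdevelop} identifies it locally with the homogeneous model $(G\to G/P,\omega_G)$. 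Since the cone structure is functorially reconstructed as $\mcG\times_P\mcC_o^{G/P}$, this local isomorphism of parabolic geometries carries $\mcC$ to $\mcC^{G/P}$, which is the assertion.

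The hard part is the second paragraph: showing that the Levi bracket of the derived flag reproduces $\mfp_-$, equivalently the equality $K=\ker(\text{model bracket})$ for the highest weight variety $\mcC_o^{G/P}$. Once the symbol is pinned down, the rest is an assembly of the cited equivalences (Theorem~\ref{Cartan_cones_str}, Proposition~\ref{p.path.geom}), the uniqueness of the characteristic connection (Corollary~\ref{c.F.unique}), and the flatness criterion (Corollary~\ref{coroll}).
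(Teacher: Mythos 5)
Your overall architecture coincides with the paper's: dispose of $(A_n,\alpha_1/\alpha_n)$ via Proposition~\ref{p.path.geom} and Corollary~\ref{coroll}; otherwise pass to the weak derived flag of $\mcH$, identify the symbol with $\mfp_-$, feed the result into Theorem~\ref{Cartan_cones_str} (using Proposition~\ref{p.Hw12} for the compatibility condition \eqref{comp.cond.}), identify $\mcF$ with the natural conic connection via Corollary~\ref{c.F.unique}, and conclude by Corollary~\ref{coroll}. The last paragraph of your argument is exactly the paper's proof.

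The genuine gap is the step you yourself flag as ``the hard part'': the identification of the symbol algebra with $\mfp_-$. The paper does not reprove this; it cites Proposition~5 of \cite{Hwang-Mok02} together with Proposition~\ref{p.Hw12}, and your sketch does not adequately replace that citation. Two specific problems. First, you assert that $\mcL\colon\Wedge^2\mcH\to TM/\mcH$ is ``fibrewise surjective by bracket generation.'' Bracket generation only says the weak derived flag eventually exhausts $TM$; it does not give $[\mcH,\mcH]+\mcH=TM$, so a priori the flag could have depth $\geq 3$ with $\mathrm{gr}_{-2}$ strictly smaller than $\mfp_{-2}$, and your dimension count then collapses. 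Ruling this out is part of what Proposition~5 of \cite{Hwang-Mok02} delivers and cannot be obtained from the rank hypotheses alone. Second, even granting surjectivity, your argument hinges on the equality of $K:=\mathrm{span}\{u\wedge v: u\in\widehat{\mcC}_o^{G/P},\ v\in\widehat{T}_u\mcC_o^{G/P}\}$ with the kernel of the model bracket $\Wedge^2\mfp_{-1}\to\mfp_{-2}$, which you explicitly leave as an unproven ``representation-theoretic nondegeneracy statement'' (``Granting $K=\ker$\dots''). This is a nontrivial fact about the highest weight orbit in each contact and $(B_n/D_{n+1},\alpha_3)$ case, not a routine verification. As it stands, the proposal is a correct assembly of the paper's lemmas around a hole at the one step that requires outside input; either cite Proposition~5 of \cite{Hwang-Mok02} as the paper does, or supply an actual proof of the two missing claims.
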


 \begin{proof}
If $(\mfg,\alpha)=(A_n, \alpha_1/\alpha_n)$ for $n\geq 3$, then the theorem follows directly from Proposition \ref{p.path.geom} and Corollary \ref{coroll}.
Now assume $(\mfg,\alpha)\neq (A_n, \alpha_1/\alpha_n)$. Then the assumption $\tau_{\mcF} =0$ and the bracket-generating assumption on $\mcH$ imply that the tangential filtration $\{T^{-i}M\}$ on $M$ given by the weak derived system of 
$\mcH$ has the constant symbol of type $\mfp_-$, by Proposition 5 of \cite{Hwang-Mok02} and Proposition \ref{p.Hw12}. Moreover, Proposition \ref{p.Hw12} and Theorem \ref{Cartan_cones_str} imply that $(\{T^{-i}M\}, \mcC)$ gives rise to 
a regular normal parabolic geometry of type $(G,P)$ on $M$ inducing $(\{T^{-i}M\}, \mcC)$. By Corollary \ref{c.F.unique}, $\mcF$ equals the conic connection on $\mcC$ of Proposition \ref{omega_induced_filtration} and hence
Corollary \ref{coroll} completes the proof.  \end{proof}

\newpage
\section{Appendix}\label{Appendix}
In the following tables $\textrm{Gr}(p,q)$ denotes the Grassmannian variety of $p$-planes in $\C^{p+q}$ and $\textrm{LGr}(n,n)$ the Langrange--Grassmanian variety of Langrangean subspaces of the
symplectic vector space $\C^{2n}$.
\begin{table}[h!]
\centering
\small
\begin{tabular}{ |p{1cm}|p{2cm}|p{2.5cm}|p{2cm}|p{2cm}|p{4cm}| }
 \hline
 $\mfg$ & $\mfp$ & $G/P$&$\mfq$ &$\mfp_0^{ss}$ & $\mcC_o^{G/P}\hookrightarrow \mathbb P(\mfp_{-1})$\\
 \hline
 &&&&&\\
 $A_n$, $n\geq 1$&\dynkin[parabolic=1]{A}{}& $\mathbb{P}^{n}$&\dynkin[parabolic=3]{A}{}& $A_{n-1}$ & $\mathbb{P}^{n-1}$\\
 &&&&&\\
 $A_n$, $\tiny{n\geq 3}$  &   \begin{dynkinDiagram}{A}{*.*x*.*}
 \dynkinBrace[p]{1}{3}
 \dynkinBrace[q-1]{4}{5}
 \end{dynkinDiagram}
 &
$ \textrm{Gr}(p,q)$
& \begin{dynkinDiagram}{A}{*.xxx.*}
 \dynkinBrace[p-1]{1}{2}
 \dynkinBrace[q-1]{4}{5}
 \end{dynkinDiagram}   &$A_{p-1}\times A_{q-1}$  & $\mathbb{P}^{p-1}\times  \mathbb{P}^{q-1}\hookrightarrow \mathbb{P}^{pq-1} $ Segre \\
 &&&&&\\
 $B_n$, $n\geq 2$& \dynkin[parabolic=1]{B}{}  & $Q^{2n-1}$ quadric &\dynkin[parabolic=3]{B}{} & $B_{n-1}$ & $Q^{2n-3}\hookrightarrow \mathbb{P}^{2n-2}$ by $\mcO(1)$\\
 &&&&&\\
 $C_n$, $n\geq 3$ & \dynkin[parabolic=16]{C}{} & $\textrm{LGr}(n,n)$ &  \begin{dynkinDiagram}{C}{**.*xx}
 \end{dynkinDiagram}
&$A_{n-1}$& $\mathbb{P}^{n-1}\hookrightarrow \mathbb{P}(S^2\C^n)$ Veronese\\
 &&&&&\\
 $D_n$, $n\geq 3$  &\dynkin[parabolic=1]{D}{} & $Q^{2n-2}$ quadric &\dynkin[parabolic=3]{D}{}& $D_{n-1}$ &  $Q^{2n-4}\hookrightarrow \mathbb{P}^{2n-3}$ by $\mcO(1)$  \\
 &&&&&\\
 $D_n$, $n\geq 5$ &\dynkin[parabolic=32]{D}{} & $\mathbb S^n$ spinor variety  & \begin{dynkinDiagram}{D}{**.*x*x}
 \end{dynkinDiagram}
&$A_{n-1}$& $\textrm{Gr}(2,n-2)\hookrightarrow \mathbb{P}(\Wedge^2\C^n)$ Pl\"ucker \\
&\dynkin[parabolic=16]{D}{}&& \begin{dynkinDiagram}{D}{**.*xx*}
 \end{dynkinDiagram}
&&\\
&&&&&\\
 $E_6$&   \dynkin[parabolic=32]{E}{6}   & $\mathbb {OP}^2$ (complexified) Cayley plane &\begin{dynkinDiagram}{E}{****xx}
 \end{dynkinDiagram}
&$D_5$& $\mathbb{S}^5\hookrightarrow \mathbb{P}^{15}$ by $\mcO(1)$ \\
 &&&&&\\
 $E_7$&  \dynkin[parabolic=64]{E}{7}  & exceptional  &\begin{dynkinDiagram}{E}{*****xx}
 \end{dynkinDiagram}
 &$E_6$& $\mathbb {OP}^2\hookrightarrow\mathbb{P}^{26}$ Severi\\
 \hline
\end{tabular}
\caption{Irreducible compact Hermitian symmetric spaces and their VMRT-structures}
\label{tab1}
\end{table}

\begin{table}[h!]
\centering
\small
\begin{tabular}{ |p{1cm}|p{2.5cm}|p{2.5cm}|p{2cm}|p{5cm}| }
 \hline
 $\mfg$ & $\mfp$ &$\mfq$ &$\mfp_0^{ss}$ & $\mcC_0^{G/P}\hookrightarrow \mathbb P(\mfp_{-1})$\\
 \hline
 &&&&\\
 $B_n$, $n\geq 4$& \begin{dynkinDiagram}{B}{**x*.**}\end{dynkinDiagram}  &
 \begin{dynkinDiagram}{B}{*xxx*.**}\end{dynkinDiagram} & $A_2\times B_{n-3}$ & $\mathbb P^2\times Q^{2n-7}\hookrightarrow \mathbb{P}^{6n-16}$ Segre \\
&&&&\\
 $D_n$, $n\geq 5$  &\begin{dynkinDiagram}{D}{**x*.***}\end{dynkinDiagram} &\begin{dynkinDiagram}{D}{*xxx*.***}\end{dynkinDiagram}& $A_2\times D_{n-3}$ &  $\mathbb P^2\times Q^{2n-8}\hookrightarrow \mathbb{P}^{6n-22}$ Segre  \\
 \hline
\end{tabular}
\caption{$(B_n/D_n, \alpha_3)$ and their VMRT-structures}
\label{tab2}
\end{table}

\begin{table}[h!]
\centering
\small
\begin{tabular}{ |p{1cm}|p{2.5cm}|p{2.5cm}|p{2cm}|p{5cm}| }
 \hline
 $\mfg$ & $\mfp$ &$\mfq$ &$\mfp_0^{ss}$ & $\mcC_0^{G/P}\hookrightarrow \mathbb P(\mfp_{-1})$\\
 \hline
 &&&&\\
 $B_n$, $n\geq 3$& \dynkin[parabolic=2]{B}{}  &
 \begin{dynkinDiagram}{B}{xxx*.**}\end{dynkinDiagram} & $A_1\times B_{n-2}$ & $\mathbb P^1\times Q^{2n-5}\hookrightarrow \mathbb{P}^{4n-7}$ Segre\\
&&&&\\
 $D_n$, $n\geq 5$  &\dynkin[parabolic=2]{D}{} &\begin{dynkinDiagram}{D}{xxx*.***}\end{dynkinDiagram}& $A_1\times D_{n-2}$ &  $\mathbb P^1\times Q^{2n-6}\hookrightarrow \mathbb{P}^{4n-9}$ Segre  \\
 &&&&\\
 $D_4$ &\dynkin[parabolic=2]{D}{4}&\begin{dynkinDiagram}{D}{xxxx}\end{dynkinDiagram}& $A_1\times D_{2}$ &  $\mathbb P^1\times Q^{2}\hookrightarrow \mathbb{P}^{7}$ Segre\\
 &&&&\\
$E_6$&   \dynkin[parabolic=2]{E}{6}   &\begin{dynkinDiagram}{E}{*x*x***}
 \end{dynkinDiagram}
&$A_5$& $\textrm{Gr}(3,3)\hookrightarrow \mathbb{P}^{19}$ by $\mcO(1)$ \\
 &&&&\\
 $E_7$&  \dynkin[parabolic=1]{E}{7}    &\begin{dynkinDiagram}{E}{x*x****}
 \end{dynkinDiagram}
 &$D_6$& $\mathbb {S}^6\hookrightarrow\mathbb{P}^{31}$ by $\mcO(1)$ \\
  &&&&\\
 $E_8$&  \begin{dynkinDiagram}{E}{*******x}
 \end{dynkinDiagram}
 &\begin{dynkinDiagram}{E}{******xx}
 \end{dynkinDiagram}
 &$E_7$& $\mcC_0^{G/P}\hookrightarrow\mathbb{P}^{55}$ by $\mcO(1)$; exceptional\\
  &&&&\\
 $F_4$&  \begin{dynkinDiagram}{F}{x***}
 \end{dynkinDiagram}
   &\begin{dynkinDiagram}{F}{xx**}
 \end{dynkinDiagram}
 &$C_3$& $\textrm {LGr}(3,3)\hookrightarrow\mathbb{P}^{13}$ by $\mcO(1)$\\
  &&&&\\
 $G_2$&  \begin{dynkinDiagram}{G}{x*}
 \end{dynkinDiagram}
   &\begin{dynkinDiagram}{G}{xx}
 \end{dynkinDiagram}
 &$A_1$& $\mathbb {P}^1\hookrightarrow\mathbb{P}^{3}$ by $\mcO(3)$; twisted cubic curve\\
 \hline
\end{tabular}
\caption{Adjoint varieties corresponding to long simple roots and their VMRT-structures}
\label{tab3}
\end{table}

\newpage


\begin{thebibliography}{XX}

\bibitem{CapCores} \v Cap, A.: \emph{Correspondence spaces and twistor spaces for parabolic geometries}, J.\,reine
angew. Math. \textbf{582} (2005), 143--172.
\bibitem{csbook} \v Cap, A., and Slov\'ak, J.: \emph{Parabolic Geometries I: Background and General Theory}, Math. Surveys and Monongraphs \textbf{154}, Amer. Math. Soc. (2009).
\bibitem{CapSoucek} \v Cap, A., and Sou\v cek, V.: \emph{Relative BGG sequences; I. Algebra}, J.\,Algebra \textbf{463} (2016), 188--210.
\bibitem{HongHwang} Hong, J. and Hwang, J.-M.: \emph{Characterization of the rational homogeneous space associated to a long simple root by its variety of minimal rational tangents},
in Algebraic Geometry in East Asia, Hanoi 2005, Adv. Studies in Pure Math. \textbf{50}, 217-236.
\bibitem{HwangMSRI} Hwang, J.-M.: \emph{Geometry of varieties of minimal rational rational tangents}, in \emph{Current Developments in Algebraic Geometry}, MSRI Publ. \textbf{59} (2011), 197--226.
\bibitem{HwangICM} Hwang, J.-M.: \emph{Mori Geometry meets Cartan Geometry: Varieties of Minimal Rational Tangents}, Proceedings of ICM 2014.
\bibitem{Hwang-Mok99} Hwang, J.-M., and Mok, N.: \emph{Varieties of Minimal Rational Tangents on Uniruled Projective Manifolds}, in \emph{Several complex variables}, MSRI Publ. \textbf{37} (1999), 351--389.
\bibitem{HM01} Hwang, J.-M., and Mok, N.: \emph{Cartan-Fubini type extension of holomorphic maps for Fano manifolds of Picard number 1}, J. Math. Pures Appl. \textbf{80} (2001), 563-575.
\bibitem{Hwang-Mok02} Hwang, J.-M., and Mok, N.: \emph{Deformation rigidity of the rational homogeneous spaces associated to long simple roots}, Ann. Scient. \'Ecole Norm. Sup., 4 s\'erie, t. 35 (2002), 173--184.
\bibitem{Hwang-Mok04}  Hwang, J.-M., and Mok, N.: \emph{Birationality of the tangent map for minimal rational curves}, Asian J. Math. \textbf{8} (2004), 54--63.
\bibitem{IL} Ivey, T., and Landsberg, J.M.: \emph{Cartan for beginners}, 2nd edition, Grad. Stud. Math. \textbf{175}, Amer. Math. Soc. (2016).
\bibitem{Kostant} Kostant, B.: \emph{Lie algebra cohomology and the generalized Borel-Weil Theorem}, Annals of Math. \textbf{74} (1961), 329-387.
\bibitem{Mok} Mok, N.:  \emph{Recognizing certain rational homogeneous manifolds of Picard number 1 from their varieties of minimal rational tangents}, AMS/IP Studies in Adv. Math., \textbf{42} (2008), 41--61.
\bibitem{LeBrunThesis}  LeBrun, C.: \emph{Spaces of complex geodesics and related structures}, Ph.D. Thesis, Oxford 1980.
\bibitem{LeBrun} LeBrun, C.: \emph{Spaces of complex null geodesics in complex-Riemannian geometry}, Trans. Amer. Math. Soc. \textbf{278} (1983), 209--231.
\bibitem{Sharpe} Sharpe, R.W.: \emph{Differential geometry : Cartan's generalization of Klein's Erlangen program}, Springer, New York, 1996.
\bibitem {Yamaguchi2} Yamaguchi, K: \emph{$G_2$-geometry of overdetermined systems of second order}, Trends in Math. (Analysis and Geometry in Several Complex Variables) (1999), Birkh\"auser, Boston, 289--314.
\end{thebibliography}
\end{document}